\documentclass[12pt]{amsart}
\usepackage{graphicx}
\usepackage{amsmath}
\usepackage{amssymb}
\usepackage{amsfonts}
\usepackage{amsthm}
\usepackage{cancel}
\usepackage[all]{xy}
\usepackage[neveradjust]{paralist}
\usepackage{enumitem}
\usepackage{float}
\usepackage{multicol}
\usepackage{mathrsfs}
\usepackage{mathdots}
\usepackage{mathtools}
\usepackage{todonotes}
\usepackage[normalem]{ulem} 
\usepackage{caption} 
\usepackage{bbm}
\usepackage[pagebackref,colorlinks=true,linkcolor=blue,citecolor=blue]{hyperref}
\usepackage{tikz-cd}
\usepackage{tikz}
\tikzcdset{scale cd/.style={every label/.append style={scale=#1},
    cells={nodes={scale=#1}}}}
\usetikzlibrary{matrix, positioning}

\newlength{\wdth}

\usepackage{tabularray}

\newcommand{\Z}{\mathbb{Z}}

\newcommand{\R}{\mathbb{R}}

\newcommand{\Eseg}{{\mathrm{Eseg}}}
\newcommand{\Esegz}{{\mathrm{Eseg}^{\mathbb{Z}}}}
\newcommand{\Vseg}{{\mathrm{Vseg}}}
\newcommand{\Vsegz}{{\mathrm{Vseg}^{\mathbb{Z}}}}
\newcommand{\Rep}{{\mathrm{Rep}}}

\newcommand{\VRep}{{\mathrm{VRep}}}

\newcommand{\B}{{\mathcal B}}

\newcommand{\inv}{^{-1}}
\newcommand{\sm}{\setminus}

\newcommand{\GL}{\mathrm{GL}}
\newcommand{\SO}{\mathrm{SO}}
\newcommand{\OO}{\mathrm{O}}
\newcommand{\SL}{\mathrm{SL}}
\newcommand{\Sp}{\mathrm{Sp}}

\newcommand{\BC}{\mathbb{C}}

\newcommand{\BB}{\mathcal{B}}

\newcommand{\EE}{\mathcal{E}}
\newcommand{\FF}{\mathcal{F}}

\newcommand{\Block}{\mathrm{Block}}
\newcommand{\ABlock}{\mathrm{ABlock}}

\newcommand{\rc}{\operatorname{rc}}
\newcommand{\tc}{\operatorname{tc}}

\renewcommand{\implies}{\Rightarrow}

\newcommand{\comment}[1]{}

\newtheorem{thm}{Theorem}[section]
\newtheorem{cor}[thm]{Corollary}
\newtheorem{lemma}[thm]{Lemma}
\newtheorem{prop}[thm]{Proposition}

\newtheorem{ques/conj}[thm]{Question/Conjecture}

\newtheorem{defn}[thm]{Definition}
\newtheorem{prob}[thm]{Problem}
\newtheorem{rmk}[thm]{Remark}

\newtheorem{exmp}[thm]{Example}

\DeclareMathOperator{\supp}{supp}

\numberwithin{equation}{section}

\begin{document}

\title{On Arthur packets containing a fixed tempered representation}
\author{Alexander Hazeltine, Aarya Kumar, Andrew Tung}
\date{\today}

\subjclass[2020]{Primary 11F70, 22E50}

\keywords{Tempered representations, local Arthur packets.}

\thanks{The research of the first named author was supported by the AMS-Simons Travel Grant program. The research of the second and third named authors was supported by Department of Mathematics at the University of Michigan and NSF Grant DMS-2301507.}

\begin{abstract}
    We determine the number of local Arthur packets containing a certain fixed tempered representation for classical $p$-adic groups. More specifically, given a tempered extended multi-segment supported in the integers, we determine a count for all extended multi-segments which arise from it through applications of the operators arising from the theory of intersections of local Arthur packets.
\end{abstract}

\maketitle


\section{Introduction}

Let $F$ be a $p$-adic field. For simplicity, we let $G_n$ denote the split group $\Sp_{2n}(F)$ throughout this introduction. We remark that our results hold for pure inner forms of quasi-split orthogonal or symplectic groups, but extra assumptions may be required to pass from the representation theory to the combinatorial arguments in certain cases (see Remark \ref{rmk Arthur packet obstruction}). 

A fundamental problem in the Langlands program is the establishment of the local Langlands correspondence, which is a parameterization of equivalence classes of complex irreducible admissible representations of a connected reductive $p$-adic groups.
Arthur proved the existence of the local Langlands correspondence for $G_n$ by establishing the existence of local Arthur packets (\cite[Theorem 1.5.1]{Art13}). The theory of local Arthur packets has proven very useful in studying representations of $G_n$; however, local Arthur packets tend to be ill-behaved. For example, local Arthur packets often have nontrivial intersections. An understanding of these intersections has been achieved in \cite{Ato23, HLL22, HLL25} largely through the use of various combinatorial algorithms. For theoretical applications, it is desirable to have more precise results and algorithms. For example, the following problem does not immediately have a clear, uniform answer from the combinatorial algorithms.

\begin{prob}\label{problem tempered}
    Given an irreducible tempered representation $\pi$ of $G_n$, determine the number of local Arthur packets which contain $\pi$.
\end{prob}

Answering this problem has important consequences. For example, a known case is when $\pi$ is supercuspidal. In this situation, work of M{\oe}glin implies that $\pi$ lies in as many local Arthur packets as possible (\cite{Moe06b,Moe09a}; see also \cite[Theorem 7.5]{GGP20}). Gan, Gross, and Prasad then use this result to construct examples of non-relevant Arthur parameters which contribute nonzero multiplicities to the non-tempered Gan-Gross-Prasad conjectures (\cite[\S7]{GGP20}). A solution to the above problem would be a generalization of M{\oe}glin's result and would become a source for further such examples.

Our main result is a recursive formula for determining the number of local Arthur packets which contain $\pi$, where $\pi$ is a certain kind of irreducible tempered representation.

\begin{thm}[{Theorem \ref{thm-count-temp}}]\label{thm tempered count intro}
For certain irreducible tempered representations $\pi$, there is a recursive formula for determining the number of local Arthur packets which contain $\pi$.
\end{thm}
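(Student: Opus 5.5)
Since $\pi$ is tempered and its $L$-parameter is supported in the integers, I would first translate the problem into combinatorics. By Atobe's theory of extended multi-segments, and the classification of intersections of local Arthur packets in \cite{Ato23, HLL22, HLL25}, $\pi$ corresponds to a tempered extended multi-segment $\EE_\pi$. Each Arthur parameter $\psi$ with $\pi\in\Pi_\psi$ then corresponds to an extended multi-segment $\EE$ with $\pi(\EE)=\pi$. The key input is the theorem of Hazeltine--Liu--Lo: every such $\EE$ is obtained from any fixed one by a finite sequence of the operators $R_k$ (row exchange), $ui_{i,j}$ (union-intersection), $dual\circ ui\circ dual$, and partial duals, or their inverses. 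The combinatorial restatement of the theorem is therefore: \emph{count the set $\Psi(\EE_\pi)$ of multi-segments reachable from $\EE_\pi$, up to row exchange.} I would restrict to those tempered $\pi$ for which the obstruction in Remark \ref{rmk Arthur packet obstruction} does not occur. This is the meaning of ``certain'' in the statement.

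Next I would normalize. Using row exchanges, I would put every reachable $\EE$ in a canonical admissible order. For a tempered $\EE_\pi$ all segments are points $[x_i,x_i]$ with $x_i\in\Z_{\ge0}$, so $\phi_\pi=\bigoplus \rho\otimes S_{2x_i+1}$. In this normal form, applying $ui^{-1}$ and the dual operators amounts to merging consecutive points into segments $[A,B]$ with sign data. So I would describe $\Psi(\EE_\pi)$ as a set of decorated set-partitions of the multiset $\{x_i\}$ into ``blocks.'' Each block records which original points were fused into a single row and with which $\eta$-sign. Any reachable multi-segment has to satisfy nonvanishing conditions, which I would verify via the explicit criteria of Atobe. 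These conditions become local constraints on adjacent blocks, for example parity and sign-alternation conditions, along with a condition on the block containing the smallest points, which is where $B<0$ and partial duals enter.

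The recursion would come from conditioning on the block containing the largest point $x_n$. Removing that block leaves a tempered multi-segment of the same kind with fewer points. I would check that its reachable set corresponds bijectively to the compatible completions, so that
\[ N(\EE_\pi)=\sum_{\text{admissible last blocks } b} c(b)\,N(\EE_\pi\setminus b), \]
with explicit weights $c(b)$ counting the sign choices and the dual/partial-dual options. Base cases would be one point, or all points equal: the latter is Mœglin's supercuspidal-type situation, where the count is maximal and can be computed directly.

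The main obstacle is showing that this decomposition is well defined and bijective. Operators applied to the remaining rows must not interact with the removed block except through the boundary condition, and distinct operator sequences that give the same $\EE$ must be identified, so that nothing is overcounted. To handle this, I would first try proving that each reachable $\EE$ has a unique canonical form under row exchange, for instance via the ``$\EE^{\max}$ / $\EE^{\min}$'' uniqueness results in HLL. I would then prove that $ui$ operations on the top block commute with those on the lower rows whenever the boundary inequality holds.
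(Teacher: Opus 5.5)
\paragraph{The recursion step fails.}
Your recursion assumes that, once the fused row $b$ through the largest point is fixed, the compatible completions are in bijection with the configurations reachable from the smaller tempered multi-segment $\EE_\pi\setminus b$. This is false.

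Take a single block with columns $c,\dots,c+3$ ($c>0$), alternating signs and multiplicities $(1,3,3,1)$. Let $b$ be the chain $[c+3,c]$, which is reachable with $\mathcal{S}=(\{c,c+1,c+2,c+3\})$. Then $\EE_\pi\setminus b$ consists of two copies each of $c+1$ and $c+2$, of opposite signs. It has two reachable configurations, since a $ui$ of type 3' fuses one $c+1$ with one $c+2$. But $b$ has exactly one completion: two chains can share at most one column, so all remaining rows must stay multiples.

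In general, which completions are compatible depends on the shape of the remainder at the boundary:
\begin{itemize}
\item consecutive chains may overlap in a column $c$ only if $m_c>1$;
\item a chain of length $\geq 2$ starting at such a $c$ forces $c$ into the previous chain;
\item leftover even multiplicities split the remainder into new blocks that can fuse among themselves;
\item hats exist only in a first block starting at $0$.
\end{itemize}
Proving that $ui$'s commute across a boundary does not fix this. The moves $U$, $D$ and $dual\circ ui\circ dual$ act non-locally, through row exchanges across the whole multi-segment.

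The paper first proves an exhaustive description of the equivalence class of a block: type $Y_{\mathcal{M}}$, encoded by $\mathcal{S}$- and $\mathcal{T}$-data. This data is not a set-partition of the points, because chains may overlap ($z$-chains) and hats $([A,-B],B,\eta)$ occur. The proof locates $\EE^{\min}$ and checks that every raising operator and its inverse preserves the type. Only after that does counting become combinatorial. The recursion then removes the last column rather than the last row. Its correction term $-|\Psi(\pi(\BB''))|$ counts the configurations of $\BB'$ whose last set is a singleton $\{c_{\max}-1\}$ not shared with the previous set.

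\paragraph{The multiplicative decomposition is missing.}
The paper cuts the tempered $\EE_\pi$ into blocks in the sense of Definition \ref{def-block}: maximal pieces with alternating signs in consecutive columns and odd multiplicities. Lemma \ref{lem-independence-of-blocks} shows, by a case analysis of the three boundary types, that no raising operator or inverse can involve rows of two different blocks, even through row exchanges. The paper also shows that blocks after the first can never create hats, which is why they are counted through $sh^1(\BB_i)$. Without this you have no handle on what your ``boundary condition'' should be.

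\paragraph{Several identifications are off.}
\begin{itemize}
\item ``Certain'' in the statement means that the extended multi-segment of $\pi$ is integral. The half-integral case, where partial duals enter, is left open. It has nothing to do with Remark \ref{rmk Arthur packet obstruction}.
\item Partial duals never occur in the integral case.
\item ``All points equal'' is not M{\oe}glin's supercuspidal situation. Such a configuration admits no nontrivial operator, so its count is $1$. The integral supercuspidal case is type $X_k$ (columns $0,\dots,k$, each of multiplicity one, alternating signs), and its count is $3^k$.
\end{itemize}
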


We mention the ideas behind the theorem here. First, the parameterization of local Arthur packets using extended multi-segments (\cite{Ato20b, HLL25}; see \S\ref{sec extended multi-segments}) and the theory of intersections of local Arthur packets (\cite{HLL22, HLL25}; see \S\ref{sec intersections}) reduces the problem to understanding certain extended multi-segments. These naturally break into two cases: the case where the extended multi-segment is supported in the integers (see Definition \ref{def multi-segment}) and the case where the extended multi-segment is supported in the half-integers. In this article, we answer the integer case completely in Theorem \ref{thm-count-temp}. Hence the above theorem applies to any tempered representations whose associated extended multi-segments is supported entirely in the integers. In general, a tempered representation $\pi$ can have an extended multi-segment which is supported in both the integers and half-integers, in which case our theorem provides a lower bound for the number of local Arthur packets containing $\pi$.

The key idea behind Theorem \ref{thm tempered count intro} is that an extended multi-segment (associated to a tempered representation $\pi$) which is supported in the integers can be decomposed into blocks (see Definition \ref{def-block}). This decomposition is uniquely determined by $\pi$ (Lemma \ref{lem-unique-block-decomp}). Theorem \ref{thm-count-temp} then relates the number of local Arthur packets containing $\pi$ with the number of local Arthur packets each block ``belongs'' to (this is made precise in \S\ref{sec-results}).  Theorem \ref{thm-count-block-temp} further gives a recursive formula for determining this count in terms of smaller blocks. 

\begin{rmk}
    To answer Problem \ref{problem tempered}, it only remains to complete the half-integer case. We expect that the results from the integer case will prove useful for this as the operators involved in the combinatorics are largely the same; however, there will certainly be some differences. For example, there is an extra operation that one needs to consider (the partial dual; see \cite[Definition 6.5]{HLL22}) which does not occur in the integer case. 
\end{rmk}

We mention that, despite not completely answering Problem \ref{problem tempered}, Theorem \ref{thm tempered count intro} is still useful for theoretical applications. Indeed, in a forthcoming article (\cite{HKT}), we will use Theorem \ref{thm tempered count intro} to study the number of Arthur packets containing the theta lift to the first occurrence of going-up tower of an irreducible tempered (or anti-tempered) representation of $G_n$.

Here is the organization of this article. In \S\ref{sec background}, we recall various results on local Arthur packets. In \S\ref{sec-results}, we state the main results precisely. In preparation for their proofs, we collect some preliminary definitions and statements in \S\ref{sec basic notions and lemmas}. In \S\ref{sec-individual-blocks}, we study the case of individual blocks. From this theory, we prove the first main result (the number of local Arthur packets containing a given block, Theorem \ref{thm-count-block-temp})  in \S\ref{sec count block}. Finally, we study how blocks may interact with each other in \S\ref{sec interation of blocks} and prove the other main result (Theorem \ref{thm-count-temp}) which gives Theorem \ref{thm tempered count intro}.

\subsection*{Acknowledgments}
The first named author thanks Yiannis Sakellaridis for inspiring conversations which lead to the genesis of this research, especially in regards to the forthcoming application to the local theta correspondence. The authors also thank Chi-Heng Lo for enlightening discussions and remarks. Finally, the authors thank Wee Teck Gan for helpful comments and support.

\section{Background}\label{sec background}

Recall that $F$ is a non-Archimedean local field of characteristic 0. For the moment, let $\mathrm{G}$ be a reductive group defined over $F$ and $G=\mathrm{G}(F).$ We are primarily concerned with the set of equivalence classes of complex irreducible admissible representations of $G$, which we denote by $\Pi(G).$ 

We let $\mathrm{G}_n=\Sp_{2n},$ $ \SO_{2n+1},$ or $\OO_{2n}$ and assume that $G_n^+=\mathrm{G}_n(F)$ is quasi-split. When $G_n^+$ is a quasi-split non-split even orthogonal group, we remark that $G_n^+$ has a quasi-split non-split pure inner form which we denote by $G_n^-.$ For brevity, we let $G_n\in\{G_n^\pm\}.$ We set $\epsilon_{G_n}=-1$ if $G_n=G_n^-$ and $\epsilon_{G_n^+}=1$, otherwise. We expect our results to apply more generally than groups considered above. Specifically, we expect analogous results for pure inner forms of quasi-split symplectic, odd special orthogonal, or even orthogonal groups; however, the necessary preliminary results on local Arthur packets have not been established in the necessary generality (see Remark \ref{rmk Arthur packet obstruction}).

\subsection{Local Arthur packets}\label{sec local Arthur packets}

In this subsection, we recall some results concerning local Arthur packets. Recall that $\OO_{2n}$ is disconnected. Consequently, we consider the L-group for the identity component ${}^LG_n:={}^L{G_n^\circ}.$ We let $\widehat{G}_n(\BC)$ denote the usual dual group when $G_n$ is connected and set $\widehat{\OO_{2n}}:=\OO_{2n}(\BC)$ otherwise.
\begin{defn}
A \emph{local Arthur parameter} of $G_n$ is a homomorphism
$$\psi: W_F \times \SL_2(\mathbb{C}) \times \SL_2(\mathbb{C}) \rightarrow {}^LG_n$$
which can be expressed as a direct sum of irreducible representations
\begin{equation}\label{eq decomp psi +}
  \psi = \bigoplus_{i=1}^r \phi_i|\cdot|^{x_i} \otimes S_{a_i} \otimes S_{b_i},  
\end{equation}
satisfying the following conditions:
\begin{enumerate}
    \item [(1)]$\phi_i(W_F)$ is bounded and consists of semi-simple elements, and $\dim(\phi_i)=d_i$;
    \item [(2)] $x_i \in \R$ and $|x_i|<\frac{1}{2}$;
    \item [(3)]the restrictions of $\psi$ to both copies of $\SL_2(\mathbb{C})$ are algebraic, $S_k$ denotes the unique $k$-dimensional irreducible representation of $\SL_2(\mathbb{C})$, and we have 
    $\sum_{i=1}^r d_ia_ib_i = N,
$
where $N=2n$ if $G_n$ is not symplectic and $N=2n+1$ otherwise.
\end{enumerate}

Two local Arthur parameters of $G_n$ are said to be \emph{equivalent} if they are conjugate under $\widehat{G}_n(\BC)$.  
We let $\Psi^+(G_n)$ denote the set of equivalence classes of  local Arthur parameters. We will not distinguish a local Arthur parameter $\psi$ and its equivalence class. We let $\Psi(G_n)$ denote the subset of equivalence classes of  bounded  local Arthur parameters, i.e., those $\psi$ for which $x_i=0$ for any $i=1,\dots,r$ in the decomposition \eqref{eq decomp psi +}.
\end{defn}

By the Local Langlands Correspondence for $\GL_{d}(F)$, any bounded representation $\phi$ of $W_F$ corresponds to an irreducible unitary supercuspidal representation $\rho$ of $\GL_{d}(F)$ (\cite{HT01, Hen00, Sch13}). Consequently, we identify \eqref{eq decomp psi +} as
\begin{equation}\label{A-param decomp}
  \psi = \bigoplus_{\rho}\left(\bigoplus_{i\in I_\rho} \rho|\cdot|^{x_i} \otimes S_{a_i} \otimes S_{b_i}\right),  
\end{equation}
where the first sum runs over a finite set of
irreducible unitary supercuspidal representations $\rho$ of $\GL_d(F)$ where $d \in \mathbb{Z}_{\geq 1}$.

For a local Arthur parameter $\psi \in \Psi(G_n)$, Arthur showed the existence of a finite multi-set $\Pi_\psi$, called a local Arthur packet, consisting of irreducible unitary representations of $G_n$ that satisfy certain twisted endoscopic character identities  (\cite[Theorem 2.2.1]{Art13}; see also \cite{AGIKMS24}). We do not recall the precise definition of $\Pi_\psi$. Instead, it suffices for our purposes to recall a parameterization of $\Pi_\psi$ using extended multi-segments (see Theorem \ref{thm Arthur packets extended multi-segment parameterization}).

M{\oe}glin showed that the computation of $\Pi_\psi$ can be reduced to the ``good parity'' case (see Theorem \ref{thm red to gp} below). We proceed by recalling this reduction.
\begin{defn}\label{def Arthur parameter good parity}
    Let $\psi$ be a local Arthur parameter as in \eqref{A-param decomp}. We say that $\psi$ is of \emph{good parity} if $\psi \in \Psi(G_n)$, i.e., $x_i=0$ for all $i$, and every summand $\rho \otimes S_{a_i} \otimes S_{b_i}$ is self-dual and of the same type as $\psi$.
We let $\Psi_{gp}(G_n)$ denote the subset of $\Psi^+(G_n)$ consisting of local Arthur parameters of good parity.
\end{defn}
We explicate this condition further. 
Consider an irreducible summand $\rho|\cdot|^x\otimes S_a \otimes S_b$ of $\psi$ as in \eqref{A-param decomp}. This summand is self-dual and orthogonal if and only if $x=0,$ $\rho$ is orthogonal (resp. symplectic), and $a_i+b_i$ is even (resp. odd). It is self-dual and symplectic if and only if $x=0,$ $\rho$ is symplectic (resp. orthogonal), and $a_i+b_i$ is even (resp. odd).

Let $\psi\in\Psi^+(G_n).$ Since $\psi$ is self-dual, we have a decomposition 
$
\psi=\psi_{ngp} + \psi_{gp}+\psi_{ngp}^\vee,
$
where $\psi_{ngp}^\vee$ denotes the dual of $\psi_{ngp}$, $\psi_{gp}\in\Psi_{gp}(G_n),$ and $\psi_{gp}$ is maximal for this decomposition, i.e., if we decompose $\psi_{ngp}+\psi_{ngp}^\vee$ as in \eqref{A-param decomp}, then any irreducible summand $\rho|\cdot|^x\otimes S_a\otimes S_b$ is not of good parity. Note that $\psi_{gp}$ is uniquely determined by this decomposition, but $\psi_{ngp}$ is not necessarily unique. M{\oe}glin showed that the local Arthur packet $\Pi_\psi$ can be constructed from $\Pi_{\psi_{gp}}.$

\begin{thm}[{\cite[Proposition 5.1]{Moe11b}}]\label{thm red to gp}
Let $\psi\in\Psi^+(G_n)$ with decomposition $\psi=\psi_{ngp}+\psi_{gp}+\psi_{ngp}^\vee$ as above. Then, there exists $\tau\in\Pi(\GL_d(F))$ (determined by $\psi_{ngp}$) such that for any $\pi_{gp}\in\Pi_{\psi_{gp}},$ the normalized parabolic induction $\tau\rtimes\pi_{gp}$ is irreducible and $\Pi_\psi=\{\tau\rtimes\pi_{gp} \ | \ \pi_{gp}\in\Pi_{\psi_{gp}}\}.$
\end{thm}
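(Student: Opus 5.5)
This is M{\oe}glin's reduction to good parity, which the paper cites as \cite[Proposition 5.1]{Moe11b} rather than proves. I would prove it by comparing twisted endoscopic character identities on both sides of a parabolic induction.

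The first step is to construct $\tau$. Write $\psi_{ngp}=\bigoplus_i \rho_i|\cdot|^{x_i}\otimes S_{a_i}\otimes S_{b_i}$. To each summand attach the Speh-type unitary representation $\tau_i$ of $\GL_{d_ia_ib_i}(F)$, that is, the Langlands quotient associated with $\rho_i|\cdot|^{x_i}\otimes S_{a_i}\otimes S_{b_i}$, and set $\tau=\times_i\tau_i$. Since $|x_i|<\frac12$, this product is irreducible and unitary by Tadić's classification of the unitary dual of $\GL_d(F)$. One also checks that $\tau$ does not depend on how $\psi_{ngp}$ is split off from $\psi_{ngp}^\vee$, because $\tau^\vee\cong\tau'$ in the relevant sense.

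The second step is to show that the stable distribution attached to $\psi$ equals the induction of the one attached to $\psi_{gp}$, twisted by $\tau$. On the $\GL$ side, the twisted character of the representation of $\GL_N(F)$ attached to $\psi$ is the induced twisted character from $\tau\times\pi_{\psi_{gp}}\times\tau^\vee$. Arthur's characterization of $\Pi_\psi$ through twisted and ordinary endoscopic identities is compatible with parabolic induction, and the component groups satisfy $S_\psi=S_{\psi_{gp}}$, since non-good-parity summands contribute no $\Z/2$ factors. Together these give the equality $\Pi_\psi=\tau\rtimes\Pi_{\psi_{gp}}$ as virtual multi-sets, with the characters matching.

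The third step, and the main obstacle, is proving that $\tau\rtimes\pi_{gp}$ is irreducible for every $\pi_{gp}\in\Pi_{\psi_{gp}}$. Without this, the character identity only says that the packet is a sum of constituents. I would use Jacquet modules. The cuspidal supports $\rho_i|\cdot|^{\pm x_i+\text{half-integer}}$ coming from $\tau$ have the wrong parity relative to $\psi_{gp}$: they are either not self-dual, or self-dual of the opposite type. Hence no reducibility point of $\rho_i|\cdot|^s\rtimes\sigma$ lies on these segments. I would then run a Mœglin–Tadić style argument. Reduce to $\tau_i$ a single Speh representation by induction on the number of summands. Compute the Jacquet module of $\tau_i\rtimes\pi_{gp}$ along $\rho_i|\cdot|^{*}$, and show that every irreducible subquotient has the same minimal exponent multiplicity. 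This forces the induced representation to have a unique irreducible subrepresentation, which equals its unique quotient and is the whole of $\tau_i\rtimes\pi_{gp}$.

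Finally, distinct $\pi_{gp}$ give distinct $\tau\rtimes\pi_{gp}$, since $\pi_{gp}$ can be recovered by taking a Jacquet module. This gives the bijection.
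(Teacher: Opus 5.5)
The paper gives no proof of this statement; it is quoted from M{\oe}glin, so your outline has to stand on its own. Steps 1, 2 and 4 are sound in outline. Step 2 is essentially Arthur's definition of $\Pi_\psi$ for a parameter factoring through the Levi $\GL_d\times G_{n'}$, and $\mathcal{S}_\psi=\mathcal{S}_{\psi_{gp}}$ is right. There are two inaccuracies. First, $\tau$ is not unitary when some $x_i\neq 0$ occurs in $\psi_{ngp}$ without its partner $-x_i$. Second, $\tau$ does depend on the choice of $\psi_{ngp}$. Only $\tau\rtimes\pi_{gp}$ is independent of that choice, and only once irreducibility is known, since then $\tau\rtimes\pi_{gp}$ and $\tau'\rtimes\pi_{gp}$ are irreducible with the same semisimplification.

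The genuine gap is Step 3.

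\emph{Where the Jacquet-module argument works.} Your argument runs as follows: exhibit a constituent of $r_M(\tau_i\rtimes\pi_{gp})$ that every subquotient must carry in full, then conclude via unique subrepresentation $=$ unique quotient. This works when $\tau_i\not\cong\tau_i^\vee$, i.e.\ for non-self-dual $\rho_i$ or $x_i\neq 0$, because then $\tau_i$ and $\tau_i^\vee$ lie on disjoint cuspidal lines.

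\emph{Where it fails.} It cannot work for the bad-parity summands with $x_i=0$ and $\rho_i$ self-dual. There $\tau_i\cong\tau_i^\vee$, so the geometric-lemma terms $\tau_i\otimes\pi_{gp}$ and $\tau_i^\vee\otimes\pi_{gp}$ coincide, and this constituent has multiplicity at least $2$. A splitting into two constituents, each carrying one copy, is invisible to semisimplified Jacquet modules. The simplest instance already shows this: for $\rho\rtimes\sigma$ with $\rho$ self-dual and $\sigma$ supercuspidal, the Jacquet module is $2\,\rho\otimes\sigma$ whether or not it reduces. Reducibility is decided by the intertwining operator at $s=0$ (Plancherel measure, $R$-group), not by exponents.

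\emph{Two further points.} Your assertion of no reducibility on these segments, at $s=0$, does not follow from the parity mismatch alone. It is M{\oe}glin's determination of reducibility points via the parameter of $\sigma$. Even granting it, you still need a mechanism that carries the information from $\rho|\cdot|^s\rtimes\sigma$ up to $\tau_i\rtimes\pi_{gp}$. Separately, your induction on the number of summands breaks down. The representation $\tau_2\rtimes\pi_{gp}$ may have cuspidal support on the same line as $\tau_1$ (two bad-parity summands with the same $\rho$), so the parity hypothesis is lost at the next step. Summands on a common line must be handled together.

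\emph{What is missing.} You need a genuinely analytic input. For tempered $\psi$ it is easy to supply. Arthur's bijection $\Pi_\phi\leftrightarrow\widehat{\mathcal{S}_\phi}$, combined with $\mathcal{S}_\phi=\mathcal{S}_{\phi_{gp}}$ and your Step 2, forces each $\tau\rtimes\pi_{gp}$ to have exactly one constituent. For non-tempered $\psi$ this count fails, because $\pi\mapsto\langle\cdot,\pi\rangle$ is neither injective nor surjective on $\Pi_\psi$. Handling that case is precisely the content of the cited result.
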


\subsection{Extended multi-segments}\label{sec extended multi-segments}
Theorem \ref{thm red to gp} reduces Problem \ref{problem tempered} to the good parity setting. In this section, we recall a parameterization of $\Pi_\psi$ for $\psi\in\Psi_{gp}(G_n).$ using extended multi-segments (Theorem \ref{thm Arthur packets extended multi-segment parameterization}). Furthermore, we also recall some results on the theory of intersections of local Arthur packets from \cite{HLL22, HLL25}. 

We fix the following notation throughout this section. Let $\psi\in\Psi_{gp}(G_n)$ with decomposition 
\[ \psi= \bigoplus_{\rho} \bigoplus_{i \in I_{\rho}} \rho \otimes S_{a_i} \otimes S_{b_i}. \]
We set $A_i=\frac{a_i+b_i}{2}-1$ and $B_i=\frac{a_i-b_i}{2}$ for $i\in I_\rho.$ 
 
We say that a total order $>_\psi$ on $I_\rho$ is \emph{admissible} if satisfies:
\[
\tag{$P$}
\text{
For $i,j \in I_\rho$, 
if $A_i > A_j$ and $B_i > B_j$, 
then $i >_\psi j$.
}
\]
Sometimes we consider an order $>_\psi$ on $I_\rho$ satisfying:
\[
\tag{$P'$}
\text{
For $i,j \in I_\rho$, 
if $B_i > B_j$, 
then $i >_\psi j$.
}
\]
Note that ($P'$) implies ($P$). For brevity, we often write $>$ instead of $>_\psi$ when it is clear that we are working with a fixed admissible order. 

Suppose now that we have fixed an admissible order for $\psi.$ We define the \emph{support} of $\psi$ to be the collection of ordered multi-sets 
$$\supp(\psi) := \cup_{\rho}\{ [A_i,B_i]_{\rho} \}_{i \in (I_\rho,>)}.
$$
Note that $\supp(\psi)$ depends implicitly on the fixed admissible order.

We recall the definition of extended multi-segments.

\begin{defn}\label{def multi-segment} \
\begin{enumerate}
\item
An \emph{extended segment} is a triple $([A,B]_\rho, l, \eta)$,
where
\begin{itemize}
\item
$[A,B]_\rho = \{\rho|\cdot|^A, \rho|\cdot|^{A-1}, \dots, \rho|\cdot|^B \}$ is a segment 
for an irreducible unitary supercuspidal representation $\rho$ of some $\GL_d(F)$; 
\item
$l \in \Z$ with $0 \leq l \leq \frac{b}{2}$, where $b = \#[A,B]_\rho = A-B+1$; 
\item
$\eta \in \{\pm1\}$. 
\end{itemize}

\item
An \emph{extended multi-segment} for $G_n$ is 
an equivalence class (via the equivalence defined below) of multi-sets of extended segments 
\[
\EE = \cup_{\rho}\{ ([A_i,B_i]_{\rho}, l_i, \eta_i) \}_{i \in (I_\rho,>)}
\]
such that 
\begin{itemize}
\item
$I_\rho$ is a totally ordered finite set with a fixed admissible total order $>$;

\item
$A_i + B_i \geq 0$ for all $\rho$ and $i \in I_\rho$; 

\item
we have that
\[
\psi_{\EE} = \bigoplus_\rho \bigoplus_{i \in I_\rho} \rho \otimes S_{a_i} \otimes S_{b_i}, 
\]
where $(a_i, b_i) = (A_i+B_i+1, A_i-B_i+1)$,
is a local Arthur parameter for $G_n$ of good parity;
\item and the following sign condition holds
\begin{align}\label{eq sign condition}
\prod_{\rho} \prod_{i \in I_\rho} (-1)^{[\frac{b_i}{2}]+l_i} \eta_i^{b_i} = \epsilon_{G_n}.
\end{align}
\end{itemize}

\item
Extended segments $([A,B]_\rho, l, \eta)$ and $([A',B']_{\rho'}, l', \eta')$ are \emph{weakly equivalent} 
if 
\begin{itemize}
\item
$[A,B]_\rho = [A',B']_{\rho'}$; 
\item
$l = l'$; and 
\item
$\eta = \eta'$ whenever $l = l' < \frac{b}{2}$. 
\end{itemize}
Two extended multi-segments 
$\EE = \cup_{\rho}\{ ([A_i,B_i]_{\rho}, l_i, \eta_i) \}_{i \in (I_\rho,>)}$ 
and 
$\EE' = \cup_{\rho}\{ ([A'_i,B'_i]_{\rho}, l'_i, \eta'_i) \}_{i \in (I_\rho,>)}$ 
are \emph{weakly equivalent}
if for any $\rho$ and $i \in I_\rho$, the extended segments $([A_i,B_i]_\rho, l_i, \eta_i)$ and $([A'_i,B'_i]_{\rho}, l'_i, \eta'_i)$ are weakly equivalent.

Note that if $l=\frac{b}{2}$, then $\eta$ is arbitrary. In this case, we always take $\eta=1$ (unless explicitly stated otherwise, e.g., in Definition \ref{def dual}).

\item
We define the \emph{support} of $\EE$ to be the collection of ordered multi-sets 
\[
\supp(\EE) = \cup_{\rho}\{ [A_i,B_i]_{\rho} \}_{i \in (I_\rho,>)}.
\]
We implicitly include the admissible order $>$ in $\supp(\EE).$
\item We say that $\EE$ is \emph{integral} (or supported in the integers) if $A_i\in\mathbb{Z}$ for any $i\in I_\rho$ and any $\rho$.
\item We let $\Eseg(G_n)$ denote the set of all extended multi-segments of $G_n$ up to weak equivalence. We further let $\Esegz(G_n)$ denote the subset of $\Eseg(G_n)$ consisting of integral extended multi-segments up to weak equivalence.
\end{enumerate}
\end{defn}
If the admissible order $>$ is clear in the context, for $k \in I_{\rho}$, we often let $k+1 \in I_{\rho}$ be the unique element adjacent with $k$ and $k+1>k$.

The data in an extended multi-segment can be cumbersome to list out in detail. Instead, 
we attach a symbol to each extended multi-segment by the same way in \cite[Section 3]{Ato20b}. We give an example to explain this.
\begin{exmp}\label{exmp symbol}
 Let $\rho$ be an orthogonal representation of $\GL_d(F)$. The symbol
\[\EE=\bordermatrix{
& -1 & 0 &1 & 2 &3 & 4\cr
& \lhd & \lhd & \oplus & \ominus & \rhd & \rhd \cr
&  &  &  & \lhd &\rhd  &  \cr
&  &  &  &  &  & \ominus \cr
}_{\rho}\]
corresponds to $\EE= \{ ([A_i,B_i]_{\rho},l_i,\eta_i)\}_{i \in (1<2<3)}$ of $\Sp_{44d}(F)$ where the data is given as follows.
\begin{itemize}
    \item  $([A_1,B_1]_{\rho},[A_2,B_2]_{\rho},[A_3,B_3]_{\rho})=([4,-1]_{\rho},[3,2]_{\rho},[4,4]_{\rho})$ specify the ``support" of each row.
    \item  $(l_1,l_2,l_3)=(2,1,0)$ counts the number of pairs of triangles in each row. 
    \item  $(\eta_1,\eta_2,\eta_3)=(1, 1, -1)$ records the sign of the first circle in each row. Note that setting $\eta_2=\pm1$ results in weakly equivalent extended multi-segments.
\end{itemize}
The associated local Arthur parameter is
    \[ \psi_{\EE}= \rho \otimes S_{4}\otimes S_{6} + \rho \otimes S_{6}\otimes S_{2} + \rho \otimes S_9 \otimes S_1.  \]
\end{exmp}

With the above symbol in mind, we often say that an extended segment $r=([A,B]_\rho,l,\eta)$ is a \emph{row} of $\EE$ if $r\in \EE.$ Furthermore, we define the support of $r$ to be $\supp(r)=[A,B]_\rho$ and let $A(r)=A$, $B(r)=B,$ $a(r)=A(r)+B(r)+1,$ $b(r)=A(r)-B(r)+1,$ $l(r)=l$, and $\eta(r)=\eta.$

Let $\EE\in\Eseg(G_n).$ We attach a representation $\pi(\EE)$ of $G_n$ via \cite[Algorithm 5.18]{HLL25} which is an adaptation of \cite[Algorithm 6.3]{HJLLZ24}. We have that either $\pi(\EE)$ vanishes or $\pi(\EE)\in\Pi(G_n).$ We do not recall the full construction here as, for our purposes, it suffices to recall various consequences of the definition.

\begin{rmk}
We remark that \cite[Algorithm 5.18]{HLL25} agrees with that of Atobe given in \cite[\S3.2]{Ato20b} for split symplectic and odd special orthogonal groups. The advantage of \cite[Algorithm 5.18]{HLL25} over \cite[\S3.2]{Ato20b} is that it only assumes the local Langlands correspondence and hence applies more broadly than \cite[\S3.2]{Ato20b} which assumes a theory of derivatives as in \cite{AM23}. In particular, \cite[Algorithm 5.18]{HLL25} is well-defined for quasi-split even orthogonal groups.
\end{rmk}

Extended multi-segments provide a parameterization of local Arthur packets.
\begin{thm}\label{thm Arthur packets extended multi-segment parameterization}
    Suppose $\psi= \bigoplus_{\rho} \bigoplus_{i \in I_{\rho}} \rho \otimes S_{a_i} \otimes S_{b_i}$ is a local Arthur parameter of good parity of $G_n$ which is assumed to be quasi-split. Choose an admissible order $>$ on $I_{\rho}$ for each $\rho$ that satisfies ($P'$) if $\frac{a_i-b_i}{2}<0$ for some $i \in I_{\rho}$. Then
\[ \bigoplus_{\pi \in \Pi_{\psi}} \pi= \bigoplus_{\EE} \pi(\EE),\]
where $\EE$ runs over all extended multi-segments with $\supp(\EE)= \supp(\psi)$ and $\pi(\EE) \neq 0$. 
\end{thm}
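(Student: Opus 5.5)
This theorem is a background result and I would prove it by assembling results from the literature, not by a new argument. The outline follows Atobe's parameterization \cite{Ato20b} and its extension in \cite{HLL25}.

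First I fix the admissible order. When some $B_i<0$, I take it to satisfy ($P'$), as the statement demands. This is needed because M{\oe}glin's construction of $\Pi_\psi$ goes through a parameter with ``discrete diagonal restriction'' (DDR). Concretely, one shifts each segment $[A_i,B_i]_\rho$ to $[A_i+T_i,B_i+T_i]_\rho$, with shifts $T_i$ increasing along the order, so that the shifted segments are pairwise disjoint and increasing. For DDR parameters $\psi_{\gg}$, M{\oe}glin's explicit description of the packet shows two things. The packet is multiplicity free. Its elements are indexed exactly by the data $(l_i,\eta_i)$ satisfying the sign condition \eqref{eq sign condition}, modulo weak equivalence. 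In each case the representation is an irreducible subquotient of an explicit induced representation from a tempered or discrete-series representation.

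Second, I recover $\Pi_\psi$ from $\Pi_{\psi_{\gg}}$ by M{\oe}glin's partial Jacquet module procedure. One applies compositions of derivatives $\mathrm{Jac}_{\rho|\cdot|^{x}}$ that undo the shifts row by row, from the lowest row of the order upward. The result is either irreducible or zero. I set $\pi(\EE)$ equal to this result, with $\EE$ the extended multi-segment with the original support and the same $(l_i,\eta_i)$. Independence of $\pi(\EE)$ from the choice of the $T_i$ is M{\oe}glin's result. One then has to check that \cite[Algorithm 5.18]{HLL25}, which uses only the local Langlands correspondence, produces the same representation. For this I would show, by induction on the number of rows and the sizes of the shifts, that each step of the algorithm (removing a row, or taking highest derivatives at the endpoint) commutes with the corresponding Jacquet operation. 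This is the content of \cite{HJLLZ24, HLL25}, and I would cite it.

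Third comes the equality of multisets. This has two parts.
\begin{itemize}
\item Exhaustion: every element of $\Pi_\psi$ arises as some $\pi(\EE)$ with $\supp(\EE)=\supp(\psi)$, because every element of $\Pi_\psi$ comes from the DDR packet.
\item Multiplicity one: distinct $\EE$ with $\pi(\EE)\neq 0$ give distinct representations, and each occurs once.
\end{itemize}
Multiplicity one for $p$-adic symplectic and orthogonal groups is M{\oe}glin's theorem \cite{Moe11b}. The quasi-split even orthogonal case needs the version of Arthur's theory for $\OO_{2n}$ in \cite{AGIKMS24} together with \cite{HLL25}.

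The main obstacle is the compatibility in the second step: showing that the algorithmic definition of $\pi(\EE)$ agrees with M{\oe}glin's Jacquet-module definition without assuming a derivative theory as in \cite{AM23}. This is where the ($P'$) hypothesis on the order is genuinely used, since it guarantees that the shifts can be removed one row at a time without interference. In practice, I would quote \cite[Theorem 3.4]{Ato20b} for split groups and the corresponding statement in \cite{HLL25} for quasi-split even orthogonal groups, rather than reprove them.
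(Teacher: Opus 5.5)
Your proposal is correct and takes the paper's approach. The paper gives no new argument for this background result; it cites \cite[Theorem 3.3]{Ato20b} for split symplectic and odd special orthogonal groups, \cite[Algorithm 6.3]{HJLLZ24} for the construction of $\pi(\EE)$ using only the local Langlands correspondence, and \cite[Theorem 6.10]{HLL25} for quasi-split even orthogonal groups. The one slip is bibliographic: this paper attributes multiplicity-freeness of local Arthur packets to \cite{Moe11a}, and cites \cite{Moe11b} instead for M{\oe}glin's reduction to the good parity case.
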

For split symplectic and odd special orthogonal groups, the above theorem was first proven by Atobe (\cite[Theorem 3.3]{Ato20b}) using the construction of $\pi(\EE)$ from \cite[\S3.2]{Ato20b}. Then \cite[Algorithm 6.3]{HJLLZ24} provided another way to compute the representation $\pi(\EE)$ given in \cite[\S3.2]{Ato20b} which only uses the local Langlands correspondence. Finally, in \cite[Theorem 6.10]{HLL25}, the above theorem was proven for quasi-split even orthogonal groups (assuming a local Langlands correspondence as in \cite{AG17b}).

M{\oe}glin showed that local Arthur packets are multiplicity-free (\cite{Moe11a}). Consequently, we have the following corollary.
\begin{cor}\label{cor same support}
    Let $\EE_1, \EE_2\in\Eseg(G_n)$ and suppose that $\supp(\EE_1)=\supp(\EE_2).$ If $\frac{a_i-b_i}{2}<0$ for some $i \in I_{\rho}$, then we also require that the order on $I_\rho$ is $(P').$ If $\pi(\EE_1)=\pi(\EE_2)\neq 0,$ then $\EE_1=\EE_2.$
\end{cor}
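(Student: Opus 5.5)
This follows directly from Theorem \ref{thm Arthur packets extended multi-segment parameterization} together with M{\oe}glin's multiplicity-freeness result (\cite{Moe11a}).

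Set $\psi:=\psi_{\EE_1}$. Since $\supp(\EE_1)=\supp(\EE_2)$, the Arthur parameters $\psi_{\EE_1}$ and $\psi_{\EE_2}$ coincide, because the parameter is determined by the data $[A_i,B_i]_\rho$ via $(a_i,b_i)=(A_i+B_i+1,A_i-B_i+1)$. Moreover, the common admissible order satisfies ($P'$) whenever some $B_i<0$. Hence the hypotheses of Theorem \ref{thm Arthur packets extended multi-segment parameterization} hold for this choice of order, and it gives
\[\bigoplus_{\pi\in\Pi_\psi}\pi=\bigoplus_{\EE}\pi(\EE),\]
where $\EE$ runs over all extended multi-segments, up to weak equivalence, with $\supp(\EE)=\supp(\psi)$ and $\pi(\EE)\neq 0$. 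Both $\EE_1$ and $\EE_2$ appear in this sum.

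Suppose $\EE_1\neq\EE_2$ as elements of $\Eseg(G_n)$, that is, they are not weakly equivalent. Then they index distinct summands on the right-hand side. Since $\pi(\EE_1)=\pi(\EE_2)=\pi\neq 0$, the representation $\pi$ occurs with multiplicity at least $2$ in the right-hand side, and therefore in $\Pi_\psi$. This contradicts M{\oe}glin's theorem that $\Pi_\psi$ is multiplicity-free (\cite{Moe11a}). Hence $\EE_1=\EE_2$.

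The only point needing care is that the sum in Theorem \ref{thm Arthur packets extended multi-segment parameterization} is indexed by weak equivalence classes. This is compatible with the conclusion, since $\Eseg(G_n)$ is itself taken up to weak equivalence.
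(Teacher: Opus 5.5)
Your argument is correct and matches the paper's. The paper deduces the corollary in one line from the parameterization in Theorem \ref{thm Arthur packets extended multi-segment parameterization} together with M{\oe}glin's multiplicity-freeness of $\Pi_\psi$ (\cite{Moe11a}), exactly as you do. Your added checks are also right: equal supports, which record the admissible order, give the same $\psi$ and the required $(P')$ order, and the sum is indexed by weak equivalence classes.
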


Recall that $\supp(\EE)$ implicitly records the admissible order on $\EE\in\Eseg(G_n)$. Thus, the hypothesis $\supp(\EE_1)=\supp(\EE_2)$ in the above corollary also asserts that the admissible orders on $\EE_1$ and $\EE_2$ agree.

\subsection{Intersections}\label{sec intersections}

In this subsection, we recall the theory of intersections of local Arthur packets as developed in \cite{HLL22, HLL25}. We begin by recalling various operators which are used in the classification of these intersections (see Theorem \ref{thm intersections of local Arthur packets}), along with some other useful operators and their properties.

We note that the effect of an operator often only depends on a fixed $\rho.$ To simplify the definitions of the operators, we introduce the following notation.

\begin{defn}
Let $\EE= \cup_{\rho} \{([A_i,B_i]_{\rho}, l_i, \eta_i)\}_{i \in (I_{\rho},>)}\in\Eseg(G_n).$
We set
\[ \EE_{\rho}=\{([A_i,B_i]_{\rho}, l_i, \eta_i)\}_{i \in (I_{\rho},>)},\  \EE^{\rho}=\cup_{\rho' \not\cong \rho}\{([A_i,B_i]_{\rho'}, l_i, \eta_i)\}_{i \in (I_{\rho'},>)}. \]
We let $\Vseg(G_n)$ denote the set of elements of the form $\FF=\cup_{\rho'} \EE_{\rho'}$
where the union is over a finite set of irreducible self-dual supercuspidal representations $\rho'$ of $\GL_d(F),$ $d\geq 1,$ and $\EE\in\Eseg(G_n).$ Essentially, $\mathcal{F}$ is an extended multi-segment (for some group) except that we not enforce the sign condition \eqref{eq sign condition}. We say that $\mathcal{F}$ is a \emph{virtual extended multi-segment}.

We also set $\Vseg_\rho(G_n)$ to be the set of elements of the form $\EE_\rho$ for some $\EE\in\Eseg(G_n)$ and $\Vseg^\mathbb{Z}(G_n)$ to be the set of $\FF=\cup_{\rho'} \EE_{\rho'}$ for some $\EE\in\Esegz(G_n)$. We set $\Vseg_\rho^\mathbb{Z}(G_n)=\Vseg_\rho(G_n)\cap\Vseg^\mathbb{Z}(G_n)$. The good parity requirement may cause $\Vseg_\rho^\mathbb{Z}(G_n)$ to be empty and when we write it, we implicitly assume that $\rho$ is taken so that it is nonempty.
\end{defn}

We also recall the definition of a symbol.
\begin{defn}\label{def symbol}
  A \emph{symbol} is a multi-set of extended segments 
\[
\EE = \cup_{\rho}\{ ([A_i,B_i]_{\rho}, l_i, \eta_i) \}_{i \in (I_\rho>)},
\] which satisfies the same conditions in Definition \ref{def multi-segment}(2) except we drop the condition $0 \leq l_i \leq \frac{b_i}{2}$, for each $i \in I_{\rho}$.
\end{defn}

Any change of admissible orders can be derived from a composition of the row exchange operators $R_k$ which we recall from \cite[Definition 3.15]{HLL22}.

\begin{defn}[Row exchange]\label{def row exchange} 
Suppose $\EE$ is a symbol where
$$\EE_{\rho}=\{([A_i,B_i]_{\rho},l_i,\eta_i)\}_{i \in (I_{\rho},>)}.$$
For $k<k+1 \in I_{\rho}$, let $\gg$ be the total order on $I_\rho$ defined by $k\gg k+1$ and if $(i,j)\neq (k,k+1)$, then $ i \gg j$ if and only if $
i >j .$ 

Suppose $\gg$ is not an admissible order on $I_{\rho}$, then we define $R_k(\EE)=\EE$. Otherwise, we define 
\[R_{k}(\EE_{\rho})=\{([A_i,B_i]_{\rho},l_i',\eta_i')\}_{i \in (I_{\rho},\gg)},\]
where $( l_i',\eta_i')=(l_i,\eta_i)$ for $i \neq k,k+1$, and $(l_k',\eta_k')$ and $(l_{k+1}', \eta_{k+1}')$ are given as follows: Denote $\epsilon=(-1)^{A_k-B_k}\eta_k\eta_{k+1}$.
\begin{enumerate}
    \item [Case 1.] $ [A_k,B_k]_{\rho} \supset [A_{k+1},B_{k+1}]_{\rho}$:
    
    In this case, we set $(l_{k+1}',\eta_{k+1}')=(l_{k+1}, (-1)^{A_k-B_k}\eta_{k+1})$, and
    \begin{enumerate}
    \item [(a)] If $\epsilon=1$ and $b_k- 2l_k < 2(b_{k+1}-2l_{k+1})$, then
    \[ (l_k', \eta_{k}')= (b_k-(l_k+ (b_{k+1}-2l_{k+1})), (-1)^{A_{k+1}-B_{k+1}} \eta_k).  \]
    \item [(b)] If $\epsilon=1$ and $b_k- 2l_k \geq  2(b_{k+1}-2l_{k+1})$, then
    \[ (l_{k}', \eta_{k}')= (l_k+ (b_{k+1}-2l_{k+1}), (-1)^{A_{k+1}-B_{k+1}+1} \eta_k).  \]
    \item [(c)] If $\epsilon=-1$, then
    \[ (l_{k}', \eta_{k}')= (l_k- (b_{k+1}-2l_{k+1}), (-1)^{A_{k+1}-B_{k+1}+1} \eta_k).  \]
\end{enumerate}
    \item [Case 2.] $ [A_k,B_k]_{\rho} \subset [A_{k+1},B_{k+1}]_{\rho}$:
    
    In this case, we set $(l_{k}',\eta_{k}')=(l_{k}, (-1)^{A_{k+1}-B_{k+1}}\eta_{k})$, and
    \begin{enumerate}
   \item [(a)] If $\epsilon=1$ and $b_{k+1}- 2l_{k+1} < 2(b_{k}-2l_{k})$, then
    \[ (l_{k+1}', \eta_{k+1}')= (b_{k+1}-(l_{k+1}+ (b_{k}-2l_{k})), (-1)^{A_{k}-B_{k}} \eta_{k+1}).  \]
    \item [(b)] If $\epsilon=1$ and $b_{k+1}- 2l_{k+1} \geq  2(b_{k}-2l_{k})$,
    then
    \[ (l_{k+1}', \eta_{k+1}')= (l_{k+1}+ (b_{k}-2l_{k}), (-1)^{A_{k}-B_{k}+1} \eta_{k+1}).  \]
    \item [(c)] If $\epsilon=-1$, then
    \[ (l_{k+1}', \eta_{k+1}')= (l_{k+1}- (b_{k}-2l_{k}), (-1)^{A_{k}-B_{k}+1} \eta_{k+1}).  \]
\end{enumerate}
\end{enumerate}
Finally, we define $R_{k}(\EE)= \EE^{\rho} \cup R_{k}(\EE_{\rho})$.
\end{defn}

We remark that there is another definition of row exchange given in \cite[Section 4.2]{Ato20b}; however, these definitions agree when $\pi(\EE)\neq 0.$ 
The next operator we recall is known as union-intersection.

\begin{defn}[union-intersection]\label{ui def}
 Let $\EE\in\Eseg(G)$. For $k< k+1 \in I_{\rho}$, we define an operator $ui_k$, called union-intersection, on $\EE$ as follows. Write 
 \[ \EE_{\rho}= \{([A_i,B_i]_\rho, l_i,\eta_i)\}_{i \in (I_{\rho},>)}.\]
  Denote $\epsilon=(-1)^{A_k-B_k}\eta_k \eta_{k+1}.$ If $A_{k+1}>A_k$, $B_{k+1}>B_k$ and any of the following cases holds:
\begin{enumerate}
    \item [{Case 1}.] $ \epsilon=1$ and $A_{k+1}-l_{k+1}=A_k-l_k,$
    \item [{Case 2}.] $ \epsilon=1$ and $B_{k+1}+l_{k+1}=B_k+l_k,$
    \item [{Case 3}.] $ \epsilon=-1$ and $B_{k+1}+l_{k+1}=A_k-l_k+1.$
\end{enumerate}
We define
\begin{align*}
     ui_{k}(\EE_{\rho})=\{ ([A_i',B_i']_{\rho},l_i',\eta_i')\}_{i \in (I_{\rho}, >)},
\end{align*} 
where $ ([A_i',B_i']_{\rho},l_i',\eta_i')=([A_i,B_i]_{\rho},l_i,\eta_i)$ for $i \neq k,k+1$, $[A_k',B_k']_{\rho}=[A_{k+1},B_k]_{\rho}$, $[A_{k+1}',B_{k+1}']_{\rho}=[A_k,B_{k+1}]_{\rho}$, and $( l_k', \eta_k', l_{k+1}',\eta_{k+1}' )$ are given case by case as follows:
\begin{enumerate}
    \item[$(1)$] in Case 1, $( l_k', \eta_k', l_{k+1}',\eta_{k+1}' )= (l_k,\eta_k, l_{k+1}-(A_{k+1}-A_k), (-1)^{A_{k+1}-A_k}\eta_{k+1})$;
    \item [$(2)$] in Case 2, if $b_k-2l_k \geq A_{k+1}-A_k$, then
    \[( l_k', \eta_k', l_{k+1}',\eta_{k+1}' )= (l_k+(A_{k+1}-A_k),\eta_k, l_{k+1}, (-1)^{A_{k+1}-A_k}\eta_{k+1}),\]
    if $b_k-2l_k < A_{k+1}-A_k$, then
    \[( l_k', \eta_k', l_{k+1}',\eta_{k+1}' )= (b_k-l_k,-\eta_k, l_{k+1}, (-1)^{A_{k+1}-A_k}\eta_{k+1});\]
    \item [$(3)$] in Case 3, if $l_{k+1} \leq  l_k$, then
    \[( l_k', \eta_k', l_{k+1}',\eta_{k+1}' )= (l_k,\eta_k, l_{k+1}, (-1)^{A_{k+1}-A_k}\eta_{k+1}),\]
    if $l_{k+1}> l_{k}$, then
    \[( l_k', \eta_k', l_{k+1}',\eta_{k+1}' )= (l_k,\eta_k, l_{k}, (-1)^{A_{k+1}-A_k+1}\eta_{k+1});\]
    \item [$(3')$] in Case 3, if $l_k=l_{k+1}=0$, then we delete $ ([A_{k+1}',B_{k+1}']_{\rho},l_{k+1}',\eta_{k+1}')$ from $ui_k(\EE_{\rho})$.
\end{enumerate}
Otherwise, we define $ui_k(\EE_{\rho})=\EE_{\rho}$. In any case, we define $ui_k(\EE)= \EE^{\rho} \cup ui_k(\EE_{\rho})$.

We say $ui_k$ is applicable on $\EE$ or $\EE_{\rho}$ if $ui_k(\EE)\neq \EE$. We say this $ui_k$ is of type 1 (resp. 2, 3, 3') if $\EE_{\rho}$ is in Case 1 (resp. 2, 3, 3'). 
\end{defn}

We also consider the composition of the row exchange and union-intersection operators as follows.

\begin{defn} \label{def ui}
Suppose $\EE\in\Eseg(G)$ and write
$\EE_{\rho}=\{ ([A_i,B_i]_{\rho},l_i,\eta_i)\}_{i\in (I_{\rho,>})}.$ 
Given $i,j \in I_{\rho}$, we define $ui_{i,j}(\EE_{\rho})=\EE_{\rho}$ unless
\begin{enumerate}
    \item [1.] we have $ A_i< A_j$, $B_i <B_j$ and $(j,i,>')$ is an adjacent pair for some admissible order $>'$ on $I_{\rho}$ and 
    \item [2.] $ui_i$ is applicable on $\EE_{\rho,>'}$
\end{enumerate}
In this case, we define $ui_{i,j}(\EE_{\rho}):=(ui_{i}(\EE_{\rho,>'}))_{>}$, so that the admissible order of $ui_{i,j}(\EE_{\rho})$ and $\EE_{\rho}$ are the same. (If the $ui_i$ is of type 3', then we delete the $j$-th row.) Finally, we define $ui_{i,j}(\EE)= \EE^{\rho} \cup ui_{i,j}(\EE_{\rho})$.

We say $ui_{i,j}$ is applicable on $\EE$ if $ui_{i,j}(\EE) \neq \EE$. Furthermore, we say that $ui_{i,j}$ is of type 1, 2, 3, or 3' if the operation $ui_i$ is of type 1, 2, 3, or 3', respectively, in Definition \ref{ui def}.
\end{defn}

The next operator is called the dual operator.
  
\begin{defn}[dual]\label{def dual}
Let $\EE= \cup_\rho \{([A_i,B_i]_{\rho},l_i,\eta_i)\}_{i\in (I_\rho, >)}$ be an extended multi-segment such that the admissible order $>$ on $I_{\rho}$ satisfies $(P')$ for all $\rho$. We define 
$$dual(\EE)=\cup_{\rho}\{([A_i,-B_i]_{\rho},l_i',\eta_i')\}_{i\in (I_\rho, >')}$$ as follows:
\begin{enumerate}
    \item The order $>'$ is defined by $i>'j$ if and only if $j>i.$ 
    \item We set \begin{align*}
l_i'=\begin{cases}
l_i+B_i  & \mathrm{if} \, B_i\in\mathbb{Z},\\
 l_i+B_i+\frac{1}{2}(-1)^{\alpha_{i}}\eta_i  & \mathrm{if} \, B_i\not\in\mathbb{Z},
\end{cases}
\end{align*}
and
\begin{align*}
\eta_i'=\begin{cases}
(-1)^{\alpha_i+\beta_i}\eta_i  & \mathrm{if} \, B_i\in\mathbb{Z},\\
 (-1)^{\alpha_i+\beta_i+1}\eta_i  & \mathrm{if} \, B_i\not\in\mathbb{Z},
\end{cases}
\end{align*}
where $\alpha_{i}=\sum_{j\in I_\rho, j<i}a_j,$ and $\beta_{i}=\sum_{j\in I_\rho, j>i}b_j,$ $a_j=A_j+B_j+1$, $b_j=A_j-B_j+1$.
\item When $B_i\not\in\mathbb{Z}$ and $l_i=\frac{b_i}{2}$, we set $\eta_i=(-1)^{\alpha_i+1}.$
\end{enumerate}
If $\FF= \EE_{\rho}$, we define $dual(\FF):= (dual(\EE))_{\rho}$.

As a shorthand, for each $i\in I_{\rho},$ let $r_i=([A_i,B_i]_{\rho},l_i,\eta_i)$. Then we let $\widehat{r}_i$ denote the effect of the dual operator on $\EE$ on this row, i.e., 
\[
\widehat{r}_i=([A_i,-B_i]_{\rho},l_i',\eta_i').\]
\end{defn} 

We note that if $\pi(\EE)\neq 0,$ then $\pi(dual(\EE))$ is the Aubert-Zelevinsky dual of $\pi(\EE)$ (\cite[Theorem 6.2]{Ato20b} if $G_n=\Sp_{2n}(F)$ and \cite[Proposition 6.11]{HLL25} if $G_n=\OO_{2n}^\pm(F)$). For our purposes, it is sufficient to use the following direct implication.

\begin{lemma}\label{lemma dual}
    Let $\EE\in\Eseg(G_n)$ be such that $\pi(\EE)\neq 0.$ Then $\pi(dual(\EE))\neq 0.$ Moreover, if $\EE'\in\Eseg(G_n)$ is such that $\pi(\EE')=\pi(\EE)$, then $\pi(dual(\EE'))=\pi(dual(\EE)).$
\end{lemma}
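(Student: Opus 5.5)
The plan is to reduce both assertions to the fact, recalled just before the statement, that $\pi(dual(\EE))$ is the Aubert--Zelevinsky dual of $\pi(\EE)$ whenever $\pi(\EE)\neq 0$. This identification is \cite[Theorem 6.2]{Ato20b} for $G_n=\Sp_{2n}(F)$ (and likewise for $\SO_{2n+1}(F)$), and \cite[Proposition 6.11]{HLL25} for the even orthogonal groups $\OO_{2n}^\pm(F)$. Write $\widehat{\pi}$ for the Aubert--Zelevinsky dual of $\pi\in\Pi(G_n)$. The key point is that $\pi\mapsto\widehat{\pi}$ is a well-defined involution on $\Pi(G_n)$: it sends irreducible representations to irreducible representations, and in particular never sends them to $0$.

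For the first claim, let $\pi(\EE)\neq 0$. The cited identification gives $\pi(dual(\EE))=\widehat{\pi(\EE)}$. Since the dual of an irreducible representation is irreducible, hence nonzero, we conclude $\pi(dual(\EE))\neq0$.

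For the second claim, suppose $\pi(\EE')=\pi(\EE)$. Then $\pi(\EE')\neq 0$ as well, so the same theorem applies to $\EE'$. Hence
\[
\pi(dual(\EE'))=\widehat{\pi(\EE')}=\widehat{\pi(\EE)}=\pi(dual(\EE)).
\]

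The only genuine issue is that $dual$ is defined only when the admissible order satisfies $(P')$ for every $\rho$. I would handle this the way Definition \ref{def dual} implicitly does. If the given orders on $\EE$ or $\EE'$ are not $(P')$, first apply a sequence of row exchanges $R_k$ to pass to orders satisfying $(P')$. Row exchange preserves $\pi(\cdot)$ when it is nonzero, which is part of the theory in \cite{Ato20b, HLL22}; so we may assume both orders satisfy $(P')$ without changing the hypotheses. The conclusion is then read with respect to these orders. This is also where the main (mild) obstacle lies: checking that the cited results cover every group $G_n$ considered here. For quasi-split $\OO_{2n}$ one relies on \cite{HLL25} under the local Langlands correspondence of \cite{AG17b}.
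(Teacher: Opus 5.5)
Your proposal is correct and matches the paper. The paper gives no separate proof: it presents the lemma as a direct consequence of the identification of $\pi(dual(\EE))$ with the Aubert--Zelevinsky dual of $\pi(\EE)$ when $\pi(\EE)\neq 0$ (\cite[Theorem 6.2]{Ato20b}, \cite[Proposition 6.11]{HLL25}), together with the fact that this involution preserves irreducibility and depends only on $\pi$. Your extra remark about first passing to $(P')$ orders by row exchanges is harmless bookkeeping, since $dual$ is only defined for such orders in the first place.
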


From \cite{HLL22, HLL25}, we understand the inverse of union-intersections not of type 3'.

\begin{lemma}\label{lemma ui inv = dud}
   If $ui$ is not of type 3', then its inverse is of the form $dual\circ ui\circ dual$ of the same type. 
\end{lemma}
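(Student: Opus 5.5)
We plan to verify directly that $dual\circ ui\circ dual$ undoes $ui$. Let $\EE$ be such that $ui=ui_{i,j}$ (equivalently $ui_k$ after row exchanges) is applicable and of type $1$, $2$ or $3$, and set $\EE'=ui(\EE)$. Since $ui$ only modifies the two rows in positions $k,k+1$ and preserves the admissible order, Definition \ref{def ui} lets us reduce to the adjacent situation $ui_k$. Both $\EE$ and $\EE'$ then have the same number of rows, and the pair of supports $[A_k,B_k],[A_{k+1},B_{k+1}]$ with $A_k<A_{k+1}$, $B_k<B_{k+1}$ becomes $[A_{k+1},B_k],[A_k,B_{k+1}]$.

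\textbf{Step 1: the supports.} Apply $dual$ to $\EE'$. The supports become $[A_{k+1},-B_k]$ and $[A_k,-B_{k+1}]$ and the order is reversed. Thus, in $dual(\EE')$, the row coming from old row $k+1$ is now the lower row. It has $A$-value $A_k$ and $B$-value $-B_{k+1}$, and it sits adjacent below the row with $A=A_{k+1}$, $B=-B_k$. Since $A_k<A_{k+1}$ and $-B_{k+1}<-B_k$, the hypothesis $A_{k+1}>A_k$, $B_{k+1}>B_k$ of Definition \ref{ui def} is again satisfied. The union-intersection of these two rows gives supports $[A_{k+1},-B_{k+1}]$ and $[A_k,-B_k]$, and dualizing back recovers $[A_{k+1},B_{k+1}]$ and $[A_k,B_k]$ in the original order. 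So, at the level of supports, $dual\circ ui\circ dual$ is inverse to $ui$, and the remaining content is the $l,\eta$ data.

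\textbf{Step 2: the $(l,\eta)$ data.} We compute, using Definition \ref{def dual}, the $(l',\eta')$ of the two rows of $dual(\EE')$ in terms of $(l_k,\eta_k,l_{k+1},\eta_{k+1})$ and the case formulas of Definition \ref{ui def}.

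First observe what is unaffected. The quantities $\alpha_i,\beta_i$ for rows other than $k,k+1$ are unchanged, because $a_k+a_{k+1}$ and $b_k+b_{k+1}$ are preserved by $ui$. Consequently only the two rows change.

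Next we check which case condition holds for the dual pair. For instance, in Case 1 the condition $A_{k+1}-l_{k+1}=A_k-l_k$ should translate, after $l\mapsto l+B$ in the integral case, into a Case 2 type equality $B'+l'$ for the dual pair. The signs $\epsilon$ are matched by tracking the parity factors $(-1)^{\alpha+\beta}$. We then verify that applying the corresponding formula and dualizing again returns $(l_k,\eta_k,l_{k+1},\eta_{k+1})$. The sub-branches must be handled separately: the two branches of Case 2 ($b_k-2l_k\gtrless A_{k+1}-A_k$) and the two branches of Case 3 ($l_{k+1}\le l_k$ versus $l_{k+1}>l_k$). For half-integral $B$, the extra $\pm\frac12$ terms in the formula for $l_i'$ must be carried along as well.

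The statement's claim ``of the same type'' would then be read off from which case condition holds in $dual(\EE')$.

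\textbf{Main obstacle.} The main obstacle is this case-by-case bookkeeping of signs. It involves the weak-equivalence ambiguity of $\eta$ when $l=b/2$, and the branch in Case 2 where $l_k'=b_k-l_k$ with $\eta_k'=-\eta_k$. We would first try to reduce to the integral case, where $dual$ acts by $l\mapsto l+B$ with a global sign, and handle half-integers by the same computation. If the direct computation becomes unwieldy, the fallback is to cite the corresponding statements in \cite{HLL22, HLL25}, where this inverse relation is established.
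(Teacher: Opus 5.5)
\textbf{Main gap: the ``same type'' conclusion cannot be read off.} Your own Step 2 observation already contradicts it. Put $\Delta=A_{k+1}-A_k$. After a type 1 $ui_k$ the two rows are $u=([A_{k+1},B_k],l_k,\eta_k)$ and $n=([A_k,B_{k+1}],l_{k+1}-\Delta,(-1)^{\Delta}\eta_{k+1})$. In the integral case, $dual(\EE')$ has $\widehat n$ immediately before $\widehat u$, with
\[
B(\widehat n)+l(\widehat n)=l_{k+1}-\Delta,\qquad B(\widehat u)+l(\widehat u)=l_k .
\]
From the $\alpha,\beta$ factors, the new sign is $\epsilon'=(-1)^{A_{k+1}+B_k}\eta(u)\eta(n)=\epsilon=1$. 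So the type 1 hypothesis is exactly the Case 2 condition for the dual pair. The Case 1 condition for the dual pair reads $l_k+B_k=l_{k+1}+B_{k+1}$, which generally fails. For example, the pair $([3,0],0,1),([4,1],1,-1)$ is only in Case 1, and its dual pair is only in Case 2. Similarly, in the second branch of Case 2 ($b_k-2l_k<\Delta$) you get $l(\widehat u)=A_k+1-l_k$, $\epsilon'=-1$ and $B(\widehat u)+l(\widehat u)=A(\widehat n)-l(\widehat n)+1$. So that dual pair is in Case 3. Carried out honestly, your case analysis produces a permutation of the cases, not ``the same type''. What it does prove is that the inverse is $dual\circ ui\circ dual$ with the inner $ui$ again of type 1, 2 or 3, and never $3'$ (a type $3'$ $ui$ deletes a row, while types 1--3 preserve the number of rows). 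That is all the paper ever uses, in Lemma \ref{lem-local}, Lemma \ref{Type X exhaustion} and \S\ref{sec exhaustion of Y_M}. You should state the conclusion in that form. If ``type'' means the numbered case of Definition \ref{ui def}, the clause as written is false.

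\textbf{Secondary gap: the reduction to an adjacent $ui_k$ is not automatic.} The operator $ui_{i,j}$ is computed in an auxiliary admissible order $>'$. That order violates $(P')$ as soon as some row $m$ has $B_i<B_m<B_j$, whereas $dual$ is only defined in $(P')$ order. To compare $dual\circ ui\circ dual(\EE')$ with your adjacent computation, you need $dual$ to intertwine the row exchanges on both sides. In this paper such commutations (Lemma \ref{lemma Alex}) are not formal identities. They follow from $\pi(\EE)\neq0$ together with M{\oe}glin's multiplicity one (Corollary \ref{cor same support}), and your proposal never invokes $\pi(\EE)\neq0$.

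Once you allow that input, you can also skip most of the $(l,\eta)$ and half-integral bookkeeping. It suffices to check the case condition and $\epsilon'$ for the dual pair, so that $dual\circ ui\circ dual$ is applicable on $\EE'$. Then $dual\circ ui\circ dual(\EE')$ and $\EE$ have the same support and order (your Step 1). They also have the same $\pi$, by Theorem \ref{thm intersections of local Arthur packets}(1) and Lemma \ref{lemma dual}. Hence they are equal by Corollary \ref{cor same support}. The paper itself gives no argument here and simply cites \cite{HLL22, HLL25}, which is your fallback.
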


To complete the theory of intersections of local Arthur packets, another operator, known as the partial dual operator and denoted $dual_k$ (see \cite[Definition 6.5]{HLL22}), is also needed. However, it is not needed for our main result (Theorem \ref{thm-count-temp}) as it does not apply when $\EE\in\Vsegz(G_n).$

We distinguish certain operators that have a key role in the intersection of local Arthur packets (see Theorem \ref{thm intersections of local Arthur packets} below). We set 
\[
\Psi(\pi)=\{\psi\in\Psi(G_n) \ | \ \pi\in\Pi_\psi\}.
\]

\begin{defn}\label{defn raising operators}
    The operators $dual\circ ui \circ dual,$ $ui^{-1}$, and $dual_k^-$ are called \emph{raising operators}. Given local Arthur parameters $\psi_1, \psi_2\in\Psi(\pi)$, we write $\psi_1\geq_O\psi_2$ if there exists a sequence of raising operators $(T_i)_{i=1}^l$ such that \[\EE_1=(T_l\circ T_{l-1}\circ\cdots\circ T_1)(\EE_2),\] where $\EE_j\in\Eseg(G_n)$ is such that $\psi_{\EE_j}=\psi_j$ and $\pi(\EE_j)=\pi$ for $j=1,2.$ 
\end{defn}

Raising operators fully determine intersections of local Arthur packets.

\begin{thm}\label{thm intersections of local Arthur packets}
    Let $\EE\in\Eseg(G_n)$ be such that $\pi(\EE)\neq 0.$ Then the following hold.
    \begin{enumerate}
        \item If $T$ is a raising operator or its inverse, then $\pi(\EE)=\pi(T(\EE)).$
        \item If $\EE'$ is an extended multi-segment for which $\pi(\EE)=\pi(\EE'),$ then $\EE$ and $\EE'$ are related by a finite composition of raising operators, their inverses, and row exchanges.
        \item $\geq_O$ defines a partial order on $\Psi(\pi).$ Moreover. there exists unique maximal and minimal elements of $\Psi(\pi)$ with respect to $\geq_O.$ We denote these elements by $\psi^{max}(\pi)$ and $\psi^{min}(\pi)$, respectively.
    \end{enumerate}
\end{thm}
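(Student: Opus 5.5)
The plan is mostly to assemble this statement from \cite{HLL22, HLL25} (and \cite{Ato20b} for the symplectic and odd orthogonal cases), checking that each ingredient holds for the groups $G_n$ considered here. The three parts are taken in order.

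For (1), the starting point is that union-intersection $ui$ preserves $\pi(\EE)$ whenever it is applicable and $\pi(\EE)\neq 0$. This is Atobe's criterion for $\pi(\EE)=\pi(\EE')$ in the split cases, and \cite{HLL25} for the quasi-split even orthogonal case. Row exchanges preserve $\pi(\EE)$ by their construction. For $dual\circ ui\circ dual$, I would apply Lemma \ref{lemma dual} twice: $dual$ sends nonvanishing $\EE$ to nonvanishing $\EE$, and it respects equality of the attached representations, so
\[
\pi(dual(ui(dual(\EE))))=\pi(dual(dual(\EE)))=\pi(\EE).
\]
Here $dual$ is an involution up to weak equivalence and row exchange. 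The partial dual $dual_k^-$ is handled by the corresponding statement in \cite[\S6]{HLL22}. Inverses of these operators then preserve $\pi(\EE)$ trivially, and Lemma \ref{lemma ui inv = dud} identifies $ui^{-1}$ with an operator of the same shape.

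For (2), I would use the strategy of \cite{HLL22}. First, attach to $\pi$ a canonical extended multi-segment $\EE^{\max}$ with $\psi_{\EE^{\max}}=\psi^{max}(\pi)$, computed by an explicit algorithm that applies raising operators until none is applicable. Then show that every $\EE'$ with $\pi(\EE')=\pi(\EE)$ reaches $\EE^{\max}$ by a finite chain of raising operators, possibly after row exchanges. Composing the chain from $\EE$ with the inverse of the chain from $\EE'$ then relates $\EE$ and $\EE'$. The essential input is a ``non-maximal implies some raising operator applies'' lemma. Its proof goes by induction on the number of rows, using Corollary \ref{cor same support}: two extended multi-segments with the same support and the same nonzero $\pi$ coincide, so it suffices to match supports.

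For (3), reflexivity and transitivity follow because the relation is defined by composable sequences of operators. For antisymmetry, I would exhibit a numerical invariant of $\psi$ that every raising operator strictly increases. A candidate is the sequence $(b_i)$ ordered decreasingly and compared lexicographically, or equivalently the associated partition under the dominance order, since each raising operator replaces segments by longer or merged ones in the $b$-direction. Then $\psi_1\ge_O\psi_2\ge_O\psi_1$ forces equality. Existence and uniqueness of the maximum follow from (2) together with the lemma above: every $\psi\in\Psi(\pi)$ is $\le_O\psi^{\max}(\pi)$. Applying $dual$, which reverses $\ge_O$ by Lemma \ref{lemma ui inv = dud}, gives $\psi^{min}(\pi)=\widehat{\psi^{max}(\hat\pi)}$, the dual of the maximal parameter of the Aubert--Zelevinsky dual $\hat\pi$ of $\pi$.

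The main obstacle is the completeness step in (2): showing that whenever $\psi_{\EE}\neq\psi^{max}(\pi)$, some raising operator applies. This is the deep combinatorial content of \cite{HLL22}. I would cite it rather than reprove it, and only verify that its proof uses nothing beyond Theorem \ref{thm Arthur packets extended multi-segment parameterization} and the local Langlands correspondence, so that it transfers to $\OO_{2n}^{\pm}$ as in \cite{HLL25}.
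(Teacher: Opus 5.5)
Your route is the paper's route. The paper gives no argument of its own for this theorem: it cites \cite[Theorems 1.4, 1.7]{HLL22} for quasi-split symplectic and odd orthogonal groups and \cite[Theorems 6.13, 6.15, 7.3]{HLL25} for quasi-split even orthogonal groups. So deferring the completeness step in (2) to those sources is exactly what is done there.

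One claim in your antisymmetry sketch for (3) is backwards, although the argument survives once the direction is reversed. Raising operators shorten or split rows in the $b$-direction; the lengthening and merging happen in the $a$-direction. Concretely:
\begin{enumerate}
\item $ui^{-1}$ of type 3' splits $[A,B]$ into $[A',B]$ and $[A,A'+1]$.
\item A $ui^{-1}$ not of type 3' replaces the $b$-values $A_{k+1}-B_k+1$, $A_k-B_{k+1}+1$ by the more balanced pair $A_k-B_k+1$, $A_{k+1}-B_{k+1}+1$.
\item Merging the hats $([A_1,-B_1],B_1,\eta_1)$ and $([B_1-1,-B_2],B_2,\eta_2)$ by $dual\circ ui\circ dual$ gives $([A_1,-B_2],B_2,\eta_1)$, whose $b=A_1+B_2+1$ is smaller than $A_1+B_1+1$.
\end{enumerate}
This matches the paper's proof of Theorem \ref{supercuspidal}, where the raising operators $D$ and $S$ carry every type $X_k$ multi-segment to the tempered one (all $b_i=1$).

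So your invariant, the decreasingly sorted $b$-sequence compared lexicographically, strictly decreases along raising operators rather than increasing. Antisymmetry still follows, with the inequality reversed.

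A smaller point: the fact that conjugation by $dual$ reverses $\ge_O$ comes from $dual$ being an involution. It turns $ui^{-1}$ and $dual\circ ui\circ dual$ into inverses of raising operators. It does not come from Lemma \ref{lemma ui inv = dud}, which excludes type 3'.
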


The above theorem was proven for quasi-split symplectic and odd special orthogonal groups in \cite[Theorems 1.4, 1.7]{HLL22} and quasi-split even orthogonal groups in \cite[Theorems 6.13, 6.15, 7.3]{HLL25}. With the above theorem in mind, for $\EE,\EE'\in\Eseg(G_n),$ we write $\EE\sim\EE'$, and say that they are (strongly) equivalent, if $\EE$ and $\EE'$ are related by a finite composition of raising operators and their inverses.

 Let $\EE\in\Eseg(G_n)$ and write $\EE=\cup_{\rho} \{ ([A_i,B_i]_{\rho},l_i,\eta_i)\}_{i \in (I_{\rho},>)}.$ We say that $\EE$ is non-negative if $B_i\geq 0$ for any $i\in I_{\rho}$ and for any $\rho.$
We now state the non-vanishing theorem which was proven for $G_n=\Sp_{2n}(F)$ in \cite[Theorems 3.6, 4.4]{Ato20b} and for $G_n=\OO_{2n}^\pm(F)$ in \cite[Theorem 6.17]{HLL25}.

\begin{thm}\label{thm non-vanishing}
Let $\EE\in\Eseg(G_n)$ such that for any $\rho$, if there exists $i \in I_{\rho}$ with $B_{i}<0$, then the admissible order on $I_{\rho}$ satisfies ($P'$).
\begin{enumerate}
    \item [(i)] Write $\EE=\cup_{\rho} \{ ([A_i,B_i]_{\rho},l_i,\eta_i)\}_{i \in (I_{\rho},>)}.$
    We have $\pi(\EE)\neq 0$
 if and only if $ \pi(sh^d (\EE))\neq 0$ for any $d \gg 0$ such that $sh^{d}(\EE):= \cup_{\rho}\{ ([A_i+d,B_i+d]_{\rho}, l_i, \eta_i) \}_{i \in (I_\rho,>)}$ is non-negative, and the following condition holds for all $\rho$ and $i\in I_{\rho}$
    \[ (\ast) \ \ \ \ \ \ \ \ \ B_i+l_i \geq \begin{cases}  0 & \text{ if }B_i \in \Z, \\
    \frac{1}{2} & \text{ if } B_i \not\in \Z \text{ and }\eta_i= (-1)^{\alpha_i+1},\\
        -\frac{1}{2} & \text{ if } B_i \not\in \Z \text{ and }\eta_i= (-1)^{\alpha_i},
        \end{cases} \]
        where 
        \[ \alpha_i:= \sum_{j < i }A_j+B_j+1. \]
    \item [(ii)]If $\EE$ is non-negative, then $\pi(\EE)\neq 0$ if and only if any adjacent pair $(i,j,\gg)$ satisfies the combinatorial conditions of \cite[Proposition 4.1]{Ato20b}.
\end{enumerate}
\end{thm}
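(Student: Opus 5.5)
Since this theorem is quoted from \cite[Theorems 3.6, 4.4]{Ato20b} and \cite[Theorem 6.17]{HLL25}, the plan below reconstructs Atobe's argument. It uses the realization of $\pi(\EE)$ through iterated derivatives, built into \cite[Algorithm 5.18]{HLL25}.

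\textbf{Part (ii).} I would prove this first.
\begin{enumerate}
\item For non-negative $\EE$, the representation $\pi(\EE)$ is built by peeling off rows. One takes the row of largest $A$ compatible with the admissible order, shrinks it by taking highest derivatives, and recovers $\pi(\EE)$ by socles of parabolic inductions from a smaller extended multi-segment.
\item Nonvanishing is therefore a local question about how each row interacts with its neighbors. By Theorem \ref{thm intersections of local Arthur packets}(1) and the compatibility of $R_k$ with $\pi$ when $\pi(\EE)\neq 0$, I may reorder rows freely by row exchanges.
\item I would then reduce to checking every adjacent pair $(i,j,\gg)$ under every admissible order. For a pair of rows, the two-row computation is explicit. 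One sees exactly when a needed derivative is zero, and this gives the inequalities of \cite[Proposition 4.1]{Ato20b} in terms of $A,B,l,\eta$.
\item Sufficiency goes by induction on $\sum_i(A_i-B_i)$. Once all adjacent pairs satisfy the conditions, removing one column keeps them satisfied, so the inductive step goes through.
\end{enumerate}

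\textbf{Part (i).} I would relate $\pi(\EE)$ to $\pi(sh^d(\EE))$.
\begin{enumerate}
\item Order $I_\rho$ by ($P'$). Then $\pi(\EE)$ is obtained from $\pi(sh^d(\EE))$ by a sequence of $\rho|\cdot|^{x}$-derivatives, one for each column removed when shifting back by $d$.
\item The shift back proceeds one unit at a time, lowering the rows in the order given by ($P'$). At each step I compare the leftmost entries of each row, namely the number $B_i+l_i$ of $\lhd$ symbols, with the exponent being removed.
\item The derivative is nonzero exactly when no row is forced to start before position $0$, or before $\pm\tfrac12$ in the half-integral case. This is precisely $(\ast)$.
\item In the half-integral case, the sign $\eta_i$ relative to $(-1)^{\alpha_i}$ records whether the first symbol at $\tfrac12$ is $\oplus$ or $\ominus$. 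This explains the asymmetric bound $\pm\tfrac12$.
\item Necessity: if $(\ast)$ fails, the sign condition forces a cancellation in the Jacquet module, so $\pi(\EE)=0$.
\end{enumerate}

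\textbf{Main obstacle.} I expect the hardest step to be showing, in part (i), that these derivatives are nonzero and land on the correct representation for the even orthogonal case. There the Langlands correspondence is only up to outer conjugation, so the derivative computations must be done through \cite[Algorithm 5.18]{HLL25} rather than Atobe's derivative theory. I would handle this by checking that each step of that algorithm commutes with the shift, using Lemma \ref{lemma dual} where needed to move between negative and non-negative supports.
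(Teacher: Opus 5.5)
The paper does not prove this theorem. It quotes it from \cite[Theorems 3.6, 4.4]{Ato20b} and \cite[Theorem 6.17]{HLL25}. Measured against what those proofs have to accomplish, your outline has a genuine gap: in both parts the equivalence is asserted exactly where the work lies.

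In part (ii), steps (1)--(2) describe $\pi(\EE)$ as a socle of a parabolic induction from $\pi(\EE^-)$, where $\EE^-$ is a shrunken multi-segment. But a socle of an induction from a nonzero representation is never zero. So an identity of the form $\pi(\EE)=\mathrm{soc}(\cdots\rtimes\pi(\EE^-))$ can only be valid once $\pi(\EE)\neq 0$ is already known. This description therefore cannot detect vanishing, and ``nonvanishing is therefore a local question'' does not follow from it.

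The same flaw sinks step (4). You do not check that Atobe's pairwise conditions pass from $\EE$ to $\EE^-$, and it is not even clear what $\EE^-$ is when the shrunken row ends in a $\rhd$. Even granting that, knowing $\pi(\EE^-)\neq0$ tells you nothing about $\pi(\EE)$.

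Vanishing is visible only through the actual definition (\cite[\S3.2]{Ato20b}; cf.\ \cite[Algorithm 5.18]{HLL25}). There $\pi(\EE)$ is an iterated $\rho|\cdot|^x$-derivative of the representation attached to a multi-segment whose rows have been shifted far apart, where M{\oe}glin's explicit construction applies. Sufficiency therefore amounts to showing that the derivatives moving rows back toward their actual positions stay nonzero under the pairwise conditions. This is where the two-row analysis behind \cite[Proposition 4.1]{Ato20b} and the row-exchange results of \cite[\S4.2]{Ato20b} enter. The natural induction is on the distance from the far-apart configuration, not on $\sum_i(A_i-B_i)$, which shifting leaves unchanged.

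In part (i), steps 3--5 restate $(\ast)$ rather than derive it. You give no reason why nonvanishing of the iterated derivatives of $\pi(sh^d(\EE))$ should split into a row-by-row inequality. Note that its threshold depends on $\alpha_i$, i.e.\ on all earlier rows. ``The sign condition forces a cancellation in the Jacquet module'' is a slogan, not an argument.

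One observation does explain the shape of $(\ast)$. By Definition \ref{def dual}, $(\ast)$ says exactly that $l_i'\geq 0$ for every $i$, i.e.\ that the formula for $dual(\EE)$ produces an honest extended multi-segment. This also accounts for the $\pm\frac12$ asymmetry. For the same reason, Lemma \ref{lemma dual}, which asserts $\pi(dual(\EE))\neq0$ whenever $\pi(\EE)\neq0$, already contains the necessity half of $(\ast)$. Invoking it in your last step, as you propose, would assume part of what is to be proved.
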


Theorem \ref{thm non-vanishing} determines provides a method to determine when $\pi(\EE)\neq 0$ purely combinatorially. We say that $\EE\in\Rep(G_n)$ if $\pi(\EE)\neq 0$. Similarly, we let $\VRep(G_n)$ denote the set of $\mathcal{F}=\cup_{\rho'}\EE_{\rho'}\in \Vseg(G_n)$ such that $\EE\in\Rep(G_n).$ Applying Theorem \ref{thm non-vanishing} to $\EE$, we have that $\mathcal{F}\in \VRep(G_n)$ if and only if $\mathcal{F}$ formally satisfies the combinatorial nonvanishing conditions in Theorem \ref{thm non-vanishing}. Furthermore, we set  $\VRep_\rho(G_n)=\{\EE_\rho \ | \ \EE\in \VRep(G_n)\}$, $\VRep^\mathbb{Z}(G_n)=\VRep(G_n)\cap\Vsegz(G_n)$, and $\VRep_\rho^\mathbb{Z}(G_n)=\VRep_\rho(G_n)\cap\Vsegz(G_n)$. Again, we have that $\mathcal{E}\in \VRep_\rho(G_n)$ if and only if $\mathcal{E}$ formally satisfies the combinatorial nonvanishing conditions in Theorem \ref{thm non-vanishing}. Also, by the good parity condition, $\VRep_\rho^\mathbb{Z}(G_n)$ may be empty. We adopt the convention that $\rho$ is chosen so that $\VRep_\rho^\mathbb{Z}(G_n)$ is nonempty.

\subsection{Tempered representations}

In this subsection, we recall some results related to tempered representations. Let $\Pi_{temp}(G)$ denote the subset of $\Pi(G)$ consisting of tempered representations. We begin by recalling the parameterization of $\Pi_{temp}(G)$ via the local Langlands correspondence due to Arthur.

A local Arthur parameter $\psi\in\Psi^+(G)$ is called tempered if $\psi\in\Psi_{gp}(G)$ and $\psi$ is trivial on the second $\SL_2(\BC),$ i.e., if $\psi=\bigoplus_{i=1}^r \rho_i\otimes S_{a_i}\otimes S_{b_i},$ then $b_i=1$ for any $i=1,\dots,r.$ We let $\Psi_{temp}(G)$ denote the subset of $\Psi^+(G)$ consisting of tempered local Arthur parameters. 

\begin{thm}[{\cite[Theorem 1.5.1]{Art13}}]\label{thm Arthur tempered}
    We have that
    \[\Pi_{temp}(G)=\bigcup_{\psi\in\Psi_{temp}(G)} \Pi_\psi.\]
    Moreover, if $\psi_1,\psi_2\in\Psi_{temp}(G)$ and $\psi_1\neq\psi_2$, then $\Pi_{\psi_1}\cap\Pi_{\psi_2}=\emptyset.$
\end{thm}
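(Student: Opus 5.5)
Since the statement is quoted from \cite[Theorem 1.5.1]{Art13}, the plan is to reduce it to the structure of Arthur's tempered $L$-packets rather than re-derive the endoscopic classification. The point to exploit is that a tempered $\psi$ is trivial on the second $\SL_2(\BC)$. Restricting $\psi$ to $W_F\times \SL_2(\BC)$ therefore gives a bounded $L$-parameter $\phi_\psi$, and $\psi$ is recovered from $\phi_\psi$. For such $\psi$, Arthur's packet $\Pi_\psi$ is by construction the tempered $L$-packet $\Pi_{\phi_\psi}$. It is characterized by the stable and twisted endoscopic character identities attached to the bounded parameter $\phi_\psi$, and the distributions involved are tempered. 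So the theorem splits into three claims: (a) every member of $\Pi_\psi$ is tempered; (b) every tempered $\pi$ lies in some $\Pi_\psi$; (c) the packets for distinct $\psi$ are disjoint.

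For (a), I would use that the packet distribution of $\Pi_{\phi_\psi}$ is the transfer of the twisted character of the tempered, unitarily induced representation of $\GL_N(F)$ attached to $\phi_\psi$. Its constituents then have tempered characters, and by the linear independence of irreducible characters each individual member is tempered. For (b), I would argue by induction on $n$ using Harish-Chandra's description of tempered representations. A tempered $\pi$ is a constituent of some $\tau\rtimes\sigma$ with $\tau$ a unitary discrete series of a $\GL$ and $\sigma$ a discrete series of a smaller classical group. One first places discrete series in packets of discrete (multiplicity-free, good parity) parameters, using Arthur's global comparison. One then takes $\phi=\phi_\tau\oplus\phi_\sigma\oplus\phi_\tau^\vee$ and uses compatibility of the packets with parabolic induction (Theorem \ref{thm red to gp} in the non-good-parity case, and the induction identity in general).

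One caveat concerns the definition used in the paper: $\Psi_{temp}(G)$ is declared to sit inside $\Psi_{gp}(G)$. Read literally, (b) then only covers tempered representations whose parameter has good parity. I would note this and treat the general tempered case through Theorem \ref{thm red to gp}, where $\tau$ is unitary tempered and the induction is irreducible.

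For (c), suppose $\pi\in\Pi_{\psi_1}\cap\Pi_{\psi_2}$. Consider the twisted endoscopic character identity for $G_n$ with trivial endoscopic datum. Pairing the stable packet distributions of $\psi_1$ and $\psi_2$ against twisted characters of $\GL_N(F)$, the twisted character of the tempered representation attached to $\phi_{\psi_1}$ would have to match that of $\phi_{\psi_2}$. By the local Langlands correspondence for $\GL_N$ (\cite{HT01, Hen00, Sch13}) this forces $\phi_{\psi_1}\cong\phi_{\psi_2}$, hence $\psi_1=\psi_2$. The main obstacle in this step is that a single member $\pi$ does not by itself determine the packet distribution. To pass from a common member to equality of the stable distributions I would use the finer endoscopic identities for all elliptic endoscopic data. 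Together these show that the packet is the full fibre of a map $\pi\mapsto\phi$ (Arthur's Langlands map), defined via the Jacquet-module and cuspidal-support data of $\pi$. I would try first to realize this map directly through the cuspidal support, which is intrinsic to $\pi$.
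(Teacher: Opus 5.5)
\noindent The paper does not prove this statement. It imports it as a black box from \cite[Theorem 1.5.1]{Art13}, where temperedness of the members, exhaustion and disjointness all come out of Arthur's global comparison.

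Your caveat about the definition is correct and worth making. Read literally, with $\Psi_{temp}(G)\subset\Psi_{gp}(G)$, the first equality is false. For example, take $\Sp_2(F)$ and a unitary character $\chi$ with $\chi^2\neq 1$: the irreducible tempered representation $\chi\rtimes 1$ has parameter $\chi\oplus 1\oplus\chi^{-1}$, which is not of good parity. The sentence after the theorem reduces to good parity via Theorem \ref{thm red to gp}, which shows that all bounded $\psi$ trivial on the second $\SL_2(\BC)$ are intended.

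Where your sketch goes beyond citing Arthur, it does not hold up.
\begin{itemize}
\item In (a), linear independence of characters only identifies the members of $\Pi_\phi$ as the constituents of the stable distribution. It does not make them tempered unless you already know that this distribution is a combination of tempered irreducible characters, and that is part of Arthur's conclusion. A local argument would need continuity of the transfer on Harish-Chandra Schwartz spaces plus a Casselman-type argument on exponents.
\item In (b), placing every discrete series in a discrete packet is the core of Arthur's theorem. So that step is citation, not proof.
\item The genuine gap is (c). You rightly note that a common member does not force the stable distributions to agree, but your proposed fix is circular. For a supercuspidal $\sigma$ of a classical group, the cuspidal support is $\sigma$ itself, so it attaches no parameter to $\sigma$ unless you already know which packet contains it.
\end{itemize}

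You could grant two further facts: compatibility of the infinitesimal character of $\phi$ with the cuspidal support of $\pi\in\Pi_\phi$ (a nontrivial theorem that itself rests on Arthur's packets), and the elementary fact that a bounded parameter is determined by its infinitesimal character. Even then, you only reduce disjointness to the supercuspidal members.

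Finishing requires an input that separates packets at the supercuspidal level. One option is to invert the full family of endoscopic identities, writing $\Theta_\pi$ as a combination of endoscopic transfers of the stable distributions attached to $\phi$. You would then have to prove that such transfers attached to distinct $\phi$ are linearly independent. Another option is M{\oe}glin's determination of $\mathrm{Jord}(\sigma)$ from the reducibility of $\rho|\cdot|^s\rtimes\sigma$. Your sketch supplies neither, so (c) remains unproved, and the correct ``proof'' here is the citation the paper gives.
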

In other words, local Arthur packets associated to tempered local Arthur packets partition  $\Pi_{temp}(G)$. 
Theorem \ref{thm red to gp} reduces the computation of tempered local Arthur packets to their computation for tempered Arthur parameters of good parity.
With Theorem \ref{thm Arthur packets extended multi-segment parameterization} in mind, we say that $\EE\in\Rep(G)$ is tempered if $\psi_\EE$ is tempered. We remark that $\EE\in\Eseg(G)$ is tempered if and only if
\[
\EE=\cup_\rho\{([A_i,A_i]_\rho,0,\eta_i)\}_{(i\in I_\rho, >)}\in\Eseg(G),
\]
where for any $i,j\in I_\rho$ with $A_i=A_j$, we have $\eta_i=\eta_j$.  In this setting, for $i\in I_\rho$ we write $\eta_\rho(A_i):=\eta_i$ as a shorthand. Note that it is possible for $\pi(\EE)$ to be tempered even if $\EE$ is not tempered. Our goal is that given $\pi=\pi(\EE)$ for some $\EE\in \Rep^\mathbb{Z}(G_n)$, determine all $\EE'\in \Rep^\mathbb{Z}(G_n)$ such that $\pi(\EE')=\pi(\EE).$

\begin{rmk}\label{rmk Arthur packet obstruction}
    The parameterization of local Arthur packets using extended multi-segments and corresponding theory of intersections is expected more generally. Fixing a quasi-split form $G_n^+$ of a symplectic, odd special orthogonal group, or even orthogonal group, we let $G_n^-$ denote its nontrivial pure inner form (if it exists). We then set $\epsilon_{G_n}=\pm1$ if $G_n=G_n^\pm.$ The theory is then expected to be largely analogous; however, we must also impose that when $G_n\in\{G_n^\pm\}$ is not quasi-split, the local Arthur parameters should be taken to be relevant for $G_n.$ 
    Similar results would also be expected for pure inner forms of quasi-split unitary groups, general spin groups (or general pin groups), and metaplectic groups.

    A crucial roadblock in extending the theory is that determining the existence of local Arthur packets for non-quasi-split groups currently remains an open problem.
\end{rmk}

\section{Statement of results}
\label{sec-results}

Given $\EE\in \VRep_\rho^\mathbb{Z}(G_n),$ we often write $\EE= \{([A_i, B_i], \ell_i, \eta_i)\}_{i \in I}$ where we implicitly identify $I=\{1,\dots,k\}$ with the order $1<2<\cdots<k.$ That is, we omit $\rho$ in the notation and identify the admissible order with the usual order.

For convenience we adopt the following definitions. First, we record the first and last columns of the symbols (see Example \ref{exmp symbol}) attached to extended multi-segments as follows. 

\begin{defn}
\label{def-starts-ends}
    Let $\EE\in \VRep_\rho^\mathbb{Z}(G_n)$. We let $c_{\min} = \min_{r \in \EE} B(r)$ and $c_{\max} = \max_{r \in \EE} A(r)$. We say that $\EE$ \emph{starts at column} $c_{\min}$ and that $c_{\min}$ is the \emph{first column}, and similarly that $\EE$ \emph{ends at column} $c_{\max}$ and that $c_{\max}$ is the \emph{last column}. 
\end{defn}

We are particularly interested in the case that $\EE\in \VRep_\rho^\mathbb{Z}(G_n)$ is tempered, so that for any row $r\in\EE$, we have that $\supp(r)$ is a singleton. In this case, we also keep track of the multiplicities of the columns as follows.

\begin{defn}
\label{def-multiplicity}
    Let $\EE\in \VRep_\rho^\mathbb{Z}(G_n)$, and fix $c\in\Z$. We denote by $m_c$ the \emph{multiplicity of $c$ in $\EE$}, which is defined to be the number of times $([c,c], 0, \eta)$ appears in $\EE$ for any $\eta$.
\end{defn}

In our later results and conjectures, we will make use of a decomposition of $\EE$ into ``blocks'' which form the building blocks of our arguments.

\begin{defn}
\label{def-block}
    Let $\EE = \{([A_i, B_i], \ell_i, \eta_i)\}_{i \in I}\in \VRep_\rho^\mathbb{Z}(G_n)$ be a tempered extended multi-segment (so that $A_i = B_i$ and $\ell_i = 0$). A \emph{block} is a multi-subset $\mathcal{B} = \{([A_i, B_i], \ell_i, \eta_i)\}_{i \in J}$ with $J \subset I$ such that
    \begin{itemize}
        \item if $([A, A], 0, \eta_1)\in\mathcal{B}$ and $([A+1, A+1], 0, \eta_2)\in\mathcal{B}$ then $\eta_1 = - \eta_2$,
        \item if for some $\eta\in\{\pm1\}$, we have $([A, A], 0, \eta)\in\mathcal{B}$ and $([A+2, A+2], 0, \eta)\in\mathcal{B}$, then $([A+1, A+1], 0, -\eta)\in\mathcal{B}$,
        \item if $([A, A], 0, \eta)\in\mathcal{B}$ then it appears with an odd multiplicity,
    \end{itemize}
    and $\mathcal{B}$ is maximal for these properties (i.e. if $\mathcal{B}' = \{([A_i, B_i], \ell_i, \eta_i)\}_{i \in J'}$ with $J' \supset J$ also satisfies these conditions, then $J = J'$).

    We say that $\mathcal{B}$ is an \emph{almost-block} if it satisfies the first two conditions and satisfies the third condition for all $([A, A], 0, \eta)$ except possibly when $A$ is maximal among all $([A, A], 0, \eta)$ appearing in $\mathcal{B}$.

    More generally, we say that $\mathcal{B}$ is a block (or almost-block) if there exists some $\EE\in  \VRep_\rho^\mathbb{Z}(G_n)$ for which $\mathcal{B}$ is a block. We let $\Block_\rho(G)$, resp. $\ABlock_\rho(G)$ denote the set of all blocks, resp. almost-blocks.
\end{defn}

It is straightforward to check the following fact.

\begin{lemma}\label{lem-unique-block-decomp}
    Any tempered  $\EE\in \VRep_\rho^\mathbb{Z}(G_n)(G)$ has a unique decomposition into disjoint blocks.
\end{lemma}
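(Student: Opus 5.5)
The plan is to encode the tempered $\EE$ as data on the integers and read the blocks off as connected components of a graph. Since $\EE$ is tempered, every row is $([c,c],0,\eta_\rho(c))$, where the sign depends only on the column $c$. So $\EE$ is determined by the multiplicity function $c\mapsto m_c$ (Definition \ref{def-multiplicity}) together with the signs $\eta_\rho(c)$ on the columns with $m_c>0$. Because all rows in a column share one sign, I will treat the rows of a column as one unit: a column with $m_c$ odd, and all of its rows, either lies in a given block or does not. This makes the first condition meaningful within columns.

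Define a graph $\Gamma$ on the set of columns $c$ with $m_c$ odd. Put an edge between $c$ and $c+1$ when both are present and $\eta_\rho(c)=-\eta_\rho(c+1)$. Put an edge between $c$ and $c+2$ when both are present with equal sign and $c+1$ is present with the opposite sign; this edge is redundant, since the path $c,c+1,c+2$ already connects them. The candidate blocks are the rows lying in the connected components of $\Gamma$, that is, the maximal runs of consecutive odd-multiplicity columns with alternating signs. Columns with $m_c$ even (including $m_c=0$) are never part of a block. If they were, the third condition would fail, since a block containing a column must contain all its rows. I expect this even-multiplicity case to be the one point to settle, so I would first check that the definition really intends blocks to cover only the odd-multiplicity part, with even-multiplicity columns discarded or treated separately.

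Existence then requires two checks. First, each component satisfies the three conditions. Alternation gives the first condition directly. For the second, two same-sign columns $A$ and $A+2$ in one run must have $A+1$ in the run with the opposite sign, since runs have no gaps. The third condition holds by construction. Second, each component is maximal. Adding any further column either breaks alternation, if it is adjacent with the same sign, or creates a same-sign pair at distance two with the middle column missing. It also cannot be an even-multiplicity column, by the third condition. A column at distance at least three from the run with no adjacent added column would leave the set disconnected. This is where the maximality formalism matters: the conditions as stated do not forbid non-adjacent unions. So I would read "block" as a maximal set satisfying the conditions that is connected in consecutive columns. This is the intended meaning, since the second condition is only about gaps, and under this reading maximality coincides with being a connected component.

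Uniqueness and disjointness follow because the connected components of $\Gamma$ partition the odd-multiplicity columns. Any block is contained in a single component, because a connected alternating run stays inside one run, and by maximality it equals that component. Hence the decomposition is unique. The main obstacle is the bookkeeping around the interpretation of the definition, namely connectedness and the even-multiplicity columns, rather than any real mathematical difficulty.
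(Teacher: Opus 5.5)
The step that fails is treating all the rows of a column as one unit and discarding the columns of even multiplicity. A block is a multi-subset indexed by $J\subset I$. The third condition of Definition \ref{def-block} asks that each row occur an odd number of times \emph{in $\mathcal{B}$}. It does not require $m_c$ to be odd in $\EE$, and nothing forces a block meeting column $c$ to contain every copy of $([c,c],0,\eta_\rho(c))$.

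The lemma concerns a decomposition that exhausts $\EE$. This is how it is used in Theorem \ref{thm-count-temp}, where $\EE=\BB_1\cup\dots\cup\BB_k$ consists of the rows of $\BB_1$, then those of $\BB_2$, and so on. A column with $m_c$ even is split between two consecutive blocks: the earlier block takes $m_c-1$ copies, and the next block begins with the single remaining copy. This is the type 2 boundary $N_{col}=H_{col}$ in \S\ref{sec interation of blocks}.

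Your construction already fails in small cases:
\begin{itemize}
\item For $\EE$ consisting of two copies of $([c,c],0,\eta)$, it produces no blocks at all, whereas the intended decomposition is two one-row blocks.
\item For columns $1,2,2,3$ with signs $+,-,-,+$, it produces $\{1\},\{3\}$ and loses column $2$, whereas the blocks are $\{1,2\}$ and $\{2,3\}$.
\end{itemize}
So what you prove is a statement about the odd-multiplicity columns only, not the lemma.

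Once columns may be split, the real content of the lemma is the uniqueness of the split, and your argument does not address it. The literal maximality in Definition \ref{def-block} cannot hold for all blocks at once. For a column with $m_c=4$, any block meeting it can be enlarged to $3$ copies, and two disjoint blocks cannot both have $3$. So the reading has to be fixed. In a tempered $\EE$ the admissible order is by column. Blocks are then consecutive stretches of rows, each maximal among stretches with the same first row, with the remainder decomposed recursively; this is the reading under which the boundary types of \S\ref{sec interation of blocks} make sense.

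You then need to show that the cut points are forced:
\begin{itemize}
\item Between consecutive nonempty columns $c<c'$, there is a cut exactly when $c'>c+1$ or $\eta_\rho(c')=\eta_\rho(c)$. Your connected-interval reading handles this part correctly.
\item Inside a column with $m_c$ even, there is a cut exactly after the $(m_c-1)$-st copy. A block cannot absorb the last copy without its count becoming even, and by contiguity it cannot skip that copy to reach column $c+1$.
\end{itemize}
Maximality rules out any other cut. The second kind of cut is the case your argument is missing.
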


We will denote this decomposition $\BB_1 \cup \dots \cup \BB_k$, where each of the $\BB_i$ lie in $\VRep_\rho^\mathbb{Z}(G_n)(G)$. Then $\EE = \BB_1 \cup \dots \cup \BB_k$ consists of the rows of $\BB_1$, followed by the rows of $\BB_2$, and so on. Note that the decomposition is ordered, in the sense that $\BB_1 \cup \BB_2 \neq \BB_2 \cup \BB_1$. 

We define the remove column operator.

\begin{defn}
    Let $\EE\in  \VRep_\rho^\mathbb{Z}(G_n)$ be tempered. We define the \emph{remove column} operator $\rc_k$ which removes the $k$th column of $\EE$. More precisely, $\rc_k(\EE)$ consists of all the extended segments of $\EE$ except those of the form $([k, k], 0, \eta)$ for any $\eta$. If no such extended segments exist, then $\rc_k(\EE) = \EE$.
\end{defn}

Let $\mathcal{B}\in\ABlock(G)$. We let $\Psi(\pi(\mathcal{B}))$ denote the set of formal local Arthur parameters $\psi_{\mathcal{F}}$ where $\mathcal{F}\in\VRep_\rho^\mathbb{Z}(G_n)$ is such that $\mathcal{F}$ and $\mathcal{B}$ are related by a  finite composition of raising operators and their inverses. We determine the cardinality of this set in the following theorem.

\begin{thm}[count for blocks]
\label{thm-count-block-temp}
    Let $\mathcal{B}$ be a block and define
    \begin{align*}
        \BB' &:= \rc_{c_{\max}}(\BB) \\
        \BB'' &:= \rc_{c_{\max}-1}(\BB')
    \end{align*}
    
    If $c_{min} = 0$, then 
    \begin{equation} 
    \label{eq first recursion}
    |\Psi(\pi(\mathcal{B}))| = \begin{cases} 3 |\Psi(\pi(\mathcal{B}'))| & \text{if } m_{c_{\max}-1} = 1 \\  4 |\Psi(\pi(\mathcal{B}'))| - |\Psi(\pi(\mathcal{B}''))| & \text{if } m_{c_{\max}-1} = 3, 5, \dots. \end{cases}
    \end{equation}
    On the other hand, if $c_{min} > 0$, then 
    \begin{equation}
    \label{eq second recursion}
    |\Psi(\pi(\mathcal{B}))| = \begin{cases} 2 |\Psi(\pi(\mathcal{B}'))| & \text{if } m_{c_{\max}-1} = 1 \\ 3 |\Psi(\pi(\mathcal{B}'))| - |\Psi(\pi(\mathcal{B}''))| & \text{if } m_{c_{\max}-1} = 3, 5, \dots. \end{cases}
    \end{equation}
\end{thm}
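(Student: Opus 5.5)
We will prove Theorem \ref{thm-count-block-temp} by describing explicitly the set of all $\mathcal{F}\in\VRep_\rho^\mathbb{Z}(G_n)$ strongly equivalent to a block $\mathcal{B}$, and then organizing this set by what happens in the last two columns $c_{\max}$ and $c_{\max}-1$. Since $\mathcal{B}$ is tempered, we have $\psi_{\mathcal{B}}=\psi^{min}(\pi(\mathcal{B}))$ (Theorem \ref{thm intersections of local Arthur packets}(3)), so every $\mathcal{F}$ is reached from $\mathcal{B}$ by raising operators only. In the integral case no partial dual is available, so we only need $dual\circ ui\circ dual$ and $ui^{-1}$. The first step is a structure lemma. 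Write any such $\mathcal{F}$ (with an order satisfying $(P')$) as a union of rows whose supports cover, with multiplicity, the columns of $\mathcal{B}$. We will show that each row of $\mathcal{F}$ is obtained by "merging" consecutive singleton columns of $\mathcal{B}$ with alternating signs. The block conditions (alternating $\eta$, odd multiplicities) guarantee that $\mathcal{F}$ is determined by its support, by Corollary \ref{cor same support}. This reduces the count to counting admissible supports, i.e.\ the multi-sets $\supp(\psi_{\mathcal{F}})$.

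The second step is to read the recursion off the rows touching column $c_{\max}$. Since $\mathcal{B}$ is a block, $m_{c_{\max}}$ is odd. All but one copy of column $c_{\max}$ pair off and remain as singleton rows. We will check this using the non-vanishing conditions of Theorem \ref{thm non-vanishing}: the pairing is forced because extra copies can only be absorbed in pairs by a $ui$ of type 3 together with its dual.

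The remaining distinguished copy lies in a unique row $r$ of $\mathcal{F}$ with $A(r)=c_{\max}$. We then classify $r$:
\begin{enumerate}
\item $r$ is the singleton $[c_{\max},c_{\max}]$, and removing it gives an arbitrary element of the class of $\mathcal{B}'$;
\item $r$ extends to column $c_{\max}-1$ or further, either as a longer segment $[c_{\max},B]$ obtained by $dual\circ ui\circ dual$, or as a segment with $B<0$ coming from the dual picture (possible only when $c_{\min}=0$, which explains why the coefficient is $3$ rather than $2$ there);
\item a "symmetric" option in which $r$ is absorbed together with a copy of column $c_{\max}-1$.
\end{enumerate}
Each option gives an injection from the class of $\mathcal{B}'$ obtained by attaching column $c_{\max}$ to the top row ending at $c_{\max}-1$. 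We will verify that this is a bijection onto the corresponding subset, using Lemma \ref{lemma ui inv = dud} to invert the operators.

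Finally we handle overcounting, which accounts for the correction term $-|\Psi(\pi(\mathcal{B}''))|$. When $m_{c_{\max}-1}=1$, the attaching options are disjoint and exhaust everything, which gives the coefficient $3$ (respectively $2$). When $m_{c_{\max}-1}\ge 3$, an extra configuration becomes available: a spare pair in column $c_{\max}-1$ lets the last column attach in one further way, raising the coefficient to $4$ (respectively $3$). The configurations where column $c_{\max}-1$ is used twice are then counted twice. We will identify these doubly counted configurations with the class of $\mathcal{B}''$ by removing both last columns, and conclude by inclusion–exclusion.

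The main obstacle is the structure lemma, together with proving that the options in the second step are exhaustive and disjoint. This requires a careful case analysis of the $ui$ types 1, 2 and 3 and of the $l,\eta$ bookkeeping under row exchange (Definition \ref{def row exchange}) as rows move past the new column. I would first test the classification on small blocks (two or three columns, multiplicities $1$ and $3$) against the non-vanishing criterion before writing the general induction on the number of columns.
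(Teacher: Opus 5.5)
Your outline (peel off column $c_{\max}$, sort configurations by the row carrying its distinguished circle, and subtract an overlap indexed by $\BB''$) has the same shape as the paper's count. However, your first step points the wrong way, and the central step is missing.

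\textbf{Direction and tools.} The tempered block is the \emph{maximal} element of $\Psi(\pi(\BB))$, not the minimal one. No raising operator applies to $\BB$ at all. Every row of $\BB$ is a singleton, so $\BB$ is not the image of any $ui$ and $ui^{-1}$ is unavailable. Also, $dual(\BB)$ consists of pairwise nested rows $[c,-c]$, so no $ui$ is possible there, even after row exchanges. Thus $\psi_{\BB}=\psi^{max}(\pi(\BB))$, and taken literally your first step would give $|\Psi(\pi(\BB))|=1$. The minimal element is far from tempered (Lemmas \ref{E min classification for blocks after zero} and \ref{E min classification}). You must move down from $\BB$ by $ui$ and $dual\circ ui^{-1}\circ dual$. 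Likewise, Theorem \ref{thm non-vanishing} cannot justify your claims about column $c_{\max}$: it decides whether $\pi(\mathcal{F})\neq 0$, not whether $\pi(\mathcal{F})=\pi(\BB)$. Which supports occur is governed only by the operator calculus of Theorem \ref{thm intersections of local Arthur packets}. That is how the paper shows the surplus copies in the last column never interact (Lemma \ref{almost block reduction}).

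\textbf{The missing structure lemma.} The genuine gap is the structure lemma, which is Theorem \ref{block classification} and occupies all of \S\ref{sec-individual-blocks}. You defer it, and your version (each row a merge of consecutive singleton columns with alternating signs) is too coarse to count with. Up to row exchange, the class of $\BB$ is exactly the type $Y_{\mathcal{M}}$ multi-segments with sign $\eta(\BB)$. These consist of chains from consecutive-integer sets $\mathcal{S}_i$, the leftover circles as multiples, and, only when $c_{\min}=0$, hats $[\max\mathcal{T}_i^j,-\min\mathcal{T}_i^j]$ from ordered partitions of the $\mathcal{S}_i$. Consecutive sets may share a single column $c$ only when $m_c\geq 3$ and the later set has size at least $2$ (a $z$-chain). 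A set of size at least $2$ starting at a column with $m_c\geq 3$ must share that column with its predecessor (Definition \ref{valid S}(5)). Proving this requires showing that the family contains $\EE^{min}$ and is stable under every raising operator and its inverse, including the row-exchange bookkeeping past multiples and hats. Lemma \ref{lemma ui inv = dud} alone does not show that your attaching maps send an arbitrary $\mathcal{F}'\sim\BB'$ into the class of $\BB$.

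Once the classification is available, the options for $c_{\max}$ are:
\begin{itemize}
\item a standalone chain;
\item appended to whichever row ends at $c_{\max}-1$ (a chain or a hat);
\item if $c_{\min}=0$, a new one-circle hat $([c_{\max},-c_{\max}],c_{\max},\cdot)$;
\item if $m_{c_{\max}-1}\geq 3$, a $z$-chain $[c_{\max},c_{\max}-1]$.
\end{itemize}
Your split into $[c_{\max},B]$ versus $B<0$ confuses appending to a hat with the new hat. Finally, the correction term comes precisely from the last condition above. Appending $c_{\max}$ to a standalone chain $\{c_{\max}-1\}$ yields the same support as the $z$-chain option on that datum. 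So these two maps agree exactly on $\BB'$-data whose last set is a standalone $\{c_{\max}-1\}$, and deleting that set identifies the overlap with the class of $\BB''$.
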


Next we state how the numbered of local Arthur parameters containing a fixed tempered representation is determined by its block decomposition.

\begin{thm}[count for tempered representations]
\label{thm-count-temp}
    Let $\EE\in\VRep_\rho^\mathbb{Z}(G_n)$ be tempered. Suppose that $\EE$ decomposes into blocks $\mathcal{B}_1, \dots, \mathcal{B}_k$ in that order. Then \[|\Psi(\pi(\EE))| = |\Psi(\pi(\BB_1))| \cdot \prod_{i=2}^k |\Psi(\pi(sh^1(\BB_i)))|.\]
\end{thm}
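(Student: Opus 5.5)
The plan is to show that the set of virtual extended multi-segments strongly equivalent to $\EE$ is, up to reindexing, a product of the corresponding sets for the individual blocks. Blocks $\BB_2,\dots,\BB_k$ are taken in shifted form $sh^1$, and $\BB_1$ is taken as is. This gives a bijection
\[
\Psi(\pi(\EE)) \longleftrightarrow \Psi(\pi(\BB_1))\times\prod_{i\ge 2}\Psi(\pi(sh^1(\BB_i))).
\]

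\textbf{Step 1: which raising operators apply.} In the integral case no $dual_k^-$ is available, so by Theorem \ref{thm intersections of local Arthur packets} every $\EE'\sim\EE$ is reached by $ui$, $dual\circ ui\circ dual$, their inverses and row exchanges. I would show that each such operator only involves two rows coming from the same block. The key observation is that between consecutive blocks $\BB_i,\BB_{i+1}$ the block-maximality conditions of Definition \ref{def-block} fail. Either there is a column gap of at least two, or an even multiplicity, or a repeated sign $\eta_\rho(A)=\eta_\rho(A+1)$, or a pattern $\eta,\eta$ at $A,A+2$ with column $A+1$ absent. In each case the applicability conditions of Definition \ref{ui def} (Cases 1--3 with $\epsilon=(-1)^{A_k-B_k}\eta_k\eta_{k+1}$) cannot hold for a pair straddling the boundary. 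I would also show that the boundary persists, i.e.\ it is an invariant of the equivalence class. For this I would attach to each $\EE'$ the supports obtained by grouping rows according to which block's columns they cover, and check the operators case by case. The inverse and dual versions follow from Lemma \ref{lemma ui inv = dud} and Lemma \ref{lemma dual}, since the dual reverses order and negates $B$ but preserves the block structure up to reflection.

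\textbf{Step 2: independence and the role of $sh^1$.} Once operators act blockwise, an element of the class of $\EE$ is a concatenation $\FF_1\cup\cdots\cup\FF_k$ with $\FF_i\sim\BB_i$ in a suitable sense. The subtlety is nonvanishing: Theorem \ref{thm non-vanishing}$(\ast)$ requires $B+l\ge 0$, and the dual/sign data $\alpha_i$ depend on the preceding rows. For the first block the constraint is exactly the one for $\BB_1$ alone. For a later block, its columns lie strictly above the previous block's support, and the preceding rows change signs only by a parity that is absorbed into the weak-equivalence normalization. One must check, however, that the lowest column of a later block cannot be extended downward past the preceding block's region the way column $0$ behaves for $\BB_1$. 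This is the phenomenon distinguishing the recursions \eqref{eq first recursion} and \eqref{eq second recursion} ($c_{\min}=0$ versus $c_{\min}>0$). Replacing $\BB_i$ by $sh^1(\BB_i)$ ensures $c_{\min}>0$, so the block is counted with the ``interior'' rule; this is what I would verify. Concretely, I would construct the map $\FF\mapsto(\FF_1, sh^1(\FF_2),\dots)$ and its inverse, and check that nonvanishing via Theorem \ref{thm non-vanishing}(ii) reduces to adjacent-pair conditions, which are local and hence blockwise. The map descends to parameters because $\psi_\FF$ is the sum of the $\psi_{\FF_i}$, and it is injective on parameters by Corollary \ref{cor same support}.

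\textbf{Main obstacle.} The hardest point is Step 1's invariance claim: operators applied inside blocks lengthen segments, and I must show that no later $ui$ ever joins a row from $\BB_i$ with one from $\BB_{i+1}$. My first attempt would be an explicit invariant. For each column $c$ at a block boundary, I would track the signed count $\sum (-1)^{\dots}\eta$ of circles in column $c$ across all rows covering it. I would then show that this count is preserved by each operator and that it obstructs Cases 1--3 across the boundary.
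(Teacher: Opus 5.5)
Your overall architecture matches the paper's: every operator acts inside one block, so the equivalence class of $\EE$ is a product, and later blocks are counted by the $c_{\min}>0$ rule. However, the step that carries all the weight is not established.

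In Step 1 you test the conditions of Definition \ref{ui def} on a straddling pair in the \emph{tempered} configuration. But you must control every $\EE'\sim\EE$, which contains chains, hats and rows with triangles. Every operator also comes with implicit row exchanges, and these can cross a boundary where a column of even total multiplicity is shared by two blocks.

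Your reduction of the remaining cases to Lemma \ref{lemma ui inv = dud} and Lemma \ref{lemma dual} also fails:
\begin{enumerate}
\item Lemma \ref{lemma ui inv = dud} excludes type 3'. Yet $ui^{-1}$ and $dual\circ ui^{-1}\circ dual$ of type 3' are exactly the operators that split rows and create hats.
\item $dual$ does not preserve the blocks ``up to reflection''. A circle in column $c$ becomes the hat $[c,-c]$, so in $dual(\EE)$ rows of different blocks are nested and can be row-exchanged past one another.
\end{enumerate}
The signed column count you propose as an invariant is left unspecified. You give no reason it survives row exchanges, which shift $l$ (hence which columns carry circles) and flip signs.

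What makes the paper's Lemma \ref{lem-independence-of-blocks} work is the following:
\begin{enumerate}
\item A staircase property $A(r_1)\le B(r_2)$ for rows of earlier and later blocks.
\item Lemma \ref{lem-aj}: a row exchanged to the end (resp.\ start) of a block becomes a row of circles whose last (resp.\ first) circle carries the block's sign.
\item At shared-column and adjacent-column boundaries the facing circles have equal sign. Hence a crossing exchange at a shared column falls in Case 1(b)/2(b) and leaves $l=1$, and a type-3' $ui$ across an adjacent-column boundary fails the required sign condition.
\end{enumerate}
On top of this one needs a three-block analysis for $ui$, and a truncation argument (Lemma \ref{lem-truncations}, Corollary \ref{cor Alex}) for $dual\circ ui^{-1}\circ dual$.

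Your explanation of $sh^1$ is also mistaken. A later block need not lie strictly above its predecessor. Since for $c_{\min}>0$ the count depends only on the multiplicity tuple, $sh^1$ changes nothing except when $\BB_2$ starts at column $0$ and $\BB_1$ is an odd number of circles in column $0$. The fact to prove is that no hat can ever be created in a later block. In $dual(\EE)$ the candidate row must be exchanged past the column-$0$ circle of $\BB_1$, since otherwise the order is not admissible. That exchange is Case 1(b) and leaves $l\ge 1$, so no type-3' $ui^{-1}$ applies (Lemma \ref{lem-later-blocks-exhaustion}).

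The signs contributed by other blocks are not absorbed by weak equivalence, because the sign of a row with circles is genuine data. The relevant fact is that $dual$ multiplies each block by a uniform sign, which does not affect whether a $ui$ applies.

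Finally, for the inverse map, Theorem \ref{thm non-vanishing}(ii) is the wrong tool, for three reasons:
\begin{enumerate}
\item A nonvanishing concatenation $\FF$ need not satisfy $\pi(\FF)=\pi(\EE)$.
\item Part (ii) only treats non-negative multi-segments, while equivalents of $\BB_1$ contain hats.
\item Its adjacent pairs range over all admissible orders, so it is not blockwise.
\end{enumerate}
What you need instead is that every operation valid on $\BB_1$ alone, or on a hat-free later block, remains valid inside $\EE$. This is an admissibility check using the staircase property, as in Lemmas \ref{lem-first-block} and \ref{lem-later-blocks-existence}. It ensures each tuple $(\FF_1,\dots,\FF_k)$ is actually reached from $\EE$ by raising operators and their inverses.
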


We will prove the above theorem in Theorem \ref{thm-count_for_tempered-in-paper}.

\begin{rmk}
    Note that we can compute $|\Psi(\pi(\BB_1))|$ and $|\Psi(\pi(sh^1(\BB_i)))|$ using Theorem \ref{thm-count-block-temp}, so the above theorems together give a complete computation of $|\Psi(\pi(\EE))|$ for any tempered $\EE\in\VRep_\rho^\mathbb{Z}(G_n)$.
\end{rmk}

Note that if $\EE\in\Rep^\Z(G)=\Rep(G)\cap\Vseg^\Z(G),$ then $\EE$ has a unique decomposition $\EE=\EE_{\rho_1}\cup\cdots\cup\EE_{\rho_k}$ where $\rho_i\neq\rho_j$ for any $i\neq j.$ Furthermore, we have $\Psi(\pi(\EE))=\prod_{i=1}^k\Psi(\pi(\EE_{\rho_i})).$ From this observation and Theorem \ref{thm-count-temp}, we obtain a formula for computing $\Psi(\pi(\EE))$ for any tempered $\EE\in\Rep^\Z(G)=\Rep(G)\cap\Vseg^\Z(G)$. That is, Theorem \ref{thm-count-temp} (plus Theorem \ref{thm red to gp}) implies Theorem \ref{thm tempered count intro}.

\begin{rmk}\label{rmk main thms expectation}
    With Remark \ref{rmk Arthur packet obstruction} in mind, we would expect Theorems \ref{thm-count-block-temp} and \ref{thm-count-temp} to hold more generally.
\end{rmk}

\section{Basic notions and lemmas}\label{sec basic notions and lemmas}

In this section we introduce some basic notions and lemmas which will be helpful in the proofs of our results. We begin by recording several explicit relations on the operators which will be useful later.

\begin{lemma}\label{lemma Alex}
    Let $\EE\in\Eseg(G_n)$ and suppose that $\pi(\EE)\neq 0.$ If $\frac{a_i-b_i}{2}<0$ for some $i \in I_{\rho}$, then we also require that the admissible orders below on $I_\rho$ satisfy $(P').$  Then the following hold:
    \begin{enumerate}
    \item Let $i,j,k,l\in I_{\rho}$. Then $(R_{i}\circ R_j) (\EE)=(R_{k}\circ R_l) (\EE)$ provided that the resulting admissible orders agree.
        \item Let $i,j\in I_{\rho}$. Then $(dual\circ R_i)(\EE)=(R_j\circ dual)(\EE)$ provided that the resulting admissible orders agree.
        \item Let $i,j\in I_{\rho}$. Then $(ui_{i}\circ R_j)(\EE)=(R_j \circ ui_i)(\EE)$ provided that the resulting admissible orders agree.
    \end{enumerate}
\end{lemma}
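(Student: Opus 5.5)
The key point is that every object in this lemma ($R_i$, $dual$, $ui_i$, and $\pi$) is already defined elsewhere, so I will prove each part by reducing it to a known statement about representations rather than by comparing the combinatorial formulas directly. The main tool is Corollary \ref{cor same support}: two extended multi-segments with the same support and the same admissible order, which give the same nonzero representation, are equal. So in each part I will check two things about the two sides: (a) they have the same support with the same admissible order, and (b) they give the same nonzero representation.

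For (a), note that $R_k$ only permutes the order and never changes the segments $[A_i,B_i]_\rho$. The operator $dual$ sends $[A_i,B_i]$ to $[A_i,-B_i]$ and reverses the order. The operator $ui_i$ changes the segments of two adjacent rows in a way that depends only on their supports. Hence, in each part, the hypothesis ``the resulting admissible orders agree'' forces the two sides to have equal supports with the same order. In (3) there is one subtlety: the rows to which $ui_i$ applies must correspond on both sides. After $R_j$, the indices refer to the permuted order. I will read the statement as asserting equality whenever both sides are formed from the same pair of rows, which are adjacent in both orders. When $ui_i$ is of type 3' a row is deleted on both sides, and the argument still goes through.

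For (b), I will use the fact that row exchange does not change the representation: $\pi(R_k(\EE))=\pi(\EE)$. This holds by the construction of $\pi(\EE)$ in \cite[Algorithm 5.18]{HLL25} together with the remark after Definition \ref{def row exchange} (the definition agrees with Atobe's, for which \cite{Ato20b} proves invariance). For (1), both sides then have representation $\pi(\EE)\neq 0$, and Corollary \ref{cor same support} gives equality. For (2), Lemma \ref{lemma dual} gives $\pi(dual(R_i(\EE)))=\pi(dual(\EE))=\pi(R_j(dual(\EE)))$, and this is nonzero. For (3), Theorem \ref{thm intersections of local Arthur packets}(1), combined with Lemma \ref{lemma ui inv = dud} for the identification of $ui$ as an inverse raising operator (type 3' is treated directly in \cite{HLL22}), gives $\pi(ui_i(\EE'))=\pi(\EE')$. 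Applying this to $\EE'=R_j(\EE)$ and to $\EE'=\EE$ shows that both sides equal $\pi(\EE)$.

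The main obstacle is the ($P'$) requirement in Corollary \ref{cor same support} when some $B_i<0$. In part (2), $dual$ negates the $B_i$, so the orders on the two sides must satisfy ($P'$) both before and after dualizing. I plan to handle this using the hypothesis of the lemma, which requires every admissible order appearing in the statement to satisfy ($P'$) whenever a negative $B$ occurs. One must also check that ($P'$) is preserved by order reversal on the dual side, since $dual$ is only defined for ($P'$) orders. Applicability of $ui_i$ on both sides also needs care. It depends on $(l,\eta)$, and these change under $R_j$. If $ui_i$ is applicable on one side but not the other, the representations still agree, but the supports would differ, which contradicts the hypothesis that the orders and supports match. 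Therefore applicability transfers between the two sides, and the argument closes.
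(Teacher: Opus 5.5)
Your proposal is correct and is essentially the paper's argument. Both sides are shown to have the same support and the same nonzero representation, using Lemma \ref{lemma dual}, Theorem \ref{thm intersections of local Arthur packets}(1), and invariance of $\pi$ under row exchange, and then Corollary \ref{cor same support} gives equality once the orders agree; the paper writes this out only for Part (2). One simplification: in Part (3) you do not need Lemma \ref{lemma ui inv = dud}, because $ui^{-1}$ is itself a raising operator, so $ui$ is covered directly by Theorem \ref{thm intersections of local Arthur packets}(1) as the inverse of a raising operator.
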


\begin{proof}
    The proof of these claims follow the same pattern. For brevity, we only give the details of the proof of Part (2) that $(dual\circ R_i)(\EE)=(R_j\circ dual)(\EE)$  provided that the resulting admissible orders are the agree.

    By Lemma \ref{lemma dual} and Theorem \ref{thm intersections of local Arthur packets}(1), we have that $\pi((dual\circ R_i)(\EE))=\pi((R_j\circ dual)(\EE)).$ Moreover, $\supp((dual\circ R_i)(\EE))=\supp((R_j\circ dual)(\EE)).$  If the resulting admissible orders are the agree, then we obtain that $(dual\circ R_i)(\EE)=(R_j\circ dual)(\EE)$ from Corollary \ref{cor same support}. This completes the proof of Part (2).
\end{proof}

We provide a further extension of Lemma \ref{lemma Alex}(1).

\begin{lemma}[commutativity of row exchanges]
\label{lem-comm}
    Let $\EE = (r, r_1, r_2)\in \VRep(G_n)$. Assume further that the orders the supports of $r, r_1, r_2$ contain or are contained in each other. Then exchanging $r$ to the third row and then exchanging $r_1$ and $r_2$ gives the same result as exchanging $r_1$ and $r_2$ first, and then exchanging $r$ to the third row. Formally, $R_1(R_2(R_1(\EE))) = R_2(R_1(R_2(\EE)))$.
\end{lemma}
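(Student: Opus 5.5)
The plan is to argue as in the proof of Lemma \ref{lemma Alex}: compare the two sides through the representations they define and then invoke multiplicity-freeness. We may pad $\EE=(r,r_1,r_2)$ with other rows $\EE^\rho$ and rows for other $\rho$ and fix signs so that we have a genuine $\EE\in\Rep(G_n)$; since row exchanges act only on the three rows in question, proving the identity for this completed $\EE$ suffices. When a row of negative $B$ is present, we use the $(P')$ convention required in Corollary \ref{cor same support}.

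First I check that both compositions are genuinely defined, meaning each intermediate order is admissible. The hypothesis that the supports of $r,r_1,r_2$ pairwise contain one another guarantees that no pair satisfies $A_i>A_j$ and $B_i>B_j$. Consequently every total order on the three rows is admissible, so no $R_k$ degenerates to the identity for admissibility reasons. I then compute the resulting orders. $R_1R_2R_1$ sends $(r,r_1,r_2)$ to $(r_1,r,r_2)$, then $(r_1,r_2,r)$, then $(r_2,r_1,r)$. $R_2R_1R_2$ sends it to $(r,r_2,r_1)$, then $(r_2,r,r_1)$, then $(r_2,r_1,r)$. So the two final admissible orders agree; this is just the braid relation in $S_3$.

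Next, by Theorem \ref{thm intersections of local Arthur packets}(1), applied repeatedly, each row exchange preserves $\pi$. Equivalently, row exchange is only a change of admissible order in Theorem \ref{thm Arthur packets extended multi-segment parameterization}. Hence
\[\pi(R_1R_2R_1(\EE))=\pi(\EE)=\pi(R_2R_1R_2(\EE))\neq 0.\]
Row exchanges do not alter supports, only the order, so the supports together with their orders coincide. Corollary \ref{cor same support} then gives $R_1R_2R_1(\EE)=R_2R_1R_2(\EE)$, and restricting to the three rows yields the claim. Equivalently, this is a special case of Lemma \ref{lemma Alex}(1) iterated once more.

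The main obstacle is justifying that each intermediate step lies in $\Rep$, together with the completion step for a virtual $\EE\in\VRep(G_n)$. For the former, I would use that $\pi(R_k(\EE))=\pi(\EE)$ whenever $R_k$ is applicable to a nonvanishing $\EE$. For the latter, I would add one auxiliary row far away, say a singleton segment with large $A$ placed last, to fix the sign condition \eqref{eq sign condition}. This row is unaffected by the exchanges, and Theorem \ref{thm non-vanishing} shows it preserves nonvanishing.
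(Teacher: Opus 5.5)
\textbf{Gap: the negative-$B$ case.} Your core mechanism is what the paper uses: the braid relation on the orders, invariance of $\pi$, then multiplicity one via Corollary \ref{cor same support}. But the paper applies it only after a reduction you omit, and without that reduction your argument fails whenever some row has $B<0$. This is the case that matters, since the lemma is later applied (through Lemma \ref{lem-condense-row-swaps}) to virtual extended multi-segments containing hats.

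Corollary \ref{cor same support}, Lemma \ref{lemma Alex} and the parameterization in Theorem \ref{thm Arthur packets extended multi-segment parameterization} all require the admissible order to satisfy $(P')$ when some $B_i<0$. Your instruction to ``use the $(P')$ convention'' cannot be carried out. If $(r,r_1,r_2)$ is in $(P')$ order, then $B(r)\le B(r_1)\le B(r_2)$. The common final order $(r_2,r_1,r)$ is then $(P')$ only if all three $B$'s coincide, and the intermediate orders such as $(r_1,r,r_2)$ have the same defect. For example, with $\supp(r)=[5,-3]$, $\supp(r_1)=[4,-2]$, $\supp(r_2)=[3,-1]$, no result stated in the paper gives you nonvanishing or injectivity of $\pi$ for the extended multi-segments you compare. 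So neither the corollary nor ``Lemma \ref{lemma Alex}(1) iterated'' applies.

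\textbf{The missing step is the shift.} By Theorem \ref{thm non-vanishing}(i), $\pi(sh^d(\EE))\neq 0$ for $d\gg 0$ with $sh^d(\EE)$ non-negative. In that case every admissible order is allowed, and your argument (equivalently Lemma \ref{lemma Alex}(1)) gives $R_1R_2R_1(sh^d(\EE))=R_2R_1R_2(sh^d(\EE))$. The formulas in Definition \ref{def row exchange} involve only $A_i-B_i$, $l_i$, $\eta_i$, containment of supports and admissibility, all of which are shift-invariant. Hence $R_k\circ sh^d=sh^d\circ R_k$, and since $sh^d$ is injective the identity descends to $\EE$. This is exactly the paper's proof.

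Two smaller points. Your padding step is unnecessary, because membership in $\VRep(G_n)$ already supplies an ambient element of $\Rep(G_n)$. Also, Theorem \ref{thm intersections of local Arthur packets}(1) concerns raising operators, not row exchanges, so the invariance of $\pi$ under $R_k$ is a separate input and should be cited as such.
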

\begin{proof}
    Since $\pi(\EE)\neq 0,$ by Theorem \ref{thm non-vanishing}, we have $\pi(sh^d(\EE))\neq 0$ for some $d\gg 0$ such that $sh^d(\EE)$ is non-negative. We observe from the definitions that the row exchange and shift operators commute. Thus, by Lemma \ref{lemma Alex}(1), we obtain that \begin{align*}
    sh^d(R_1(R_2(R_1(\EE)))) =R_1(R_2(R_1(sh^d(\EE)))) &= R_2(R_1(R_2(sh^d(\EE))))\\ &=sh^d(R_2(R_1(R_2(\EE)))).
    \end{align*}
    Therefore, we obtain that $R_1(R_2(R_1(\EE))) = R_2(R_1(R_2(\EE)))$.
\end{proof}

We also verify that certain operators are ``local'' operators.

\begin{lemma}
\label{lem-local}
    Suppose that $\EE\in\VRep_\rho(G_n)$. 
    Then the operations $ui$ and $dual \circ ui \circ dual$ are ``local'' operations. More specifically, if a certain row is not involved in the union-intersection or any row exchanges, then it is fixed by these operations.
\end{lemma}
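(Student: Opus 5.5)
The plan is to read the claim directly off the explicit formulas defining the operators, since locality is really a statement about what those formulas touch. First treat $ui_k$ itself, as in Definition \ref{ui def}, for an adjacent pair $k<k+1$ in a fixed admissible order. By definition,
\[
([A_i',B_i']_\rho,l_i',\eta_i')=([A_i,B_i]_\rho,l_i,\eta_i) \quad \text{for } i\neq k,k+1,
\]
and $\EE^\rho$ is untouched. Two features of the definition matter here:
\begin{itemize}
\item the applicability conditions (Cases 1--3) involve only $A_k,B_k,l_k,\eta_k$ and $A_{k+1},B_{k+1},l_{k+1},\eta_{k+1}$;
\item in particular, $\epsilon=(-1)^{A_k-B_k}\eta_k\eta_{k+1}$ depends only on the two rows involved.
\end{itemize}
So $ui_k$ fixes every other row, and whether it applies does not depend on them. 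The same holds for row exchange $R_k$ in Definition \ref{def row exchange}: only rows $k,k+1$ change, again with an $\epsilon$ depending only on those two rows.

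Next, for the general $ui_{i,j}$ of Definition \ref{def ui}, write it as a composition
\[
ui_{i,j}=R^{-1}\circ ui_i\circ R,
\]
where $R$ is a sequence of row exchanges bringing $j,i$ adjacent, and $R^{-1}$ restores the original order. Any row not among those exchanged, and not $i$ or $j$, is fixed by each factor, hence by the composition. This is precisely the hypothesis ``not involved in the union-intersection or any row exchanges.''

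The main obstacle is $dual\circ ui\circ dual$. The $dual$ operator of Definition \ref{def dual} is not local: $l_i'$ and $\eta_i'$ depend on $\alpha_i=\sum_{j<i}a_j$ and $\beta_i=\sum_{j>i}b_j$, which involve all rows. The plan is to show that these global contributions cancel for an uninvolved row $r_m$. Applying $dual$, then $ui$ (via row exchanges among involved rows), then $dual$ again, the uninvolved row $r_m$ first has $\eta_m$ multiplied by $(-1)^{\alpha_m+\beta_m}$. It is then multiplied by $(-1)^{\alpha_m'+\beta_m'}$ computed in $ui(dual(\EE))$, where the order is reversed. The shift $B_m$ in $l$ is added and then cancels because $-(-B_m)=B_m$; for half-integral $B_m$ one also checks the $\tfrac12(-1)^{\alpha}\eta$ terms cancel, though the integral case is what is needed later. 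For the signs, one must check that the multiset of rows on each side of $r_m$ has the same parity of total $a$ and total $b$ before and after.

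Here the key fact is that union-intersection preserves the multiset of endpoints $\{A_k,A_{k+1}\}$ and $\{B_k,B_{k+1}\}$. Hence it preserves the sums $a_k+a_{k+1}$ and $b_k+b_{k+1}$, and those two rows stay on the same side of $r_m$ because $r_m$ is not exchanged. Under $dual$, $a$ and $b$ swap roles, so $\alpha$ and $\beta$ computed in the dual correspond to $\beta$ and $\alpha$ in the original. The parities therefore cancel, giving $(-1)^{2(\alpha_m+\beta_m)}=1$.

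The type 3' case deletes a row $[A_k,B_{k+1}]$ with $A_k=B_{k+1}-1$, so its $a$ and $b$ are both zero and the deleted row contributes nothing to the parities. I expect this parity bookkeeping to be the only delicate point. As a fallback, one can instead appeal to Lemma \ref{lemma Alex} together with Corollary \ref{cor same support} and Theorem \ref{thm intersections of local Arthur packets} to reduce to a comparison of supports, though that is less direct.
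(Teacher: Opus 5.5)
Your treatment of $ui$, including the row exchanges inside $ui_{i,j}$, is the same as the paper's. For $dual\circ ui\circ dual$ you take a different route.
\begin{itemize}
\item The paper handles the non-3' types structurally: by \cite[Corollary 5.6]{HLL22}, such an operator is the inverse of a $ui$ of the same type, hence local. It only appeals to a direct calculation for type 3'.
\item You instead do direct bookkeeping throughout. For types 1--3 the endpoint multisets of the pair are preserved, and the pair stays on one side of an uninvolved $r_m$. So the $a$- and $b$-sums are preserved exactly, not just mod $2$.
\item This gives $\alpha'_m=\beta_m$ and $\beta'_m=\alpha_m$ on the nose, which also disposes of the half-integral $\tfrac12(-1)^{\alpha}\eta$ terms.
\end{itemize}
That part is a valid, self-contained replacement for the citation.

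The type 3' step, however, rests on a false claim.
\begin{itemize}
\item The row deleted in $dual(\EE)$ has support $[A_k,A_k+1]$. So $b=0$, but $a=2A_k+2$, which is not zero.
\item Deleting it preserves the $b$-sum but lowers the $a$-sum of the rows before $r_m$ in the dual order by $2A_k+2$.
\item In the integral case this number is even. Your conclusion survives once ``$a$ and $b$ are both zero'' is replaced by ``$b=0$ and $a$ is even.''
\item For half-integral $A_k$ it is odd. Take an uninvolved $r_m$ preceding the pair in $\EE$. Then $\alpha'_m\equiv\beta_m+1$ while $\beta'_m=\alpha_m$.
\item Plugging this into the half-integral formulas of Definition \ref{def dual} gives final values $l_m+(-1)^{\alpha_m}\eta_m$ and $-\eta_m$ for $r_m$.
\end{itemize}
So the cancellation you assert for half-integral rows fails for type 3'. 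You should explicitly restrict the type 3' argument to the integral case, which is the only setting in which the paper uses the lemma.
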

\begin{proof}
    For the operation $ui$ the result follows immediately from Definition \ref{def ui}. In particular, the result is also true for the inverse of $ui.$ Suppose now that we consider the operator $dual \circ ui \circ dual$, where the $ui$ is not of type 3'. Then $dual \circ ui \circ dual$ is the inverse of a $ui$ operator (of the same type) by \cite[Corollary 5.6]{HLL22} and thus the result follows. For $dual \circ ui \circ dual$ of type 3', the result follows from direct calculation (note that the rows to which the $ui$ is applied must satisfy that $a_i+b_i\equiv a_j+b_j \mod 2$ since the $ui$ is of type 3').
\end{proof}

Next we give a lemma on a certain relation between the row exchange and union-intersection operators.

\begin{lemma}\label{lemma ui and row exchange commute}
    Let $\EE=(r_1,r_2,r_3)\in\VRep_\rho(G_n)$ and suppose that $\supp(r_1)\supseteq\supp(r)$ for any $r\in\EE$ and that $ui_{2}$ is applicable on $\EE.$ Let $\FF=ui_{2}(\EE)=(r_1,s_2,s_3)$ and consider the row exchange of $r_1$ to the bottom row in both $\EE$ and $\FF.$ We write $\EE'=(r_2',r_3',r_1')$ and $\FF'=(s_2',s_3',r_1'')$ for the resulting virtual extended multi-segments. Then $r_1'=r_1''.$
\end{lemma}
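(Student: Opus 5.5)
The plan is to run the same argument as in Lemma \ref{lemma Alex} and Lemma \ref{lem-comm}: compare the two representations attached to the moved row rather than compute them, and then use uniqueness from Corollary \ref{cor same support}. Concretely, I would first reduce to the non-negative case. By Theorem \ref{thm non-vanishing}, $\pi(sh^d(\EE))\neq 0$ for $d\gg 0$. Shifting commutes with row exchanges and with $ui_2$, since both definitions depend only on differences and the lengths $b_i$. So it suffices to prove $r_1'=r_1''$ after replacing $\EE$ by $sh^d(\EE)$, and we may assume $\EE$ is non-negative, so that any admissible order is available.

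Next I would check that moving $r_1$ to the bottom is legitimate in both $\EE$ and $\FF$. In $\EE$ this holds because $\supp(r_1)\supseteq \supp(r_2),\supp(r_3)$. In $\FF$ the new supports are $\supp(s_2)=[A_3,B_2]$ and $\supp(s_3)=[A_2,B_3]$ (or $s_3$ is deleted in type 3'). Both are still contained in $[A_1,B_1]$, so every intermediate order is admissible and the row exchanges fall into Case 1 of Definition \ref{def row exchange}. I would then note that Case 1 leaves the lower row's $l$ unchanged and only flips its sign. This shows $\EE'$ and $\FF'$ arise by genuine row exchanges with $\pi(\EE')=\pi(\EE)=\pi(\FF)=\pi(\FF')$ by Theorem \ref{thm intersections of local Arthur packets}(1).

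The key step is to show that $\FF'$ is obtained from $\EE'$ by a union-intersection on its top two rows. By Lemma \ref{lemma Alex}(3), together with Lemma \ref{lemma Alex}(1) to regroup the compositions $R_2\circ R_1$, one expects $\FF'=R_2R_1 ui_2(\EE)=ui_1 R_2R_1(\EE)=ui_1(\EE')$ whenever the admissible orders agree, which they do. If this is granted, Lemma \ref{lem-local} says $ui_1$ fixes the untouched bottom row, so $r_1''=r_1'$.

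I expect the main obstacle to be making this identity rigorous, because Lemma \ref{lemma Alex}(3) only treats a single $R_j$. It also silently requires that $ui_1$ be applicable on $\EE'$, meaning that the applicability conditions $A_{k+1}-l_{k+1}=A_k-l_k$ (and the others) persist after $r_1$ passes through. My first attempt would avoid applicability altogether by a support argument. Both $\EE'$ and $\FF'$ represent $\pi(\EE)$. Their bottom rows have the same support $[A_1,B_1]$, and each bottom row is determined by $(l,\eta)$. Passing $r_1$ over $r_2,r_3$ changes $(l_1,\eta_1)$ by formulas in Case 1(a)--(c). These formulas depend on $r_2,r_3$ only through the quantities $\sum (b_j-2l_j)$ and the parities of $A_j-B_j$ and of the products $\eta_j$.

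I would then verify directly, type by type (1, 2, 3, 3'), that these invariants coincide for $(r_2,r_3)$ and $(s_2,s_3)$. For example, in type 1 we have $b(s_2)+b(s_3)=b_2+b_3$, $l$ drops by $A_3-A_2$, and the sign is adjusted by $(-1)^{A_3-A_2}$. A subtlety is that Case 1 branches (a)/(b) depend on each intermediate value, not only on totals. I would handle this by doing the two-step composition explicitly, grouping the computation as in Lemma \ref{lem-comm}.
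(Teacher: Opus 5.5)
There is a gap in the route you finally commit to: the invariants you propose to match are not preserved by $ui_2$. Your own type-1 data already show this. We have $b(s_2)+b(s_3)=b_2+b_3$ while $l(s_3)=l_3-d$ with $d=A_3-A_2>0$. Hence $C(s_2)=C(r_2)+d$ and $C(s_3)=C(r_3)+d$, so $\sum_j(b_j-2l_j)$ goes up by $2d$. Type 2 lowers it by $2d$ in its first subcase, and type 3 with $l_3>l_2$ raises it by $2(l_3-l_2)$.

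Nor is the outcome of moving $r_1$ a function of $C(r_2)+C(r_3)$. Write $\epsilon_{ij}=(-1)^{A_i-B_i}\eta_i\eta_j$, and let $g_3=(-1)^{A_3-B_3}\eta_3$ be the sign of the last circle of $r_3$. Cases 1(a)--(c) of Definition \ref{def row exchange} then show that $r_1'$ is encoded by the signed integer
\[
g_3\bigl(2C(r_3)-\epsilon_{23}\,(2C(r_2)-\epsilon_{12}\,C(r_1))\bigr).
\]
Its absolute value is $C(r_1')$, and its sign is that of the last circle of $r_1'$. In types 1 and 2 one has $\epsilon_{23}=1$, so what has to match is $C(r_3)-C(r_2)$, not the sum. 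In type 3 with $l_3>l_2$, both $\epsilon_{23}$ and $g_3$ change sign when passing to $(s_2,s_3)$. Equality there uses $C(r_3)=A_3-A_2+l_2-l_3$, which is just the type 3 condition. So the case-by-case check the paper alludes to ("a direct consequence of Definitions \ref{def row exchange} and \ref{ui def}") does go through, but only with this signed bookkeeping, not with the invariants you list.

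The route you set aside actually closes with an observation you already made. The two Case 1 exchanges leave the supports and $l$-values of $r_2,r_3$ unchanged. They multiply $\eta_2$ and $\eta_3$ by the same factor $(-1)^{A_1-B_1}$, because the support of the moving row never changes. Hence $\epsilon_{23}$ and every condition of Definition \ref{ui def} hold verbatim for the top two rows of $\EE'$, with the same type, including 3'.

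You also do not need Lemma \ref{lemma Alex}(3) for a composite; just rerun its proof. After your shift, $ui_1(\EE')$ and $\FF'$ both have representation $\pi(\EE)\neq 0$, by Theorem \ref{thm intersections of local Arthur packets}(1) and invariance under row exchange. They also have the same ordered support $[A_3,B_2],[A_2,B_3],[A_1,B_1]$, with the row supported on $[A_2,B_3]$ absent in both in type 3'. So Corollary \ref{cor same support} gives $ui_1(\EE')=\FF'$. Since $ui_1$ does not touch the last row, $r_1'=r_1''$.

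Compared with the paper's direct computation, this replaces the case analysis by the multiplicity-one input behind Corollary \ref{cor same support}. The cost is that it uses the standing hypothesis $\pi(\EE)\neq 0$ and the reduction to the non-negative case.
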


\begin{proof}
    The proof is a direct consequence of Definitions \ref{def row exchange} and \ref{ui def}. 
\end{proof}

The following result which will be used for certain reductions in several technical arguments later.

\begin{cor}
\label{cor Alex}
    Suppose $r$ is a row in $\mathcal{E}\in\VRep_\rho^\mathbb{Z}(G_n)$ immediately followed by some consecutive rows consisting a sub-multi-segment $\mathcal{E}_1 \subset \mathcal{E},$ $\mathcal{E}_2$ is an extended multi-segment equivalent to $\EE_1$. 
    We suppose further that
    $\supp(r)\supseteq\supp(s)$ for any $s\in\EE_1\cup\EE_2$.    
    Then, the result after $r$ is exchanged with $\EE_1$ is the same as the result after $r$ is exchanged with $\EE_2$. 
\end{cor}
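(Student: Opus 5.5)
The plan is to reduce Corollary \ref{cor Alex} to the situation of Lemma \ref{lemma ui and row exchange commute}: by Theorem \ref{thm intersections of local Arthur packets}(2), $\EE_2$ is obtained from $\EE_1$ by a finite chain of raising operators, their inverses, and row exchanges. It therefore suffices, by induction on the length of this chain, to treat the case where $\EE_2=T(\EE_1)$ for a single such operator $T$. Here ``the result after $r$ is exchanged with $\EE_1$'' means the row $r'$ obtained by successively applying row exchanges moving $r$ past each row of $\EE_1$. Since $\supp(r)$ contains every support involved, these exchanges are always defined (the new orders are admissible) and fall under Case 1 of Definition \ref{def row exchange}.

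First I treat the case where $T$ is a row exchange inside $\EE_1$. Here I use Lemma \ref{lem-comm}, applied to $r$ and the two adjacent rows of $\EE_1$ being swapped, together with Lemma \ref{lemma Alex}(1). Moving $r$ past both rows and then swapping them equals swapping first and then moving $r$ past. So the resulting row $r'$ coincides, and exchanges with the remaining rows of $\EE_1$ are unaffected.

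Next I treat the case where $T=ui_{i}$ (possibly after row exchanges, which are handled as above) acting on two adjacent rows of $\EE_1$. I first move $r$ via row exchanges until it sits directly above these two rows; this intermediate row is the same for $\EE_1$ and $\EE_2$, since the preceding rows agree. Then Lemma \ref{lemma ui and row exchange commute}, applied to the triple ($r$, the two rows), shows that moving $r$ past the pair gives the same row before and after the $ui$. The rows below are unchanged by locality (Lemma \ref{lem-local}), so the subsequent exchanges agree.

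For $ui^{-1}$ the same identity is read backwards. For $dual\circ ui\circ dual$ not of type 3', Lemma \ref{lemma ui inv = dud} identifies it with the inverse of a $ui$, which is already covered. The type 3' case, where a row is deleted or created, needs separate handling. There I would check directly that exchanging $r$ past a row of the form $([A,B],l,\eta)$ with $b-2l$ and sign contributing trivially leaves $r$ unchanged. This amounts to a short computation in Definition \ref{def row exchange}, where the quantity $b_{k+1}-2l_{k+1}$ governs the change.

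The main obstacle I expect is justifying the hypotheses of Lemma \ref{lemma ui and row exchange commute} at each step. Specifically, the intermediate configurations must stay in $\VRep_\rho$, and the containment $\supp(r)\supseteq\supp(s)$ must hold for every row produced along the chain from $\EE_1$ to $\EE_2$. The corollary's hypothesis only constrains rows of $\EE_1\cup\EE_2$. I would first try to choose the chain so that every intermediate row has support inside the union of supports in $\EE_1$ and $\EE_2$, using that $ui$ and its inverses only produce unions and intersections of existing segments. Then I would pass to $sh^d$ as in the proof of Lemma \ref{lem-comm} to stay in the non-negative range.
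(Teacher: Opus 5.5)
Your skeleton matches the paper's. You induct along a chain of elementary operations, use Lemma \ref{lem-comm} for row exchanges inside $\EE_1$ and Lemma \ref{lemma ui and row exchange commute} for $ui^{\pm1}$, and use Lemma \ref{lemma ui inv = dud} to rewrite a $dual\circ ui\circ dual$ not of type 3' as a $ui^{-1}$. However, the one case that needs a new idea, $dual\circ ui\circ dual$ of type 3', is not actually handled.

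In $\EE_1$ this operation does not delete a row with $b-2l=0$ while leaving the other rows alone. It replaces two rows by one modified row. For instance, by Lemma \ref{merge} the hats $h_1=([A_1,-B_1],B_1,\eta_1)$ and $h_2=([B_1-1,-B_2],B_2,\eta_2)$ become $([A_1,-B_2],B_2,\eta_1)$. For $D$, a hat and a row of circles are combined into one longer row of circles. The row that gets deleted exists only in $dual(\EE_1)$, where it is the empty segment $[A,A+1]$ of Definition \ref{ui def}(3'). So your ``short computation'' has no row of $\EE_1$ to act on. You would still have to show that moving $r$ past the two old rows gives the same result as moving it past the single new row.

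The missing idea is that the dual of that empty segment is the phantom row $s=([k-1,-k]_\rho,k,1)$, which has $b(s)-2l(s)=0$. The type 3' operation then factors, formally, as a union-intersection composed with adding or removing $s$. In the example above, take $k=B_1$. Definition \ref{ui def} (Case 2 or Case 3 according to $\eta_1$) gives $ui(s,h_1*h_2)=(h_1,h_2)$, so merging is phantom removal after a $ui^{-1}$. The paper builds this in by replacing Theorem \ref{thm intersections of local Arthur packets}(2) with Atobe's generators: the phantom operators $P_k$, union-intersections, and row exchanges \cite[Theorem 1.4]{Ato23}.

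With that factorization, the $ui$ part is your Lemma \ref{lemma ui and row exchange commute} step, and your computation is applied to $s$. Exchanging $r$ past $s$ falls in Case 1(b) or 1(c) of Definition \ref{def row exchange}; Case 1(a) would need $b(r)-2l(r)<0$. Hence $l(r)$ shifts by $b(s)-2l(s)=0$ and $\eta(r)$ is multiplied by $(-1)^{A(s)-B(s)+1}=1$.

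Note that $\supp(s)=[k-1,-k]$ is usually not contained in $\supp(r)$. Your plan to keep every intermediate support inside $\supp(r)$ therefore cannot cover this step. The exchange with $s$ has to be done formally, treating the Case 1 formulas as formulas on symbols, which is how the paper proceeds.
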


\begin{proof}
    Before proving the claim, we first recall an operator, called phantom (dis)appearing, defined by Atobe (\cite[Definition 3.4]{Ato23}). Let $k$ be an integer and  $P_k$ denote the operator  which formally attaches $([k-1,-k]_\rho,k,1)$ (allowed only if $k>0$) or $([k-\frac{1}{2},-k-\frac{1}{2}]_\rho,k,1)$ (allowed only if $k\geq0$) as the first row of a virtual extended multi-segment. The choice is determined by the good parity condition. For split symplectic and odd special orthogonal groups, Atobe showed that any two equivalent extended multi-segments are related by a finite composition of $P_k$'s, union-intersections, row exchanges, and their inverses(\cite[Theorem 1.4]{Ato23}). This result is equivalent with Theorem \ref{thm intersections of local Arthur packets} (see \cite[Remark 10.5]{HLL22}) and so the same result holds for quasi-split even orthogonal groups.

    We return to the proof of the claim. For simplicity, we may assume that $\EE=\{r\}\cup\EE_1.$ Let $\EE'=\EE_1'\cup\{r'\}$ denote the resulting virtual extended multi-segment obtained by row exchanging $r$ with every segment in $\EE_1.$ Similarly, we consider $\{r\}\cup\EE_2$ and let $\EE_2'\cup\{r''\}$ denote the resulting virtual extended multi-segment obtained by row exchanging $r$ with every segment in $\EE_2.$ We must show that $r'=r''.$

    Furthermore, without loss of generality, we may assume that $\EE_2$ is obtained from $\EE_1$ by either a union-intersection or the composition of $P_k$ and a union-intersection. The former case is taken care of by Lemma \ref{lemma ui and row exchange commute}  and so we proceed with the latter case.

    Let $s$ denote the corresponding formally added element $([k-1,-k]_\rho,k,1)$ (recall that $A(r)\in\Z$ and so this must be the case). We note that formally row exchanging $r$ and $s$ leaves both segments unchanged. Indeed, the row exchange of $r$ with $s$ is always in Case 1(b) or 1(c) of Definition \ref{def row exchange} and formally $b(s)-2l(s)=0$. Consequently, the effect of row exchanging $r$ to the last row of $\{r\}\cup\{s\}\cup\EE_1$ is the same as row exchanging $r$ to the last row of $\{r\}\cup\EE_1$. The claim then follows from a similar argument as in the proof of Lemma \ref{lemma ui and row exchange commute}.
\end{proof}

\subsection{The alternating sign condition} In this section, we define and study the alternating sign condition.
First, we give a definition which counts the number of circles in the symbol associated to an extended segment (see Example \ref{exmp symbol}).

\begin{defn}
    Let $r = ([A, B]_\rho, l, \eta)$ be a row. Then we denote the number of circles in $r$ by $C(r)$, which is given by \[C(r) := b - 2l.\] For $\EE$ a virtual extended multi-segment, we let $C(\EE)$ be the total number of circles in $\EE$, which is given by \[C(\EE) := \sum_{r \in \EE} C(r).\]
\end{defn}

We note that $C(r)+1\equiv A-B \mod 2.$ Next, we attach a sign to $\EE.$

\begin{defn}
\label{def-sign-ems}
    We let the \emph{sign} $\eta(\EE)$ of an extended multi-segment $\EE$ be \[\eta(\EE) := \eta(r_1),\]
    where $r_1$ is the first row in $\EE$ in the admissible order.
\end{defn}

The alternating sign condition is an important condition on two rows which affects how they interact under the certain operations.

\begin{defn}
\label{def-alternating-sign-condition}
    Let $r_1, r_2$ be rows with $r_1 < r_2$ in some admissible order. We say that $r_1$ and $r_2$ (in that order) satisfy the \emph{alternating sign condition} if \[\eta(r_2) = (-1)^{C(r_1)} \eta(r_1).\]
    We say that an extended multi-segment $\EE$ is \emph{alternating} if any two consecutive rows of $\EE$ satisfy the alternating sign condition.
\end{defn}

In terms of symbols (see Example \ref{exmp symbol}), the alternating sign condition says that the last circle of the first row $r_1$ and the first circle of the second row $r_2$ have opposite signs.

\begin{defn}
\label{def-last-circle}
    Let $r$ be a row. We call $(-1)^{C(r)-1} \eta(r)$ the \emph{sign of the last circle of $r.$}
\end{defn}

Note that the above definition makes sense even if, in a picture, $r$ does not have any circles. We now seek to prove some properties about rows that satisfy the alternating sign condition. First, we need the following lemma.

\begin{lemma}
\label{Dual sign}
    Let $\EE\in\VRep_\rho^\mathbb{Z}(G_n)$ and $r\in\EE$ be a row. Let $\widehat{r}$ be the image of $r$ under $dual$ (see Definition \ref{def dual}). Then \[\eta(\widehat{r}) = (-1)^{C(\EE) - C(r)} \eta(r).\]
\end{lemma}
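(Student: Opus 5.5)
The identity should follow from unwinding Definition \ref{def dual} in the integral case, together with a parity bookkeeping of $\alpha_i$, $\beta_i$ and circle counts. Write $\EE=\{r_j\}_{j\in I}$ with $r=r_i=([A_i,B_i],l_i,\eta_i)$. Since $\EE$ is integral, every $B_j\in\Z$, so the first case of the definition applies:
\[
\eta(\widehat r)=(-1)^{\alpha_i+\beta_i}\eta_i,\qquad \alpha_i=\sum_{j<i}a_j,\quad \beta_i=\sum_{j>i}b_j.
\]
It therefore suffices to show
\[
\alpha_i+\beta_i\equiv C(\EE)-C(r_i)=\sum_{j\neq i}C(r_j) \pmod 2 .
\]

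We can compare the two sides term by term. Since $C(r_j)=b_j-2l_j\equiv b_j \pmod 2$, the terms with $j>i$ already match: $\beta_i\equiv\sum_{j>i}C(r_j)$. For $j<i$ we need $a_j\equiv b_j \pmod 2$. This holds because
\[
a_j-b_j=2B_j,
\]
which is even since $B_j\in\Z$. Hence $\alpha_i\equiv\sum_{j<i}b_j\equiv\sum_{j<i}C(r_j)$, and adding the two congruences gives the claim.

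We also need to handle the convention for rows with $l=b/2$, where $\eta$ is only defined up to weak equivalence and is normalized to $1$. In the integral case the dual formula still applies, and the statement should be read modulo weak equivalence, i.e.\ with the normalized choice. When $l_i=b_i/2$ there are no circles, so the sign of $\widehat r$ is irrelevant up to weak equivalence. One point needs care here: $l_i'=l_i+B_i$ and $\widehat r$ has length $b_i$ as well, so we should check that $l_i'=b_i/2$ as well, or else interpret the identity as defining the sign. The main obstacle is this convention issue; I would state that the identity holds for the formula of Definition \ref{def dual}, with $\eta$ taken as given, and note that it is consistent with weak equivalence. The parity computation itself is routine.
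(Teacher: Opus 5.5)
Your argument is correct and is essentially the paper's proof: in the integral case $\eta(\widehat r)=(-1)^{\alpha_i+\beta_i}\eta(r)$, and $a_j\equiv b_j\equiv C(r_j)\pmod 2$ turns $\alpha_i+\beta_i$ into $C(\EE)-C(r)$. You also correctly take $\beta_i=\sum_{j>i}b_j$, whereas the paper's proof has a typo there. Your convention worry rests on a slip: $\widehat r$ has length $a_i$, not $b_i$. Hence $C(\widehat r)=a_i-2(l_i+B_i)=b_i-2l_i=C(r)$, so $l_i=b_i/2$ holds exactly when $l_i'=a_i/2$, and the degenerate case is consistent.
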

\begin{proof}
    Suppose $r = r_i$ is the $i$th row of $\EE$. Since $B \in \mathbb{Z}$ for every row, by Definition \ref{def dual}, $\eta(\widehat{r}) = (-1)^{\alpha_i + \beta_i} \eta(r),$ where $\alpha_i = \sum_{j < i} a_j$ and $\beta_i = \sum_{j < i} b_j.$ Since $\EE$ is integral, for any $j$ we have $a_j \equiv b_j \bmod{2}$, so \[\alpha_i + \beta_i \equiv \sum_{j \neq i} b_j \equiv \sum_{j \neq i} C(r_j) \equiv C(\EE) - C(r_i) \bmod{2}. \qedhere \]
\end{proof}

We use the above lemma to verify that $dual$ preserves the alternating sign condition.

\begin{lemma}
\label{Alternating dual} Let $\EE\in\VRep_\rho^\mathbb{Z}(G_n)(G).$
    \begin{enumerate}
        \item If $r_1 < r_2$ are two rows of $\EE$ satisfying the alternating sign condition, then the rows $\widehat{r_2} < \widehat{r_1}$ in $dual(\EE)$ also satisfy the alternating sign condition. In particular, if $\EE$ is alternating, then so is $dual(\EE).$
        \item If $\EE$ is alternating, then $\eta(\EE) = \eta(dual(\EE)).$
    \end{enumerate}
\end{lemma}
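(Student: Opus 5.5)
The plan is to compute directly from Lemma \ref{Dual sign}, which gives the sign of every dual row: $\eta(\widehat{r}) = (-1)^{C(\EE)-C(r)}\eta(r)$. We also need how the number of circles behaves under $dual$. By Definition \ref{def dual}, in the integral case $l' = l + B$ and $b' = A - (-B) + 1 = b + 2B$. Hence
\[
C(\widehat{r}) = b + 2B - 2(l+B) = b - 2l = C(r),
\]
so $dual$ preserves the circle count of each row, and in particular $C(dual(\EE)) = C(\EE)$.

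For part (1), let $r_1 < r_2$ satisfy $\eta(r_2) = (-1)^{C(r_1)}\eta(r_1)$. Since $dual$ reverses the order, $\widehat{r_2} < \widehat{r_1}$, and we must verify $\eta(\widehat{r_1}) = (-1)^{C(\widehat{r_2})}\eta(\widehat{r_2})$. Applying Lemma \ref{Dual sign} to both rows gives
\[
\eta(\widehat{r_1})\,\eta(\widehat{r_2}) = (-1)^{2C(\EE) - C(r_1) - C(r_2)}\,\eta(r_1)\eta(r_2).
\]
Substituting $\eta(r_1)\eta(r_2) = (-1)^{C(r_1)}$, the right-hand side becomes $(-1)^{C(r_2)} = (-1)^{C(\widehat{r_2})}$, which is the required condition. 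Consecutive rows of $dual(\EE)$ are exactly the reversed consecutive pairs of $\EE$, so $dual(\EE)$ is alternating whenever $\EE$ is.

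For part (2), write $\EE = (r_1, \dots, r_k)$. The first row of $dual(\EE)$ is $\widehat{r_k}$, so by Lemma \ref{Dual sign}
\[
\eta(dual(\EE)) = (-1)^{C(\EE) - C(r_k)}\eta(r_k).
\]
Iterating the alternating condition gives $\eta(r_k) = (-1)^{C(r_1)+\dots+C(r_{k-1})}\eta(r_1) = (-1)^{C(\EE)-C(r_k)}\eta(r_1)$. The two signs cancel, leaving $\eta(dual(\EE)) = \eta(r_1) = \eta(\EE)$.

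The one delicate point is rows with $l = b/2$, i.e.\ rows with no circles, where $\eta$ is only determined up to weak equivalence. I would handle these by noting that the identities above are formal sign identities valid for the values assigned by Definition \ref{def dual}. The alternating condition for such a row just propagates the sign of the last circle, consistent with Definition \ref{def-last-circle}. So I expect no real obstacle, only bookkeeping of this convention.
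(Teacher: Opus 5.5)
Your proposal is correct and follows the paper's proof essentially verbatim. In part (1) you combine Lemma \ref{Dual sign} for both rows with the alternating relation and the invariance $C(\widehat{r})=C(r)$; in part (2) you iterate the alternating condition from the first row to the last and apply Lemma \ref{Dual sign} to the last row, which becomes the first row of $dual(\EE)$. Your explicit check that $C(\widehat{r}) = b+2B-2(l+B) = C(r)$ supplies a step the paper only asserts.
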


\begin{proof}
By Lemma \ref{Dual sign} and the fact that $r_1$ and $r_2$ satisfy the alternating sign condition, we obtain
    \begin{align*}
        \eta(\widehat{r_1}) = (-1)^{C(\EE) - C(r_1)} \eta(r_1) 
        &= (-1)^{C(\EE) - C(r_1)} \left( (-1)^{C(r_1)} \eta(r_2) \right) \\
        &= (-1)^{C(\EE)} \eta(r_2) \\
        &= (-1)^{C(\EE)} \left( (-1)^{C(\EE) - C(r_2)} \eta(\widehat{r_2}) \right) \\
        &= (-1)^{C(r_2)} \eta(\widehat{r_2}).
    \end{align*}
    Since $C(r_2) = C(\widehat{r_2})$, this shows $\widehat{r_2}$ and $\widehat{r_1}$ satisfy the alternating sign condition, in that order. This proves Part (1).

    For Part (2), let $r$ be the first row of $\EE$ and let $s$ be the last row of $\EE$. Since $\EE$ is alternating, by repeatedly applying the alternating sign condition, we obtain $\eta(s) = (-1)^{C(\EE) - C(s)} \eta(r)$. By Lemma \ref{Dual sign}, we have $\eta(\widehat{s}) = (-1)^{C(\EE) - C(s)} \eta(s)$. Combining these two gives that $\eta(\widehat{s}) = \eta(r)$. Since $\widehat{s}$ is the first row of $dual(\EE)$, this shows $\eta(\EE) = \eta(dual(\EE))$.
\end{proof}

\subsection{Lemmas on row exchanges}

Recall that our goal is to classify all the extended multi-segments $\EE'$ equivalent to a tempered extended multi-segment $\EE$. One type of extended segment that appears in such extended multi-segments $\EE'$ is of the following form.

\begin{defn}
\label{def-hat}
    A \emph{hat} is an extended segment of the form $([A, B]_\rho, l, \eta)$ where $B = -l$.
\end{defn}

Much of the substance of the ensuing proofs will depend on understanding how a hat changes under certain row exchanges. The next few lemmas will be important in analyzing the behavior of hats in special cases of these types of operations.

\begin{lemma}
\label{swap down}
    Suppose $r_1 < r_2$ are consecutive rows satisfying the alternating sign condition and with $\supp(r_1) \supset \supp(r_2).$ Then, if $r_2' < r_1'$ are the images of $r_1$ and $r_2$ under a row exchange, we have:
    \begin{align*}
        (l(r_1'), \eta(r_1')) &= (l(r_1) - C(r_2), (-1)^{C(r_2)} \eta(r_1)) \\
        (l(r_2'), \eta(r_2')) &= (l(r_2), (-1)^{C(r_1)+1} \eta(r_2)).
    \end{align*}
    Moreover, $r_2' < r_1'$ fail the alternating sign condition.
\end{lemma}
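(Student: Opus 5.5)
This is a direct computation with Definition \ref{def row exchange}, Case 1, applied to the pair $k=r_1$, $k+1=r_2$. We have $\supp(r_1)\supset\supp(r_2)$, and the two rows are consecutive, so Case 1 is the relevant case.

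\textbf{Step 1: compute $\epsilon$.} Write $\epsilon=(-1)^{A(r_1)-B(r_1)}\eta(r_1)\eta(r_2)$. Since $C(r_1)=b(r_1)-2l(r_1)$ and $b(r_1)=A(r_1)-B(r_1)+1$, we get $(-1)^{A(r_1)-B(r_1)}=(-1)^{C(r_1)+1}$. The alternating sign condition gives $\eta(r_2)=(-1)^{C(r_1)}\eta(r_1)$. Hence
\[
\epsilon=(-1)^{C(r_1)+1}\cdot(-1)^{C(r_1)}\eta(r_1)^2=-1.
\]
So we are always in Case 1(c), and no inequality between $b_k-2l_k$ and $2(b_{k+1}-2l_{k+1})$ needs checking.

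\textbf{Step 2: read off the formulas.} Case 1(c) gives the following.
\begin{itemize}
\item $l(r_1')=l(r_1)-(b(r_2)-2l(r_2))=l(r_1)-C(r_2)$.
\item $\eta(r_1')=(-1)^{A(r_2)-B(r_2)+1}\eta(r_1)$. By the parity relation $C(r)+1\equiv A(r)-B(r)\pmod 2$, this equals $(-1)^{C(r_2)}\eta(r_1)$.
\item For the other row, $(l(r_2'),\eta(r_2'))=(l(r_2),(-1)^{A(r_1)-B(r_1)}\eta(r_2))=(l(r_2),(-1)^{C(r_1)+1}\eta(r_2))$.
\end{itemize}
These are the claimed formulas.

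\textbf{Step 3: the failure of alternation.} We must show $\eta(r_1')\neq(-1)^{C(r_2')}\eta(r_2')$. The number of circles of $r_2$ is unchanged, since $l(r_2')=l(r_2)$ and the support is unchanged, so $C(r_2')=C(r_2)$. Then
\[
(-1)^{C(r_2')}\eta(r_2')=(-1)^{C(r_2)}(-1)^{C(r_1)+1}\eta(r_2).
\]
Substituting $\eta(r_2)=(-1)^{C(r_1)}\eta(r_1)$, this becomes $-(-1)^{C(r_2)}\eta(r_1)=-\eta(r_1')$. So the alternating sign condition fails for $r_2'<r_1'$.

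\textbf{Possible obstacles.} The only delicate points are conventions. First, when $l=b/2$ (no circles), $\eta$ is only defined up to weak equivalence. The computation stays valid formally, since the sign of a row is treated through the formula in Definition \ref{def-last-circle} regardless of whether the row has circles. Second, one should note that the row exchange is actually defined, that is, that swapping the two rows gives an admissible order. This holds because nested supports never violate condition ($P$). Beyond these conventions, everything follows from the parity identity $C(r)+1\equiv A(r)-B(r)$.
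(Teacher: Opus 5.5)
Your proof is correct and follows essentially the same argument as the paper: use the alternating sign condition together with the parity relation $C(r)+1\equiv A(r)-B(r)\pmod 2$ to get $\epsilon=-1$. This places you in Case 1(c) of Definition \ref{def row exchange}, from which you read off the formulas and check by direct substitution that $r_2'<r_1'$ fail the alternating sign condition.
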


\begin{proof}
    First, we calculate that \[\epsilon = (-1)^{A(r_1) - B(r_1)} \eta(r_1) \eta(r_2) = (-1)^{C(r_1) - 1} \eta(r_1) \eta(r_1) \cdot (-1)^{C(r_1)} = -1.\] So the row exchange falls into Case 1(c) of Definition \ref{def row exchange}. The formulas for $l(r_i'), \eta(r_i')$ follow immediately from the given formulas. To see that these rows fail the alternating sign condition, we note that because row exchange preserves supports, $b(r_2) = b(r_2')$, so $C(r_2) \equiv C(r_2') \bmod{2}$. Applying this fact and using the above formulas, \[\eta(r_2') = (-1)^{C(r_1) + 1} \eta(r_2) = -\eta(r_1) = -(-1)^{C(r_2')} \eta(r_1'). \qedhere\]
\end{proof}

The reverse occurs when swapping the larger row up.

\begin{lemma}
\label{swap up}
    Suppose $r_1 < r_2$ are consecutive rows failing the alternating sign condition with $\supp(r_1) \subset \supp(r_2)$, and suppose that $C(r_2) \geq 2 C(r_1)$. Then, if $r_2' < r_1'$ are the images of $r_1$ and $r_2$ under a row exchange, we have:
    \begin{align*}
        (l(r_1'), \eta(r_1')) &= (l(r_1), (-1)^{C(r_2)+1} \eta(r_1)) \\
        (l(r_2'), \eta(r_2')) &= (l(r_2) + C(r_1), (-1)^{C(r_1)} \eta(r_2)).
    \end{align*}
    Moreover, $r_2'$ and $r_1'$ satisfy the alternating sign condition.
\end{lemma}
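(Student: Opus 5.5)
This is a direct computation from Definition \ref{def row exchange}, mirroring the proof of Lemma \ref{swap down}. First I identify the case. Since $\supp(r_1)\subset\supp(r_2)$ and $r_1$ sits in position $k$, $r_2$ in position $k+1$, we are in Case 2 of Definition \ref{def row exchange}.

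Next I compute $\epsilon=(-1)^{A(r_1)-B(r_1)}\eta(r_1)\eta(r_2)$. Recall $A(r)-B(r)=b(r)-1\equiv C(r)+1 \bmod 2$, since $C(r)=b(r)-2l(r)$. Failure of the alternating sign condition means $\eta(r_2)=-(-1)^{C(r_1)}\eta(r_1)$. Hence
\[
\epsilon=(-1)^{C(r_1)+1}\eta(r_1)\cdot\bigl(-(-1)^{C(r_1)}\eta(r_1)\bigr)=1 .
\]
Together with the hypothesis $C(r_2)\ge 2C(r_1)$, that is $b_{k+1}-2l_{k+1}\ge 2(b_k-2l_k)$, this puts us in Case 2(b).

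Reading off Case 2(b) gives the new data. The lower row keeps $l(r_1)$ and has sign $(-1)^{A(r_2)-B(r_2)}\eta(r_1)=(-1)^{C(r_2)+1}\eta(r_1)$. The upper row has $l(r_2)+(b(r_1)-2l(r_1))=l(r_2)+C(r_1)$ and sign $(-1)^{A(r_1)-B(r_1)+1}\eta(r_2)=(-1)^{C(r_1)}\eta(r_2)$. These are exactly the claimed formulas.

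Finally I check the alternating sign condition for $r_2'<r_1'$; this bookkeeping is the only step needing care. Since supports are preserved, $C(r_2')=C(r_2)-2C(r_1)\equiv C(r_2)\bmod 2$. We need $\eta(r_1')=(-1)^{C(r_2')}\eta(r_2')$. The right side equals
\[
(-1)^{C(r_2)}(-1)^{C(r_1)}\eta(r_2)=(-1)^{C(r_2)}(-1)^{C(r_1)}\cdot\bigl(-(-1)^{C(r_1)}\eta(r_1)\bigr)=(-1)^{C(r_2)+1}\eta(r_1),
\]
which equals $\eta(r_1')$. This confirms the alternating sign condition for $r_2'<r_1'$ and completes the argument.
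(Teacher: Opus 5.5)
Your proof is correct and follows the paper's argument essentially verbatim. You get $\epsilon=1$ from the failure of the alternating sign condition and land in Case 2(b) via $C(r_2)\ge 2C(r_1)$. You then read off the formulas and check the alternating sign condition using $C(r_2')\equiv C(r_2) \bmod 2$, and you also spell out the parity identity $A-B\equiv C+1 \bmod 2$, which the paper uses implicitly.
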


\begin{proof}
    Since $r_1$ and $r_2$ fail the alternating sign condition, we obtain that $\epsilon = 1$ (see Definition \ref{def row exchange} for notation). Moreover, since $\supp(r_1) \subset \supp(r_2)$ and $C(r_2) \geq 2C(r_1)$, we fall into Case 2(b) of Definition \ref{def row exchange}. The formulas follow immediately from the given formulas. Finally, $r_2'$ and $r_1'$ satisfy the alternating sign condition because \[\eta(r_1') = (-1)^{C(r_2) + 1} \eta(r_1) = (-1)^{C(r_1) + C(r_2)} \eta(r_2) = (-1)^{C(r_2')} \eta(r_2'). \qedhere\]
\end{proof}

We generalize Lemmas \ref{swap down} and $\ref{swap up}$ to analyze what happens when we swap a row with a series of consecutive rows satisfying the alternating sign condition.

\begin{lemma}
\label{big swap down}
    Suppose that $r_0 < r_1 < \cdots < r_k$ are consecutive rows satisfying the alternating sign condition with $\supp(r_0) \supset \supp(r_i)$ for $i > 0$. Let $r_1' < \dots < r_k' < r_0^{(k)}$ be their images after $r_0$ is row exchanged $k$ times. Then we have
    \begin{align*}
        (l(r_0^{(k)}), \eta(r_0^{(k)})) &= \left(l(r_0) - \sum_{i=1}^k C(r_i), (-1)^{\sum_{i=1}^k C(r_i)} \eta(r_0) \right), \\
        (l(r_i'), \eta(r_i')) &= (l(r_i'), (-1)^{C(r_0)+1} \eta(r_i)) \text{ for } i >0.
    \end{align*}
    Moreover, after the row exchanges every pair of adjacent rows satisfies the alternating sign condition except $r_k'$ and $r_0^{(k)}$.
\end{lemma}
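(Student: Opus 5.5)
We argue by induction on $k$, using Lemma \ref{swap down} as both the base case and the inductive step. (In the displayed formula for $r_i'$, the $l$-component should read $l(r_i)$: the rows $r_i$ keep their $l$ and only their sign flips. Lemma \ref{swap down} shows this.)

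For $k=1$ the statement is exactly Lemma \ref{swap down}, applied to the consecutive pair $r_0<r_1$. This pair satisfies the alternating sign condition and has $\supp(r_0)\supset\supp(r_1)$. Note also that $C(r_1')=C(r_1)$, since $l$ and the support are unchanged.

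For the inductive step, suppose that after $j$ exchanges ($1\le j<k$) we have the rows
\[
r_1'<\cdots<r_j'<r_0^{(j)}<r_{j+1}<\cdots<r_k,
\]
with
\[
l(r_0^{(j)})=l(r_0)-\sum_{i\le j}C(r_i),\qquad \eta(r_0^{(j)})=(-1)^{\sum_{i\le j}C(r_i)}\eta(r_0).
\]
Since row exchanges preserve supports, $\supp(r_0^{(j)})=\supp(r_0)\supset\supp(r_{j+1})$.

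The key claim is that $r_0^{(j)}<r_{j+1}$ satisfies the alternating sign condition, so that Lemma \ref{swap down} applies again. To check this, iterate the alternating condition along $r_0<r_1<\cdots<r_{j+1}$ to get
\[
\eta(r_{j+1})=(-1)^{C(r_0)+\cdots+C(r_j)}\eta(r_0).
\]
On the other side,
\[
C(r_0^{(j)})=b(r_0)-2l(r_0^{(j)})=C(r_0)+2\sum_{i\le j}C(r_i),
\]
so $C(r_0^{(j)})\equiv C(r_0)\pmod 2$. Hence
\[
(-1)^{C(r_0^{(j)})}\eta(r_0^{(j)})=(-1)^{C(r_0)+\sum_{i\le j}C(r_i)}\eta(r_0)=\eta(r_{j+1}),
\]
which is the required condition. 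Applying Lemma \ref{swap down} to this pair then gives:
\begin{itemize}
\item $l(r_0^{(j+1)})=l(r_0^{(j)})-C(r_{j+1})$;
\item $\eta(r_0^{(j+1)})=(-1)^{C(r_{j+1})}\eta(r_0^{(j)})$;
\item $r_{j+1}'$ keeps its $l$, and its sign is multiplied by $(-1)^{C(r_0^{(j)})+1}=(-1)^{C(r_0)+1}$.
\end{itemize}
This closes the induction for the formulas. Lemma \ref{swap down} also tells us that the new pair $r_{j+1}'<r_0^{(j+1)}$ fails the alternating sign condition.

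It remains to check the alternating condition among the adjacent pairs $r_i'<r_{i+1}'$ with $1\le i<k$. Both signs in such a pair are multiplied by the same factor $(-1)^{C(r_0)+1}$, and the circle counts are unchanged. Since the condition $\eta(r_{i+1}')=(-1)^{C(r_i')}\eta(r_i')$ is homogeneous in the signs, it is preserved. The only failing adjacent pair is therefore $r_k'<r_0^{(k)}$.

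The main point needing care is the parity bookkeeping: the sign $(-1)^{C(r_0^{(j)})}$ must be computed from the updated $l$. It works because $C$ changes by an even amount. One must also confirm at each step that the row exchange is genuinely performed, i.e.\ that swapping the two rows yields an admissible order. This holds because the supports are nested, so neither row dominates the other in both $A$ and $B$, and hence property ($P$) does not force the original order.
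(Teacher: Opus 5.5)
Your proposal is correct and follows essentially the same route as the paper: induction on the number of exchanges, applying Lemma \ref{swap down} at each step, with the rows passed over all having their signs scaled by the common factor $(-1)^{C(r_0)+1}$, so their mutual alternation is preserved. The one small difference is how you check that $r_0^{(j)}<r_{j+1}$ alternates. The paper uses the failing pair $r_j'<r_0^{(j)}$ together with the alternation of $r_j<r_{j+1}$, whereas you use the closed-form sign of $r_0^{(j)}$ and the parity $C(r_0^{(j)})\equiv C(r_0)$; both work, and you are also right that the statement's $l$-component should read $l(r_i')=l(r_i)$.
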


\begin{proof}
    We proceed by induction on the number of times $r_0$ is exchanged. The base case is when $r_0$ is not exchanged, which is clear. Suppose the statement holds after $r_0$ has been exchanged $i$ times, giving $r_1' < \dots < r_i' < r_0^{(i)} < r_{i+1} < \dots < r_k$. Then since by inductive hypothesis $(r_i', r_0^{(i)})$ fails the alternating sign condition and $r_i$ and $r_{i+1}$ satisfy the alternating sign condition, we have
    \begin{align*}
        \eta(r_0^{(i)}) = (-1)^{C(r_i') + 1} \eta(r_i') 
        &= (-1)^{C(r_i) + 1} \left( (-1)^{C(r_0) + 1} \eta(r_i) \right) \\
        &= (-1)^{C(r_0^{(i)})} \eta(r_{i + 1}).
    \end{align*}
    So $r_0^{(i)}$ and $r_{i+1}$ satisfy the alternating sign condition. Thus, we can apply Lemma \ref{swap down}. Exchanging these rows gives $l(r_0^{(i+1)}) = l(r_0^{(i)}) - C(r_{i+1})$ and $\eta(r_0^{(i+1)}) = (-1)^{C(r_{i + 1})} \eta(r_0^{(i)})$
    which are the desired values (after applying the induction hypothesis). Meanwhile $l(r_i') = l(r_i)$ and \[\eta(r_{i + 1}') = (-1)^{C(r_0') + 1} \eta(r_{i+1}) = (-1)^{C(r_0) + 1} \eta(r_{i+1})\] as desired. Finally, to show the claims about the alternating sign condition, we observe that $r_i'$ and $r_{i+1}'$ are the same as $r_i$ and $r_{i+1}$ respectively except for their sign, which are both multiplied by the same quantity $(-1)^{C(r_0)+1}$. Since $r_i$ and $r_{i+1}$ satisfy the alternating sign condition, so do $r_i'$ and $r_{i+1}$. Also, by Lemma \ref{swap down}, $r_{i+1}'$ and $r_0^{(i+1)}$ do not alternate. This completes the inductive step.
\end{proof}

\begin{lemma}
\label{big swap up}
    Suppose that $r_1 < \cdots < r_k < r_0$ are consecutive rows such that every pair of consecutive rows satisfies the alternating sign condition except $r_k$ and $r_0$. Suppose further that $\supp(r_0) \supset \supp(r_i)$ for $i > 0$, and that $C(r_0) \geq 2 \sum_{i = 1}^k C(r_i)$. Let $r_0^{(k)} < r_1' < \cdots < r_k'$ be their images after $r_0$ is exchanged up $k$ times. Then:
    \begin{align*}
        (l(r_0^{(k)}), \eta(r_0^{(k)})) &= \left(l(r_0) + \sum_{i=1}^k C(r_i), (-1)^{\sum_{i=1}^k C(r_i)} \eta(r_0) \right) \\
        (l(r_i'), \eta(r_i')) &= (l(r_i), (-1)^{C(r_0) + 1} \eta(r_i)) \text{ for } i > 0.
    \end{align*}
    Moreover, after the row exchanges every pair of adjacent rows satisfies the alternating sign condition.
\end{lemma}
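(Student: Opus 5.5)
The plan is to mirror the proof of Lemma \ref{big swap down}: induct on the number $i$ of upward exchanges already performed on $r_0$, using Lemma \ref{swap up} as the single-step engine. After $i$ exchanges the rows are
\[
r_1<\dots<r_{k-i}<r_0^{(i)}<r_{k-i+1}'<\dots<r_k'.
\]
The inductive hypothesis will assert three things:
\begin{itemize}
\item $l(r_0^{(i)})=l(r_0)+\sum_{j>k-i}C(r_j)$;
\item $\eta(r_0^{(i)})=(-1)^{\sum_{j>k-i}C(r_j)}\eta(r_0)$;
\item each $r_j'$ with $j>k-i$ equals $r_j$ except that its sign is multiplied by $(-1)^{C(r_0)+1}$.
\end{itemize}
In addition, every adjacent pair alternates except $(r_{k-i},r_0^{(i)})$. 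The base case $i=0$ is exactly the hypothesis.

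For the inductive step I first check that the pair $(r_{k-i},r_0^{(i)})$ fails the alternating sign condition, so that Lemma \ref{swap up} applies. For $i=0$ this is given. For $i>0$, the previous step made $r_0^{(i-1)}$ alternate with $r_{k-i+1}$ via Lemma \ref{swap up}. Since $r_{k-i}$ and $r_{k-i+1}$ alternate, a sign computation analogous to the one in Lemma \ref{big swap down} shows that $\eta(r_0^{(i)})\neq(-1)^{C(r_{k-i})}\eta(r_{k-i})$. Here one uses that $C(r_0^{(i)})$ differs from $C(r_0)$ by an even number, namely $2\sum C(r_j)$, so parities of circle counts are preserved.

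Next I verify the size hypothesis of Lemma \ref{swap up}, namely $C(r_0^{(i)})\geq 2C(r_{k-i})$. We have
\[
C(r_0^{(i)})=C(r_0)-2\sum_{j>k-i}C(r_j)\geq 2\sum_{j\leq k-i}C(r_j)\geq 2C(r_{k-i}),
\]
using the standing assumption $C(r_0)\geq 2\sum_{j=1}^k C(r_j)$. This is the one place that assumption is needed, and it is exactly why it is phrased as a total sum.

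Applying Lemma \ref{swap up} then gives:
\begin{itemize}
\item $l$ increases by $C(r_{k-i})$;
\item $\eta(r_0^{(i+1)})=(-1)^{C(r_{k-i})}\eta(r_0^{(i)})$;
\item $r_{k-i}'$ has sign $(-1)^{C(r_0^{(i)})+1}\eta(r_{k-i})=(-1)^{C(r_0)+1}\eta(r_{k-i})$, again by the parity remark.
\end{itemize}
Lemma \ref{swap up} also shows that $r_0^{(i+1)}$ and $r_{k-i}'$ alternate. Since all the $r_j'$ were multiplied by the same sign, the pair $r_{k-i}'<r_{k-i+1}'$ still alternates. The only remaining non-alternating pair is $(r_{k-i-1},r_0^{(i+1)})$, which completes the induction.

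At $i=k$ no non-alternating pair remains, giving the final claim. The main obstacle is only bookkeeping of the sign parities. It must also be checked that $\supp(r_{k-i})\subset\supp(r_0)$, which is given, so that Case 2 of Definition \ref{def row exchange} is the relevant one; this holds since $r_0$ is the lower row, i.e. the row $k+1$ being moved up.
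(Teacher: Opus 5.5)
Your proof is correct and is the argument the paper intends: induct on the number of upward exchanges as in Lemma \ref{big swap down}, with Lemma \ref{swap up} as the one-step engine (the paper's one-line proof actually names the two single-swap lemmas the wrong way round). Your explicit check that $C(r_0^{(i)})=C(r_0)-2\sum_{j>k-i}C(r_j)\geq 2C(r_{k-i})$, which is where the total-sum hypothesis is used, is left implicit in the paper. The only blemish is an index slip: the previous step makes $r_0^{(i)}$ alternate with $r_{k-i+1}'$, not $r_0^{(i-1)}$ with $r_{k-i+1}$.
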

\begin{proof}
    The proof is exactly analogous to the proof of Lemma \ref{big swap down}, except that instead of applying Lemma \ref{swap up}, we apply Lemma \ref{swap down}.
\end{proof}

Finally, we prove the following two lemmas, which demonstrate cases in which swapping a row $h$ with two rows $h_1$ and $h_2$ leaves $h$ unchanged. These results will allow us to simplify future proofs by removing such pairs $h_1$ and $h_2$ from a virtual extended multi-segment.

\begin{lemma}
\label{lem-multiplicity-cancel}
Suppose $h < h_1 < h_2$ are consecutive rows and that $\supp(h) \supset \supp(h_1)$, $\supp(h_2)$. Further suppose $C(h) \neq 0$, and that $C(h_1) = C(h_2) = 1$, $l(h_1) = l(h_2),$ and $\eta(h_1) = \eta(h_2).$ Then $h$ is unchanged after being exchanged with $h_1$ and $h_2$. Moreover, $h_1$ and $h_2$ are unchanged except that their sign is multiplied by $(-1)^{C(h) + 1}$.
\end{lemma}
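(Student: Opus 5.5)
The plan is a direct computation with Definition \ref{def row exchange}: exchange $h$ first with $h_1$ and then with $h_2$, staying in Case 1 throughout since $\supp(h)\supset\supp(h_i)$. Three facts drive the argument.
\begin{itemize}
\item Row exchange preserves supports, so $A(h)-B(h)$ never changes, and $(-1)^{A(h)-B(h)}=(-1)^{C(h)+1}$ by the congruence $C(r)+1\equiv A-B \bmod 2$.
\item Since $C(h_i)=1$, we have $A(h_i)-B(h_i)$ even and $b(h_i)-2l(h_i)=1$.
\item In Case 1 the lower row keeps $l(h_i)$, and its sign is multiplied by $(-1)^{A(h)-B(h)}=(-1)^{C(h)+1}$. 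This already gives the second assertion about $h_1$ and $h_2$.
\end{itemize}
So it remains to track $(l,\eta)$ of $h$ through the two exchanges.

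Let $\epsilon_1=(-1)^{C(h)-1}\eta(h)\eta(h_1)$ be the quantity $\epsilon$ for the first exchange, and write $h'$ for the image of $h$. Each step below is read off from the case formulas, using the facts above.

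\emph{Case $\epsilon_1=-1$.} We are in Case 1(c).
\begin{itemize}
\item First exchange: $l(h')=l(h)-1$ and $\eta(h')=-\eta(h)$, so $C(h')=C(h)+2$.
\item The new quantity is $\epsilon_2=(-1)^{C(h)+1}(-\eta(h))\eta(h_2)$. Using $\eta(h_2)=\eta(h_1)$, this gives $\epsilon_2=-\epsilon_1=1$.
\item Since $C(h')\ge 2=2C(h_2)$, the second exchange is in Case 1(b). It restores $l=l(h)$ and multiplies the sign by $-1$, giving back $\eta(h)$.
\end{itemize}

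\emph{Case $\epsilon_1=1$ and $C(h)\ge 2$.} The roles of Cases 1(b) and 1(c) are reversed.
\begin{itemize}
\item Case 1(b) gives $l(h')=l(h)+1$ and $\eta(h')=-\eta(h)$, so $C(h')=C(h)-2$.
\item Then $\epsilon_2=-1$, and Case 1(c) undoes the first exchange.
\end{itemize}

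\emph{Case $\epsilon_1=1$ and $C(h)=1$.} This is Case 1(a).
\begin{itemize}
\item First exchange: $l(h')=b(h)-l(h)-1$ and $\eta(h')=\eta(h)$, since the exponent $A(h_1)-B(h_1)$ is even.
\item Then $C(h')=b(h)-2l(h')=2-C(h)=1$, and therefore $\epsilon_2=\epsilon_1=1$.
\item We are again in Case 1(a). The involution $l\mapsto b(h)-l-1$ returns $l(h)$, and the sign stays $\eta(h)$.
\end{itemize}

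The only delicate point is the bookkeeping of which subcase of Case 1 occurs at each step. This is precisely where the hypotheses $C(h)\neq 0$, $C(h_1)=C(h_2)=1$, $l(h_1)=l(h_2)$ and $\eta(h_1)=\eta(h_2)$ enter. If $C(h)=0$ with $\epsilon_1=1$, Case 1(a) would produce $C(h')=2$, and the second exchange would fall into Case 1(b) instead of reversing. I expect no further obstacle beyond carefully checking these parities.
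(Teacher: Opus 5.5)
Your proposal is correct and is essentially the paper's proof. The paper uses the same split according to whether the first exchange lands in Case 1(c), 1(b) or 1(a), which are your branches $\epsilon_1=-1$; $\epsilon_1=1,\ C(h)\ge 2$; and $\epsilon_1=1,\ C(h)=1$. It uses $C(h)\neq 0$ in the same place, to force $C(h)=1$ in the 1(a) branch, and it reads the claim about $h_1,h_2$ off Case 1 together with the fact that row exchange preserves supports.

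Your closing remark on $C(h)=0$ is accurate for the literal formulas. Note, though, that in that case $l(h)=b(h)/2$, so the resulting sign flip of $h$ is invisible up to weak equivalence.
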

\begin{proof}
    Note that since $\supp(h) \supset \supp(h_1)$, $\supp(h_2)$ and the support of a row is unchanged by row exchange, both row exchanges will be in Case 1 of Definition \ref{def row exchange}. We let $h_1'<h'<h_2'$ and $h_1'<h_2'<h''$ denote the effects of the row exchanges.
    The second claim about the sign of $h_1'$ and $h_2'$ follows immediately from the definition. Note that since $C(h_1)=1,$ we have that $2l(h_1)+1=2b(h_1)$ and similarly for $h_2.$
    For the first claim, we divide into three cases. 
    
    First suppose that the first row exchange is in Case 1(a) of Definition \ref{def row exchange}. Then $l(h') = b(h) - l(h) - 1$ and $\eta(h') = \eta(h)$. Since $A(h') = A(h)$, $B(h') = B(h)$, and $\eta(h) = \eta(h')$, and the first row exchange had $(-1)^{A(h) - B(h)} \eta(h) \eta(h_1) = 1$ by assumption, we obtain that the second row exchange also has $\epsilon = 1$. Now note that \[C(h') = C(h) + 2(l(h) - l(h')) = C(h) + 2 (2 l(h) - b(h) + 1) = 2 - C(h).\] Since the first row exchange was in Case 1(a), $C(h) < 2$. Since $C(h) \neq 0$, we must have $C(h) = 1$, and hence the second row exchange will be in Case 1(a). So $l(h'') = b(h') - l(h') - 1 
        = b(h) - (b(h) - l(h) - 1) - 1 
        = l(h)$ and $\eta(h'') = \eta(h') = \eta(h).$
    This completes the proof in this case.
    
    Second suppose that the first row exchange is in Case 1(b). Then $l(h') = l(h) + 1$ and $\eta(h') = -\eta(h)$. Since $\epsilon$ for the first row exchange is $1$, and $\eta(h') = -\eta(h)$, we conclude that $\epsilon$ for the second row exchange is $-1$, so it is in Case 1(c). Then $l(h'') = l(h') - 1 = l(h)$ and $\eta(h'') = -\eta(h') = \eta(h)$. This completes the proof in this case.

    Finally suppose that the first row exchange is in Case 1(c). Then $l(h') = l(h) - 1$ and $\eta(h') = - \eta(h)$. So using similar reasoning as before, $\epsilon$ for the second row exchange is $1$. However note that it is impossible for the second row exchange to be in Case 1(a), since $C(h') = C(h) + 2 \geq 2$, so we cannot have $C(h') < 2 C(h_2) = 2$. So the second row exchange is in Case 1(b), where the formulas give $l(h'') = l(h') + 1 = l(h)$ and $\eta(h'') = -\eta(h') = \eta(h)$. This completes the proof in this case and hence proves the lemma.
\end{proof}

\begin{lemma}
\label{lem-multiplicity-cancel-up}
Suppose $h_1 < h_2 < h$ are consecutive rows and that $\supp(h) \subset \supp(h_1)$, $\supp(h_2)$. Further suppose $C(h) \neq 0$, and that $C(h_1) = C(h_2) = 1$, $l(h_1) = l(h_2)$ and $\eta(h_1) = \eta(h_2).$ Then $h$ is unchanged after being exchanged with $h_1$ and $h_2$. Moreover, $h_1$ and $h_2$ are unchanged except that their sign is multiplied by $(-1)^{C(h)}$.
\end{lemma}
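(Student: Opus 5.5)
The plan is a direct computation with Definition \ref{def row exchange}, mirroring the proof of Lemma \ref{lem-multiplicity-cancel} with the roles of the two rows in each exchange swapped. Since $\supp(h)\subset\supp(h_1),\supp(h_2)$ and row exchanges preserve supports, both exchanges (first $h_2$ with $h$, then $h_1$ with the image of $h$) fall into Case 1 of Definition \ref{def row exchange}. In that case the row with the larger support is row $k$ ($h_2$, resp.\ $h_1$) and the row with the smaller support is row $k+1$ ($h$).

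\textbf{Step 1: the row $h$.} Case 1 sets $(l_{k+1}',\eta_{k+1}')=(l_{k+1},(-1)^{A_k-B_k}\eta_{k+1})$, so $l(h)$ is never changed. Its sign is multiplied by $(-1)^{A(h_i)-B(h_i)}$. Since $C(r)+1\equiv A(r)-B(r)\bmod 2$ and $C(h_i)=1$, this factor is $(-1)^{C(h_i)+1}=1$. Hence $h$ is literally unchanged by each exchange, not merely after both; this proves the first claim.

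\textbf{Step 2: the row $h_2$, then $h_1$.} Write $l=l(h_1)=l(h_2)$, so $b(h_i)=2l+1$. Let $\epsilon_2=(-1)^{A(h_2)-B(h_2)}\eta(h_2)\eta(h)=\eta(h_2)\eta(h)$. Because $C(h_2)=1<2\le 2C(h)$ (using $C(h)\neq0$), subcase 1(b) is impossible. So either $\epsilon_2=1$ and we are in 1(a), or $\epsilon_2=-1$ and we are in 1(c).
\begin{itemize}
\item In 1(a) the formula gives $l'=b(h_2)-l-C(h)=l+1-C(h)$, with sign multiplied by $(-1)^{C(h)+1}$.
\item In 1(c) it gives $l'=l-C(h)$, with sign multiplied by $(-1)^{C(h)}$.
\end{itemize}
By Step 1, $\eta(h)$ is unchanged and $\eta(h_1)=\eta(h_2)$. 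Hence the second exchange, of $h_1$ with $h$, has the same $\epsilon$ as the first and lands in the same subcase. So $h_1$ and $h_2$ undergo identical transformations, which is the symmetry the statement exploits.

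\textbf{Step 3: reconciling with the claim.} It remains to check that these formulas agree, up to weak equivalence, with ``$l$ unchanged and sign multiplied by $(-1)^{C(h)}$''. I will show that the non-vanishing hypothesis (Theorem \ref{thm non-vanishing}, via $l(h_i')\ge0$ and the conditions of \cite[Proposition 4.1]{Ato20b} on the adjacent pairs $(h_i,h)$) excludes the configurations in which $l$ genuinely moves. In the remaining configurations I expect the relation $2l+1=b(h_i)$ to make $l'=l$. Where that fails, I will compare the outcome against the circle picture: a single circle flipped from $\eta$ to $(-1)^{C(h)}\eta$ in the symbol.

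Step 3 is the step I expect to be the main obstacle. The raw formulas appear to move $l(h_i)$ by $C(h)$ or $C(h)-1$, so the claim must rely on a constraint forcing $C(h)$ to be small, probably $C(h)=1$, relative to the one-circle rows $h_i$.

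If the direct bookkeeping becomes unwieldy, the fallback is to reduce to Lemma \ref{lem-multiplicity-cancel}. Exchanging $h$ up past $h_1,h_2$ is the inverse of exchanging the image of $h$ down past the images of $h_1,h_2$. Combining this with Lemma \ref{lemma Alex}(1) and the uniqueness in Corollary \ref{cor same support} (after a shift $sh^d$ as in Lemma \ref{lem-comm}) would let me read off the result from the already-proved downward lemma.
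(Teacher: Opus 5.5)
\textbf{Steps 1 and 2 are right for the statement as printed.} With $\supp(h)\subset\supp(h_1),\supp(h_2)$, both exchanges fall in Case 1 of Definition \ref{def row exchange}. The row $h$ plays the role of row $k+1$ and is literally untouched, since its sign factor is $(-1)^{A(h_i)-B(h_i)}=1$; this proves the first assertion. The rows $h_2$, then $h_1$, transform exactly as you computed.

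\textbf{Step 3 is a genuine gap and cannot be closed.} In this configuration the second assertion fails in every instance. In 1(c), $l(h_i)$ drops by $C(h)\ge 1$. In 1(a), $l(h_i)$ is preserved only when $C(h)=1$, and then the sign factor is $(-1)^{C(h)+1}=+1$, not $(-1)^{C(h)}=-1$. The sign matters, since $l(h_i)=(b(h_i)-1)/2<b(h_i)/2$.

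Non-vanishing rules out neither branch. Take $h_1=h_2=([1,-1]_\rho,1,\eta)$ and $h=([0,0]_\rho,0,\pm\eta)$. The dual is the tempered $\{([0,0]_\rho,0,\pm\eta),([1,1]_\rho,0,\eta),([1,1]_\rho,0,\eta)\}$, so these lie in $\VRep_\rho^{\mathbb{Z}}(G_n)$. With $+\eta$, nothing changes at all. With $-\eta$, both $h_i$ become $([1,-1]_\rho,0,-\eta)$.

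Your fallback fails as well. After the exchanges the row on top is the \emph{smaller} one, which is not the setting of Lemma \ref{lem-multiplicity-cancel}. In the 1(c) branch the images of the $h_i$ also have three circles.

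\textbf{What the paper's proof actually uses.} Its proof (``as in Lemma \ref{lem-multiplicity-cancel}, but in Case 2'') only makes sense when $h$ has the \emph{larger} support, i.e.\ $\supp(h)\supset\supp(h_1),\supp(h_2)$. This is the upward mirror of Lemma \ref{lem-multiplicity-cancel}: a long row moved up past a pair of identical one-circle rows. There each $h_i$ is row $k$ of Case 2, so $l(h_i)$ is untouched. Its sign is multiplied by $(-1)^{A(h)-B(h)}=(-1)^{C(h)+1}$; the printed exponent $C(h)$ appears to be off by one, compare Lemmas \ref{swap up} and \ref{big swap up}.

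The real content is then that $h$ returns to itself. Set $\epsilon=\eta(h_2)\eta(h)$. The first exchange lands in one of three subcases:
\begin{itemize}
\item 2(a): this forces $C(h)=1$, and gives $C(h')=2-C(h)=1$ and $\eta(h')=\eta(h)$;
\item 2(b): this gives $(l(h)+1,-\eta(h))$;
\item 2(c): this gives $(l(h)-1,-\eta(h))$ with $C(h')=C(h)+2$.
\end{itemize}
Because $\eta(h_1)=\eta(h_2)$, the second exchange is then in 2(a), 2(c), 2(b) respectively. Each pairing restores $(l(h),\eta(h))$. This three-case pairing on $h$ is the idea your proposal is missing.
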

\begin{proof}
    The proof is exactly analogous to the proof of Lemma \ref{lem-multiplicity-cancel}, except that we are in Case 2 of Definition \ref{def row exchange} instead of Case 1.
\end{proof}

\subsection{Lemmas on hats}

The alternating sign condition helps provide an important criterion for whether consecutive hats can interact with each other.

\begin{lemma}
\label{merge}
    Given two consecutive hats $h_1 < h_2$ in $(P')$ order where $h_1 = ([A_1, -B_1], B_1, \eta_1)$ and $h_2 = ([A_2, -B_2], B_2, \eta_2),$
    it is possible to apply a nontrivial $dual \circ ui \circ dual$ to $h_1$ and $h_2$ if and only if $A_2 = B_1 - 1$ and $h_1$ and $h_2$ satisfy the alternating sign condition.
    The result has only one row and it is of the form \[h = ([A_1, -B_2], B_2, \eta_1).\]
\end{lemma}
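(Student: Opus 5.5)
The plan is to compute $dual \circ ui \circ dual$ on the two hats explicitly. We pass to the dual side, see which union-intersection can apply there, and then dualize back, using Lemmas \ref{Dual sign} and \ref{Alternating dual} for the signs.

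\textbf{Step 1: dualize the hats.} For a hat $h=([A,-B],B,\eta)$ we have $b=A+B+1$ and $l=B$, so $C(h)=A-B+1$. Since the data is integral, Definition \ref{def dual} gives $l(\widehat h)=l+B(h)=B-B=0$. Hence $\widehat{h}=([A,B],0,\eta')$, a row consisting entirely of circles, with $C(\widehat h)=C(h)$. By Lemma \ref{Dual sign}, $\eta'=(-1)^{C(\EE)-C(h)}\eta$.

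The dual reverses the order, so in $dual(\EE)$ the rows $\widehat{h_2}<\widehat{h_1}$ are consecutive, with supports $[A_2,B_2]$ and $[A_1,B_1]$. The $(P')$ order $h_1<h_2$ gives $-B_1\le -B_2$, i.e. $B_2\le B_1$.

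\textbf{Step 2: decide when $ui$ applies on the dual side.} Because the two rows are adjacent in $dual(\EE)$ and the operation is local (Lemma \ref{lem-local}), the only relevant operator is $ui_k$ with $k=\widehat{h_2}$ and $k+1=\widehat{h_1}$. This requires $A_1>A_2$ and $B_1>B_2$, and we check the three cases of Definition \ref{ui def} with $l_k=l_{k+1}=0$.

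Case 1 would need $A_1=A_2$, which is impossible. Case 2 would need $B_1=B_2$, which is also impossible. Case 3 needs $\epsilon=-1$ and $B_1=A_2+1$. Here $\epsilon=(-1)^{A_2-B_2}\eta(\widehat{h_2})\eta(\widehat{h_1})$, and $C(\widehat{h_2})=A_2-B_2+1$. So $\epsilon=-1$ says exactly that $\widehat{h_2},\widehat{h_1}$ satisfy the alternating sign condition. By Lemma \ref{Alternating dual}(1), applied in both directions since $dual$ is an involution, this is equivalent to $h_1,h_2$ alternating.

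Since $l_k=l_{k+1}=0$, the operation is of type 3'. The second row is deleted and we obtain the single row $([A_1,B_2],0,\eta(\widehat{h_2}))$.

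I expect the main obstacle here to be excluding the $ui_{i,j}$ variants of Definition \ref{def ui}, which first pass to another admissible order. I would argue that the rows are already adjacent, and that by Lemma \ref{lemma Alex}(3) and Lemma \ref{lem-local} any such reordering either yields the same operation or moves other rows that do not participate.

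\textbf{Step 3: dualize back and compute the sign.} The dual of $([A_1,B_2],0,\eta(\widehat{h_2}))$ is $([A_1,-B_2],l,\eta'')$ with $l=0+B_2=B_2$, so it is a hat of the claimed form.

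For the sign, first note that the total number of circles is preserved:
\[
C(\widehat{h_1})+C(\widehat{h_2})=(A_1-B_1+1)+(A_2-B_2+1)=A_1-B_2+1,
\]
using $A_2=B_1-1$. So $C$ of the whole multi-segment is unchanged by the operation. Two applications of Lemma \ref{Dual sign} then give $\eta''=(-1)^{C(\EE)-C(h_2)}\eta(\widehat{h_2})\cdot(-1)^{C(\EE)-C(h)}$ in terms of $\eta_2$. Combining this with the alternating relation $\eta_2=(-1)^{C(h_1)}\eta_1$ and $C(h)=C(h_1)+C(h_2)$, the exponents cancel modulo $2$ and we get $\eta''=\eta_1$.

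Finally, for the converse direction: if $A_2\ne B_1-1$ or the hats do not alternate, none of the three cases in Step 2 holds, so $ui$ acts trivially on the dual side and hence $dual\circ ui\circ dual$ acts trivially on $h_1,h_2$.
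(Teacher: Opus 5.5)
Your proof is correct and follows the paper's argument: dualize the hats to rows of circles, observe that only a type 3' union-intersection can apply (your explicit check of Cases 1--3 makes precise the paper's one-line claim), transfer the sign condition via Lemma \ref{Alternating dual}, and recover $\eta_1$ from two applications of Lemma \ref{Dual sign} together with the additivity of $C$. One small slip in Step 3: the sign formula should read $\eta''=(-1)^{C(\EE)-C(h)}\eta(\widehat{h_2})=(-1)^{C(\EE)-C(h)}(-1)^{C(\EE)-C(h_2)}\eta_2$; as you typed it, the factor $(-1)^{C(\EE)-C(h_2)}$ multiplies $\eta(\widehat{h_2})$ rather than replacing it, but your subsequent cancellation gives the right answer.
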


\begin{proof}
    After applying $dual,$ the corresponding dual rows $\widehat{h_2} < \widehat{h_1}$ will be $\widehat{h_2} = ([A_2, B_2], 0, (-1)^{C(\EE) + C(h_2)}\eta_2)$ and $\widehat{h_1} = ([A_1, B_1], 0, (-1)^{C(\EE) + C(h_1)} \eta_1)$.

    Since these rows are composed entirely of circles, the only nontrivial $ui$ is that of type 3'. Since $h_1$ and $h_2$ are in $(P')$ order, $B_1 > B_2$. So such a $ui$ can be applied if and only if $B_1 = A_2 + 1$ and if $\widehat{h_2}$ and $\widehat{h_1}$ satisfy the alternating sign condition. By Lemma \ref{Alternating dual}, the second condition is equivalent to $h_1$ and $h_2$ satisfying the alternating sign condition. Applying $ui$ gives a new row
    \[\widehat{h} = ([A_1, B_2], 0, (-1)^{C(\EE) + C(h_2)} \eta_2).\]
    Dualizing gives a new row of the form
    $h = ([A_1, -B_2], B_2, \eta),$
    for some $\eta.$ 
    
    In order to calculate the new value of $\eta,$ we note that $C(h') = C(h_1') + C(h_2')$ and that dualization preserves the number of circles in each row. Therefore, $C(h') = C(h_1) + C(h_2)$ and $C(\EE) = C(dual(\EE)).$ Now, we can calculate using Lemma \ref{Dual sign} that
    \begin{align*}
        \eta = (-1)^{C(dual(\EE)) - C(\widehat{h})} \left( (-1)^{C(\EE) + C(h_2)} \eta_2 \right) 
        &= (-1)^{C(\EE) - C(h_1) - C(h_2) + C(\EE) + C(h_2)} \eta_2 \\
        &= (-1)^{C(h_1)} \eta_2 \\
        &= \eta_1. \qedhere
    \end{align*}
\end{proof}
    
\begin{defn}
    We refer to the act of performing a $dual \circ ui \circ dual$ of type 3' on two consecutive hats $h_1$, $h_2$ as described above as \emph{merging} the hats. We say two hats are \emph{mergable} if they satisfy the conditions in Lemma \ref{merge}. We denote the merged hat by $h_1 * h_2.$
\end{defn}

We observe from Lemma \ref{merge} that $C$ is additive under mergings. Formally, if $h_1$ and $h_2$ are two hats, then $C(h_1 * h_2) = C(h_1) + C(h_2).$ It follows immediately from the formulas in Lemma \ref{big swap up} that row exchanges commute with the action of merging hats. We state this formally in the following corollary.

\begin{cor}
\label{swap commutes with M}
    Let $r_1 < r_2 < r_3$ be three consecutive rows in a virtual extended multi-segment.
    \begin{itemize}
        \item Suppose $r_2$ and $r_3$ are mergable hats, and suppose $\supp(r_2), \supp(r_3) \subset \supp(r_1)$. Then the image of $r_1$ after being exchanged with $r_2$ and $r_3$ is the same as the image of $r_1$ after being exchanged with $r_2 * r_3$.
        \item Suppose $r_1$ and $r_2$ are mergable hats, and suppose $\supp(r_1), \supp(r_2) \subset \supp(r_3)$. Then the image of $r_3$ after being exchanged with $r_2$ and $r_1$ is the same as the image of $r_3$ after being exchanged with $r_1 * r_2$.
    \end{itemize}
\end{cor}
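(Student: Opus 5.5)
The plan is to obtain both bullets of Corollary \ref{swap commutes with M} directly from the closed formulas in Lemmas \ref{big swap down} and \ref{big swap up}, together with the explicit description of the merged hat in Lemma \ref{merge}. Only the effect on the big row matters, and those lemmas express it purely through the \emph{total} number of circles of the rows it passes, $\sum_i C(r_i)$, and its own data. Since merging is additive in circles, $C(r_2*r_3)=C(r_2)+C(r_3)$, the two computations should visibly agree.

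For the first bullet, $r_2<r_3$ are mergable hats, so by Lemma \ref{merge} they satisfy the alternating sign condition. The next step is to check whether $r_1$ and $r_2$ alternate.
\begin{itemize}
\item If they do, Lemma \ref{big swap down} applies with $k=2$. It gives $l(r_1^{(2)})=l(r_1)-C(r_2)-C(r_3)$ and $\eta(r_1^{(2)})=(-1)^{C(r_2)+C(r_3)}\eta(r_1)$.
\item Exchanging $r_1$ with the single row $r_2*r_3$ instead: by Lemma \ref{merge}, $\eta(r_2*r_3)=\eta(r_2)$, so $r_1$ and $r_2*r_3$ alternate exactly when $r_1,r_2$ do. Then Lemma \ref{swap down} gives $l=l(r_1)-C(r_2*r_3)$ and sign $(-1)^{C(r_2*r_3)}\eta(r_1)$, which is the same output.
\end{itemize}
If $r_1,r_2$ fail to alternate, the exchange lands in Case 1(a) or 1(b) of Definition \ref{def row exchange}, and a direct computation is needed. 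The way I would handle this is to write down the exchange of $r_1$ past $r_2$ in each of these cases. After that, $r_1'$ and $r_3$ alternate, so the second exchange is Case 1(c), and composing gives an output depending only on $C(r_2)+C(r_3)$. Comparing with the single exchange past $r_2*r_3$ should give agreement, possibly after splitting by the threshold $C(r_1)$ versus $2C(r_2)$ and $2(C(r_2)+C(r_3))$.

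The second bullet is handled symmetrically. When $r_2,r_3$ fail to alternate and $C(r_3)\ge 2(C(r_1)+C(r_2))$, Lemma \ref{big swap up} applies directly. In this case Lemma \ref{merge} gives $\eta(r_1*r_2)=\eta(r_1)$, and the last circle of $r_1*r_2$ has sign $(-1)^{C(r_1)+C(r_2)-1}\eta(r_1)$, which equals the sign of the last circle of $r_2$ because $r_1,r_2$ alternate. So the alternation status against $r_3$ is preserved, and Lemma \ref{swap up} with $C(r_1*r_2)=C(r_1)+C(r_2)$ matches the result. The remaining cases (Case 2(a) or 2(c)) are again handled by direct composition of the formulas in Definition \ref{def row exchange}.

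I expect the main obstacle to be these residual cases, where the condition $\epsilon$ or the threshold inequality changes between the two successive exchanges. There the $l$-values pass through the reflection $b-(l+\cdots)$ of Case (a), and one must check that the reflection composed with a subtraction gives the same result as a single reflection by the summed circle count.

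If the bookkeeping becomes unwieldy, the fallback is an abstract argument in the style of Corollary \ref{cor Alex}. Merging is a $dual\circ ui\circ dual$, so $\{r_2,r_3\}$ and $\{r_2*r_3\}$ are equivalent (Theorem \ref{thm intersections of local Arthur packets}(1)). Corollary \ref{cor Alex}, or its dual version obtained via Lemma \ref{lemma Alex}(2), then shows that the image of the big row is the same in both cases.
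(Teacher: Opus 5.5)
Your approach is the same as the paper's. The paper's entire justification is that $C(h_1*h_2)=C(h_1)+C(h_2)$ (from Lemma \ref{merge}) plus ``the formulas in Lemma \ref{big swap up}''. Your use of Lemmas \ref{big swap down} and \ref{big swap up}, together with $\eta(r_2*r_3)=\eta(r_2)$, is that argument. You are also right, and more careful than the paper's one line, that those lemmas cover only some configurations and the rest need direct composition of the formulas in Definition \ref{def row exchange}. What makes the composition work is what you noticed: the $\epsilon$ of the first exchange equals the $\epsilon$ of the single exchange with the merged hat.

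However, one step of your residual case, as written, would fail. In the first bullet, $r_1'$ and $r_3$ alternate only after a Case 1(a) exchange. After Case 1(b) you have $\eta(r_1')=(-1)^{C(r_2)}\eta(r_1)$ and $\eta(r_3)=(-1)^{C(r_2)}\eta(r_2)$. So $\epsilon$ stays equal to $1$, and the second exchange is again Case 1(b) or 1(a), according as $C(r_1)-2C(r_2)\ge 2C(r_3)$ or not. Composing 1(b) with 1(c), as you propose, gives $l(r_1)+C(r_2)-C(r_3)$, which does not match.

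The correct bookkeeping, with $c_i=C(r_i)$, is as follows.
\begin{enumerate}
\item Case 1(a) then 1(c), and Case 1(b) then 1(a), both give $\bigl(b(r_1)-l(r_1)-c_2-c_3,\ (-1)^{c_2+c_3+1}\eta(r_1)\bigr)$. This is the single Case 1(a) exchange past $r_2*r_3$, which occurs exactly when $c_1<2(c_2+c_3)$.
\item Case 1(b) then 1(b) gives $\bigl(l(r_1)+c_2+c_3,\ (-1)^{c_2+c_3}\eta(r_1)\bigr)$, the single Case 1(b) exchange.
\end{enumerate}
In the second bullet, your residual list must also include Case 2(b) followed by 2(a), that is $2c_2\le c_3<2(c_1+c_2)$, which Lemma \ref{big swap up} does not cover. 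Together with 2(c) then 2(c), and 2(a) then 2(c), these again reproduce the single exchange past $r_1*r_2$, because the thresholds line up in the same way.

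Finally, your fallback via Corollary \ref{cor Alex} proves less than the statement. It needs $\EE\in\VRep_\rho^\mathbb{Z}(G_n)$, whereas the corollary is a formal identity of row-exchange formulas for arbitrary virtual extended multi-segments. Also, $dual$ does not preserve the containment hypothesis of the second bullet, so the ``dual version'' is not immediate. The direct computation above is the route to take.
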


\section{Individual Blocks}
\label{sec-individual-blocks}

Throughout this section, we let $\mathcal{B}\in\Block_\rho(G)$. Recall that our goal is to determine the size of $\Psi(\pi(\mathcal{B}))$. This problem falls into two cases, namely, whether the block starts at zero or not (see Theorem \ref{thm-count-block-temp}). 

\subsection{\texorpdfstring{Constructing Virtual Extended Multi-Segments from $\mathcal{S}$-data}{}}

Let $\mathcal{B}\in\Block_\rho(G)$. We aim to give a classification of extended multi-segments that are equivalent to $\mathcal{B}$ (see Theorem \ref{block classification}). 

We begin by setting some notation. Recall from Definition \ref{def-starts-ends} that $c_{\min} = \min_{r \in \mathcal{B}} B(r)$ and $c_{\max} = \max_{r \in \mathcal{B}} A(r)$ are the the integers that $supp(\mathcal{B})$ begins and ends at. Also recall from Definition \ref{def-multiplicity} that $m_c$ denotes the number of rows in $\mathcal{B}$ with support $[c, c]$, which we refer to as the multiplicity of column $c$. We let the tuple 
$\mathcal{M}_\B = (m_{c_{\min}}, m_{c_{\min} + 1}, \dots, m_{c_{\max}})$ denote the multiplicities of all rows in $\mathcal{B}.$ If $\B$ is fixed, we will simple write $\mathcal{M}=\mathcal{M}_\B$ for brevity. Note that all the numbers in $\mathcal{M}$ are positive odd integers by Definition \ref{def-block}. More generally, we fix two integers $c_{\min},c_{\max}\in\Z_{\geq 0}$ with $c_{\max}\geq c_{\min}$. We say that a tuple \[\mathcal{M}(c_{\min}, c_{\max})=(m_{c_{\min}}, m_{c_{\min}+1},\dots,m_{c_{\max}})\in\Z^{c_{\max} - c_{\min} +1}\] is a \emph{block-tuple} if each $m_{i}$ is a positive odd integer. Note that if $\B$ is a block, then $\mathcal{M}_\B$ is a block-tuple.

\begin{defn}
\label{valid S}
    We define a \emph{valid} tuple $\mathcal{S}$ for a block-tuple $\mathcal{M}(c_{\min},c_{\max})$ to be a tuple $(\mathcal{S}_1, \dots, \mathcal{S}_k)$ of subsets of $\{c_{\min}, \dots, c_{\max}\}$ satisfying the following conditions.
    \begin{enumerate} 
        \item Each $\mathcal{S}_i \subset \{c_{\min}, \dots, c_{\max} \}$ is a nonempty set of consecutive integers.
        \item $\bigcup_i \mathcal{S}_i = \{c_{\min}, \dots, c_{\max}\}.$
        \item If $i < j$ and $s_i \in \mathcal{S}_i, s_j \in \mathcal{S}_j,$ then $s_i \leq s_j$.
        \item If $i < j$ and $c \in \mathcal{S}_i \cap \mathcal{S}_j,$ then $j - i = 1,$ $|\mathcal{S}_j| \geq 2,$ and $m_c > 1.$
        \item If $i\geq 2$, $|\mathcal{S}_i| \geq 2$, and $c = \min \mathcal{S}_i$ has $m_c > 1,$ then $c \in \mathcal{S}_{i - 1}.$
\end{enumerate}
\end{defn}

Fix a block $\B\in\Block_\rho(G_n)$ and let $\mathcal{M}=\mathcal{M}_\B.$
To any valid tuple $\mathcal{S}$ for $\mathcal{M}$, we associate a virtual extended multi-segment $\mathcal{E}(\mathcal{M}, \mathcal{S}, \eta)$ in the following manner.

\begin{defn}\label{defn E(M,S)}
    Let $\mathcal{M}= (m_{c_{\min}}, m_{c_{\min} + 1}, \dots, m_{c_{\max}})$ be a block-tuple. Given a sign $\eta \in \{\pm 1\}$ and a valid tuple $\mathcal{S}=(\mathcal{S}_1,\dots,\mathcal{S}_k)$ for $\mathcal{M},$ we define a virtual extended multi-segment $\EE(\mathcal{M}, \mathcal{S}, \eta)$ consisting of the following rows.
    \begin{enumerate}
    \item For each $\mathcal{S}_i,$ we include a row of circles (i.e. a row with $l=0$) with support 
    $[\max \mathcal{S}_i, \min \mathcal{S}_i].$
    We refer to these rows as \emph{chains}.
    \item For each $c \in \{c_{\min}, \dots, c_{\max}\},$ we add $m_c - |\{i \mid c \in \mathcal{S}_i\}|$ copies of a row of circles with support
    $[c, c].$
    We refer to these rows as \emph{multiples}.
\end{enumerate}

The rows are ordered in a $(P')$ ordering such that if $A(r) < A(r')$ then $r < r'$. In the case where $r$ and $r'$ both have $supp(r) = supp(r') = [c, c]$ but $r$ is a chain and $r'$ is a multiple, we choose the order so that $r < r'.$

The signs $\eta$ are chosen to be ``odd-alternating'': i.e., the following conditions hold.
    \begin{enumerate}
        \item $\eta(\mathcal{E}(\mathcal{M}, \mathcal{S}, \eta)) = \eta.$
        \item If $r_i$ and $r_{i + 1}$ are consecutive rows and neither is a multiple, then $r_i$ and $r_{i + 1})$ satisfy the alternating sign condition (see Definition \ref{def-alternating-sign-condition}).
        \item If $r_i$ and $r_{i + 1}$ are consecutive rows and $r_{i + 1}$ is a multiple, then $r_i$ and $r_{i + 1}$ fail the alternating sign condition.
        \item If $r_i$ and $r_{i + 1}$ are consecutive rows and only $r_{i}$ is a multiple, then $r_i$ and $r_{i+1}$ satisfy the alternating sign condition if and only if $B(r_{i + 1}) > B(r_i).$
    \end{enumerate}
\end{defn}

\begin{rmk}
    Note that the number of multiples $m_c - |\{\mathcal{S}_i \mid c \in \mathcal{S}_i\}|$ is chosen such that the number of circles in column $c$ is always equal to $m_c.$
\end{rmk}

We introduce some terminology related to the study of the virtual extended multi-segments $\EE(\mathcal{M},\mathcal{S},\eta).$

\begin{defn}
\label{def z-chain}
    If two chains $r$ and $r'$ are associated to $\mathcal{S}_i$ and $\mathcal{S}_{i + 1}$ such that $\mathcal{S}_i \cap \mathcal{S}_j \neq \emptyset,$ then we say that $r'$ is a \emph{$z$-chain}. An example is given in Figure \ref{fig: z-chain}.
\end{defn}

\begin{figure}[h]
    \centering
    $$\bordermatrix{&0 &1 & 2 & 3 \cr & \oplus & \ominus \cr & & \ominus \cr & & \ominus & \oplus & \ominus}$$
    \caption{The $\mathcal{S}$ data for the above multi-segment is $(\{0, 1\}, \{1, 2, 3\}).$ Since $\mathcal{S}_1 \cap \mathcal{S}_2 = \{1\},$ the chain with support $[3, 1]$ is a $z$-chain.}
    \label{fig: z-chain}
\end{figure}

\begin{defn}
\label{def consecutive chains}
    We say that two chains $r_1$ and $r_2$ are \emph{consecutive chains} if they are associated to two consecutive sets $\mathcal{S}_i$ and $\mathcal{S}_{i + 1}$.
\end{defn}
    
\begin{defn}
\label{def multiples belonging to a chain}
    Let $r_1$ and $r_2$ be consecutive chains. If $r_3$ is a multiple such that $r_1 < r_3 < r_2,$ then we say that $r_3$ \emph{belongs} to $r_1.$
\end{defn}

\begin{rmk}
    It follows from the sign condition in Definition \ref{defn E(M,S)} that all multiples belonging to some chain $r$ have the same sign $\eta(r) \cdot (-1)^{C(r) - 1}.$
\end{rmk}

\begin{rmk}\label{rmk odd signs}
    Suppose $\mathcal{E} = \mathcal{E}(\mathcal{M}, \mathcal{S}, \eta)$ as in Definition \ref{defn E(M,S)}. Suppose the circles of $\mathcal{E}$ are read out in order, row by row, then the symbols $\oplus$ or $\ominus$ always appear an odd number of times consecutively. For example, in Figure \ref{fig: z-chain}, there is one $\oplus,$ then three $\ominus$s, then one $\oplus,$ and then one $\ominus.$
\end{rmk}

It turns out that for blocks $\mathcal{B}$ starting at zero (i.e., $c_{\min} = 0$), we will need to specify more data to describe all the virtual extended multi-segments equivalent to $\mathcal{B}.$ In particular, some virtual extended multi-segments equivalent to $\BB$ have rows that are hats, but the construction in Definition \ref{defn E(M,S)} only produces virtual extended multi-segments with rows having $l=0$. We therefore make the following modification to the construction $\mathcal{E}(\mathcal{M}, \mathcal{S}, \eta)$ by defining a new parameter $\mathcal{T}.$

\begin{defn}
\label{valid T}
    Suppose $\BB$ starts at zero and let $\mathcal{M} := \mathcal{M}_\mathcal{B}$ be the corresponding block-tuple. Let $\mathcal{S} = (\mathcal{S}_1, \dots, \mathcal{S}_k)$ be a valid tuple for $\mathcal{M}$. Then we say a collection of partitions of each $\mathcal{S}_i$ of the form $(\mathcal{T}_i^0, \mathcal{T}_i^1, \dots,  \mathcal{T}_i^{\ell_i})$ is \emph{valid} if
    \begin{enumerate}
        \item each $\mathcal{T}_i^j$ is a nonempty set of consecutive integers;
        \item if $j < j'$ and $t_j \in \mathcal{T}_i^j$, $t_{j'} \in \mathcal{T}_i^{j'}$, then $t_j \leq t_j'$;
        \item if $|\mathcal{S}_i \cap \mathcal{S}_{i + 1}| \geq 1,$ then $|\mathcal{T}_{i + 1}^0| \geq 2$.
    \end{enumerate}
A \emph{valid} tuple $(\mathcal{M}, \mathcal{S}, \mathcal{T})$ is one such that $\mathcal{S}$ and $\mathcal{T}$ are both valid.
\end{defn}

\begin{rmk}
    As a means of shorthand, we will represent the subsets $\mathcal{T}_i^j \subset \mathcal{S}_i$ by overlining each $\mathcal{T}_i^j$ for $j > 0$. As an example, suppose $\mathcal{S}_i = \{2, 3, 4, 5, 6, 7, 8, 9\},$ and $\mathcal{T}_i^0 = \{2, 3, 4\}, \mathcal{T}_i^1 = \{5, 6\}, \mathcal{T}_i^2 = \{7\}, \mathcal{T}_i^3 = \{8, 9\}.$ As shorthand, we write
    $$\mathcal{S}_i = \{2, 3, 4, \overline{5, 6}, \overline{7}, \overline{8, 9}\}.$$
\end{rmk}

\begin{defn}
\label{defn E(M,S, T)}
    Let $\mathcal{M} = (m_{c_{\min}}, \dots, m_{c_{\max}})$ be a block-tuple with $c_{\min} = 0$. Given valid $(\mathcal{M}, \mathcal{S}, \mathcal{T})$ along with a sign $\eta \in \{\pm 1\}$, we associate a virtual extended multi-segment $\mathcal{E}(\mathcal{M}, \mathcal{S}, \mathcal{T}, \eta)$ as follows.
\begin{enumerate}
    \item For each $\mathcal{T}_i^j$ with $j \geq 1,$ we include a hat with support
    $[\max \mathcal{T}_i^j, -\min \mathcal{T}_i^j].$
    \item For each $\mathcal{T}_i^0 \subset \mathcal{S}_i,$ we include a row of circles with support given by
    $[\max \mathcal{T}_i^0, \min \mathcal{T}_i^0].$
    \item For each $c \in \{c_{\min}, \dots, c_{\max}\},$ we add $m_c - |\{i \mid c \in \mathcal{S}_i\}|$ copies of a row of circles with support
    $[c, c]$
    for some $\eta_i \in\{\pm 1\}.$
\end{enumerate}
The order of the rows and the signs of each row follow exactly the same rules as in Definition \ref{defn E(M,S)}.
\end{defn}

As before, we call rows falling into case (2) \emph{chains} and rows falling into case (3) \emph{multiples}. We also have exactly the same notions of $z$-chains (Definition \ref{def z-chain}) and a multiple belonging to a chain (Definition \ref{def multiples belonging to a chain}) as before. Note that with these definitions, the construction of a virtual extended multi-segment $\EE(\mathcal{M}, \mathcal{S}, \eta)$ is a special case of the construction of $\EE(\mathcal{M}, \mathcal{S}, \mathcal{T}, \eta)$ where each $\mathcal{T}_i^0 = \mathcal{S}_i$ and $\ell_i = 0$.

\begin{defn}
    We say that $\EE$ is of \emph{type $Y_\mathcal{M}$} if it is of the form $\EE(\mathcal{M}, \mathcal{S}, \mathcal{T}, \eta)$ for valid $(\mathcal{M}, \mathcal{S}, \mathcal{T})$ or $\EE(\mathcal{M}, \mathcal{S}, \eta)$ for valid $(\mathcal{M}, \mathcal{S})$.
\end{defn}

\begin{defn}
    If $\EE$ is of type $Y_\mathcal{M},$ then we refer to the associated $\mathcal{S}$ (or $\mathcal{S}$ and $\mathcal{T}$, if applicable) as the \emph{$\mathcal{S}$-data} of $\EE.$
\end{defn}

Since the construction above is quite elaborate, we provide an example of a type $Y_\mathcal{M}$ virtual extended multi-segment.

\begin{exmp}
    Let $n = 9,$ $\mathcal{M} = (1, 1, 3, 1, 1, 3, 1, 3, 1)$, 
    \begin{align*}
        \mathcal{S}_1 = \{0, \overline{1, 2}\},
        \mathcal{S}_2 = \{2, 3, \overline{4}\},
        \mathcal{S}_3 = \{5\},
        \mathcal{S}_4 = \{5, 6, 7\},
        \mathcal{S}_5 = \{8, 9\}.
    \end{align*}
    and $\mathcal{S} = \{\mathcal{S}_1, \mathcal{S}_2, \mathcal{S}_3, \mathcal{S}_4, \mathcal{S}_5\}.$
    Then the associated multi-segment $\mathcal{E}(\mathcal{M}, \mathcal{S}, \mathcal{T})$ with $\eta(\mathcal{E}) = 1$ is:

    \begin{center}
        \bordermatrix{& -4 & -3 & -2 & -1 & 0 & 1 & 2 & 3 & 4 & 5 & 6 & 7 & 8 & 9 \cr
    & \lhd & \lhd & \lhd & \lhd & \oplus & \rhd & \rhd & \rhd & \rhd \cr
    & & & & \lhd & \ominus & \oplus & \rhd \cr
    & & & & & \ominus \cr
    & & & & & & & \ominus \cr
    & & & & & & & \ominus & \oplus \cr
    & & & & & & & & & & \ominus \cr
    & & & & & & & & & & \ominus \cr
    & & & & & & & & & & \ominus & \oplus & \ominus \cr
    & & & & & & & & & & & & & \oplus & \ominus \cr
    & & & & & & & & & & & & & \ominus \cr
    & & & & & & & & & & & & & \ominus \cr}
    \end{center}

    Here, the row with support $[2, 2]$ is a multiple belonging to the chain with support $[0, 0]$ and the multiples with support $[8, 8]$ belong to the chain with support $[8, 9].$ Meanwhile, the first row with support $[5, 5]$ is a chain, and the second such row is a multiple belonging to it. 
\end{exmp}

\begin{exmp}
    Suppose $\mathcal{B}$ is a block. Then $\mathcal{B}$ is of type $Y_{\mathcal{M}_\mathcal{B}},$ where $\mathcal{S}_i = \{i\}$ (and $\mathcal{T}_i^0 = \{i\}$, if $\mathcal{B}$ starts at zero).
\end{exmp}

One important property of virtual extended multi-segments of type $Y_\mathcal{M}$ is the following lemma.

\begin{lemma}
\label{odd alternating dual}
    If $\EE$ is of type $Y_\mathcal{M}$, then $\eta(\EE) = \eta(dual(\EE)).$
\end{lemma}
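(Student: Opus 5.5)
The plan is to reduce the statement to a parity count of sign changes among the circles of $\EE$, using Lemma \ref{Dual sign}. Write $r_1$ for the first row and $s$ for the last row of $\EE$ in its $(P')$ order. Since $dual$ reverses the order, $\widehat{s}$ is the first row of $dual(\EE)$. Lemma \ref{Dual sign} gives $\eta(dual(\EE))=\eta(\widehat s)=(-1)^{C(\EE)-C(s)}\eta(s)$. By Definition \ref{def-last-circle}, the sign of the last circle of $s$ is $(-1)^{C(s)-1}\eta(s)$. So the claim $\eta(\EE)=\eta(dual(\EE))$ is equivalent to
\[
(-1)^{C(s)-1}\eta(s)=(-1)^{C(\EE)-1}\eta(r_1).
\]
In words: the last circle of $\EE$ has sign $(-1)^{C(\EE)-1}$ times the sign of the first circle. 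This is exactly the conclusion one gets for an alternating multi-segment, as in the proof of Lemma \ref{Alternating dual}(2). Hats cause no trouble here, because only the circles $C(r)=b-2l$ enter Lemma \ref{Dual sign}.

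Next, read the $N=C(\EE)$ circles of $\EE$ in order, row by row, as a sequence of signs $\epsilon_1,\dots,\epsilon_N$. Inside a row consecutive circles always have opposite signs. Between consecutive rows $r<r'$, the last circle of $r$ and the first circle of $r'$ have opposite signs exactly when $r,r'$ satisfy the alternating sign condition (see the remark after Definition \ref{def-alternating-sign-condition}). Let $d$ be the number of indices $t$ with $\epsilon_t=\epsilon_{t+1}$. Then $\epsilon_N=(-1)^{N-1-d}\epsilon_1=(-1)^{N-1}(-1)^{d}\eta(r_1)$. It therefore suffices to show that $d$ is even.

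For this I would prove the type-$Y_\mathcal{M}$ version of Remark \ref{rmk odd signs}: the sign sequence splits into maximal constant runs, each of odd length. If the runs have lengths $L_1,\dots,L_p$, then $d=\sum(L_q-1)$, which is even when every $L_q$ is odd. To check the odd-run property, use the sign rules (2)--(4) of Definition \ref{defn E(M,S)}, which Definition \ref{defn E(M,S, T)} inherits. A repeated sign can only arise in one way. A chain (or the hat/chain pieces coming from one $\mathcal{S}_i$, which alternate among themselves by rule (2)) is followed by the multiples belonging to it; by rule (3) each of these repeats the last sign. The next chain then either repeats the sign once more or flips it, according to rule (4).

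So a run at column $c$ has length $1+(\#\text{multiples at }c)+\delta$. Here $\delta=1$ exactly when the next chain also starts at $c$, that is, $c\in\mathcal{S}_i\cap\mathcal{S}_{i+1}$; this is the $z$-chain situation, with $B(r_{i+1})=B(r_i)$. The number of multiples at $c$ is $m_c-|\{i: c\in\mathcal{S}_i\}|$, and $m_c$ is odd. By validity condition (4) of Definition \ref{valid S}, the count $|\{i:c\in\mathcal{S}_i\}|$ is $1$ or $2$, and it equals $2$ precisely when $\delta=1$. In either case the run length is $m_c$ or $m_c-2+2=m_c$, hence odd. Validity condition (5) and condition (3) of Definition \ref{valid T} should rule out the remaining edge cases: a chain starting at a column with $m_c>1$ that is not shared, and a hat sitting where a $z$-chain's first circle must be.

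The main obstacle is this case analysis. One must verify carefully that every adjacency between a multiple and the following chain or hat falls under rule (4) with the correct outcome. One must also check that hats coming from $\mathcal{T}_i^j$, $j\ge 1$, never create an even run. I would handle this by organizing the rows column by column, tracking which row contributes the circle in column $c$. For hats, I would use the fact that the single circle of a hat $([A,-B],B,\eta)$ sits at column $0$, so hats contribute their circles in the same alternating pattern as the chain they subdivide.
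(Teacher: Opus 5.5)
Your argument is essentially the paper's: both apply Lemma~\ref{Dual sign} to the last row and then use the odd-run property of Remark~\ref{rmk odd signs}. The paper simply asserts that property for every type $Y_\mathcal{M}$, deletes an even number of circles to reach an alternating multi-segment, and quotes Lemma~\ref{Alternating dual}; this is exactly your parity count of $d$.

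If you do write out the odd-run check you sketch, there are three things to fix.
\begin{enumerate}
\item A hat $([A,-B],B,\eta)$ has $A-B+1$ circles, not a single one.
\item Runs are not indexed by column. The run through a chain's last circle absorbs every multiple belonging to that chain, and each column interior to $\mathcal{S}_i$ contributes $m_c-1$ such multiples.
\item Condition (5) of Definition~\ref{valid S} imposes nothing for $i=1$, so it does not rule out your first edge case. Suppose $c_{\min}=0$, $m_0\ge 3$, $|\mathcal{T}_1^0|\ge 2$ and hats are present, e.g.\ $\mathcal{M}=(3,1,1)$, $\mathcal{S}=(\{0,1,\overline{2}\})$. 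The stated ordering then puts the $m_0-1$ multiples of column $0$ between the last hat and the chain $\mathcal{T}_1^0$. Read literally, rules (3)--(4) give a run of even length $m_0+1$ there. The multi-segment actually reached from the block instead has these multiples alternating with the last hat. The paper's one-line appeal to the remark passes over this point as well.
\end{enumerate}
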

\begin{proof}
    Let $r$ be the last row of $\mathcal{E}.$ Then Lemma \ref{Dual sign} implies that \[\eta(dual(\EE)) = \eta(\widehat{r}) = (-1)^{C(\EE) - C(r)} \eta(r),\]
    where $\widehat{r}$ is the image of $r$ under $dual.$ Note that Remark \ref{rmk odd signs} holds for all multi-segments of type $Y_\mathcal{M},$ even those where $\mathcal{M}$ starts after zero. Therefore, listing the symbols $\oplus$ and $\ominus$ in order produces streaks of odd multiplicity. Therefore, an even number of circles can be removed from $\mathcal{E}$ so that each of these streaks has one circle; i.e., so that $\mathcal{E}$ is alternating. Doing this does not change the sign $(-1)^{C(\EE) - C(r)} \eta(r).$ Therefore, the result follows from Lemma \ref{Alternating dual}.
\end{proof}

This definition of type $Y_\mathcal{M}$ has been set up with the intention of proving the following theorem.

\begin{thm}
\label{block classification}
    Let $\mathcal{B} \in\VRep_\rho^\mathbb{Z}(G_n)$ be a block with multiplicities $\mathcal{M} := \mathcal{M}_\BB.$ Then, the set $\Psi(\pi(\BB))$ is precisely the set of $\psi_\EE$ where $\EE\in\VRep_\rho^\mathbb{Z}(G_n)$ is of type $Y_\mathcal{M}$ with $\eta(\EE)=\eta(\BB)$.
\end{thm}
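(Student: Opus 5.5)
We prove the two inclusions separately. Write $Y$ for the set of $\EE\in\VRep_\rho^\mathbb{Z}(G_n)$ of type $Y_\mathcal{M}$ with $\eta(\EE)=\eta(\BB)$.

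\textbf{Every element of $Y$ is equivalent to $\BB$.} We induct on the complexity of the $\mathcal{S}$-data, measured by $\sum_i(|\mathcal{S}_i|-1)+\sum_i \ell_i$. The base case is the data $\mathcal{S}_i=\{i\}$, $\mathcal{T}_i^0=\{i\}$, which gives $\BB$ itself.

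For the inductive step, take a chain with $|\mathcal{T}_i^0|\ge 2$ and split it into two chains (or a chain plus a multiple). This splitting is the inverse of a union-intersection of type 3, respectively 3': the odd-alternating sign rules of Definition \ref{defn E(M,S)} are exactly what make the alternating sign condition hold, so $\epsilon=-1$ and $B_{k+1}+l_{k+1}=A_k-l_k+1$. Row exchanges are needed to move multiples out of the way. Lemmas \ref{swap down}, \ref{swap up}, \ref{big swap down} and \ref{big swap up} control the signs during these exchanges, and Lemma \ref{lem-multiplicity-cancel} shows that pairs of equal multiples can be passed through without effect.

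Hats $\mathcal{T}_i^j$ with $j\ge1$ are peeled off via Lemma \ref{merge}: a hat with support $[\max\mathcal{T},-\min\mathcal{T}]$ arises from the dual picture. Concretely, $dual$ turns the hat into a row of circles $[\max\mathcal{T},\min\mathcal{T}]$, and we undo the $dual\circ ui\circ dual$ of type 3' there. The hypothesis $c_{\min}=0$ is what allows hats at all, because a hat with $B=-l$ requires the dual row to start at a column $\ge0$ adjacent to the rest.

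Throughout, Lemma \ref{odd alternating dual} and Lemma \ref{Alternating dual} keep $\eta$ unchanged. Nonvanishing of every member of $Y$ then follows from Theorem \ref{thm intersections of local Arthur packets}(1), since each is reached from $\BB$ by raising operators and their inverses.

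\textbf{Every $\EE$ equivalent to $\BB$ is of type $Y_\mathcal{M}$.} By Theorem \ref{thm intersections of local Arthur packets}(2), $\EE$ is reached from $\BB$ by a finite chain of raising operators, their inverses, and row exchanges. It therefore suffices to show that $Y$, taken up to row exchange back to the canonical order, is closed under each of $ui$, $ui^{-1}$, $dual\circ ui\circ dual$ and its inverse.

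We argue this case by case on which two rows the operator acts on: two chains, a chain and a multiple, two multiples, or two hats. In each case we check that the result is again of type $Y_\mathcal{M}$ and identify the new $\mathcal{S}$- or $\mathcal{T}$-data. The column multiplicity $m_c$ is invariant under these operators, which is why $\mathcal{M}$ is preserved. The conditions of Definition \ref{valid S} are exactly the applicability constraints:
\begin{itemize}[label={}]
\item condition (4) comes from $m_c>1$ being needed for two chains to overlap;
\item condition (5) comes from the type 3 condition $B_{k+1}+l_{k+1}=A_k-l_k+1$ forcing adjacency;
\item the $\mathcal{T}$-condition (3) comes from a $z$-chain not being able to carry a hat at its start.
\end{itemize}
To reduce the need for row exchanges in this verification we use locality (Lemma \ref{lem-local}) and Corollary \ref{cor Alex}.

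\textbf{Main obstacle.} The hardest step is this closure check. A union-intersection may be applied between non-adjacent rows after admissible row exchanges (Definition \ref{def ui}), and we must show that the intermediate row-exchanged configurations never let an operator produce something outside $Y$. In particular, the signs must stay odd-alternating. Our plan is to first row exchange the relevant multiples past chains using Lemma \ref{big swap down} and Lemma \ref{lem-multiplicity-cancel}, reducing to the case where the two interacting rows are consecutive in the canonical order. We then check applicability directly from Definition \ref{ui def} with $\epsilon$ computed via the alternating sign condition. The case $c_{\min}=0$, where hats interact with chains through $dual$, will require the most bookkeeping.
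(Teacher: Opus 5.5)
\textbf{The existence direction breaks down at the hats.} Lemma \ref{merge} fuses two consecutive hats, and by Lemma \ref{M existence general} this happens only for hats coming from $\mathcal{T}_i^{j+1}$ and $\mathcal{T}_i^{j}$ with $j\ge 1$. So it lowers $\ell_i$ by one but can never bring $\ell_i$ to $0$: the last hat $\mathcal{T}_i^1$ has to be absorbed into a row of \emph{circles} ending at $\min\mathcal{T}_i^1-1$.

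Concretely, take $\mathcal{M}=(1,1,1)$ and $\mathcal{S}=(\{0\},\{1,\overline{2}\})$. The rows are $([2,-2],2,\eta)$, $([0,0],0,-\eta)$, $([1,1],0,\eta)$. No chain can be split and there is only one hat, so neither of your inductive moves applies, yet this is not $\BB$.

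What is missing is the non-consecutive operator $D$ of Lemma \ref{D existence general}. In $dual(\EE)$ the target row $r$ with support $[B-1,C]$ becomes a hat $\widehat r$ with $l=C$. It must be row-exchanged down past the duals of everything lying between $h$ and $r$ (in the example, the chain $[0,0]$). It must arrive just above $\widehat h$ with $l=0$ and the sign required for a type 3' union. The paper obtains this in three steps:
\begin{itemize}
\item it commutes the exchanges with $S$ and with the $D$'s of later hats (Corollary \ref{swap commutes with S}, Lemma \ref{lem-swap commutes with D});
\item it cancels pairs of multiples (Lemma \ref{lem-multiplicity-cancel}), which reduces to type $X_k$;
\item in type $X_k$, Lemma \ref{big swap down} shows that $l$ drops by exactly the number of circles passed (Lemmas \ref{U existence}, \ref{one dual ui dual}, \ref{D existence}).
\end{itemize}
Your plan of moving multiples aside until the interacting rows are consecutive cannot produce this. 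What separates a hat from its target are chains and other hats, and it is precisely their total circle count, together with their alternating signs, that makes the union possible.

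\textbf{The closure direction has the same blind spot.} Your case list (chain--chain, chain--multiple, multiple--multiple, hat--hat) omits hat--chain and hat--multiple pairs. That is where $D^1$, $D^2$ and the non-3' operators of Lemma \ref{dud not 3' hat} act, and like $U$ they generally act across intervening chains and hats.

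Nor can ``$m_c$ is invariant'' serve as a tool. Once hats are present, $m_c$ is not the number of circles in column $c$: the hat $([2,-2],2,\eta)$ above has its only circle in column $0$ but accounts for column $2$. So preservation of $\mathcal{M}$ is part of what must be proved.

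Further, closing under lowering operators such as $dual\circ ui^{-1}\circ dual$ of type 3' requires analysing row exchanges inside $dual(\EE)$. That is in general not of type $Y_\mathcal{M}$, since a multiple $[c,c]$ with $c>0$ dualizes to a hat $[c,-c]$ and these stack. So the self-duality used for type $X_k$ in Lemma \ref{Type X exhaustion} is unavailable.

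The paper sidesteps both problems:
\begin{itemize}
\item It exhibits $\EE^{\min}$ explicitly (Lemmas \ref{E min classification for blocks after zero} and \ref{E min classification}) and uses Theorem \ref{thm intersections of local Arthur packets}(3), so only raising operators must be checked.
\item It avoids the sign bookkeeping you identify as the main obstacle. It matches supports and applies Corollary \ref{cor same support} against a type $Y_\mathcal{M}$ configuration already known, from the existence direction, to be equivalent to $\BB$.
\end{itemize}
Your route via Theorem \ref{thm intersections of local Arthur packets}(2) is sound in principle. As written, though, both the hat cases and the lowering-operator cases are left open.
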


We will prove this theorem in Section \ref{sec exhaustion of Y_M}.

\subsection{\texorpdfstring{Type $X_k$}{}}

In this section, we define a special case of virtual extended multi-segments of type $Y_\mathcal{M},$ which we call type $X_k.$ We will use this simple case to prove the existence of four basic combinations of operators on virtual extended multi-segments $\mathcal{E}$ of type $X_k,$ all of which send such multi-segments to other multi-segments of type $X_k$.

\begin{defn}
    We say that a virtual extended multi-segment $\mathcal{E}$ is of \emph{type $X_k$} if $\mathcal{E}$ is of type $Y_\mathcal{M}$ for a block-tuple $\mathcal{M}$ starting at $0$ and with all multiplicities $1$.
\end{defn}

Here, we note a few properties about virtual extended multi-segments of type $X_k.$

\begin{lemma}
\label{Type X Basic Properties}
    Let $\mathcal{E}$ be of type $X_k.$ Then,
    \begin{enumerate}
        \item $\EE$ is alternating,
        \item $C(\EE) = k + 1,$ and
        \item $dual(\EE)$ is also of type $X_k.$
    \end{enumerate}
\end{lemma}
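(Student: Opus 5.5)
The plan is to treat the three claims in order, using the explicit shape of $\EE=\EE(\mathcal{M},\mathcal{S},\mathcal{T},\eta)$ when all $m_c=1$.

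\textbf{No multiples.} Since every $m_c=1$, clause (4) of Definition \ref{valid S} forbids $\mathcal{S}_i\cap\mathcal{S}_j\neq\emptyset$ for $i\neq j$. So the $\mathcal{S}_i$ partition $\{0,\dots,k\}$ into consecutive intervals and there are no $z$-chains. The count $m_c-|\{i\mid c\in\mathcal{S}_i\}|$ is then $0$ for every $c$, so $\EE$ has no multiples. Its rows are, for each $i$, the chain attached to $\mathcal{T}_i^0$ followed by the hats attached to $\mathcal{T}_i^1,\dots,\mathcal{T}_i^{\ell_i}$, ordered by increasing $A$.

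\textbf{Parts (1) and (2).} For Part (1), the sign rules of Definition \ref{defn E(M,S)} only impose something other than the alternating condition when a multiple is involved. With no multiples, clause (2) there says that every pair of consecutive rows satisfies the alternating sign condition, so $\EE$ is alternating. For Part (2), I count circles row by row. A chain with support $[\max\mathcal{T}_i^0,\min\mathcal{T}_i^0]$ and $l=0$ has $C=|\mathcal{T}_i^0|$. A hat $([A,-B],B,\eta)$ with $A=\max\mathcal{T}_i^j$ and $B=\min\mathcal{T}_i^j$ has
\[
b=A+B+1, \qquad C=b-2B=A-B+1=|\mathcal{T}_i^j|.
\]
Since the $\mathcal{T}_i^j$ partition $\{0,\dots,k\}$, summing gives $C(\EE)=k+1$.

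\textbf{Part (3).} I apply Definition \ref{def dual}. The order on $\EE$ satisfies $(P')$, because its rows are ordered by increasing $A$ along disjoint consecutive intervals, and hats have $B\le 0$ placed in the right position. The dual reverses the order and sends each row with support $[A,B]$ to support $[A,-B]$ with $l'=l+B$.
\begin{itemize}
\item A chain $([A,B],0,\eta)$ with $B\ge0$ becomes $([A,-B],B,\eta')$, which is a hat.
\item A hat $([A,-B],B,\eta)$ becomes $([A,B],0,\eta')$, which is a chain.
\end{itemize}
So $dual(\EE)$ consists of rows supported on the same intervals $\mathcal{T}_i^j$, with the roles of chains and hats swapped, and the support is unchanged apart from order.

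I then need a new valid $(\mathcal{S}',\mathcal{T}')$. I take $\mathcal{S}'$ to group each dual-chain, which is the dual of a hat $\mathcal{T}_i^j$ with $j\ge1$, together with the dual-hats immediately following it. These following hats are the duals of the chains and hats preceding it in the reversed order.
\begin{itemize}
\item The row with interval containing $0$ must become a chain in the new picture, so that $\mathcal{T}'^0$ is the first set. This holds whether $\mathcal{T}_1^0\ni 0$ is a chain, dualizing to a hat, or not; I expect this needs a small adjustment.
\item Ordering by increasing $A$ should then be checked against $(P')$, after which $dual(\EE)$ is reordered by row exchanges. These are harmless because all supports are disjoint, so Case 1/2 does not occur and $R_k$ changes only signs in a controlled way.
\end{itemize}

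Signs are handled last. By Lemma \ref{Alternating dual}(1), $dual(\EE)$ is alternating, and by Lemma \ref{odd alternating dual}, $\eta(dual(\EE))=\eta(\EE)$. Since sign data of type $X_k$ is exactly "alternating with a prescribed first sign", this matches $\EE(\mathcal{M},\mathcal{S}',\mathcal{T}',\eta)$.

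The main obstacle is the bookkeeping: showing that the regrouped data $(\mathcal{S}',\mathcal{T}')$ satisfies the validity conditions, and correctly placing the row through column $0$.
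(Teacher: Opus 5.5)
Your Parts (1) and (2), and the sign argument at the end of Part (3), follow the paper's proof. The gap is the ordering step of Part (3).

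\textbf{The row order of $\EE$.} The rows of $\EE$ are not in increasing order of $A$. Hats have $B=-\min\mathcal{T}_i^j<0$, so $(P')$ puts all hats first, in decreasing order of $A$, followed by the chains in increasing order. For example, for $\mathcal{S}=(\{0,\overline{1}\})$ the hat $([1,-1],1,\eta)$ precedes $([0,0],0,-\eta)$.

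\textbf{The reordering step fails.} You propose to reorder $dual(\EE)$ by row exchanges that are ``harmless because all supports are disjoint''. But the hats of $dual(\EE)$ all contain column $0$, are nested, and contain the row $[\max\mathcal{T}_1^0,0]$. Exchanges among them therefore fall into Case 1 or 2 of Definition \ref{def row exchange} and change $l$: by Lemma \ref{swap down}, $l\mapsto l-C$ for consecutive alternating nested rows, which destroys the hat shape. For genuinely disjoint supports the exchanged order is not admissible, so $R_k$ does nothing. Moreover, even a harmless reordering would only show that a row-exchanged version of $dual(\EE)$ is of type $X_k$, which is not the claim.

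\textbf{What is missing.} No reordering is needed. Because the $\mathcal{S}_i$ are disjoint, distinct rows of $\EE$ have distinct $B$, and hence so do the rows of $dual(\EE)$. $dual$ preserves $(P')$, and when all $B$ are distinct, $(P')$ determines the order uniquely. So $dual(\EE)$ is automatically in the order prescribed for $\EE(\mathcal{M},\mathcal{S}',\mathcal{T}',\eta)$. This is how the paper argues.

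\textbf{The points you defer are short.}
\begin{itemize}
\item \emph{The row through column $0$.} It is always the chain $([\max\mathcal{T}_1^0,0],0,\eta_0)$. Definition \ref{def dual} gives $l'=l+B=0$, so it remains a row of circles and becomes ${\mathcal{T}'}_1^0$.
\item \emph{The other rows.} Every other interval $\mathcal{T}_i^j$ switches between chain and hat, with $l$ and support exactly as required.
\item \emph{Validity.} Your grouping (each new chain together with the new hats immediately above it) is the right $(\mathcal{S}',\mathcal{T}')$. With all $m_c=1$ the new sets are pairwise disjoint, so conditions (4) and (5) of Definition \ref{valid S} and (3) of Definition \ref{valid T} are vacuous. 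The remaining conditions hold because the $\mathcal{T}_i^j$ are consecutive intervals partitioning $\{0,\dots,k\}$ in increasing order, and the lowest one is a new chain.
\end{itemize}
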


\begin{proof}
    Suppose that $\EE=\EE(\mathcal{M},\mathcal{S},\mathcal{T},\eta).$
    Since all the multiplicities of $\mathcal{E}$ are 1, $\mathcal{E}$ has no multiples. Therefore, Part (1) of the lemma follows from the sign condition of Definition \ref{defn E(M,S)}.
    
    Since all the multiplicities of $\EE$ are 1, by Condition (4) of Definition \ref{valid S}, none of the $\mathcal{S}_i$ overlap. Each $\mathcal{S}_i$ contributes rows with a total of $|\mathcal{S}_i|$ circles, so \[C(\mathcal{E}) = \sum_{i = 1}^\ell |\mathcal{S}_i| = \left| \bigcup_{i = 1}^\ell \mathcal{S}_i  \right| = |\{0, 1, \dots, k\}|,\]
    which proves Part (2) of the lemma.
    
    For Part (3), we first show $dual(\EE)$ has the supports of a virtual extended multi-segment of type $X_k$. The dual of a hat with support $[\max \mathcal{T}_i^j, -\min \mathcal{T}_i^j]$ is a row of circles with support $[\max \mathcal{T}_i^j, \min \mathcal{T}_i^j]$. The dual of a row of circles with support $[\max \mathcal{T}_i^0, \min \mathcal{T}_i^0]$ is a hat with support $[\max \mathcal{T}_i^0, -\min \mathcal{T}_i^0]$, except when $\min \mathcal{T}_i^0 = 0$, in which case it is still a row of circles. So the supports of $dual(\EE)$ are the same as the supports of $\EE(\mathcal{M}, \mathcal{S}', \mathcal{T}', \eta)$, where $\mathcal{S}'$ and $\mathcal{T}'$ is created by the following procedure. The elements of $\mathcal{S}'$ are
    \begin{itemize}
        \item for all $i$ and for all $0 < j < \ell_i$, the set $\mathcal{T}_i^j$, with the corresponding partition consisting of just the set $\mathcal{T}_i^j$,
        \item for all $i$ with $\ell_i>0$, the union of $\mathcal{T}_i^{\ell_i}$ and every $\mathcal{T}_j^0$ such that $\ell_k = 0$ for all $i < k \leq j$, with the corresponding partition consisting of the sets $\mathcal{T}_i^{\ell_i}, \mathcal{T}_{i+1}^0, \dots$, in that order, or
        \item $\mathcal{T}_1^0$, with the corresponding partition consisting of just the set $\mathcal{T}_1^0$.
    \end{itemize}
    It is clear that the union of all of these sets is the same as the union of the $\mathcal{S}_i$, and that they do not overlap. We simply order them such that condition (3) of Definition \ref{valid S} is satisfied. Conditions (4) and (5) are not applicable. We simply order the partitions such that condition (2) of Definition \ref{valid T} is satisfied, and condition (3) is not applicable. So this is indeed valid. Moreover, we see that, except for $\mathcal{T}_1^0$, every $\mathcal{T}_i^j$ with $j>0$, corresponding to a hat, is some ${\mathcal{T}'}_{i'}^{j'}$ for $j' = 0$, corresponding to a row of circles, and similarly every $\mathcal{T}_i^j$ with $j=0$, corresponding to a row of circles, is some ${\mathcal{T}'}_{i'}^{j'}$ for $j' > 0$. So $\EE(\mathcal{M}, \mathcal{S}', \mathcal{T}', \eta)$ does indeed have the correct supports.
    
    Next we check that $\EE(\mathcal{M}, \mathcal{S}', \mathcal{T}', \eta)$ has the same order as $dual(\EE)$. Note that applying $dual$ to $\mathcal{E}$ replaces each row with its dual and reverses their order. So if $\EE$ is in $(P')$ order then so is $dual(\EE)$. Moreover, since the $\mathcal{S}_i$ do not overlap, each of the rows of $\EE$ starts in a distinct column, and the same is true for $dual(\EE)$. So the $(P')$ condition is enough to specify the order exactly. But $\EE(\mathcal{M}, \mathcal{S}', \mathcal{T}', \eta)$ also satisfies $(P')$, so they have the same order.

    Finally, we check the signs. By Lemma \ref{Alternating dual}, $dual(\EE)$ is alternating, and $\eta(dual(\EE)) = \eta$. Since $\EE(\mathcal{M}, \mathcal{S}', \mathcal{T}', \eta)$ is also alternating and has sign $\eta$, the signs match up, so we are done.
\end{proof}

We aim to prove the following result as a precursor to the more general Theorem \ref{block classification}.

\begin{thm}
\label{supercuspidal}
    Suppose that $\EE\in\VRep_\rho^\mathbb{Z}(G_n)(G)$ is tempered of type $X_k$ with sign $\eta(\EE)$. Then, the set $\Psi(\pi(\EE))$ is  the set of $\psi_\mathcal{F}$ where $\mathcal{F}\in\VRep_\rho^\mathbb{Z}(G_n)(G)$ is of type $X_k$ with $\eta(\mathcal{F})=\eta(\EE)$.
\end{thm}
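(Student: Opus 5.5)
We prove two inclusions. For ``$\supseteq$'', we show that every $\mathcal{F}$ of type $X_k$ with $\eta(\mathcal{F})=\eta(\EE)$ is connected to $\EE$ by raising operators and their inverses. For ``$\subseteq$'', we show that the set of type $X_k$ virtual extended multi-segments with fixed sign is closed under every applicable raising operator, every inverse, and every row exchange that keeps us in $\VRep_\rho^\mathbb{Z}(G_n)$. Together with Theorem \ref{thm intersections of local Arthur packets}(2), closure gives $\subseteq$. One remark first: in the $X_k$ situation the tempered $\EE$ has $\mathcal{S}_i=\{i-1\}$ and all multiplicities $1$. It is therefore the unique tempered member, namely the alternating row-of-singletons $\{0\},\{1\},\dots,\{k\}$ with sign $\eta$. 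Since an $X_k$ element is determined by its supports together with the sign, it suffices to track supports.

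For $\supseteq$, we introduce four basic moves on type $X_k$ data $(\mathcal{S},\mathcal{T})$ and realize each by an explicit operator.
\begin{enumerate}
\item Merging two adjacent chains $\mathcal{S}_i,\mathcal{S}_{i+1}$ (with $\max\mathcal{S}_i+1=\min\mathcal{S}_{i+1}$) into one chain. This is a $ui$ of type 3' on two alternating rows of circles. It is an inverse raising operator, so its inverse (splitting) is also allowed.
\item Merging two consecutive hats. By Lemma \ref{merge}, this is $dual\circ ui\circ dual$ of type 3'.
\item Converting the last block $\mathcal{T}_i^{\ell_i}$ of one chain and the $\mathcal{T}^0$ of the next. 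Here we conjugate move (1) by $dual$, using Lemma \ref{Type X Basic Properties}(3), which shows that $dual$ preserves type $X_k$, and Lemma \ref{Alternating dual}, which shows that the sign is preserved.
\item Moving hats across chains, i.e.\ repartitioning which $\mathcal{S}_i$ a given $\mathcal{T}$ belongs to. This is realized by row exchanges, using Lemmas \ref{big swap down} and \ref{big swap up} to control the signs and $l$-values, together with Corollary \ref{swap commutes with M}.
\end{enumerate}
Using these moves, any valid $(\mathcal{S},\mathcal{T})$ reduces to the tempered configuration. The reduction proceeds in two steps: first dualize away all hats into circles via (3), then split all chains via (1). Theorem \ref{thm intersections of local Arthur packets}(1) then shows $\pi(\mathcal{F})=\pi(\EE)$, so $\psi_{\mathcal{F}}\in\Psi(\pi(\EE))$.

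For $\subseteq$, take $\mathcal{F}$ of type $X_k$ and an applicable $ui$. Because $C(\mathcal{F})=k+1$ and all circles lie in distinct columns with alternating signs (Lemma \ref{Type X Basic Properties}), we examine cases 1, 2, 3 and 3' of Definition \ref{ui def}. We argue that only type 3' between alternating adjacent rows whose supports abut can be applicable: types 1 and 2 require equal signs where the alternating condition forces opposite ones, or they require overlapping columns. The result is again of type $X_k$, with the sign unchanged because the first row is untouched or merged. For $dual\circ ui\circ dual$, we apply the same argument to $dual(\mathcal{F})$, which is of type $X_k$ with the same sign by Lemma \ref{Type X Basic Properties}(3) and Lemma \ref{odd alternating dual}. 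Inverses of $ui$ not of type 3' are of the form $dual\circ ui\circ dual$ by Lemma \ref{lemma ui inv = dud}. The inverses of type 3' are handled by noting that they insert a row, and non-vanishing (Theorem \ref{thm non-vanishing}, condition $(\ast)$) together with the column count forces the inserted row to split an existing chain or hat. Row exchanges only change the order, and by Corollary \ref{cor same support} we may always return to the $(P')$ normal order.

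The main obstacle is this closure step, and specifically the case analysis for $ui$ applied across the nonstandard admissible orders allowed in $ui_{i,j}$ (Definition \ref{def ui}). There, a row exchange first alters the $l$ and $\eta$ values, and one must verify that Cases 1 and 2 never become applicable. I would try first to use Lemma \ref{lemma Alex}(3) to commute the row exchanges past the $ui$, reducing to adjacent rows in $(P')$ order. If that fails, I would compute directly with Lemmas \ref{swap down} and \ref{swap up}, which show that swapping a hat past alternating circles yields the non-alternating configuration, and hence $\epsilon=-1$ in the subsequent $ui$.
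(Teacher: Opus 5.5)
Your two-inclusion plan is sound, but both halves currently fail at the same point: the \emph{non-consecutive} operators, which the paper calls $U$ and $D$.

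\textbf{The inclusion $\supseteq$.} Move (3) is not what you get by conjugating move (1) with $dual$. In $dual(\mathcal{F})$ a chain $\mathcal{T}_i^0$ with $i\geq 2$ becomes a hat with $l=\min\mathcal{T}_i^0>0$. So the only rows of circles there are $\widehat{\mathcal{T}_1^0}$ and the duals of the hats, and merging adjacent abutting ones gives back only $M$ and the absorption of the hat $\mathcal{T}_1^1$ into $\mathcal{T}_1^0$.

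The move you describe, joining $\mathcal{T}_i^{\ell_i}$ with $\mathcal{T}_{i+1}^0$, is the paper's $U^{-1}$. Absorbing a hat of $\mathcal{S}_i$ with $i\geq 2$ into $\mathcal{T}_i^0$ is $D$. Both need row exchanges first. For $\mathcal{S}=(\{0\},\{1,\overline{2}\})$ the rows of $dual(\mathcal{F})$ are $[1,-1]$ (with $l=1$), $[0,0]$, $[2,2]$. Here $[1,-1]$ must be exchanged past $[0,0]$ (Case 1(c), $l\to 0$) before any $ui$ with $[2,2]$ is possible. Proving such moves exist and stay in type $X_k$ is exactly the $l$/sign count of Lemma \ref{U existence}.

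Move (4) cannot supply this: row exchanges preserve supports, so they cannot change $(\mathcal{S},\mathcal{T})$. Hence ``first dualize away all hats via (3)'' is unjustified as stated. You can repair it without $U$ or $D$ by interleaving:
\begin{enumerate}
\item absorb the hats of $\mathcal{S}_1$ one at a time by the dual-conjugated merge; alternation and the first sign survive by Lemma \ref{Alternating dual};
\item merge the chain $[\max\mathcal{S}_1,0]$ with the now adjacent chain $\mathcal{T}_2^0$ via (1);
\item repeat.
\end{enumerate}
This reaches $([k,0],0,\eta)$, which $\EE$ also reaches by merging.

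\textbf{The inclusion $\subseteq$.} Your claim that only type 3' merges of adjacent rows are applicable is false. Take $\mathcal{S}=(\{0,\overline{1}\},\{2\})$. Exchanging the hat $[1,-1]$ past $[0,0]$ turns it into a row of three circles with $\epsilon=-1$ against $[2,2]$. The type 3' $ui_{i,j}$ then yields the hat $[2,-1]$, which is $U^{-1}$; dually, $D$ arises inside $dual\circ ui\circ dual$. So the real task is not to rule out Cases 1 and 2, but to identify what these cross-order $ui$'s produce. Also, it is the alternating pair that has $\epsilon=-1$, not the non-alternating one.

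Your own remark that only supports matter closes this gap. Since $\psi$ depends only on supports, it suffices to show that every operator preserves the property ``the supports are those of a valid $(\mathcal{S},\mathcal{T})$''. In such a support any two segments are nested or disjoint. Therefore:
\begin{itemize}
\item any $ui$ or $ui_{i,j}$, in any admissible order, can only replace two abutting segments by their union (two chains, or a last hat and the next chain);
\item a 3' split of a chain or a hat that respects $A+B\geq 0$ again gives a valid support;
\item the $dual$-conjugated operators are covered by the support-level content of Lemma \ref{Type X Basic Properties}(3).
\end{itemize}
Combined with the repaired $\supseteq$ direction, every such support with sign $\eta(\EE)$ is realized, which proves the theorem.

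This route differs from the paper's. The paper classifies all applicable operators as $S,M,U,D$ and their inverses, and checks by sign and $l$ bookkeeping that each preserves type $X_k$ and the sign. Your repaired argument avoids that bookkeeping.
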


We will prove this theorem at the end of this subsection.
In order to prove this theorem, we will completely classify all raising operators that are possible on virtual extended multi-segments of type $X_k.$ First, we prove the existence of two very important operations for virtual extended multi-segments of type $X_k$.

\begin{lemma}
\label{U existence}
    Suppose that $\EE$ is of type $X_k$ and that $h = ([A, -B], B, \eta)$ is a hat in $\EE$. Let $c$ be a positive integer and suppose $C(h) > c$. Then there exists a series of operations on $\EE$ resulting in a new virtual extended multi-segment $\EE'$ such that:
    \begin{itemize}
        \item $\EE'$ has a hat of the form $h' = ([A - c, -B], B, \eta)$,
        \item $\EE'$ has a row of $c$ circles with support $[A, A - c + 1]$,
        \item the other segments of $\EE'$ are precisely the same as the segments of $\EE - \{h\},$ except with possibly different signs,
        \item $\eta(\EE') = \eta(\EE),$ and
        \item $\EE'$ is of type $X_k.$
    \end{itemize}
\end{lemma}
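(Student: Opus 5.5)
We would build the operations in two stages: first split the hat $h$ into two hats, then convert the upper hat into a row of circles. In both stages we would work inside the class of type $X_k$ objects and track signs through the alternating sign condition. Since every multiplicity is $1$, Lemma \ref{Type X Basic Properties} says that $\EE$ is alternating, no two $\mathcal{S}_i$ overlap, and every row begins in a distinct column. So an operation that preserves supports up to the expected splitting, and preserves alternation, automatically lands in type $X_k$: the order is forced by $(P')$ and the signs are forced by alternation together with $\eta(\EE)$.

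\textbf{Step 1 (splitting).} Put
\[
h_1=([A-c,-B],B,\eta), \qquad h_2=([A,-(A-c+1)],A-c+1,(-1)^{C(h_1)}\eta),
\]
so that $h_1<h_2$ in $(P')$ order. We have $A(h_2)=A$ and $B(h_1)$-data satisfying $A(h_1)+1=A-c+1$, which is the column condition of Lemma \ref{merge}, and the sign of $h_2$ is chosen so that $h_1,h_2$ satisfy the alternating sign condition. By Lemma \ref{merge} they are mergeable and $h_1*h_2=h$. Applying the inverse of this merge (the inverse of a raising operator) to $\EE$ therefore replaces $h$ by $h_1,h_2$ and leaves all other rows unchanged, by Lemma \ref{lem-local}. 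The assumption $C(h)>c$ is exactly what makes $h_1$ nonempty. In the $\mathcal{T}$-data this refines $\mathcal{T}_i^j$ into two consecutive blocks, so the result is still of type $X_k$. Because $C$ is additive under merging and the new pair alternates, the whole multi-segment stays alternating and its first row, hence $\eta(\EE)$, does not change.

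\textbf{Step 2 (hat to circles).} We need to replace $h_2$, whose support is $[A,-(A-c+1)]$, by the circle row $([A,A-c+1],0,\ast)$. Our plan is to pass to $dual(\EE)$ and use Lemma \ref{Type X Basic Properties}(3) together with Lemma \ref{Alternating dual}: there $\widehat{h_2}$ is already a row of circles with support $[A,A-c+1]$, while the chain $\widehat{r}$ coming from $\mathcal{T}_1^0$ (the one containing column $0$) becomes a hat. We would then use row exchanges to move rows adjacent to one another. Corollary \ref{cor Alex}, Lemma \ref{big swap down}, Lemma \ref{big swap up} and Corollary \ref{swap commutes with M} let us do this while controlling $l$ and $\eta$, and they show that intermediate mergeable pairs do not affect the row being moved. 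The aim is to apply a $ui$ of type 3, or its inverse via Lemma \ref{lemma ui inv = dud}, between the relevant circle row and the chain through column $0$. This is the only place where a hat can be exchanged for circles, because the type 3 equation $B_{k+1}+l_{k+1}=A_k-l_k+1$ is solvable exactly when one of the rows starts at $0$. Dualizing back should then give $\EE'$.

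\textbf{Main obstacle.} We expect Step 2 to be the hard part. One must check that, after the row exchanges, the $\epsilon$-value and the equation of Case 3 in Definition \ref{ui def} really hold. One must also check that the chain through $0$ returns with the same support and $l$, so that only $h_2$ has changed, and that the final signs are alternating with first sign $\eta$. We would first verify the case $c=1$ by direct computation with Lemmas \ref{swap down} and \ref{swap up}. For $c>1$ we would either induct, peeling one circle at a time, and glue the resulting circle rows by $ui$ of type 3', or repeat the same calculation with $C(h_2)=c$. The conditions $\eta(\EE')=\eta(\EE)$ and type $X_k$ would then follow from Lemma \ref{odd alternating dual} and Lemma \ref{Type X Basic Properties}.
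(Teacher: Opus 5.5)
Your Step 1 has a repairable slip. Since $C(h)=A-B+1>c$, we have $-(A-c+1)<-B$, so in $(P')$ order the hat $([A,-(A-c+1)],A-c+1,\ast)$ comes \emph{before} $([A-c,-B],B,\ast)$. By Lemma~\ref{merge}, a merged hat carries the sign of the first hat. Undoing the merge therefore gives $([A,-(A-c+1)],A-c+1,\eta)$ followed by $([A-c,-B],B,(-1)^c\eta)$, not the pair you wrote.

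The genuine gap is Step 2. All the content of the lemma lies there, and you do not carry it out. The mechanism you sketch rests on a false premise. The chain through column $0$ has $B=0$, so $dual$ gives $l'=l+B=0$ and it stays a row of circles (this is noted in the proof of Lemma~\ref{Type X Basic Properties}); it does not become a hat. Your assertion that the type 3 equation is solvable exactly when a row starts at $0$ is unsupported. Also, a $ui$ of type 3 or 3' between $\widehat{h_2}$ and that chain would change the chain's support, whereas every row other than $h$ must keep its support.

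More seriously, turning a whole hat with $C(h_2)=c$ into the circle row $[A,A-c+1]$ is not an available move. If you exchange $h_2$ down past the rows for the columns below it, you get the circle row $[A,-(A-c+1)]$. Splitting off $[A,A-c+1]$ then leaves $[A-c,-(A-c+1)]$, which has $A+B=-1<0$ and would come back up as a zero-circle phantom row. So Step 1 is a detour you would have to undo.

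The missing idea is that the hat is turned into circles by row exchanges, and the splitting is done while it is in circle form.
\begin{itemize}
\item Restrict to the type $X_A$ sub-multi-segment beginning at $h$. Then $A=k$ and $\supp(h)$ contains every later support.
\item Use Corollary~\ref{swap commutes with M} and Lemma~\ref{lemma Alex}(3) to assume every later row has one circle, so that $h$ and the later rows alternate.
\item By Lemma~\ref{big swap down}, exchanging $h$ to the bottom lowers $l$ by $k+1-C(h)=B$ and yields $([k,-B],0,(-1)^B\eta)$.
\item Since $c<C(h)$ gives $B\le k-c$, a $ui^{-1}$ of type 3' splits off the $c$ circles $[k,k-c+1]$.
\item Exchange the remaining row $[k-c,-B]$ back to the top by Lemma~\ref{big swap up}; its hypothesis $k-c+B+1\ge 2B$ holds. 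This restores $l=B$ and sign $\eta$, and the rows in between all have their signs multiplied by $(-1)^c$.
\end{itemize}
A direct check that the split-off row alternates with the row above it then gives alternation, $\eta(\EE')=\eta(\EE)$, and type $X_k$.
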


\begin{proof}
    We first prove the statement in the case where $A = k.$ The operation whose existence we shall establish consists of the following combination:

    \begin{enumerate}
        \item Row exchange $h$ until it is the last row $\EE.$
        \item Perform a $ui^{-1}$ of type 3' to separate $c$ circles from the bottom row.
        \item Row exchange the second to last row all the way to the top.
    \end{enumerate}

    In light of Corollary \ref{swap commutes with M} and  Lemma \ref{lemma Alex}(3), we know that these row exchanges commute with merging hats and $ui$'s of type 3'. Therefore, to determine the image of $h$ under the row exchanges, we may assume that $\EE$ is completely unmerged: i.e., each row $r \in \EE$ has $C(r) = 1$.

    Thus, the row $h$ is followed by $k + 1 - C(h)$ rows $r_1, \dots, r_{k + 1 - C(h)}$, all satisfying the alternating sign condition. It is clear by analyzing $\mathcal{S}$-data that $\EE - \{h\}$ is of type $X_{k - C(h)}$; therefore, $\supp(h)$ contains the supports of all other rows. Therefore, by Lemma \ref{big swap down}, exchanging $h$ down to the bottom gives us a row $h'$ such that
    $$l(h') = l(h) - (k + 1- C(h)) = B - k - 1 + (k - B + 1) = 0,$$
    $$\eta(h') = (-1)^{k + 1 - C(h)} \eta(h).$$

    Therefore, $h'$ is a row of circles. Because $c < C(h) = k - B + 1,$ we have that $B \leq k - c,$ which means that it is possible to use a $ui^{-1}$ to separate off the last $c$ circles into a new row. We denote the two resulting rows by $h_1, h_2.$

    Lemma \ref{big swap down} says that the images $r_1' < \cdots < r_{k + 1 - C(h)}'$ are alternating but $(r_{k + 1 - C(h)}', h')$ fails the alternating sign condition, This means that $(r_{k + 1 - C(h)}', h_1)$ also fails the alternating sign condition. Again, $supp(h_1) = [k - c, B]$ still contains the supports of all the rows $r_i'.$ Therefore, Lemma \ref{big swap up} tells us that exchanging $h_1$ to the top of the multi-segment gives us new rows $h_1' < r_1'' < \cdots < r_{k + 1 - C(h)}''$ satisfying the alternating sign condition. In particular, we have
    $$l(h_1') = \eta(h_1) + k + 1- C(h) = 0 + l(h) = B$$
    and
    \begin{align*}
        \eta(h_1') = (-1)^{k + 1 - C(h)}  \eta(h_1)
        = (-1)^{k + 1 - C(h)}  \eta(h') 
        &= (-1)^{k + 1 - C(h)}  (-1)^{k + 1 - C(h)}  \eta(h)\\ 
        &= \eta(h).
    \end{align*}

    These equations tell us that the resulting row $h_1'$ is a hat of the form $([A - c, -B], B, \eta).$ In particular, if we call the resulting virtual extended multi-segment $\EE'$, then $\eta(\EE) = \eta(\EE').$ Finally, we need to check that the pair $(r_{k + 1 - C(h)}'', h_2)$ satisfies the alternating sign condition:
    \begin{align*}
        \eta(h_2) = (-1)^{C(h_1) + c}  \eta(h_1) 
        &= (-1)^{C(h) + c}  (-1)^{k + 1 - C(h)}  \eta(h') 
        = (-1)^{k + 1 + c}  \eta(h).
    \end{align*}

    Meanwhile, we have
    \begin{align*}
        \eta(r_{k + 1 - C(h)}'') &= (-1)^{C(h) - c}  (-1)^{C(h)}  \eta(r_{k + 1 - C(h)}) = (-1)^c \eta(r_{k + 1 - C(h)}).
    \end{align*}

    But since the original rows $h < r_1 < \cdots < r_{k + 1 - C(h)}$ were alternating, we obtain $$(-1)^c  \eta(r_{k + 1 - C(h)}) = (-1)^c  (-1)^{k - C(h)}  (-1)^{C(h)}  \eta(h) = -\eta(h_2),$$
    and since $C(r_{k + 1 - C(h)}) = 1,$ this suffices to show that the pair is alternating. Now, it is clear that $\EE' - \{h_1', h_2\}$ has the same segments as $\EE - \{h\}$ except, possibly, for the signs. Since it is alternating, we see that $\EE' - \{h_1', h_2\}$ is of type $X_{k - C(h)}.$ We conclude that $\EE'$ is itself $X_k.$

    Now, we progress to the more general case where $k \geq A.$ Since $\EE$ is of type $X_k,$ the sub-virtual extended multi-segment $\EE_A$ of which $h$ is the first row is  of type $X_A$. As we have seen, performing the aforementioned operations on $\EE_A$ gives a segment $\EE_A'$ that has the same sign and is of type $X_A$. Just as $\EE$ is built from $\EE_A$ by including alternating merged hats and rows of circles, $\EE'$ is likewise built the same way from $\EE_A'.$ Thus, to show that $\EE'$ is of type $X_k$, it suffices to show that it is alternating.
    
    In particular, it suffices to show that $h_1'$ and the row that comes before it satisfy the alternating sign condition, and that $h_2$ and any row following it satisfies the alternating sign condition. The first of these follows from the fact that $h$ and $h_1'$ have the same sign. For the second of these, we note that $C(\EE_A) = C(\EE_A')$. Because $\EE$ satisfies the alternating sign condition, the row following the block $\EE_A'$ should have sign $\eta(\EE_A)(-1)^{C(\EE_A)}.$ This is equal to $ \eta(\EE_A')(-1)^{C(\EE_A')},$ so it has the correct sign in $\EE'$ as well. Thus, $\EE'$ is $X_k.$ That $\eta(\EE') = \eta(\EE)$ is clear, since any rows before $h$ are unaffected.
\end{proof}

Next, we show that it is sometimes possible to perform exactly one $dual \circ ui \circ dual$ on certain virtual extended multi-segments of type $X_k.$

\begin{lemma}
\label{one dual ui dual}
    Suppose $\EE$ is of type $X_k$ and $h = ([k, -B],B, \eta)$ is the first row. Then it is possible to perform exactly one $dual \circ ui \circ dual$ involving the row $h.$ This operation preserves the property of being type $X_k$ and the sign $\eta(\EE).$
\end{lemma}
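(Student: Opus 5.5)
The plan is to pass to the dual side, where $dual\circ ui\circ dual$ becomes an ordinary union-intersection. By Lemma \ref{Type X Basic Properties}(3), $dual(\EE)$ is again of type $X_k$; by Lemma \ref{odd alternating dual}, it has the same sign $\eta(\EE)$. Since $h=([k,-B],B,\eta)$ is the first row of $\EE$, its image $\widehat h$ is the last row of $dual(\EE)$. The support of $\widehat h$ is $[k,B]$, and it consists entirely of circles (for $B=0$, $h$ was already a row of circles and stays one). So the task is to show that exactly one $ui$ (possibly after row exchanges, as in Definition \ref{def ui}) involving the last row $\widehat h$ of a type $X_k$ multi-segment is applicable. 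Then we check that applying it and dualizing back yields a type $X_k$ multi-segment.

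\textbf{Step 1: enumerate the candidate partners.} In $dual(\EE)$ the sets $\mathcal{S}_i$ do not overlap (all multiplicities are $1$). So the rows other than $\widehat h$ have supports inside $[0,B-1]$, and exactly one row $r$ has $A(r)=B-1$. This $r$ is either a row of circles or a hat ending in column $B-1$. A $ui_{i,j}$ needs $A_i<A_j$ and $B_i<B_j$ with the two rows adjacent in some admissible order. Any row other than $r$ has its support nested inside that of $r$, or has $A<B-1$, so it cannot be brought adjacent to $\widehat h$ without passing $r$; I would argue this rules it out as a partner. (Alternatively, using Lemma \ref{lem-local} together with the fact that the unmerged picture of Corollary \ref{swap commutes with M} is alternating, such a partner can never satisfy the numerical conditions of Cases 1--3.) The only candidate is therefore the pair $(r,\widehat h)$, which is already adjacent. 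The first part of the main obstacle is to verify that no row exchange creates another admissible adjacency. Here I would use Lemma \ref{big swap down} and Lemma \ref{big swap up}: these show that moving $\widehat h$ past nested rows changes $l(\widehat h)$ by circle counts, and in each such configuration the equalities of Cases 1--3 of Definition \ref{ui def} fail.

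\textbf{Step 2: check that exactly one case applies for $(r,\widehat h)$.} Alternation gives $\epsilon=-1$ when computed appropriately, or $+1$ if $r$ is a hat, so I would split on whether $r$ is a row of circles or a hat.
\begin{itemize}
\item If $r$ is circles, Case 3 applies with $l=0$: the condition $B(\widehat h)+0=A(r)+1$ holds, giving a type 3$'$ merge. This is the mirror image of Lemma \ref{merge}.
\item If $r=([B-1,-B'],B',\eta')$ is a hat, Case 2 applies: $B(\widehat h)+l(\widehat h)=B$ and $B(r)+l(r)=0$ do not match, so I would instead check Case 1, namely $A(\widehat h)-0=k$ versus $A(r)-l(r)=B-1-B'$.
\end{itemize}
I expect this case bookkeeping to be the main obstacle. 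In particular, I need to confirm that exactly one of Cases 1--3 holds and that the sign $\epsilon$ comes out as alternation predicts. I would organize the check by the parity formula $C(r)+1\equiv A-B$.

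\textbf{Step 3: identify the result.} The output should again be of type $X_k$ with the same sign. For the 3$'$ case, the two rows fuse into a single row of circles $[k,B']$, which corresponds to enlarging the corresponding $\mathcal{S}_i$ or $\mathcal{T}_i^j$. For the hat case, the supports become $[k,-B']$ and $[B-1,B]$; here I would need to interpret these sets as a valid re-partition. Alternation of the new pair follows from the $\eta$-formulas in Definition \ref{ui def}, and the remaining rows are untouched by Lemma \ref{lem-local}. Dualizing back, Lemma \ref{Type X Basic Properties}(3) and Lemma \ref{odd alternating dual} give a type $X_k$ multi-segment with sign $\eta(\EE)$.
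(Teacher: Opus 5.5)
Your proposal has a genuine gap in Steps 1--2. The claim that the partner $r$ (the unique row of $dual(\EE)$ with $A(r)=B-1$) is ``already adjacent'' to $\widehat h$ is false in one of the two cases.

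\textbf{The case you handle.} If the row of $\EE$ ending at $B-1$ is the hat immediately after $h$, its dual is a row of circles sitting directly above $\widehat h$. This gives the merge of Lemma \ref{merge}, and your argument covers it.

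\textbf{The case you miss.} Suppose $h$ comes from $\mathcal{T}_i^1$. Then the row of $\EE$ ending at $B-1$ is the chain from $\mathcal{T}_i^0$, which is the \emph{last} row of $\EE$. Its dual $\widehat r=([B-1,-C],C,\cdot)$ is the \emph{first} row of $dual(\EE)$, and it is a hat when $C>0$.
\begin{itemize}
\item Your own Step 2 computation shows that no case of Definition \ref{ui def} holds for $(\widehat r,\widehat h)$ as they stand. Cases 1 and 2 fail, and Case 3 needs $l(\widehat r)=0$.
\item The missing step is that $ui_{i,j}$ (Definition \ref{def ui}) first row-exchanges $\widehat r$ down past all intermediate rows. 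These rows are the duals of the type $X_{C-1}$ part of $\EE$. They alternate, their supports are nested in $[B-1,-C]$, and they carry exactly $C$ circles.
\item So by Lemma \ref{big swap down} the image of $\widehat r$ has $l=0$ and alternates with $\widehat h$. Only then does a type $3'$ union-intersection apply.
\end{itemize}
This is the $D$ operation, i.e.\ the inverse of the $U$ of Lemma \ref{U existence}, which is how the paper treats this case. Your Step 3 output ``$[k,-B']$ and $[B-1,B]$'' reflects the misidentification: the second segment is empty, because the operation is the type $3'$ deletion.

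\textbf{Uniqueness.} Every other row of $dual(\EE)$ has $A\le B-1$, so its support is disjoint from $\supp(\widehat h)=[k,B]$. This rules out Cases 1 and 2 and the non-$3'$ part of Case 3 at once. Hence any $ui$ with $\widehat h$ is of type $3'$, and the partner must end at $B-1$, i.e.\ it must be $r$.

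Your stated reason, that other rows ``cannot be brought adjacent to $\widehat h$ without passing $r$'', is not valid for two reasons. Rows nested in $\supp(r)$ can legitimately be exchanged past $r$. And in the $D$ case some other row is already adjacent to $\widehat h$.

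Likewise, the row to move with Lemmas \ref{big swap down} and \ref{big swap up} is the partner, not $\widehat h$. Indeed $\widehat h$ nests with no other row and cannot be exchanged at all.

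\textbf{Sign and type preservation.} In the $D$ case the intermediate rows take part in row exchanges and their signs are multiplied. So Lemma \ref{lem-local} does not show they are ``untouched''. Preserving type $X_k$ and $\eta(\EE)$ requires the sign bookkeeping of Lemmas \ref{big swap down} and \ref{big swap up}, or the identification with Lemma \ref{U existence}.
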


\begin{proof}
    The image of $h$ in $dual(\EE)$ is a row of circles $\widehat{h} = ([k, B], 0, \eta').$ Since $\EE - \{h\}$ is of type $X_{k - C(h)},$ none of the other rows of $dual(\EE)$ have supports intersecting $[k, B].$ Therefore, the only possible $ui$ in $dual(\EE)$ involving $\widehat{h}$ is one of type 3'. Therefore, a $ui$ must involve the image of the unique row $r\in\EE$ with support ending at $A(r)=B - 1.$ The existence of a row ending at $B - 1$ is guaranteed because $\EE \setminus \{h\}$ is of type $X_{k - C(h)} = X_{B - 1}.$ 
    
    There are two possible cases. The first case is that the row $r \in \EE$ ending at $B - 1$ is the hat directly after $h,$ in which case the unique $dual \circ ui \circ dual$ involving $\widehat{h}$ is the operator merging the hats. The second case is that the row $r \in \EE$ ending at $B - 1$ is a row of circles on the bottom of $\EE.$ In $dual(\EE),$ the image $\widehat{r}$ is the first hat, and its support still ends at $B - 1.$ The assertion that it is possible to perform a $ui$ between $\widehat{r}$ and $\widehat{h}$ is equivalent to the assertion that the $ui^{-1}$ operation in Lemma \ref{U existence} can be undone. The fact that such an operation preserves the sign and the property of being of type $X_k$ follows from Lemmas \ref{Dual sign} and \ref{U existence}.
\end{proof}

In the more general case where $h = ([A, -B], B, \eta)$ is not the first row of $\EE,$ we still have some sub-multi-segment $\EE_A$ of type $X_A$, with $h$ as the first hat. This means that it is always possible to perform a $dual \circ ui \circ dual$ on any hat, though this operation may not be unique. To keep the uniqueness property, we restate the result as such:

\begin{lemma}
\label{D existence}
    Suppose $\EE$ is of type $X_k$ and $h = ([A, B], -B, \eta)\in\EE$ is any hat. Then there exists a unique way to apply $dual \circ ui \circ dual$ to a pair of rows $(h, r)$ where $A(r) < A.$
\end{lemma}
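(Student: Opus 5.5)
The plan is to reduce to Lemma \ref{one dual ui dual} by isolating the sub-multi-segment headed by $h$. Since $\EE$ is of type $X_k$, the $\mathcal{S}$-data have pairwise disjoint $\mathcal{S}_i$ and every row starts in a distinct column. Writing $h=([A,-B],B,\eta)$, the rows $r$ with $A(r)\le A$ are exactly the rows that precede $h$ in the $(P')$ order: those coming from $\mathcal{T}_i^j$ with entries $\le A$. Together with $h$ they form a sub-multi-segment $\EE_A$, and $h$ is its last hat. After reordering, or more precisely inside $dual(\EE)$, where $\widehat h=([A,B],0,\eta')$ is a row of circles, the image of $\EE_A$ is the dual of a multi-segment of type $X_A$ in which $h$ plays the role of the first row.

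\textbf{Step 1 (candidates).} I would determine all candidates for the $ui$ inside $dual(\EE)$ involving $\widehat h$ and a row $\widehat r$ with $A(r)<A$. Every row of $dual(\EE)$ has supports coming from disjoint intervals of $\{0,\dots,k\}$, so no other row's support overlaps $[A,B]$ in a nested way. Applying Definition \ref{ui def} (with Definition \ref{def ui} to allow non-adjacent pairs) forces the $ui$ to be of type 3'. For type 3' the partner row must satisfy $A(\widehat r)=B-1$. The column $B-1$ lies in exactly one $\mathcal{T}_i^j$, since the $\mathcal{T}$'s partition $\{0,\dots,k\}$. Hence there is exactly one row $r$ with $A(r)=B-1$, which gives uniqueness. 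If $B=0$, I would need to check that $h$ is not actually a hat with $l=0$ that is a first circle row; this degenerate case can be excluded or handled separately.

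\textbf{Step 2 (existence).} The sign condition for type 3' requires the alternating sign condition between $\widehat r$ and $\widehat h$ in the appropriate order. By Lemma \ref{Alternating dual}(1), this reduces to $r$ and $h$ alternating in $\EE$ after row exchanges bring them adjacent. To check this, I would restrict to $\EE_A$, which is of type $X_A$, and invoke Lemma \ref{one dual ui dual}. The exchanges needed to make $r,h$ adjacent only pass through rows whose supports are contained in, or disjoint from, those of $h$. By Corollary \ref{cor Alex} and Lemma \ref{lem-local}, the operation computed inside $\EE_A$ agrees with the one computed in $\EE$, and the rows of $\EE\setminus\EE_A$ are untouched.

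\textbf{Main obstacle.} The delicate point is justifying that the localization to $\EE_A$ is legitimate, because $dual$ is a global operation. Its signs depend on $\alpha_i,\beta_i$, which involve all rows. I would handle this using Lemma \ref{Dual sign}: the sign shifts are by $(-1)^{C(\EE)-C(r)}$, and the alternating relations among rows of $\EE_A$ are unaffected by the rows outside. I would also use that $\EE$ is alternating (Lemma \ref{Type X Basic Properties}), so the same parity bookkeeping as in Lemma \ref{one dual ui dual} applies.
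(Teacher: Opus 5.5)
Your argument is essentially the paper's. The paper proves this lemma only by remarking that $h$ heads a sub-multi-segment $\EE_A$ of type $X_A$ and then applying Lemma \ref{one dual ui dual}, with uniqueness coming from the single row ending at column $B-1$. Your use of Lemma \ref{Dual sign} is a sound way to make the localization explicit: the $\EE_A$-rows of $dual(\EE)$ agree with $dual(\EE_A)$ up to the global sign $(-1)^{C(\EE)-C(\EE_A)}$, and a global sign does not change any $\epsilon$.

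Two corrections. First, the rows with $A(r)<A$ come \emph{after} $h$ in the $(P')$ order, not before: hats are listed by decreasing $A$, followed by the rows of circles. So $h$ is already the first row of the contiguous block $\EE_A$, and no reordering is needed. Second, when $r$ is a row of circles, existence is not just a sign check, because $l(\widehat r)$ must drop to $0$ under the row exchanges; that is exactly what Lemma \ref{one dual ui dual} (via Lemma \ref{U existence}) supplies.
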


Again, this operation will fall into one of two cases: (1) $r$ is a hat and the operation is merging, and (2) $r$ is a row of circles. For the sake of brevity, we will refer to the latter operation as \emph{dualizing} the hat $h$ to $r$. Note that a hat with support $[A, B]$ can only be dualized to a row of circles $r$ whose support ends at $-B - 1.$

The previous two operations are two of four possible raising operators that can be performed for virtual extended multi-segments $\EE$ of type $X_k$. Here are the four possible raising operators.

\begin{defn}\label{def-smud}\
\begin{itemize}
    \item Separation of a row of circles via $ui^{-1}$ of type 3' which we denote by $S$,
    \item Merging consecutive hats as in Lemma \ref{merge}, which we denote by $M$,
    \item $ui^{-1}$ as in Lemma \ref{U existence} (again of type 3') which we denote by $U$, and
    \item Dualizing a hat $h$ as in Lemma \ref{D existence}, which we denote by  $D$.
\end{itemize}
\end{defn}

Note that operators $S$ and $M$ are in a sense inverse-dual to each other; specifically, $M$ can be obtained by dualizing $\EE$ and performing the inverse of $S$. These operations are different from the other two in that they involve the interaction of consecutive rows. $U$ and $D$ are similarly inverse-duals, and both of them involve non-consecutive interactions: applying $U$ or $D$ to a hat near the top of $\EE$ results in the appearance of circles near the bottom of $\EE.$ These operations are summarized in the following table.
\begin{center}
    \begin{tabular}{|c|c|c|}
    \hline
     & Consecutive & Non-Consecutive \\
     \hline
     $ui^{-1} $& $S$ & $U$\\
     \hline
     $dual \circ ui \circ dual$ & $M$ & $D$\\
     \hline
\end{tabular}
\end{center}

Going forward, we introduce more elaborate notation for the operators above to make the ensuing proofs more precise.
\begin{defn} \
\label{def-operators}
    \begin{itemize}
        \item Let $S_{r, c}$ be the operator separating out $c$ circles from a row $r.$
        \item Let $U_{h, c}$ be the $ui^{-1}$ operator that removes $c$ circles from the hat $h.$
        \item Let $M_{h_1, h_2}$ be the operator merging the hats $h_2, h_2.$
        \item Let $D_{h,r}$ be the operator that dualizes the hat $h$ to the row $r.$
    \end{itemize}
\end{defn}

It turns out that the operators $S, M, U$ and $D$ completely classify equivalence for virtual extended multi-segments of type $X_k.$

\begin{lemma}
\label{Type X exhaustion}
    Let $\EE$ be of type $X_k.$ Then the only raising or lowering operators that can be applied to $\EE$ are those of the form $S, M, U, D$ and their inverses.
\end{lemma}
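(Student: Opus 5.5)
The plan is to enumerate directly every raising operator and every inverse of one that can act on $\EE=\EE(\mathcal{M},\mathcal{S},\mathcal{T},\eta)$ of type $X_k$. By Definition \ref{defn raising operators} and the remark that partial duals do not occur in the integral case, the candidates are $ui^{-1}$, $dual\circ ui\circ dual$, and their inverses. Lemma \ref{lemma ui inv = dud} identifies the inverse of a $ui$ that is not of type 3' with a $dual\circ ui\circ dual$ of the same type. Lemma \ref{Type X Basic Properties}(3) says $dual(\EE)$ is again of type $X_k$. So it suffices to determine every $ui$ (possibly after row exchanges, as in Definition \ref{def ui}) that is applicable to some $\EE$ of type $X_k$, and every $ui^{-1}$ applicable to such an $\EE$. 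Conjugating by $dual$ then handles the other two families: $dual\circ ui\circ dual$ acting on $\EE$ is a $ui$ acting on $dual(\EE)$, which is again of type $X_k$.

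\textbf{Step 1: the $ui$'s.} Every column contains exactly one circle, and the $\mathcal{S}_i$ are pairwise disjoint (condition (4) of Definition \ref{valid S} with all $m_c=1$). Hence the supports of a row of circles and of the circle part of a hat are pairwise disjoint intervals tiling $\{0,\dots,k\}$. A $ui_{i,j}$ requires $A_i<A_j$ and $B_i<B_j$, together with one of the equalities in Cases 1--3. I will go through the possible pairs: circle/circle, circle/hat, hat/hat. Using $l=0$ for circles and $B=-l$ for hats, I will check that Cases 1 and 2 force overlapping circle columns or a violation of $A_i<A_j$. Case 3 forces adjacency $B_j+l_j=A_i-l_i+1$, and the alternating sign condition ($\epsilon=-1$) holds automatically in type $X_k$. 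The result will be that a $ui$ occurs only between a row and the row whose circles immediately follow. This is type 3' when both are circle rows, i.e. $S^{-1}$. When hats are involved, it is $U^{-1}$: the row-exchange computations of Lemmas \ref{big swap down} and \ref{big swap up} show that bringing a hat adjacent to a circle row turns it into circles, and the $ui$ is then type 3'. Corollary \ref{cor Alex} and Lemma \ref{lem-local} guarantee that the computation can be done after unmerging (as in the proof of Lemma \ref{U existence}) and does not depend on the chosen row exchanges.

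\textbf{Step 2: the $ui^{-1}$'s.} A $ui^{-1}$ splits one row into two rows whose supports are the union and intersection. Because of the disjoint tiling, the only splittings that land in $\VRep$ cut a row of circles, or the circle part of a hat, into two consecutive pieces. These are exactly $S_{r,c}$ and $U_{h,c}$ (Lemma \ref{U existence}). The dual statements then give $M$ and $D$, by Lemmas \ref{merge} and \ref{D existence}.

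\textbf{Main obstacle.} The hard step is the case analysis in Step 1 for pairs involving hats that are not adjacent in the order, where a $ui_{i,j}$ is defined only after row exchanges to an admissible order $>'$. Here I must show that no admissible reordering produces an applicable $ui$ of types 1, 2 or 3 other than the ones found above. For this I will rely on the explicit formulas of Lemmas \ref{big swap down} and \ref{big swap up}, which say exactly what a hat becomes (the value of $l$, and whether the alternating sign condition holds) after being exchanged past a block of alternating rows. With those formulas, applicability reduces to checking the equalities of Definition \ref{ui def} against the values they give.
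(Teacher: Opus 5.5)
Your proposal is correct and follows essentially the paper's argument. You use Lemma \ref{lemma ui inv = dud} and the fact that $dual$ preserves type $X_k$ (Lemma \ref{Type X Basic Properties}(3)) to reduce to the type 3' splittings, which are $S$ or $U$, and to the $ui$'s, whose $dual$-conjugates give $M$ and $D$. The only difference is that you classify the $ui$'s on a type $X_k$ multi-segment by checking Cases 1--3 of Definition \ref{ui def} directly (after the row exchanges), whereas the paper simply cites Lemma \ref{D existence} for the $dual\circ ui\circ dual$'s. Also, dualizing your Step 2 yields $M^{-1}$ and $D^{-1}$, not $M$ and $D$; the operators $M$ and $D$ come from dualizing Step 1.
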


\begin{proof}
    All raising operators are of the form either $dual \circ ui \circ dual$ or $ui^{-1}.$ We have already seen from Lemma \ref{D existence} that the only possible $dual \circ ui \circ dual$s are $M$ or $D$. Applying $ui^{-1}$ of type 3' to a row of circles is necessarily an $S$ operation. Applying $ui^{-1}$  of type 3' to a hat involves conducting row exchanges until that hat is a row of circles, which is necessarily a $U$ operation (recall that a $ui^{-1}$ not of type 3' must be of the form $dual \circ ui \circ dual$ by Lemma \ref{lemma ui inv = dud}). Meanwhile, if $T$ is a lowering operator on $\EE,$ then $dual \circ T \circ dual$ is a raising operator on $dual(E)$ and must therefore be one of $S, M, U, D$. Since these operations come in inverse dual pairs, $T$ must be one of their inverses.
\end{proof}

\begin{cor}
\label{preservation}
    If $\EE \sim \EE'$ and $\EE$ is of type $X_k$, then $\EE'$ is also of type $X_{k}.$ Furthermore, $\eta(\EE) = \eta(\EE').$
\end{cor}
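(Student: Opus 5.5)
The plan is to argue by induction on the length of a chain of raising operators and their inverses connecting $\EE$ to $\EE'$. By definition, $\EE\sim\EE'$ means there is a finite sequence $\EE=\EE_0,\EE_1,\dots,\EE_m=\EE'$ in which each $\EE_{j+1}$ is obtained from $\EE_j$ by a single raising operator or the inverse of one. Row exchanges might also appear if we use Theorem \ref{thm intersections of local Arthur packets}(2) rather than the definition of $\sim$; in that case we first absorb them. Type $X_k$ fixes the admissible order as the $(P')$ order with rows ordered by $A$, and each row of a type $X_k$ object starts in a distinct column. Hence any row exchange can only be an intermediate step inside one of the composite operators of Definition \ref{def ui}, and it suffices to treat single raising or lowering operators. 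It is then enough to show: if $\EE$ is of type $X_k$ and $T$ is a raising operator or the inverse of one applicable to $\EE$, then $T(\EE)$ is of type $X_k$ and $\eta(T(\EE))=\eta(\EE)$.

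By Lemma \ref{Type X exhaustion}, $T$ is one of $S,M,U,D$ or one of their inverses, so I will check each of the eight operators.

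\emph{$U$ and $M$.} For $U$, Lemma \ref{U existence} states exactly that the result is of type $X_k$ with the same sign. For $M$, Lemma \ref{merge} gives that the merged hat $h_1*h_2$ keeps sign $\eta_1$ and has $C(h_1*h_2)=C(h_1)+C(h_2)$. So the alternating condition with the neighbouring rows persists. On the level of $\mathcal{S}$-data, merging replaces two consecutive pieces $\mathcal{T}$ (hat parts) by their union, which remains valid. The first-row sign is unchanged because the merged row keeps $\eta_1$.

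\emph{$S$ and $D$.} For $S$, separating $c$ circles from a chain $[\max\mathcal{T}_i^0,\min\mathcal{T}_i^0]$ via $ui^{-1}$ of type 3' gives two adjacent rows of circles. The type 3' inverse formulas with $l=0$ show they alternate internally and with their neighbours, because $C$ is additive and the first piece keeps the original sign. On the level of data, this splits $\mathcal{S}_i$ into two consecutive sets, which is valid since multiplicities are all $1$. For $D$, Lemma \ref{one dual ui dual} handles the case $A(h)=k$, and the general case reduces to it by restricting to the sub-multi-segment $\EE_A$ of type $X_A$ headed by $h$, exactly as at the end of the proof of Lemma \ref{U existence}. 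The sign of the first row of $\EE_A$ and $C(\EE_A)$ are preserved, so alternation with the rows outside $\EE_A$ persists.

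\emph{Inverses.} For an inverse $T^{-1}$ of a raising operator, write $T^{-1}=dual\circ T'\circ dual$, where $T'$ is the inverse-dual partner from the pairing $S\leftrightarrow M$, $U\leftrightarrow D$, as in the proof of Lemma \ref{Type X exhaustion}. By Lemma \ref{Type X Basic Properties}(3), $dual(\EE)$ is of type $X_k$. By the raising cases above, $T'(dual(\EE))$ is of type $X_k$. Applying $dual$ once more keeps type $X_k$. For the sign, Lemma \ref{odd alternating dual} (or Lemma \ref{Alternating dual}(2), since type $X_k$ is alternating) shows that $dual$ preserves $\eta$, and $T'$ preserves it by the raising cases.

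Induction along the chain then gives both claims. The main obstacle I expect is the bookkeeping for $S$ and $D$ when the affected row is not at the top. There one must verify that the modified rows and the associated $\mathcal{S}$/$\mathcal{T}$-data satisfy Definitions \ref{valid S} and \ref{valid T}, in particular the ordering conditions, and that alternation with the unaffected neighbours survives. I would handle this with the localization trick: restrict to $\EE_A$, use $C(\EE_A)=C(\EE_A')$ and $\eta(\EE_A)=\eta(\EE_A')$, and invoke Lemma \ref{lem-local} to know the other rows are untouched.
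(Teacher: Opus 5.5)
Your proposal is correct and follows essentially the same route as the paper: you reduce via Lemma \ref{Type X exhaustion} to the operators $S,M,U,D$ and their inverses, invoke Lemmas \ref{U existence} and \ref{D existence} for $U$ and $D$ (the latter via localization to $\EE_A$, as the paper indicates), and treat inverses through the inverse-dual pairing together with Lemmas \ref{odd alternating dual} and \ref{Type X Basic Properties}. The differences are cosmetic---you handle $S^{-1}$ and $M^{-1}$ by the same duality trick rather than calling them obvious---and your aside about absorbing standalone row exchanges is unnecessary, since $\sim$ is defined using raising operators and their inverses only (a genuine standalone row exchange would in fact break the $(P')$ order required by type $X_k$).
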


\begin{proof}
    It suffices to check that the operations $S, M, U, D$, and their inverses all preserve the property of being of type $X_k$ and also preserve the sign. For $S$ and $M$, and their inverses, this fact is obvious. For $U$ and $D,$ it follows from Lemmas \ref{U existence} and \ref{D existence} respectively. For $U^{-1}$ and $D^{-1},$ it follows from the fact that $U$ and $D$ are inverse-duals of each other, combined with Lemmas \ref{odd alternating dual} and \ref{Type X Basic Properties}.
\end{proof}

Now, we are finally ready to prove the classification from Theorem \ref{supercuspidal}.

\begin{proof}[Proof of Theorem \ref{supercuspidal}]
    Suppose that $\EE\in\VRep_\rho^\mathbb{Z}(G_n)$ is tempered of type $X_k$ with sign $\eta(\EE)$. Lemma \ref{preservation} implies that all $\EE'\in\VRep_\rho^\mathbb{Z}(G_n)$ which are equivalent to $\EE$ are also type $X_k$ with the same sign. Conversely, if $\EE'$ is of type $X_k$, applying $D_{h_i}$ for each hat of $\EE'$ gives an equivalent virtual extended multi-segment made up only of circles. Applying $S$ successively leaves a tempered segment with the same multiplicities as $\EE$. Sign preservation guarantees this multi-segment is the same as $\EE.$ 
\end{proof}

\subsection{\texorpdfstring{Operations for type $Y_\mathcal{M}$}{}}

For the duration of this section, let $\mathcal{E}$ be of type $Y_\mathcal{M}$ for some block-tuple $\mathcal{M}$ of odd-integers. The goal of this section is to describe and classify the row operations that can be implemented on $\EE$. These are analogous to the operations $S, M, U,$ and $D$ seen for type $X_k,$ but they are generally more complicated. Furthermore, these operators may be understood nicely through their effect on the $\mathcal{S}$-data of $\EE.$ Note in advance that the discussion of the $S$ operator will apply to all multi-segments of type $Y_\mathcal{M},$ while the other operators only concern virtual extended multi-segments starting at zero.

First, we consider the operation $S.$ For multi-segments of type $X_k,$ applying operations $ui^{-1}$ to rows of circles is simple: all operations $S_{r, c}$ are possible, and no row exchanges are necessary. However, this is not always the case with general multi-segments of type $Y_\mathcal{M}.$ The following lemma describes a situation where we can apply $S$.

\begin{lemma}[Existence of S]
\label{S existence general}
    Suppose $\EE$ is of type $Y_\mathcal{M}$, $r = ([A, B], 0, \eta)$ is a chain in $\EE,$ and $k < C(r)$ is an integer. Then there exists a series of operations preserving the type $Y_\mathcal{M}$ that splits $\mathcal{S}_i = \{B, \dots, A, \dots\}$ into two sets \[\mathcal{S}_i^1 = \{B, \dots, A - c\} \text{ and } \mathcal{S}_i^2 = \{A - c + 1, \dots, A, \dots\}.\]
\end{lemma}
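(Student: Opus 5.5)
The plan is to mimic the proof of Lemma \ref{U existence}, replacing the hat there by the chain $r$ (so $k=c$ in the statement). Order the rows as in Definition \ref{defn E(M,S)}. The chain $r$ with support $[A,B]$ may be followed by multiples belonging to it and by later chains and multiples whose supports lie in columns $\geq A$. Rows above $r$ end at columns $\leq B$, with equality only in the $z$-chain situation. The move is: row exchange $r$ downward until it is the last among the rows whose support is contained in $\supp(r)$. Then apply $ui^{-1}$ of type 3' to cut $r$ into $[A-c,B]$ and $[A,A-c+1]$. Finally, exchange the lower piece $[A-c,B]$ back up to the original position of $r$.

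To control the exchanges, first reduce to the case where the only rows with support inside $\supp(r)$ are multiples $[c',c']$ for $B\le c'\le A$ which belong to $r$ or sit in its columns. Multiples come in pairs of equal sign and $l=0$. By Lemma \ref{lem-multiplicity-cancel} and Lemma \ref{lem-multiplicity-cancel-up}, exchanging $r$ past such a pair leaves $r$ unchanged and only multiplies the pair's signs by a common factor. So we may discard pairs and assume each relevant column has at most one multiple, which makes the relevant rows alternating after the first. Then Lemma \ref{big swap down} computes the image of $r$ after it is moved down, and Lemma \ref{big swap up} computes the image of the piece $[A-c,B]$ after it is moved back up. The needed inequality $C\ge 2\sum C(r_i)$ is automatic here, since all rows have $l=0$ and exchanging a circle row past a circle row keeps $l$ at $0$ (Case 1(c)/2(b)). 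Corollary \ref{cor Alex} justifies replacing the block of rows $r$ passes through by an equivalent, simpler one, and Lemma \ref{lem-comm} handles reordering of identical-support rows.

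Next, verify that the $ui^{-1}$ step is legitimate. After moving $r$ down, the sign computation from Lemma \ref{big swap down} should show that the two pieces of $r$ satisfy the alternating sign condition. Then $ui$ of type 3' recombines them, so the split is a valid $ui^{-1}$. Nonvanishing is preserved by Theorem \ref{thm intersections of local Arthur packets}(1).

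It remains to check that the output is $\EE(\mathcal{M},\mathcal{S}',\eta)$ (or $\EE(\mathcal{M},\mathcal{S}',\mathcal{T}',\eta)$) with $\mathcal{S}'$ obtained by replacing $\mathcal{S}_i$ by $\mathcal{S}_i^1,\mathcal{S}_i^2$, which are disjoint. The validity conditions (4) and (5) of Definition \ref{valid S} transfer directly. The supports are correct by construction, and the circle counts per column are unchanged, so the multiples are correct. The order follows from $(P')$ together with the chain-before-multiple tie-break, possibly after row exchanges that commute by Lemma \ref{lemma Alex}. The signs are the main obstacle: one must check the odd-alternating conditions (2)--(4) of Definition \ref{defn E(M,S)} across the new boundary between $\mathcal{S}_i^1$ and $\mathcal{S}_i^2$, and show that the multiples which now belong to the new chain $[A,A-c+1]$ acquire the opposite-sign pattern. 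I would do this by tracking the sign factors $(-1)^{C(r)+1}$ and $(-1)^{\sum C(r_i)}$ supplied by Lemmas \ref{big swap down} and \ref{big swap up}, and by using Remark \ref{rmk odd signs} (odd streaks) to reduce to the alternating case, as in Lemma \ref{odd alternating dual}. If $r$ carries hats (the $\mathcal{T}$ case), Corollary \ref{swap commutes with M} reduces to the unmerged situation first.
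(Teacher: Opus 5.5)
Your three-step outline (exchange $r$ down, cut with a type~3' $ui^{-1}$, exchange the lower piece back up) is the paper's. However, you exchange $r$ too far, and as written the step fails.

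The paper moves $r$ down only past the rows whose support begins at a column $\le A-c$. These are multiples $[c',c']$ with $B<c'\le A-c<A$. Such a column lies in $\mathcal{S}_i$ alone, since two chains can share only an endpoint. So it carries an even number $m_{c'}-1$ of multiples, all with the sign of the last circle of $r$, and Lemma~\ref{lem-multiplicity-cancel} shows that $r$ passes them unchanged. Stopping exactly there is what makes the cut legal: every row below then begins at a column $\ge A-c+1$, so $[A-c,B]<[A,A-c+1]$ is admissible with all its neighbours.

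Going on to ``the last row contained in $\supp(r)$'' breaks this in two ways.
\begin{itemize}
\item If some column in $[A-c+1,A]$ carries multiples, the piece $[A-c,B]$ ends up below rows $[c',c']$ with $c'>A-c$ and $c'>B$. This violates $(P)$, so the required $ui^{-1}$ is not defined. Example: $\mathcal{M}=(1,3,1)$ on columns $1,2,3$, $\mathcal{S}=(\{1,2,3\})$, $c=2$.
\item If the next chain is a $z$-chain beginning at $A$, column $A$ carries an odd number $m_A-2$ of multiples, so ``multiples come in pairs'' is false. The unpaired multiple has the sign of $r$'s last circle, hence $\epsilon=1$. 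Since $C(r)\ge 2$, the exchange is in Case~1(b) of Definition~\ref{def row exchange} and gives $l=1$, leaving no circles to cut. Example: $\mathcal{S}=(\{1,2\},\{2,3\})$, $r=[2,1]$, $c=1$.
\end{itemize}

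The tools you name for the exchanges are also the wrong ones. Lemma~\ref{big swap down} needs $r$ to satisfy the alternating sign condition with the next row. A multiple belonging to $r$ has the sign of $r$'s last circle, so it fails that condition. It is also not true that exchanging circle rows keeps $l=0$: Case~1(c) lowers $l$ by $C$, and Case~1(b) raises it. In the correct range nothing remains after discarding equal-sign pairs, so Lemma~\ref{lem-multiplicity-cancel} is all that is needed, and Lemma~\ref{big swap up} plays no role. Similarly, the two pieces of a type~3' $ui^{-1}$ alternate by construction; what must actually be checked is admissibility.

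Finally, conditions (4) and (5) of Definition~\ref{valid S} do not transfer directly.
\begin{itemize}
\item If $c=A-B$ and $r$ is a $z$-chain, then $\mathcal{S}_i^1=\{B\}$ violates (4) and must be reread as a multiple (Remark~\ref{S exception}).
\item If $c\ge 2$ and $m_{A-c+1}>1$, then $\mathcal{S}_i^2$ violates (5). Once the even block of column-$(A-c+1)$ multiples is moved in front of $[A,A-c+1]$, their sign is opposite to the last circle of $[A-c,B]$. So the output is of type $Y_\mathcal{M}$ with data $\{B,\dots,A-c\},\{A-c+1\},\{A-c+1,\dots,A,\dots\}$, not the two-set split. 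The first example above produces $(\{1\},\{2\},\{2,3\})$. The paper's own write-up passes over this case too.
\end{itemize}
Hats need no reduction via Corollary~\ref{swap commutes with M}: they have negative $B$, sit above every chain in $(P')$ order, and $r$ never meets them.
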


\begin{proof}
    We will conduct the following series of operations.
    \begin{itemize}
        \item Exchange $r$ downwards past all rows whose supports begin at $A - c$ or earlier. The resulting row $r'$ will be equal to $r.$
        \item Apply $S_{r, c}.$
        \item Exchange the first row of the result back up to the original position of $r.$
    \end{itemize}

    We assert that the only rows whose supports begin at $A - c$ or earlier are multiples. This is equivalent to the assertion that there are no chains beginning at $B - c$ or earlier, which must be true because two chains can only overlap for at most one circle. Now, all of these multiplies must belong to the chain $r,$ and since they do not overlap with any other chains, there must be evenly many of each of them. Therefore, by Lemma \ref{lem-multiplicity-cancel}, the row $r$ stays the same after exchanging with these rows. Meanwhile, each multiple's sign is multiplied by $(-1)^{C(r) - 1}.$

    Now, the rows after $r$ are precisely the rows whose supports begin at least at $A - c + 1,$ so we can separate $r$ into two rows: $$r_1 = ([A - c], 0, \eta), ~~~~~ r_2 = ([A, A - c + 1], 0, \eta^{C(r_1)}).$$
    Then exchanging $r_1$ up with all the multiplicities once again preserves $r_1$ by Lemma \ref{lem-multiplicity-cancel}, while all the multiplicities have their signs multiplied by $(-1)^{C(r_1)} = (-1)^{C(r) - 1 - c}.$ It is clear that the resulting multi-segment has the same $\mathcal{S}$-data as $\EE,$ except that $\mathcal{S}_i = \{B, \dots, A, \dots\}$ has been split into $\{B, \dots, A - c\}$ and $\{B - c + 1, \dots, A, \dots\}.$ It remains to be checked that the multi-segment is odd-alternating as in Definition \ref{defn E(M,S)}.

    In $\EE,$ all the multiplicities belonged to $r$ and therefore had sign $\eta  (-1)^{C(r) - 1}.$ These row exchanges have changed their sign by $(-1)^{k},$ so the new sign is $\eta  (-1)^{C(r) - c - 1} = \eta  (-1)^{C(r_1) - 1}.$ This obeys the odd-alternating condition. Before $r_1$ was exchanged up, the pair $(r_1, r_2)$ obeyed the alternating sign condition, so $r_2$ must alternate with the multiplicity right before it after the row exchange: since there are an even number of multiplicities, this is alternating. Finally, the multiplicities belonging to $r_2$ are precisely those that belonged to $r$ but do not now belong to $r_1.$ These still have sign $$\eta  (-1)^{C(r) - 1} = \eta  (-1)^{C(r_1)} (-1)^{C(r_2) - 1} = \eta(r_2)  (-1)^{C(r_2) - 1},$$
    so they obey the alternating sign condition.
\end{proof}

We continue to refer to the operation described in Lemma \ref{S existence general} as $S_{r, c}.$

\begin{rmk}
\label{S exception}
    There is a small subtlety that can occur with the $S$ operation. Suppose we apply $S_{r, k}$ to a chain $r = ([A, B], 0, \eta)$ such that $A - B = k$ and $r$ is a $z$-chain. Then separating $k$ circles leaves behind a one circle row $([B, B], 0, \eta)$ associated to $\mathcal{S}_i^1.$ However, the conditions for being of type $Y_\mathcal{M}$ require that $|\mathcal{S}_i^1| \geq 2$. That is, we would like to count it as a multiple rather than a chain. To fix this notational problem, instead of saying that we have separated $\mathcal{S}_i$ into two sets $\mathcal{S}_i^1 = \{B\}$ and $\mathcal{S}_i^2 = \{B + 1, \dots, A\}$, we instead throw out $\mathcal{S}_i^1$ and replace $\mathcal{S}_i$ with $\mathcal{S}_i^2$ solely. If $A \geq B +2$, the chain corresponding to $\{B + 1, \dots, A\}$ must be row exchanged below the (evenly many) multiples with support $[B + 1, B + 1].$
\end{rmk}

To summarize, we can describe the impact of $S$ on the $\mathcal{S}$-data of $\EE$ as follows:

\begin{rmk}
\label{S S-data}
    Let $r = ([A, B], 0, \eta).$ We have two cases as follows.

\begin{itemize}
    \item If $c = A - B$ and $r$ is a $z$-chain, then applying $S_{r, c}$ replaces:
    $$\{B, \dots, A, \dots\} \longrightarrow \{B + 1, \dots, A, \dots\}.$$ 
    
    \item Otherwise, applying $S_{r, c}$ replaces:
    $$\{B, \dots, A, \dots\} \longrightarrow \{B, \dots, A - c\}, \{A - c + 1, \dots, A, \dots\}.$$
\end{itemize}
\end{rmk}

\begin{defn}
    We shall refer to the case where $k = A - B$ and $r$ is a $z$-chain as $S^2,$ and the other case as $S^1.$
\end{defn}

We note that this $S$ operation still commutes with row exchanges. More precisely, the following corollary follows immediately from Parts (1) and (3) of Lemma \ref{lemma Alex}.

\begin{cor}
\label{swap commutes with S}
    Let $R$ denote the operation of exchanging some row $r$ with a series of rows $r_i < r_{i + 1} < \cdots < r_j.$ Let $S$ be some $ui^{-1}$ of type $S$ involving the rows $r_i < r_{i + 1} < \cdots < r_j.$ Then $R \circ S = S \circ R$.
\end{cor}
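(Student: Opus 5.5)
The plan is to reduce the identity $R\circ S = S\circ R$ to Lemma \ref{lemma Alex}(1) and (3), by rewriting $S$ as row exchanges and $ui^{-1}$'s of type 3'. By Lemma \ref{S existence general}, $S$ consists of three parts: row exchanges moving the chain downward past the multiples belonging to it, a single $ui^{-1}$ of type 3' splitting off $c$ circles, and row exchanges moving the first piece back up. Each of these pieces is a row exchange or the inverse of a union-intersection, so I will check that each one commutes with $R$ and then compose.

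\textbf{Step 1.} Treat the row-exchange portions of $S$. These are compositions of $R_k$'s. The composition $R$ is also a product of adjacent row exchanges, and these act on the row $r$ outside the block $r_i<\dots<r_j$. Consider the two compositions in either order. They have the same support and, by construction, the same resulting admissible order. Lemma \ref{lemma Alex}(1) then identifies them, provided $\pi\neq 0$ throughout, which holds by Theorem \ref{thm intersections of local Arthur packets}(1) since $\EE\in\VRep_\rho^{\mathbb Z}(G_n)$.

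\textbf{Step 2.} Treat the middle $ui^{-1}$. Write it as the inverse of some $ui_i$. Apply Lemma \ref{lemma Alex}(3) to the extended multi-segment obtained after the split, which is legitimate because it is again nonvanishing by Theorem \ref{thm intersections of local Arthur packets}(1). This gives $ui\circ R = R\circ ui$ whenever the admissible orders agree, and inverting both sides gives $R\circ ui^{-1} = ui^{-1}\circ R$. The order condition holds because $R$ only moves $r$ relative to the block. Meanwhile $ui^{-1}$ only changes rows inside the block, and the locality of Lemma \ref{lem-local} guarantees that rows outside the block are untouched.

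\textbf{Step 3.} Chain Steps 1 and 2 together to conclude $R\circ S=S\circ R$.

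\textbf{Main obstacle.} The delicate point is that $R$ exchanges $r$ past a different number of rows before and after $S$, since $S$ adds a row to the block. The honest claim is therefore that the resulting admissible orders agree, with $r$ placed on the same side of the whole block. Corollary \ref{cor same support} then forces equality once $\pi$-values and supports agree. As a fallback, one can pass to $sh^d$ as in Lemma \ref{lem-comm} to ensure non-negativity, so that the order hypothesis $(P')$ is unproblematic.
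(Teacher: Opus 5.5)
Your proposal is correct and follows the paper's route. The paper simply states that the corollary follows immediately from Parts (1) and (3) of Lemma \ref{lemma Alex}, which is exactly your decomposition of $S$ into row exchanges and a single $ui^{-1}$ of type 3', each commuted past $R$ and identified via Corollary \ref{cor same support}; your extra care about the admissible orders agreeing (with $r$ on the same side of the enlarged block) and about inverting Lemma \ref{lemma Alex}(3) spells out what the paper leaves implicit.
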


For the remaining operations $M, U,$ and $D$ discussed in this section, we assume that $\mathcal{E} = \mathcal{E}(\mathcal{M}, \mathcal{S}, \mathcal{T},\eta)$ begins at zero. The most simple operation that can be implemented on $\EE$ is $M,$ the merging of hats.

\begin{lemma}
\label{M existence general}
    Suppose $\mathcal{E} = \mathcal{E}(\mathcal{M}, \mathcal{S}, \mathcal{T},\eta)$ begins at zero and has hats $h_1 < h_2$ obtained from $\mathcal{T}_i^j$ and $\mathcal{T}_{i'}^{j'}.$ Then $h_1$ and $h_2$ can be merged if and only if $i = i'$ and $j = j' + 1.$
\end{lemma}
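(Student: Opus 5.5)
The plan is to reduce to Lemma \ref{merge}. By that lemma, two consecutive hats $h_1<h_2$ in $(P')$ order, with $h_1=([A_1,-B_1],B_1,\eta_1)$ and $h_2=([A_2,-B_2],B_2,\eta_2)$, can be merged if and only if two conditions hold:
\begin{enumerate}
\item $A_2=B_1-1$;
\item $h_1,h_2$ satisfy the alternating sign condition.
\end{enumerate}
So we only need to translate these two conditions into conditions on the $\mathcal{S}$-data of $\EE=\EE(\mathcal{M},\mathcal{S},\mathcal{T},\eta)$. We treat the merge as an operation on rows that are consecutive, after any row exchanges that bring them together. Since the order is $(P')$ and $B$ equals $-\min$ of the relevant $\mathcal{T}$-set, the hat with the larger $\min$ comes first. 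With the labeling of the statement, $h_1$ comes from $\mathcal{T}_i^j$, $h_2$ from $\mathcal{T}_{i'}^{j'}$, and $\min\mathcal{T}_i^j>\min\mathcal{T}_{i'}^{j'}$.

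\textbf{Support condition.} Write $B_1=\min\mathcal{T}_i^j$ and $A_2=\max\mathcal{T}_{i'}^{j'}$. Then $A_2=B_1-1$ says exactly that $\mathcal{T}_{i'}^{j'}$ ends immediately before $\mathcal{T}_i^j$ begins.
\begin{itemize}
\item If $i=i'$, condition (2) of Definition \ref{valid T}, together with the fact that the $\mathcal{T}$'s partition the consecutive set $\mathcal{S}_i$, forces the two blocks to be adjacent in the partition. In the paper's labeling this is $j=j'+1$.
\item If $i\neq i'$, adjacency requires $\max\mathcal{S}_{i'}+1=\min\mathcal{S}_i$ with $i=i'+1$. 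We must then show this configuration cannot occur with both rows hats. The point is that $\mathcal{T}_i^j$ with $j\ge1$ is never the first block of $\mathcal{S}_i$, because $\mathcal{T}_i^0$ comes first. So $\min\mathcal{T}_i^j>\min\mathcal{S}_i$, and hence $\max \mathcal{T}_{i'}^{j'}=\min\mathcal{T}_i^j-1\ge\min\mathcal{S}_i$, which lies in $\mathcal{S}_i$. Now split on whether $\mathcal{S}_{i'}$ and $\mathcal{S}_i$ overlap.
\begin{itemize}
\item If they do not overlap, $\mathcal{T}_{i'}^{j'}\subset\mathcal{S}_{i'}$ cannot contain an element of $\mathcal{S}_i$, a contradiction.
\item If they do overlap, the overlap is the single column $c=\min\mathcal{S}_i$, by condition (4) of Definition \ref{valid S} together with consecutiveness. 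Condition (3) of Definition \ref{valid T} then gives $|\mathcal{T}_i^0|\ge2$, so $\min\mathcal{T}_i^j\ge c+2$. This forces $\max\mathcal{T}_{i'}^{j'}\ge c+1\notin\mathcal{S}_{i'}$, again a contradiction.
\end{itemize}
\end{itemize}
Thus the support condition holds if and only if $i=i'$ and $j=j'+1$. The case analysis of $i\ne i'$ is the main point.

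\textbf{Sign condition.} We then check that for $i=i'$ and $j=j'+1$ the alternating sign condition holds automatically. This is where the main obstacle lies, since the hats need not be adjacent rows: rows lying between them in the $(P')$ order (other chains, multiples, or hats from other $\mathcal{S}$'s) have to be exchanged past. The first thing to try is Corollary \ref{swap commutes with M} and Lemma \ref{lem-multiplicity-cancel-up}. These should show that exchanging the intermediate rows, which come in even-multiplicity pairs or form alternating runs, changes the sign relation of $h_1,h_2$ by a factor that the odd-alternating sign rules of Definition \ref{defn E(M,S)} cancel. After that, consecutive non-multiple rows alternate by the construction. The rows are then consecutive and alternating, so Lemma \ref{merge} applies.

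Conversely, whenever $i\neq i'$ or $j\ne j'+1$, the support condition already fails, so no merge is possible whatever the signs are.
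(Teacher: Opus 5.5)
\textbf{Gap in the sign condition.} Your support-condition analysis is correct and is essentially the paper's argument; your write-up is cleaner than the paper's, which writes $\{A_1\}$ where $\{A_2\}$ is meant. You should state explicitly that $i'<i$. This follows from condition (3) of Definition \ref{valid S}, since $\max\mathcal{T}_{i'}^{j'}<\min\mathcal{T}_i^j$, and it is what justifies writing $c=\min\mathcal{S}_i$. The converse direction (support condition fails, so no merge) is also fine. The problem is the sign condition: you never prove that $h_1$ and $h_2$ alternate. You propose exchanging ``intermediate rows'' past the hats and say Corollary \ref{swap commutes with M} and Lemma \ref{lem-multiplicity-cancel-up} ``should show'' that the sign changes cancel, but nothing is carried out. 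Those results describe the image of a third row exchanged past a pair of rows, not how the sign relation between two non-adjacent rows changes, so it is unclear they would even apply.

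\textbf{The missing observation.} The premise is false: there are no intermediate rows. A hat from $\mathcal{T}_i^j$ with $j\ge1$ has support with lower endpoint $-\min\mathcal{T}_i^j\le -1$, since $\min\mathcal{T}_i^j>\min\mathcal{T}_i^0\ge 0$. Every chain and every multiple has lower endpoint $\ge 0$. So in $(P')$ order all hats precede all rows of circles, and the hats are ordered by decreasing $\min\mathcal{T}$. The hat intervals are pairwise disjoint, because the only possible common element of $\mathcal{S}_{i-1}$ and $\mathcal{S}_i$ is $\min\mathcal{S}_i\in\mathcal{T}_i^0$. Hence when $\mathcal{T}_i^{j-1}$ ends at $\min\mathcal{T}_i^j-1$, no other hat can have its minimum strictly between $\min\mathcal{T}_i^{j-1}$ and $\min\mathcal{T}_i^j$. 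So $h_1$ and $h_2$ are adjacent rows, neither is a multiple, and condition (2) of the odd-alternating rule in Definition \ref{defn E(M,S)} gives the alternating sign condition at once. This is exactly what the paper's line ``because $\EE$ is odd-alternating and in $(P')$ order'' relies on. Replace your row-exchange sketch with this observation and the proof is complete.
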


\begin{proof}
    Suppose that $h_1 = ([A_1, -B_1], B_1, \eta_1)$ and $h_2 = ([A_2, -B_2], -B_2, \eta_2).$ Then we must have $\mathcal{T}_i^j = \{B_1, \dots, A_1 \}$ and $\mathcal{T}_{i'}^{j'} = \{B_2, \dots, A_2\},$ and these sets must be disjoint for $(\mathcal{S}, \mathcal{T})$ to be valid. Since $h_1 < h_2,$ we must have $B_1 > A_2.$ Because $\EE$ is odd-alternating and in $(P')$ order, $h_1$ and $h_2$ can be merged if and only if $B_1 = A_2 + 1$ by Lemma \ref{merge}.

    We prove that if $B_1 = A_2 + 1,$ then $i = i'$ and $j = j' + 1,$ as the converse is clear. Suppose for the sake of contradiction that $i \neq i',$ in which case $i > i'$ since $B_1 > A_2.$ But because $A_2 = B_1 - 1 \in \mathcal{S}_{i'}$ and $|\mathcal{S}_i \cap \mathcal{S}_{i'}| \leq 1,$ we must have that $\min \mathcal{S}_i = B_1$ or $A_2.$ But since $j \geq 1,$ this would imply that $\mathcal{T}_i^0 \subset \{A_1\},$ contradicting the condition that $|\mathcal{T}_i^0| \geq 2.$ Therefore, we conclude that $i = i'.$ The fact that $j = j' + 1$ follows from the fact that $B_1 = A_2 + 1.$
\end{proof}

\begin{rmk}
\label{M S-data}
    With this lemma in mind, we observe that the operation $M$ has a nice description in terms of the $\mathcal{S}$ data of $\EE.$ If $\mathcal{T}_i^j$ and $\mathcal{T}_i^j$ are the sets associated with $h_1$ and $h_2,$ then merging $h_1 * h_2$ corresponds with replacing these sets with $\mathcal{T}_i^j \cup \mathcal{T}_i^{j + 1}.$

    In other words, applying $M_{h_1, h_2}$ to $h_1 = ([A, -B], B, \eta_1)$ and $h_2 = ([-B + 1, C], -C, \eta_2)$ replaces $$\{\dots \overline{C, \dots, B - 1}, \overline{B, \dots, A}, \dots \} \longrightarrow \{\dots \overline{C, \dots, B - 1, B, \dots, A}, \dots \}.$$
\end{rmk}

Next, we show that it is possible to dualize hats.

\begin{lemma}[Existence of $D$]
\label{D existence general}
    Suppose $\EE = \mathcal{E}(\mathcal{M}, \mathcal{S}, \mathcal{T},\eta)$ with $\mathcal{M}$ beginning at zero and that $h = ([A, -B], B, \eta)$ is a hat in $\EE$. Suppose that $r_1, r_2, \dots, r_k$ are the rows of circles whose support contains $B - 1.$ The following hold.
    \begin{enumerate}
        \item The row $r_k$ ends at $B - 1,$ and  it is possible to perform a $dual \circ ui \circ dual$ between $h$ and $r_k,$ which we notate $D_{h, r_k}.$ We have that
        $D_{h, r_{k}}(\EE)$ is also of type $Y_\mathcal{M}$ and has the same sign. 
        \item Suppose $r = r_i$ is a row of circles and $k - i \in 2\mathbb{Z}$. Then it is possible to perform a $dual \circ ui \circ dual$ between $h$ and $r,$ which we notate $D_{h, r}.$
        \item If $\EE'$ is the image of $\EE$ under such a $dual \circ ui \circ dual$ in either Part (1) or (3), then $\EE$ and $\EE'$ are of type $Y_\mathcal{M}$ for the same $\mathcal{M}.$ Moreover, $\eta(\EE) = \eta(\EE').$ 
        \item $\EE$ is equivalent to a tempered virtual extended multi-segment of type $Y_\mathcal{M}.$
    \end{enumerate}
\end{lemma}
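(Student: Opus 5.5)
The plan is to reduce to the type $X_k$ computation (Lemmas \ref{one dual ui dual} and \ref{D existence}) by passing to $dual(\EE)$. First I describe the rows of $\EE$ containing column $B-1$. Since $h$ comes from some $\mathcal{T}_i^j$ with $j\ge 1$ and $\min\mathcal{T}_i^j=B$, the column $B-1$ lies either in $\mathcal{T}_i^{j-1}$ (same $\mathcal{S}_i$) or, when $B=\min\mathcal{S}_i$, in $\mathcal{S}_{i-1}$. If $B-1\in\mathcal{T}_i^{j-1}$ with $j-1\ge 1$, the row through $B-1$ is a hat, which is the merging case of Lemma \ref{M existence general}. I take that case as already handled and treat only the rows of circles. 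These are the multiples at $[B-1,B-1]$ together with possibly one chain through $B-1$. By the $(P')$ order convention (chains before multiples, ordered by $A$), the last such row $r_k$ ends at $B-1$, and there is an odd number $m_{B-1}$ of circles in that column.

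For Part (1) I apply $dual$. By Lemma \ref{odd alternating dual} the sign is preserved, and $\widehat{h}=([A,B],0,\eta')$ is a row of circles. Since $B\ge 1$, $\widehat{r_k}$ is a hat ending at $B-1$, or it remains a row of circles $[B-1,B-1]$ if $B=1$. A $ui$ of type 3' between $\widehat{r_k}$ and $\widehat{h}$ needs two things: that they become adjacent after row exchanges, and that the alternating sign condition holds. For adjacency, the rows lying between them in $dual(\EE)$ are the duals of the rows with support between $B-1$ and $B$. These are the even number of multiples at $[B,B]$ not belonging to $h$'s chain, together with the other multiples at $[B-1,B-1]$. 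I exchange $\widehat h$ past these using Lemma \ref{lem-multiplicity-cancel-up} and Lemma \ref{lem-multiplicity-cancel}, which say that pairs of identical one-circle rows leave the exchanged row unchanged, and I use Corollary \ref{cor Alex} to justify doing this within sub-multi-segments. For the sign, the odd-alternating rule of Definition \ref{defn E(M,S)}, combined with Lemma \ref{Alternating dual}, shows that $\widehat{r_k}$ and $\widehat h$ satisfy the alternating sign condition exactly when $r_k$ is the last row in column $B-1$. The result should have $\mathcal{S}$-data obtained by moving $\mathcal{T}_i^j$ into the preceding set, mirroring the $X_k$ case of Lemma \ref{one dual ui dual}.

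Part (2) follows the same argument. After pairing off, the rows $r_{i+1},\dots,r_k$ form an even number of identical multiples, so Lemma \ref{lem-multiplicity-cancel} lets $\widehat h$ pass them unchanged. The sign condition with $\widehat{r_i}$ is then the same as with $\widehat{r_k}$, because the signs of the multiples in a column agree and the parity of $k-i$ is even. For Part (3) I compute the image explicitly. The only change in the support data is that $\mathcal{T}_i^j$ becomes a row of circles attached to the set containing $B-1$. I then verify conditions (3)–(5) of Definition \ref{valid S} and condition (3) of Definition \ref{valid T}. In particular, the number of multiples in column $B-1$ drops by one when the chain absorbs a circle, and since $m_{B-1}$ is odd this is consistent with condition (4). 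I also check that the odd-alternating signs of Definition \ref{defn E(M,S)} are restored, using Lemmas \ref{Dual sign} and \ref{Alternating dual}. Part (4) then follows by induction on the number of hats: apply $D_{h,r_k}$ to each hat, which keeps the multi-segment of type $Y_\mathcal{M}$ with the same sign, and once no hats remain apply $S$ (Lemma \ref{S existence general}) to split the chains into singletons, which gives a tempered multi-segment of type $Y_\mathcal{M}$.

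I expect the main obstacle to be the sign bookkeeping in Parts (1) and (3). This means tracking how the multiples in columns $B-1$ and $B$ change sign under the row exchanges (each by $(-1)^{C+1}$ or $(-1)^C$) and showing that after the $ui$ and the second $dual$ the rows again satisfy the odd-alternating rule. This is especially delicate when the receiving row is a $z$-chain, or when $B-1=0$ so that $\widehat{r_k}$ is not a hat. My first attempt will be to reduce to an alternating configuration by deleting pairs of equal multiples, as in the proof of Lemma \ref{odd alternating dual}, and then invoke the $X_k$ computation of Lemma \ref{U existence} directly.
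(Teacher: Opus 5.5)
\textbf{There is a genuine gap in Part (1), and it is exactly where the content of the lemma lies.}

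\emph{Which rows separate $\widehat{r_k}$ from $\widehat h$.} In $\EE$ every hat has $B(h)<0$, so in the $(P')$ order all hats precede all rows of circles. The rows strictly between $h$ and $r_k$ are therefore:
\begin{itemize}
\item all hats whose columns lie below $B$;
\item all chains and multiples preceding $r_k$, every one of which has $A\le B-1$.
\end{itemize}
Since $dual$ reverses the order, these are the rows separating $\widehat{r_k}$ from $\widehat h$ in $dual(\EE)$. The multiples at $[B,B]$ are not among them: they follow $r_k$ in $\EE$, so their duals precede $\widehat{r_k}$.

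\emph{$\widehat h$ cannot move.} Moreover $\widehat h=([A,B],0,\eta')$ with $B\ge 1$ cannot be exchanged with any separating row. Their duals have supports $[c,-c']$ or $[A',B']$ with $c,A'<B$, which are not nested with $[A,B]$. The swap would violate $(P)$, so $R$ acts trivially.

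\emph{A concrete case.} Take $\mathcal{S}=(\{0,\overline{1}\},\{2,\overline{3}\})$ with all $m_c=1$, $\supp(h)=[3,-3]$ and $\supp(r_k)=[2,2]$. The supports of $dual(\EE)$ are, in order, $[2,-2],[0,0],[1,1],[3,3]$. Your description predicts nothing between $\widehat{r_k}$ and $\widehat h$, yet two rows lie there, and $[3,3]$ can pass neither of them.

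\emph{Part (2).} The same reversal breaks it: $\widehat{r_{i+1}},\dots,\widehat{r_k}$ precede $\widehat{r_i}$ in $dual(\EE)$, so they are not between $\widehat{r_i}$ and $\widehat h$. The correct reduction is to discard everything after $r_i$. Since $k-i$ is even, what remains is still of type $Y_{\mathcal{M}}$ with $r_i$ the last row ending at $B-1$.

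\textbf{What actually has to be proved.} Exchange the hat $\widehat{r_k}$ downward past all the separating rows; their supports lie inside $\supp(\widehat{r_k})$. Its $l$-value, initially $B(r_k)$, must drop to exactly $0$. The sign of its last circle must end up opposite to that of $\widehat h$, so that a $ui$ of type 3' applies. Then the merged row is exchanged back.

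Your claim that the alternating condition holds ``exactly when $r_k$ is last in column $B-1$'' concerns $\widehat{r_k}$ before it has moved. The $ui$ depends on the sign and $l$ of its image after all these exchanges, and your claim says nothing about $l$.

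\textbf{How the paper controls the exchanges.} It reduces to type $X$ in three steps:
\begin{enumerate}
\item Dualize the intermediate hats first, by induction on the number of hats; this is legitimate by Lemma \ref{lem-swap commutes with D}.
\item Split the intermediate chains into single circles, using Corollary \ref{swap commutes with S}.
\item Cancel pairs of identical multiples via Lemma \ref{lem-multiplicity-cancel}.
\end{enumerate}
After this, Lemma \ref{big swap down} and the $X_k$ case (Lemma \ref{D existence}) give $l=0$ and the correct sign.

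Your closing idea of deleting pairs of multiples and invoking the $X_k$ computation points in this direction. It is not enough by itself: intermediate hats and chains of length at least $2$ survive, so the configuration is not yet of type $X$.

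\textbf{Part (3).} The sign check rests on the same bookkeeping. Every intermediate row picks up a factor $(-1)^{C(r_k)-1}$ on the way down and $(-1)^{C(r_k)+C(h)-1}$ on the way back (Lemmas \ref{big swap down}, \ref{big swap up}), a net factor of $(-1)^{C(h)}$. This is what preserves the odd-alternating condition. The rest of your Part (3) and your induction for Part (4) are fine once Part (1) is established this way.
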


Before proving this lemma, we note that the operation $D_{h, r}$ as it is described above commutes with row exchanges (if it exists).

\begin{lemma}
\label{lem-swap commutes with D}
    Suppose $\mathcal{E} = \mathcal{E}(\mathcal{M}, \mathcal{S}, \mathcal{T},\eta)$ begins at zero. Let $r \in \mathcal{E}$ be a row, and let $r_1$ and $r_2$ be rows such that the operation $D_{r_1, r_2}$ is valid. Suppose that $supp(r)$ contains the supports of the rows between $r_1$ and $r_2$ inclusively. Then exchanging $r$ down with the rows $r_1 < \cdots < r_2$ results in the same $r'$ as exchanging $r$ with the images of these rows under $D_{r_1, r_2}.$
\end{lemma}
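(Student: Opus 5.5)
The plan follows the pattern of Corollary \ref{cor Alex} and Corollary \ref{swap commutes with S}. Everything reduces to two facts: Corollary \ref{cor same support}, which says an extended multi-segment is determined by its support together with $\pi$; and the fact that $D_{r_1,r_2}$ is a $dual\circ ui\circ dual$, so by Theorem \ref{thm intersections of local Arthur packets}(1) it does not change the represented representation.

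Write $\FF=\{r\}\cup(r_1<\cdots<r_2)$ for the consecutive block, and $\FF_D=\{r\}\cup D_{r_1,r_2}(r_1<\cdots<r_2)$ for the block after the operation. By Lemma \ref{lem-local}, $D_{r_1,r_2}$ only touches rows in the stretch $r_1,\dots,r_2$, so we may restrict to this block. Let $\FF'$ (resp. $\FF_D'$) be the result of exchanging $r$ down to the last row.

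First I will argue that $R\circ D = D'\circ R$, where $R$ is the row exchange of $r$ and $D'$ is the corresponding $dual\circ ui\circ dual$ on the exchanged rows. Both sides represent the same $\pi$, since each is a composition of operators preserving $\pi$. To get the supports to match, I shift by $sh^d$ as in the proof of Lemma \ref{lem-comm}, so that Theorem \ref{thm non-vanishing} applies and all orders can be taken $(P')$. The supports agree: $\supp(r)$ contains every support in the stretch, so $r$ is untouched in support and ends up last in both. Corollary \ref{cor same support} then gives $(R\circ D)(\FF)=(D'\circ R)(\FF)$. Lemma \ref{lemma Alex}(2),(3) provides the commutation of $R$ with $dual$ and with $ui$ separately, and assembling these two gives the statement.

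The key remaining point is that $D'$ is local to the images of $r_1,\dots,r_2$ and does not involve the last row. Given that, the last row of $D'(\FF')$ is the image $r'$ of $r$ in $\FF'$. Comparing last rows then yields that $r'$ equals the image of $r$ in $\FF_D'$.

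An alternative route, if the abstract argument fails, is the direct one used in Corollary \ref{cor Alex}. Decompose $D_{r_1,r_2}$, via its definition in Lemma \ref{D existence general}, into row exchanges, merges $M$, and a $ui^{-1}$ or $ui$ of type 3'. Corollary \ref{swap commutes with M} handles the merges, Lemma \ref{lemma ui and row exchange commute} handles the $ui$ step, and Lemma \ref{lem-comm} and Lemma \ref{lemma Alex}(1) handle the row exchanges. Between $r_1$ and $r_2$ there may also be pairs of identical multiples; Lemma \ref{lem-multiplicity-cancel} shows these leave $r$ unchanged, so they can be discarded first.

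The main obstacle is checking that the conjugating $dual$ in $D$ is compatible with exchanging $r$. $dual$ reverses the order, so $r$ moves to the other end, and Lemma \ref{lemma Alex}(2) requires the admissible orders to agree. I would handle this by computing the sign changes through Lemma \ref{Dual sign}, using that $\FF$ is odd-alternating so that Lemma \ref{odd alternating dual} applies, and checking that the circle count $C$ of the stretch is unchanged by $D$, since $C$ is additive under these operations. Equality of the $l$ and $\eta$ values of $r'$ then follows, because the row-exchange formulas depend only on $C$ of the passed rows and the signs.
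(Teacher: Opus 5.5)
\textbf{There is a genuine gap: your main argument fails at its first step.}

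Since $\supp(r)\supseteq\supp(r_1)=[A,-B]$, you have $B(r)\le -B<0\le B(r_2)$. So once $r$ is exchanged below the stretch, $R(\FF)$ has a row with negative $B$ sitting below rows with larger $B$, and it is not in $(P')$ order. This has several consequences:
\begin{itemize}
\item $dual$ (Definition \ref{def dual}) is not defined on $R(\FF)$, so there is no operator ``$D'$'' to apply to it.
\item Corollary \ref{cor same support} cannot be used to compare $(R\circ D)(\FF)$ with anything, because it explicitly requires $(P')$ when some $B<0$.
\item Lemma \ref{lemma Alex}(2) does not transport the exchange to the dual side. In $dual(\FF)$ one has $A(\widehat r)\ge A(\widehat s)$ and $B(\widehat r)\ge B(\widehat s)$, so $\widehat r$ and $\widehat s$ are in general not nested (for $s=r_2$ they never are). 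The exchange of $r$ therefore has no counterpart there.
\item The $sh^d$ device from Lemma \ref{lem-comm} fails, because shifting does not commute with $dual$. Take $\FF=\{([1,-1],1,\eta),([0,0],0,-\eta)\}$, so $D(\FF)=\{([1,0],0,\eta)\}$. Then $sh^1(\FF)$ and $sh^1(D(\FF))$ have $\sum a_ib_i$ equal to $12$ and $8$, so they are not even for the same group.
\end{itemize}
If you instead define $D':=R\circ D\circ R^{-1}$, then ``$D'$ fixes the last row'' is exactly the statement to be proved, and the argument becomes circular.

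\textbf{Your fallback route does not close the gap either.} The row exchanges and the type-3' $ui$ that make up $D$ happen inside $dual(\FF)$, where $\widehat r$ cannot be exchanged with the relevant rows. Corollary \ref{swap commutes with M} and Lemma \ref{lemma ui and row exchange commute} are statements about operations in $\FF$ itself, so they do not apply.

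Your closing claim, that the exchange formulas depend only on the total number of circles passed and the signs, is false in general. At each step Definition \ref{def row exchange} branches on $\epsilon$ and on comparisons such as $b_k-2l_k$ versus $2(b_{k+1}-2l_{k+1})$ for the individual row being passed. The claim holds only when the passed rows alternate and $r$ alternates with the first of them. Then every step is Case 1(c), and Lemma \ref{big swap down} gives $l(r')=l(r)-\sum C(r_i)$ and $\eta(r')=(-1)^{\sum C(r_i)}\eta(r)$.

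\textbf{What is missing is the reduction to that alternating situation,} which is what the paper's proof does:
\begin{itemize}
\item Induct on the number of hats below $r_1$, so the intermediate hats can be dualized away using the inductive hypothesis.
\item Unmerge the chains via $S$, using Corollary \ref{swap commutes with S}.
\item Discard identical pairs of multiples via Lemmas \ref{lem-multiplicity-cancel} and \ref{lem-multiplicity-cancel-up}.
\end{itemize}
The stretch is then an alternating type $X_k$ configuration, so Lemma \ref{big swap down} applies. Since $D$ preserves the total circle count of the stretch (and the sign of its first circle), $r'$ comes out the same in both cases.
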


\begin{proof}
    We prove the result by inducting on the number of hats below $r_1$. As part of the inductive assumption, we can assume that all hats after $r_1$ can be dualized without affecting the image $r'$ under row exchanges. Therefore, the rows between $r_1$ and $r_2$ (excluding $r_1$ but including $r_2$) are all rows of circles. In light of Lemma \ref{swap commutes with S}, we can presume that all chains between $r_1$ and $r_2$ are completely unmerged so that the rows from $r_1$ to $r_2$ (not inclusive) form a tempered multi-segment. Then, by Lemmas \ref{lem-multiplicity-cancel} and \ref{lem-multiplicity-cancel-up}, we can presume that there are no multiples, so that the rows $r_1 < \cdots < r_2$ consists of a multi-segment of type $X_k$ for some $k.$ Then, these rows must alternate, so Lemma \ref{big swap down} indicates that $r'$ depends only on the total number of circles in these rows, which is preserved under $D_{r_1, r_2}.$
\end{proof}

With the above lemma in hand, we prove Lemma \ref{D existence general}.

\begin{proof}[Proof of Lemma \ref{D existence general}]
    We first verify that $r_k$ ends at $B - 1$. Assume otherwise for the sake of contradiction. Then $r_k$ must have support containing $B$, so $B \in \mathcal{T}_i^0$ for some $i.$ This is impossible because $B$ is in the hat $h$, so $B \in \mathcal{T}_{i'}^{j'}$ for some $j \geq 1.$ Then we must have that $|\mathcal{S}_i \cap \mathcal{S}_{i'}| \geq 2,$ which is a contradiction.
    
    We prove Parts (1) and (4) together within the same inductive argument. In Part (1), we are asserting that it is possible to conduct the following series of operations:
    \begin{itemize}
        \item Dualize $\EE,$ so that the image $\widehat{r_k}$ is a hat and $\widehat{h}$ a row of $C(h)$ circles in $dual(\mathcal{E})$.
        \item Exchange down $\widehat{r_k}$ in $dual(\EE)$ until its image $\widehat{r_k}'$ is the row right before $\widehat{h}.$
        \item Perform a $ui$ of type 3' between $\widehat{r_k}'$ and $\widehat{h}.$
        \item Row exchange the resulting row until it is in the original position of $\widehat{r_k}$.
    \end{itemize}
    
    Firstly, it is clear that none of the rows except for those between $h$ and $r_k$ are relevant to the existence of these operations (because they can only change all the signs of the rows between $h$ and $r_k$ by the same factor), so it suffices to prove the lemma in the case where $h$ is the first row of $\EE$ and $r_k$ is the last. We inductively assume that all hats after $h$ have been dualized; we are able to dualize these hats independently of the proposed operation on $h$ by Lemma \ref{lem-swap commutes with D}. By Corollary \ref{swap commutes with S}, we can also reduce to the case where all chains that come before $r_k$ have exactly one circle. Thus, we have supposed inductively that $\EE \sm \{h, r_k\}$ is a tempered virtual extended multi-segment. Now, we separately consider each of the two following cases.

    The first case is that $m_{B - 1} \geq 3,$ in which case $r_k$ is a multiple of the form $([B - 1, B - 1], 0, \eta').$ Then $\mathcal{E} \sm \{h\}$ is tempered as well since all its rows have exactly one circle. So we have
    $$\EE \setminus \{h\} = \bigcup_{i = 1}^{B - 1} \bigcup_{j = 1}^{m_i} ([i, i], 0, (-1)^i  \eta(\EE)).$$
    This means that  
    $$dual(\EE)\setminus \{\widehat{h}\} = \bigcup_{i = 1}^{B - 1} \bigcup_{j = 1}^{m_i} ([i, -i], i, \eta_i').$$
    However, in light of Lemma \ref{lem-multiplicity-cancel}, proving this result for $\EE$ is equivalent to proving it for a virtual extended multi-segment where all multiples have been removed. We then presume $$\EE \sm \{h\} = \bigcup_{i = 1}^{B - 1} ([i, i], 0, (-1)^i  \eta(\EE)),$$
    which is precisely the case where $\EE$ is of type $X_k,$ so Part (1) follows from Lemma \ref{D existence}. If $\EE - \{h\}$ is tempered, then employing this $dual \circ ui \circ dual$ to combine $h$ and $r_k$ into a single row $r'$ and then using successive operations $S_{r_k', 1}$ shows that $\EE$ itself is equivalent to a tempered multi-segment which proves Part (4) in this case.

    The second case is that $m_{B - 1} = 1,$ in which case $r_k = ([C, B], 0, \eta')$ is a chain. Depending on whether $r_k$ overlaps with another chain at $C,$ we have that
    $$\EE \sm \{h, r_k\} = \bigcup_{i = 1}^{C} \bigcup_{j = 1}^{m_i'} ([i, i], 0, (-1)^i  \eta(\EE)),$$
    where 
    $$m_i' := \begin{cases}
        m_i - 2 & \mathrm{if} \ i = C ~\text{and}~ \text{$r_k$ overlaps with another chain}, \\
        m_i - 1 & \mathrm{if} \ i = C ~\text{and}~ r_k ~ \text{does not overlap with another chain},\\
        m_i & \text{otherwise.}
    \end{cases}$$
    Again, when row exchanging $\widehat{r_k}$ with the other rows of $dual(\EE),$ the multiplicities will not impact the image $\widehat{r_k}'$ due to Lemma \ref{lem-multiplicity-cancel}. Therefore, we can presume that each $m_i$ is equal to $0$ or $1,$ and we have reduced to the case where $\EE$ is of type $X_k.$ Again, this suffices for Part (1). Dualizing $h$ and applying $S$ suffices to show Part (4).

    We note that Part (2) can be reduced to Part (1) by similarly only considering rows between $h$ and $r_i.$  Since $r_i$ and $r_k$ have the same parity, such a multi-segment is still of type $Y_\mathcal{M}$ and the proof still holds.

    Similarly, it suffices to prove Part (3) for the case where $i = k.$ The fact that the sign $\eta(\EE)$ is preserved under the $dual \circ ui \circ dual$ operation is inherited from the fact that the sign is preserved when $\EE$ is of type $X_k.$ This is because the presence of multiplicities does not cause a cumulative change in sign under row operations. To show that type $Y_\mathcal{M}$ is preserved, we note that none of the multiplicities are affected under the operation; it suffices to show that the odd-alternating condition is preserved.

    We note that exchanging $\widehat{r}$ down until it is the row before $dual(h)$ changes the signs of all the intermediary rows by $(-1)^{C(r) - 1},$ and exchanging the merged row back up changes all the signs by $(-1)^{C(r) + C(h) - 1}$ (Lemmas \ref{big swap down} and \ref{big swap up}). Therefore, all the intermediate rows have a net sign change of $(-1)^{C(h)}.$ Consecutive intermediate rows all satisfy the same sign conditions as before. Meanwhile, it follows from the proof of Lemma \ref{D existence} that the signs of the first and last circles of the block of rows $h < \cdots < r$ stays the same under the $D_{h, r}$ operation. Then $r'$ must satisfy the sign condition with the row after it prescribed by Definition \ref{defn E(M,S)}. Meanwhile, the sign of the row before $r$ is changed by $(-1)^{C(h) - 1},$ as is the last circle of $\widehat{r}',$ so $r'$ must also satisfy the proper sign condition with the row before it.
\end{proof}

The above lemma is enough to deduce the following result.

\begin{cor}
\label{Type Y SMUD equivalence}
    Any two $Y_{\mathcal{M}}$ multi-segments with the same $\eta$ are equivalent up to the operators $S,$ $D,$ and their inverses.
\end{cor}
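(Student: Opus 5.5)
The plan is to reduce every $Y_\mathcal{M}$ multi-segment with sign $\eta$ to one canonical representative, namely the tempered block with multiplicities $\mathcal{M}$ and sign $\eta$. Equivalence is symmetric and transitive, so two multi-segments reducing to the same representative are equivalent to each other. The reduction has two stages: first remove all hats using $D$, then break all chains into single circles using $S$ (or $S^{-1}$ in the reverse direction).

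\textbf{Step 1 (removing hats).} Let $\EE$ be of type $Y_\mathcal{M}$ with sign $\eta$. If $\EE$ starts at zero, I induct on the number of hats.
\begin{itemize}
\item Choose a hat $h=([A,-B],B,\eta')$ with no hat below it.
\item By Lemma \ref{D existence general}(1), the row $r_k$ ending at $B-1$ exists, and $D_{h,r_k}$ applies.
\item By Part (3) of the same lemma, the result is again of type $Y_\mathcal{M}$ with the same sign, and it has one fewer hat.
\end{itemize}
If $\EE$ does not start at zero, there are no hats, since $\mathcal{T}$-data only arises when $c_{\min}=0$. In both cases we obtain $\EE_1\sim\EE$ of type $Y_\mathcal{M}$, consisting only of chains and multiples, with $\eta(\EE_1)=\eta$. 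Alternatively, Lemma \ref{D existence general}(4) gives this stage directly in the zero case.

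\textbf{Step 2 (splitting chains).} Next, I split every chain of $\EE_1$ into single circles using Lemma \ref{S existence general}. Each application of $S_{r,c}$ preserves type $Y_\mathcal{M}$, and I will check from its proof that it also preserves the sign of the first row. The only way it could fail is if $r$ is the first row, and there the first piece keeps the sign $\eta(r)$. By Remark \ref{S S-data}, each $S^1$ strictly increases the number of sets in $\mathcal{S}$ or shrinks one of them, and $S^2$ shrinks a set. Iterating until every $\mathcal{S}_i$ is a singleton therefore terminates. The endpoint is $\EE(\mathcal{M},(\{c_{\min}\},\dots,\{c_{\max}\}),\eta)$. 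Its rows all have support $[c,c]$ with multiplicity $m_c$, and the odd-alternating signs are forced by $\eta$, so it is tempered.

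\textbf{Step 3 (uniqueness of the endpoint).} It remains to see that the tempered endpoint depends only on $\mathcal{M}$ and $\eta$. The ordering rules of Definition \ref{defn E(M,S)} fix the order of the rows. The sign rules then determine every sign from $\eta$:
\begin{itemize}
\item the chain at $c$ alternates with the previous chain;
\item the multiples at $c$ fail to alternate with the row before them, so they copy the sign of the chain at $c$.
\end{itemize}
This gives exactly the block with signs $\eta_\rho(c)=(-1)^{c-c_{\min}}\eta$. Hence any two $Y_\mathcal{M}$ multi-segments with the same $\eta$ are each equivalent, via $D$ and $S$ moves, to the same tempered block. Composing one chain of moves with the inverse of the other gives the corollary.

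\textbf{Main obstacle.} The delicate point is Step 2 in the presence of $z$-chains and the $S^2$ exception of Remark \ref{S exception}. There I must confirm that the splitting can always continue until every set is a singleton, without ever producing an invalid $\mathcal{S}$, in particular when two overlapping chains force the leftover single circle to be reclassified as a multiple. I would handle this by always splitting off one circle at a time from the top of the last chain, so that overlaps are resolved from the right.
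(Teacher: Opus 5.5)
Your proposal is correct and follows essentially the same route as the paper: remove the hats with $D$ using Lemma~\ref{D existence general}, split the chains into single circles with $S$ using Lemma~\ref{S existence general}, and observe that the resulting tempered multi-segment is determined by $\mathcal{M}$ and $\eta$. Some of your extra steps fill in details the paper leaves implicit: you dualize the lowest hat first, so that the row ending at $B-1$ is a row of circles; you split off one circle at a time from the top, so that each new set is a singleton and cannot violate Condition~(5) of Definition~\ref{valid S}; and you verify explicitly that the tempered endpoint is unique.
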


\begin{proof}
    First, we suppose $\mathcal{M}$ starts at zero. Part (4) of the Lemma \ref{D existence general} implies that all $Y_{\mathcal{M}}$ multi-segments with the same $\mathcal{M}$ and $\eta$ are equivalent to a tempered multi-segment of type $Y_{\mathcal{M}}.$ Part (1) guarantees that this tempered multi-segment always has the same $\mathcal{M}$ and $\eta.$

    If $\mathcal{M}$ starts after zero, then any type $Y_{\mathcal{M}}$ multi-segment has no hats and can be separated via the $S$ operation into a tempered multi-segment. By Lemma \ref{S existence general}, this multi-segment still must have the same $\mathcal{M}$ and $\eta.$
\end{proof}

By reducing to the case where $\EE$ is of type $X_k$, we have shown that the operation $D$ has the same structural properties as it did for type $X_k$. That is to say, if $\EE' = D_{h, r}(\EE)$ and $r'$ is the resulting row, then:

\begin{itemize}
    \item $\EE \sm \{h, r\}$ and $\EE' \sm \{r'\}$ have the same rows up to a change in sign.
    \item If $r$ is a row of circles with support $[A, B],$ then $r'$ is a row of circles with support $[A + C(h), B].$
\end{itemize}

\begin{rmk}
\label{equivalent D}
    Lemma \ref{D existence general} and the above description concern only the case where $r = r_i$, when $i$ and $k$ have the same parity. One may wonder what would happen if we dualize $h$ to some multiple $r_i$ where $i$ and $k$ have different parity. Indeed, it is possible to do this, and Lemma \ref{lemma Alex} implies that if $r_i = r_j,$ then $D_{h, r_i}$ and $D_{h, r_j}$ are equivalent up to row exchanges. 
    
Indeed, dualizing a hat $h$ to a multiple $r_i$ with different parity from $k$ will always produce a row with $l \geq 1.$ To see this, suppose $r_1 = ([B, B], 0, \eta)$ and $r_2 = ([A, B], 0, \eta)$ are two consecutive rows with the same sign. The formulae indicate that $r_1$ stays the same under a row exchange except that its sign is changed by $(-1)^{C(r_2)},$ whereas $l(r_2)$ is increased by 1. This is the case regardless of the order of $r_1$ and $r_2.$

Since it does not matter which multiple a hat is dualized to, the convention will always be dualize to the last multiple, $r_k.$
\end{rmk}

The next lemma uses the observation that dualizing to each of the multiples produces the same multi-segment up to row exchanges in order to determine how many ways there are to dualize a given hat.

\begin{lemma}
\label{multiple duds}
    Let $h = ([A, -B], B, \eta)$ be a hat in $\EE=\mathcal{E}(\mathcal{M}, \mathcal{S}, \mathcal{T},\eta')$.
    \begin{enumerate}
        \item If $m_{B - 1} = 1,$ then there is a unique $dual \circ ui \circ dual$ involving $h$ and rows lower than $h.$
        \item If $m_{B - 1} \geq 3$ and $\mathcal{T}_{i}^0 = \{B - 1\} \subset \mathcal{S}_i$ for some $i,$ then there is exactly one $dual \circ ui \circ dual,$ unique up to row exchange, involving $h$ and rows lower than $h.$
        \item Otherwise, there are exactly two $dual \circ ui \circ dual$s involving $h$ and rows lower than $h,$ up to row exchanges.
    \end{enumerate}
\end{lemma}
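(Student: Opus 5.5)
To prove Lemma \ref{multiple duds}, I would list every row $r$ with $A(r)<A$ to which $h$ could be paired by a $dual\circ ui\circ dual$, and then group these candidates into classes up to row exchange.

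The first step is to pass to $dual(\EE)$. There $\widehat{h}$ is a row of $C(h)$ circles with support $[A,B]$. The argument in the proof of Lemma \ref{D existence general} shows that no circle of another row in column $\ge B$ lies below $h$ and can interact with $\widehat h$. Hence the only available $ui$ involving $\widehat h$ and the image of a lower row is of type 3'. That forces the partner $\widehat r$ to end at column $B-1$. So the candidates are exactly the rows $r_1,\dots,r_k$ of Lemma \ref{D existence general} whose support contains $B-1$ and ends at $B-1$, together with the hat immediately after $h$ when it ends at $B-1$ (the merging case of Lemma \ref{M existence general}). By Lemma \ref{D existence general}(1)--(2), $D_{h,r_i}$ exists whenever $k-i$ is even. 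By Remark \ref{equivalent D}, rows $r_i=r_j$ with the same support give the same result up to row exchange, via Corollary \ref{cor same support} and Lemma \ref{lemma Alex}. Pairing with an $r_i$ of the wrong parity produces a row with $l\ge1$, and I would check with Lemma \ref{swap down} that this is not a valid type 3' $ui$ after the necessary exchanges.

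The cases then go as follows.

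\emph{Case (1), $m_{B-1}=1$.} Only one row carries the circle in column $B-1$. That row is either a chain ending at $B-1$ or the hat for the preceding $\mathcal T$-piece, and it cannot be both, since the single circle lies in exactly one row. So there is a unique partner, and this gives (1).

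\emph{Case $m_{B-1}\ge3$.} The rows through column $B-1$ are the multiples $[B-1,B-1]$ together with at most one chain or hat ending at $B-1$. All the multiples give one class up to row exchange. The remaining possibility is the chain $\mathcal T_i^0$ containing $B-1$ as its maximum, or a hat with $\max\mathcal T_i^j=B-1$.
\begin{itemize}
\item If $\mathcal T_i^0=\{B-1\}$, then this chain is itself a single circle $[B-1,B-1]$. It has the same support as the multiples, so by Remark \ref{equivalent D} it falls into the same class, giving exactly one operation and hence (2).
\item Otherwise the chain or hat ending at $B-1$ has a different support from the multiples. The multiples give one operation (dualize to $r_k$) and the chain or hat gives a second, so there are exactly two, giving (3).
\end{itemize}

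The main obstacle is showing that these two classes in (3) really differ and that the parity restriction does not eliminate one of them. I would handle this by comparing supports of the results: merging or dualizing into the longer row gives a row with support $[A,C]$ with $C<B-1$, while dualizing into a multiple gives support $[A,B-1]$. The supports differ, so the outcomes are inequivalent up to row exchange. For existence I would invoke Lemma \ref{D existence general}(2), checking that the chain or hat occupies the position of correct parity relative to $r_k$ using the odd-alternating sign convention of Definition \ref{defn E(M,S)}.
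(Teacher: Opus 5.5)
Your case analysis is the paper's own. Partners must end in column $B-1$, Lemma \ref{D existence general} supplies the dualizations, Remark \ref{equivalent D} collapses the identical rows $[B-1,B-1]$, and in (3) the two classes are separated by support.

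\textbf{The opening reduction is false.} It is not true that no row below $h$ has a circle in a column $\geq B$ that can interact with $\widehat{h}$. Whenever some $c$ with $B\le c<A$ has $m_c\ge 3$, the multiples $([c,c],0,\cdot)$ lie below $h$. Their duals $([c,-c],c,\cdot)$ overlap $\widehat{h}=([A,B],0,\cdot)$ without either support containing the other. Lemma \ref{dud not 3' hat} treats exactly these partners, and such operations really occur. A concrete instance:
\begin{enumerate}
\item Take $\mathcal{M}=(1,3,1)$ and $\mathcal{S}=(\{0,\overline{1,2}\})$. Then $\EE$ consists of $h=([2,-1],1,\eta)$, $([0,0],0,\eta)$, and two copies of $([1,1],0,\eta)$.
\item In $dual(\EE)$ one gets $\widehat{h}=([2,1],0,-\eta)$ and, for a multiple, $\widehat{m}=([1,-1],1,\eta)$.
\item Exchanging $\widehat{m}$ past $([0,0],0,\eta)$ leaves both rows unchanged (Case 1(a) of Definition \ref{def row exchange}).
\item Now $\epsilon=-1$ and $B(\widehat{h})+l(\widehat{h})=1=A(\widehat{m})-l(\widehat{m})+1$, so a $ui$ of type 3 (not 3') applies.
\item Dualizing back gives $\EE(\mathcal{M},(\{0,\overline{1}\},\{1,2\}),\eta)$. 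Its support differs from that of $D_{h,r}(\EE)=\EE(\mathcal{M},(\{0,1,2\}),\eta)$ with $r=([0,0],0,\eta)$.
\end{enumerate}
Here $m_{B-1}=m_0=1$, yet there are two inequivalent $dual\circ ui\circ dual$'s with lower rows. So Part (1), read literally, fails.

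\textbf{What to do instead.} The statement has to be read as counting only the type-3' operations involving $h$, i.e.\ the merges $M$ and dualizations $D$. That is all the paper's proof examines, since it only considers rows ending at $B-1$. It is also all that is used later, e.g.\ for the added hat of $\Theta_1(\EE)$: there $C(h)=1$, so no column $c$ with $B\le c<A$ exists. You should impose this restriction explicitly rather than try to derive it. With that change, your argument coincides with the paper's.

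\textbf{Two smaller points.}
\begin{itemize}
\item Merging with a hat $([B-1,-C],C,\cdot)$ produces a hat with support $[A,-C]$, not $[A,C]$ with $C<B-1$. It still differs from $[A,B-1]$, so your distinction in (3) survives.
\item Your planned check that wrong-parity multiples admit no valid type-3' $ui$ is unnecessary, since Remark \ref{equivalent D} already identifies dualizations to identical rows up to row exchange. For identical multiples the check would in fact fail. The dual $\widehat{r_i}$ passes an identical one-circle row unchanged (Case 1(a); cf.\ Lemma \ref{lem-hat-row-swaps-easy}), so every copy reaches $\widehat{h}$ in the same state.
\end{itemize}
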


\begin{proof}
    For Part (1), if $m_{B - 1} = 1,$ then there is exactly one row $r$ with support ending at $B - 1.$ If $r$ is a hat, it must be the row right after $h,$ so the unique $dual \circ ui \circ dual$ is $M_{h, r}.$ If $r$ is a row of circles, then the operation is $D_{h, r}$ by Lemma \ref{D existence general}.

    For Part (2), if $m_{B - 1} \geq 3,$ then there are $m_{B - 1}$ rows with support ending at $B - 1.$ Since there exists some $\mathcal{T}_{i}^0 = \{B - 1\},$ none of these rows can be a hat. Instead, one of them is a chain with support $[B - 1, B - 1],$ and the others are all multiples. Therefore, all these $m_{B - 1}$ rows are consecutive and equal to each other. By Lemma \ref{D existence general}, $h$ can be dualized to each of them, but per Remark \ref{equivalent D} all of these are equivalent up to row exchange.

    For Part (3), if $m_{B - 1} \geq 3$ and there does not exist any $\mathcal{T}_{i}^0 = \{B - 1\},$ then $B$ is contained in some $\mathcal{S}_i = \{\dots, B - 2, B-1, \overline{B, \dots, A}, \dots \}.$ Since two elements of $\mathcal{S}$ have intersection of size at most one, $B - 1\not\in \mathcal{S}_j$ for any $j\neq i$. Now, one of the following must be true.

    \begin{itemize}
        \item If $B - 1$ is contained in a hat, then this hat $h'$ and $m_{B - 1}$ multiples of support $[B - 1, B - 1]$ are precisely the $m_{B - 1}$ rows with support ending at $B - 1.$ One $dual \circ ui \circ dual$ is obtained by merging $h * h'.$ The other is gotten by dualizing $h$ to any of the multiples. Per Remark \ref{equivalent D}, these dualizations are the same up to row exchange since the multiples are all equal to each other.
        \item If $B - 1$ is not contained in a hat, then it must be contained in a chain of length $\geq 2,$ lest there be some $\mathcal{T}_i^0 = \{B - 1\}$.  Then one $dual \circ ui \circ dual$ is gotten by dualizing $h$ to this chain, and the other is gotten by dualizing it to any of the multiples.
    \end{itemize}
    From here, it is clear that the two $dual \circ ui \circ dual$s do not produce virtual extended multi-segments associated with the same Arthur parameters. This can be seen because the supports of the multi-segments are different. Indeed, the one obtained from the second case contains a row of support $[B, B - 1]$ and the one obtained from the first case cannot contain such a row.
\end{proof}

In summary, when $m_{B - 1} \geq 3,$ a hat can sometimes be dualized in two ways: either to a chain or to a multiple. Although these operations are quite similar, they impact the $\mathcal{S}$-data of $\EE$ very differently. Therefore, we will distinguish between them by notating a dualization to a chain as $D^1$ and a dualization to a multiple as $D^2.$ 

\begin{rmk}
\label{D S-data}
These dualizations change the $\mathcal{S}$-data as follows.

\begin{itemize}
    \item Applying $D_{h, r}^1$ to $h = ([A, -B], B, \eta_1)$ and $r = ([B - 1, C], 0, \eta_2)$ replaces
    $$\{C, \dots, B - 1, \overline{B, \dots, A}, \dots\} \longrightarrow \{C, \dots, B - 1, B, \dots, A, \dots\}.$$
    \item Applying $D_{h, r}^2$ to $h = ([A, -B], B, \eta_1)$ and a multiple $r = ([B - 1, B - 1], 0, \eta_2)$ replaces
    $$\{\dots, B - 1, \overline{B, \dots, A}, \dots\} \longrightarrow \{\dots, B - 1\}, \{B - 1, \dots, A, \dots\},$$
    thereby resulting in the creation of a $z$-chain.
\end{itemize}
\end{rmk}

Finally, we prove that the operation $U$ can be applied to $\EE$ in the same way structurally as if $\EE$ were of type $X_k.$

\begin{lemma}[Existence of $U$]
\label{U existence general}
    Suppose $\mathcal{E} = \mathcal{E}(\mathcal{M}, \mathcal{S}, \mathcal{T},\eta)$ begins at zero and that $h = ([A, -B], B, \eta)$ is a hat in $\EE$ with $C(h) > c$. Then there exists a series of operations on $\EE$ resulting in a new virtual extended multi-segment $\EE'$ of type $Y_\mathcal{M}$ satisfying the following
    \begin{itemize}
        \item $\EE'$ has a hat of the form $h' = ([A - c, -B], B, \eta)$,
        \item $\EE'$ has a row of $k$ circles with support $[A, A - c + 1]$,
        \item the other segments of $\EE'$ are precisely the same as the segments of $\EE - \{h\},$ except with possibly different signs, and
        \item $\eta(\EE') = \eta(\EE).$
    \end{itemize}
\end{lemma}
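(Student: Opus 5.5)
I will mirror the proof of Lemma \ref{U existence}, reducing the general type $Y_\mathcal{M}$ situation to the type $X_k$ situation in the same way that the proof of Lemma \ref{D existence general} reduced $D$ to Lemma \ref{D existence}. The operation is the same three-step procedure. First, row exchange $h$ downward past every row whose support is contained in $[A,-B]$, i.e.\ past all rows of $\EE$ lying after $h$ that belong to the sub-multi-segment $\EE_A$ consisting of $h$ and the rows supported in columns $<B$. Second, apply a $ui^{-1}$ of type 3' to split off the last $c$ circles of the resulting row of circles. Third, row exchange the remaining row $h_1$ back up to the original position of $h$.

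The key point is that the image of $h$ under these exchanges should be computable as in type $X_k$, and I will establish this with the following reductions. By Corollary \ref{swap commutes with S}, Corollary \ref{swap commutes with M} and Lemma \ref{lem-swap commutes with D}, the image of $h$ does not change if we first merge or dualize the hats after $h$ and separate all chains. After these steps, the rows after $h$ and up to column $B-1$ form a tempered virtual extended multi-segment. Lemma \ref{lem-multiplicity-cancel} then lets us discard the multiples in pairs: every column has odd multiplicity, and the multiples are equal and consecutive after the reductions. Discarding them changes only the signs of the discarded rows, each by the same factor $(-1)^{C(h)+1}$ applied twice. Lemma \ref{lem-multiplicity-cancel-up} plays the same role for the upward exchange of $h_1$.

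After these reductions, the rows after $h$ are alternating with one circle each, and there are exactly $B$ of them, one for each of columns $0,\dots,B-1$. This is precisely the situation handled in Lemma \ref{U existence}. Lemma \ref{big swap down} gives $l=0$, so $h$ becomes a row of circles $[A,B]$. Since $c<C(h)$, the $ui^{-1}$ is applicable. Lemma \ref{big swap up} then returns $h_1'=([A-c,-B],B,\eta)$ and leaves the separated row $h_2=([A,A-c+1],0,\ast)$ immediately after the block. The sign computations are then verbatim those of Lemma \ref{U existence}. In particular, $\eta(\EE')=\eta(\EE)$, because rows before $h$ are untouched, and $h$ itself keeps its sign when $h$ is the first row.

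I expect the main obstacle to be verifying that $\EE'$ is again of type $Y_\mathcal{M}$, that is, identifying valid $(\mathcal{S}',\mathcal{T}')$ and checking the odd-alternating sign rules of Definition \ref{defn E(M,S)}. For the $\mathcal{S}$-data, I expect that the set $\mathcal{T}_i^j=\{B,\dots,A\}$ is replaced by $\{B,\dots,A-c\}$ as a hat, and that $\{A-c+1,\dots,A\}$ becomes a new chain $\mathcal{T}^0$ of a new $\mathcal{S}$-set placed where columns $A-c+1,\dots,A$ belong in the $(P')$ order. If $h$ was not the last hat of $\mathcal{S}_i$, the hats $\mathcal{T}_i^{j'}$ for $j'>j$ must also be split off into this new set. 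Validity conditions (4) and (5) of Definition \ref{valid S} and condition (3) of Definition \ref{valid T} need checking, particularly at column $A$ when $m_A>1$. I would also have to row exchange $h_2$ past the evenly many multiples at column $A-c+1$, again using Lemma \ref{lem-multiplicity-cancel}, so that it sits in the prescribed order.

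For the signs, the plan is the bookkeeping from the proof of Lemma \ref{D existence general}(3). The intermediate rows acquire a net sign change of $(-1)^{C(h)+1}(-1)^{C(h)-c+1}=(-1)^{c}$, and, as computed in Lemma \ref{U existence}, $h_2$ alternates with the last row of the block. The multiples in the block are changed uniformly, so the streaks of equal signs stay odd and the odd-alternating conditions are preserved.
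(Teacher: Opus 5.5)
Your proposal is essentially the paper's proof. It uses the same three-step procedure: exchange $h$ down until it is a row of circles, split off $c$ circles by a $ui^{-1}$ of type 3', and exchange the remaining piece back up. It reduces to Lemma~\ref{U existence} in the same way, via Corollary~\ref{swap commutes with S}, Lemma~\ref{lem-swap commutes with D} and Lemma~\ref{lem-multiplicity-cancel}, and does the same $(-1)^{c}$ sign bookkeeping. Your handling of a hat with $A<c_{\max}$ through the sub-multi-segment below it is exactly the step the paper leaves implicit.

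A few details need tightening:
\begin{itemize}
\item The image of $h$ after the downward exchanges is a row of circles with support $[A,-B]$, not $[A,B]$.
\item Your two descriptions of where $h$ stops do not agree. It must pass exactly the rows in columns $<B$ together with the (evenly many) multiples in columns $B,\dots,A-c$, and no further.
\begin{itemize}
\item If you split earlier, $[A,A-c+1]$ ends up above some $[c',c']$ with $c'\le A-c$, which is not an admissible order.
\item If you pass a multiple in a column $\ge A-c+1$, it ends up above $h_1=[A-c,-B]$, which is again not admissible.
\item When $A\in\mathcal{S}_{i+1}$, continuing past the odd number of multiples in column $A$ even leaves $l=1$, so the $ui^{-1}$ cannot be applied at all.
\end{itemize}
\item In your $\mathcal{S}$-data check, the column that needs care under condition (5) of Definition~\ref{valid S} is $A-c+1$, not $A$.
\end{itemize}
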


\begin{proof}
    Again, we first prove the statement in the case where $h = ([k, -B], B, \eta),$ where $k = c_{\max}.$ This consists of showing that the following combination of operations is valid.
    \begin{itemize}
        \item Row exchange $h$ until the image $h'$ is at the bottom of the multi-segment. $h'$ should be a row of circles.
        \item Apply $S_{h', c}.$
        \item Row exchange the first row of resulting circles back to the top.
    \end{itemize}
    
    Due to Lemma \ref{D existence general}, we know that $\EE \sm \{h\}$ is equivalent to a tempered representation through some series of row operations, all of which commute with row exchange by Corollary \ref{swap commutes with S} and Lemma \ref{lem-swap commutes with D}. Therefore, we can presume that $\EE \sm \{h\}$ is tempered. Furthermore, we can presume that $\EE \sm \{h\}$ has no multiples due to Lemma \ref{lem-multiplicity-cancel}. Thus, we have reduced to the type $X_k$ case, and the result follows from Lemma \ref{U existence}.

    The fact that $\eta(\EE)$ is preserved is clear from the fact that the sign of $h$ is unchanged throughout the row exchanges: the sign changes from exchanging down and exchanging up cancel each other by Lemmas \ref{big swap down} and \ref{big swap up}. These lemmas also show that the rows that $h$ is exchanged with all have their signs changed by $(-1)^c,$ so they satisfy the same sign conditions that they did before; therefore, the multi-segment is still odd alternating. That $h$ alternates with the row below it after the $U$ operation if and only if it does before the row operation is clear since $C(h)$ is also decreased by $c.$
    
    Finally, the fact that the broken-off row of $c$ circles alternates with the row before it can be seen in two cases. If the row before it is a chain, then the proof is exactly the same as in the type $X_k$ case (see the proof of Lemma \ref{U existence}). If the preceding row is a multiple with support $[C, C]$, it has the same sign as the last circle of the chain that it belongs to. Since $C \leq A - c < A,$ it is impossible for $C$ to be in more than one $ \mathcal{S}_i,$ so there are evenly many such multiples. Therefore, if we reduce to the case where the multiples are not present via Lemma \ref{lem-multiplicity-cancel}, it is clear that the row of $c$ circles alternates with the chain, so it must also alternate with the multiple. Thus, we observe that the image of this $U$ operation is odd-alternating and therefore of type $Y_\mathcal{M}.$
\end{proof}

\begin{rmk}
\label{U S-data}
    As for the $\mathcal{S}$-data, applying $U_{h, c}$ to $h = ([A, -B], B, \eta)$ replaces $$\{\dots \overline{B, \dots, A}, \dots \} \longrightarrow \{\dots \overline{B, \dots, A - c} \}, \{A - c + 1, \dots, A, \dots \}.$$
\end{rmk}

As with the other operators, the $U$ operator commutes with row exchange. Specifically, it follows from a combination of Parts (1) and (3) of Lemma \ref{lemma Alex} that the following corollary holds.

\begin{cor}
\label{swap commutes with U}
    Let $R$ denote the operation of exchanging some row $r$ with a series of rows $r_i < r_{i + 1} < \cdots < r_j.$ Let $T$ be some $U$ operation involving the rows $r_i < r_{i + 1} < \cdots < r_j.$ Then $R \circ T = T \circ R$.
\end{cor}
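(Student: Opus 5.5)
This is Corollary \ref{swap commutes with U}: a row exchange $R$ of a row $r$ past $r_i<\cdots<r_j$ commutes with a $U$ operation $T$ supported on those rows. I would deduce it formally from Corollary \ref{cor same support} through the relations in Lemma \ref{lemma Alex}, rather than by tracking signs by hand. The $U$ operation of Lemma \ref{U existence general} is, by construction, a composite
\[
T = R^{\uparrow}\circ ui^{-1}\circ R^{\downarrow}.
\]
Here $R^{\downarrow}$ exchanges the hat $h$ down to the bottom of the block $r_i<\cdots<r_j$. The middle map is a $ui^{-1}$ of type $3'$; it is the $S_{h',c}$ splitting off $c$ circles. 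Finally, $R^{\uparrow}$ exchanges the first resulting row of circles back to the top of the block. So the task reduces to commuting $R$ past each of the three factors.

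First, I would handle the row-exchange factors. $R$ moves $r$ across the whole block, while $R^{\downarrow}$ and $R^{\uparrow}$ permute rows inside the block. Both $R\circ R^{\downarrow}$ and $R^{\downarrow}\circ R$ are therefore compositions of row exchanges that reach the same admissible order on the same support. Lemma \ref{lemma Alex}(1), applied inductively to adjacent transpositions, shows they agree, and the same holds for $R^{\uparrow}$. For the middle factor, Lemma \ref{lemma Alex}(3) gives $ui\circ R = R\circ ui$ whenever the resulting orders agree. Inverting both sides gives the same identity for $ui^{-1}$, and Corollary \ref{swap commutes with S} is exactly this statement for $S$.

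Next, I would chain these commutations. Writing $R\circ T = R\circ R^{\uparrow}\circ ui^{-1}\circ R^{\downarrow}$, I would move $R$ rightward one factor at a time until it sits on the far right, which gives $T\circ R$. At each step I would choose the version of $R$ appropriate to the current configuration. This is the version that exchanges $r$ past the current image of the block, which may contain one more or one fewer row after the split.

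The main obstacle is checking the hypotheses of Lemma \ref{lemma Alex} at each intermediate stage. There are two conditions to verify. The intermediate virtual extended multi-segments must be nonvanishing. Also, the $(P')$ condition must hold when negative $B$ occur, which it does here because hats have negative $B$. Nonvanishing follows from Theorem \ref{thm intersections of local Arthur packets}(1): every intermediate object is obtained from $\EE$ by raising operators, their inverses, and row exchanges. For the $(P')$ condition, I expect to need the containment $\supp(r)\supseteq\supp(r_\ell)$ for every row of the block. If this containment is not available, I would first shift by $sh^d$, as in the proof of Lemma \ref{lem-comm}, to reduce to the non-negative case. Shifting commutes with both row exchanges and $ui$, so I would prove the identity for the shifted objects and then shift back. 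Once these hypotheses hold, the two sides have the same $\pi$ and the same ordered support, so Corollary \ref{cor same support} forces them to be equal.
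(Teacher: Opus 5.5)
Your proposal is correct and matches the paper's argument: the paper derives this corollary directly from Parts (1) and (3) of Lemma \ref{lemma Alex}, which amounts to viewing $U$ as row exchanges around a type 3' $ui^{-1}$ and commuting $R$ past each factor, with equality ultimately resting on Corollary \ref{cor same support}. Your $sh^d$ reduction (as in the proof of Lemma \ref{lem-comm}) makes explicit the $(P')$ hypothesis that the paper leaves implicit, but you should apply it unconditionally: moving the hat down inside $U$ already breaks $(P')$, whether or not $\supp(r)\supseteq\supp(r_\ell)$ holds, so that containment is not what controls this hypothesis.
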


\subsection{\texorpdfstring{Exhaustion of Operators on $Y_\mathcal{M}$ Segments}{}}
\label{sec exhaustion of Y_M}

We now aim to prove Theorem \ref{block classification}. Due to Corollary \ref{Type Y SMUD equivalence}, we know that all type $Y_{\mathcal{M}}$ multi-segments with the same sign are equivalent. Then, it suffices to show that if $\EE$ is a virtual extended multi-segment of type $Y_\mathcal{M},$ and sign $\eta,$ then any virtual extended multi-segment equivalent to $\EE$ is also $Y_\mathcal{M}$ with sign $\eta.$ First we will prove that $\EE^{min}$ is of type $Y_\mathcal{M}$ with sign $\eta.$ Then we will show that all possible raising operators on type $Y_\mathcal{M}$ virtual extended multi-segments preserve type $Y_\mathcal{M}$ and sign. Since all equivalent virtual extended multi-segments can be obtained from $\EE^{min}$ through raising operators, this will suffice for our proof. We begin with the following simple technical lemma about row exchange.

\begin{lemma}
\label{lem-hat-row-swaps-easy}
    Let $h_1$ and $h_2$ be hats with order $h_1<h_2$, and suppose $C(h_1) = C(h_2) = 1$ and $\eta(h_1) = \eta(h_2)$. Then after row exchanging $h_1$ and $h_2$, we have $h_1' = h_1$ and $h_2' = h_2$.
\end{lemma}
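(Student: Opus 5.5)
The plan is a direct computation with Definition \ref{def row exchange}, after first pinning down exactly what such hats look like.

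First I would normalize the hats. Write $h_i=([A_i,B_i]_\rho,l_i,\eta_i)$ with $B_i=-l_i$. Then $b(h_i)=A_i-B_i+1=A_i+l_i+1$, so $C(h_i)=b(h_i)-2l_i=A_i-l_i+1$. The hypothesis $C(h_i)=1$ forces $l_i=A_i$. Hence
\[
h_i=([A_i,-A_i]_\rho,A_i,\eta),\qquad i=1,2,
\]
with the common sign $\eta=\eta(h_1)=\eta(h_2)$. Since $l_i\in\Z$, each $A_i$ is an integer. Consequently $A_i-B_i=2A_i$ is even, and every sign factor of the form $(-1)^{A_i-B_i}$ appearing in Definition \ref{def row exchange} equals $1$.

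Second, I would note that the row exchange is genuinely defined. Both supports are symmetric intervals about $0$, so one contains the other. Swapping two nested segments keeps the order admissible, so $R$ acts nontrivially by the formulas rather than as the identity. The relevant sign is
\[
\epsilon=(-1)^{A_1-B_1}\eta(h_1)\eta(h_2)=\eta^2=1 .
\]

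Third comes the case split on nesting.

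\emph{Case $A_1\geq A_2$, so $\supp(h_1)\supset\supp(h_2)$.} This is Case 1 of Definition \ref{def row exchange}.
\begin{itemize}
\item The lower row becomes $(l_2,(-1)^{A_1-B_1}\eta)=(l_2,\eta)$, so $h_2'=h_2$.
\item For $h_1$ we have $b_1-2l_1=C(h_1)=1<2=2(b_2-2l_2)$, so we are in Case 1(a).
\item This gives $l_1'=b_1-(l_1+1)=(2A_1+1)-A_1-1=A_1$.
\item It also gives $\eta_1'=(-1)^{A_2-B_2}\eta=\eta$, so $h_1'=h_1$.
\end{itemize}

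\emph{Case $A_1<A_2$, so $\supp(h_1)\subset\supp(h_2)$.} This is Case 2, and the computation is symmetric.
\begin{itemize}
\item The row $h_1$ becomes $(l_1,(-1)^{A_2-B_2}\eta)=(l_1,\eta)$.
\item For $h_2$ we have $b_2-2l_2=1<2=2(b_1-2l_1)$, which is Case 2(a).
\item This yields $l_2'=b_2-l_2-1=A_2$ and $\eta_2'=(-1)^{A_1-B_1}\eta=\eta$.
\end{itemize}

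If $A_1=A_2$, the two rows are literally identical. Either case then applies and gives the same conclusion.

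There is no real obstacle here. The only point needing care is the observation that $C=1$ pins down $l=A=-B$, which makes all parity factors trivial and forces subcase (a) in both cases. After that, everything is a routine substitution into the formulas.
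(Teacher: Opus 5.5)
Your proof is correct and follows the same route as the paper: $C(h_i)=1$ forces $A(h_i)-B(h_i)$ to be even, so $\epsilon=1$ and subcase (a) of Definition \ref{def row exchange} applies, after which the formulas return both rows unchanged. The paper writes out only Case 1 (the configuration $\supp(h_1)\supset\supp(h_2)$ that a $(P')$ order forces), so your explicit Case 2 computation is a harmless and slightly more complete addition.
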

\begin{proof}
Note that since $C(h_i)=1,$ we have that $A(h_i)\equiv B(h_i)\mod 2.$ Thus,
    the row exchange has $\epsilon = (-1)^{A(h_1) - B(h_1)} \eta(h_1) \eta(h_2) = 1$, $b(h_1) - 2 l(h_1) = 1$, and $2(b(h_2) - 2 l(h_2)) = 2$, so it falls into Case 1(a) of Definition \ref{def row exchange}. So $l(h_2') = l(h_2)$, $\eta(h_2') = \eta(h_2)$, $l(h_1') = A(h_1) - B(h_1) + 1 - (l(h_1) + 1) = l(h_1)$, and $\eta(h_1') = (-1)^{A(h_2) - B(h_2)} \eta(h_1) = \eta(h_1).$
    So the rows are unchanged.
\end{proof}

Now, we classify $\mathcal{E}^{\min}$ according to $\mathcal{S}$-data, breaking into cases depending on whether $\mathcal{M}$ starts at zero.

\begin{lemma}
\label{E min classification for blocks after zero}
    Suppose $\mathcal{E}$ is of type $Y_\mathcal{M}$ with $\mathcal{M}$ beginning after zero. Suppose the $\mathcal{S}$-data is such that $\mathcal{S}$ contains some $ \mathcal{S}_i = \{c_{\min}, \dots, c_{\max}\}.$ Then $\mathcal{E} = \mathcal{E}^{\min}.$
\end{lemma}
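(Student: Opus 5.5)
The plan is to first pin down the exact shape of $\EE$ from the validity conditions of Definition \ref{valid S}, and then check directly that no inverse of a raising operator can be applied to $\EE$. By Theorem \ref{thm intersections of local Arthur packets}(3), together with the fact that $\ge_O$ is generated by raising operators, this would force $\psi_\EE=\psi^{min}(\pi(\EE))$, i.e. $\EE=\EE^{\min}$.

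\emph{Step 1: structure of $\EE$.} Suppose $\mathcal{S}_i=\{c_{\min},\dots,c_{\max}\}$ and $j\neq i$.
\begin{itemize}
\item Then $\mathcal{S}_j$ meets $\mathcal{S}_i$, so by condition (4) we have $|j-i|=1$ and $|\mathcal{S}_j\cap\mathcal{S}_i|=1$.
\item By condition (3), $\mathcal{S}_j=\{c_{\max}\}$ if $j=i+1$, and $\mathcal{S}_j=\{c_{\min}\}$ if $j=i-1$.
\item The case $j=i+1$ is excluded, since condition (4) requires $|\mathcal{S}_{i+1}|\ge 2$.
\end{itemize}
Since $\mathcal{M}$ starts after zero, there are no hats. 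Hence $\EE$ consists of:
\begin{itemize}
\item one long chain $r=([c_{\max},c_{\min}],0,\eta(r))$;
\item possibly one chain $([c_{\min},c_{\min}],0,\eta)$ placed before it, which is allowed only if $m_{c_{\min}}\ge3$;
\item multiples $([c,c],0,\cdot)$.
\end{itemize}
All of these lie after $r$ in the $(P')$ order, except possibly multiples at $c_{\min}$. By the odd-alternating rules of Definition \ref{defn E(M,S)}, every multiple belonging to $r$ carries the common sign $\eta(r)(-1)^{C(r)-1}$, i.e. the sign of the last circle of $r$.

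\emph{Step 2: no $ui$ applies.} A $ui_{i,j}$ needs two rows with $A_j>A_i$ and $B_j>B_i$, adjacent in some admissible order. The long chain $r$ contains every other support, so it can never be one of the two rows. Hence the only candidates are pairs of singleton rows $[c,c]$ and $[c',c']$ with $c<c'$.

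Since $l=0$ throughout:
\begin{itemize}
\item Cases 1 and 2 of Definition \ref{ui def} would force $c=c'$, which is impossible.
\item Case 3 requires $c'=c+1$ together with $\epsilon=-1$, i.e. the two singletons alternate in sign.
\end{itemize}
To make them adjacent, they may have to be row exchanged past $r$ or past other singletons. I would control the resulting signs as follows:
\begin{itemize}
\item Lemma \ref{lem-multiplicity-cancel} and Lemma \ref{lem-multiplicity-cancel-up} (multiplicities at columns other than $c_{\min}$ come in even numbers once the chains are removed) handle exchanges with singletons.
\item Lemmas \ref{swap down} and \ref{swap up} handle exchanges with $r$.
\end{itemize}
The expected conclusion is that any two singletons at consecutive columns, once made adjacent, have equal signs. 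For two multiples this holds because both carry the sign of the last circle of $r$, and an exchange with $r$ multiplies both by the same factor. This gives $\epsilon=1$, so no type 3 or 3' operation applies.

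The special chain at $c_{\min}$ must be treated separately. It is followed by $r$ with the alternating sign condition, and the multiples at $c_{\min}+1$ sit after $r$. Here I would compute with Lemma \ref{swap down} that exchanging it past $r$ flips its sign relative to those multiples, so again the signs do not alternate.

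\emph{Step 3: no $dual\circ ui^{-1}\circ dual$ applies.} Equivalently, no $ui^{-1}$ is applicable to $dual(\EE)$. Using Definition \ref{def dual} and Lemma \ref{Dual sign}, I would write $dual(\EE)$ down explicitly:
\begin{itemize}
\item the singletons become hats $([c,-c],c,\cdot)$;
\item $r$ becomes $([c_{\max},-c_{\min}],c_{\min},\cdot)$, which is again a hat.
\end{itemize}
A $ui^{-1}$ of type 3' must split a row into two pieces, each with $l\ge0$ and satisfying $B+l\ge0$ (condition $(\ast)$ of Theorem \ref{thm non-vanishing}). Splitting a hat $([c,-c],c)$ is impossible since it has only one circle. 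For the dual of $r$, I would push it to the bottom with Lemma \ref{big swap down}, as in the proof of Lemma \ref{U existence}. I expect that its $l$ there is $c_{\min}$ minus the number of circles of the rows it passes, which is still $>0$ because $c_{\min}>0$, and this should obstruct the split. The remaining $ui^{-1}$ types are, by Lemma \ref{lemma ui inv = dud}, of the form $dual\circ ui\circ dual$, and these reduce back to Step 2.

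\emph{Main obstacle.} I expect the main obstacle to be this last step, namely verifying that the dual of the long chain cannot be broken after row exchanges. This is exactly the place where the hypothesis $c_{\min}>0$ enters. It contrasts with Lemma \ref{U existence}, where $c_{\min}=0$ and the hat becomes a full row of circles at the bottom, so the split there succeeds.
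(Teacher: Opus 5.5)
Your outline follows the paper's proof. You reduce to the two shapes $\mathcal{S}=(\mathcal{S}_1)$ or $(\{c_{\min}\},\mathcal{S}_2)$, rule out $ui$ because the long chain $r$ contains every other support, and rule out $dual\circ ui^{-1}\circ dual$ by inspecting $dual(\EE)$. However, the mechanism you propose for the decisive last step is wrong.

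A $ui^{-1}$ of type 3' can only split a row that has $l=0$ in some admissible order. The rows below $\widehat{r}=([c_{\max},-c_{\min}],c_{\min},\cdot)$ in $dual(\EE)$ are the duals of the $m_{c_{\min}}-1$ rows of $\EE$ supported on $[c_{\min},c_{\min}]$; this is an even number of one-circle hats $([c_{\min},-c_{\min}],c_{\min},\cdot)$ of a common sign. In $\EE$ these rows and the first circle of $r$ all carry the same sign, by rules (3) and (4) of Definition \ref{defn E(M,S)}. So by the computation in Lemma \ref{Alternating dual}, $\widehat{r}$ \emph{fails} the alternating sign condition with the row below it.

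Consequently Lemma \ref{big swap down} does not apply. Even if it did, your predicted value $c_{\min}-(m_{c_{\min}}-1)$ need not be positive (take $c_{\min}=1$, $m_{c_{\min}}=3$). Had the rows alternated, $l$ would drop by one per exchange and could reach $0$, as in Lemma \ref{U existence}.

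The missing idea is the following. Because alternation fails, $\epsilon=1$, and for $c_{\max}>c_{\min}$ the first exchange is in Case 1(b), which raises $l$ to $c_{\min}+1$. Lemma \ref{lem-multiplicity-cancel} then shows that two exchanges with equal one-circle hats return $\widehat{r}$ unchanged. Hence $l(\widehat{r})\in\{c_{\min},c_{\min}+1\}$ is never $0$, and this is where $c_{\min}>0$ is used.

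You also dismiss the one-circle hats above $\widehat{r}$ because they have one circle, which ignores that row exchanges can change $l$. The paper uses that these hats share a sign, so Lemma \ref{lem-hat-row-swaps-easy} leaves them unchanged.

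The same sign fact is mis-stated in your Step 2. The special chain $\{c_{\min}\}$ is not followed by $r$ with alternation. Since $m_{c_{\min}}\ge 3$, at least one multiple at $c_{\min}$ lies between them, and the chain, those multiples and the first circle of $r$ all share the sign $\eta(r)$. This is exactly what makes your conclusion true. Every row that $r$ passes is multiplied by $(-1)^{A(r)-B(r)}=(-1)^{C(r)-1}$. Once $r$ is moved past one of the two singletons, the $[c_{\min},c_{\min}]$ row and the $[c_{\min}+1,c_{\min}+1]$ multiple therefore have equal signs, so $\epsilon=1$.

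With the alternation you assumed, the same computation gives $\epsilon=-1$, and a type 3' $ui$ would apply. The multiples at $c_{\min}$ in the case $\mathcal{S}=(\mathcal{S}_1)$ need this argument too, since they carry the sign of the first circle of $r$, not the last.
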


\begin{proof}
    We check that no lowering operators are possible on $\mathcal{E}.$ There are two cases: either $\mathcal{S} = ({S}_i)$ or $\mathcal{S} = (\{c_{\min}\}, \mathcal{S}_i).$ In the first case, $\mathcal{E}$ consists of one chain and multiples contained within the support of this chain. Clearly, no $ui$ operators involving the chain are possible. All the multiples have circles of the same sign, so it is impossible to apply a $ui$ of type 3'. In the second case, there are two chains, and the second chain contains all the other rows, which each have one circle. Again, it is evident that there are no $ui$ operators.
    
    Less obvious is the fact that there are no $dual \circ ui^{-1} \circ dual$ operators on $\mathcal{E}.$ To see this, note that $dual(\mathcal{E})$ consists of three different kinds of rows

    \begin{itemize}
        \item one hat $h$ with support $[c_{\max}, -c_{\min}]$,
        \item evenly many (possibly zero) hats below $h$ with support $[c_{\min}, -c_{\min}]$, or
        \item numerous hats with one circle each above $h$ corresponding to multiples belonging to the chain with support $[c_{\max}, c_{\min}].$
    \end{itemize}

    Since $c_{\min} > 0$, none of these rows have $l = 0$. In order to perform a $ui^{-1}$ of type 3', we must obtain a row with $l = 0$ through row exchanges. However, the rows above $h$ cannot be exchanged with $h$ or the rows below $h$ since no such pair of rows has one row's support containing the other. By Lemma \ref{lem-hat-row-swaps-easy}, none of the rows above $h$ are changed by any row exchanges with each other. Also, no row exchange involving only rows below $h$ changes $dual(\mathcal{E}).$ Thus, the only row exchanges that could possibly give an $l = 0$ are those involving $h.$ But $h$ can only be exchanged with the hats below, and since each of these hats has one circle of the same sign, by Lemma \ref{lem-multiplicity-cancel}, we need only consider what happens after swapping $h$ with any one of these rows, as doing two row exchanges leaves $h$ unchanged. Since $h$ fails the alternating sign condition with the row under it, this row exchange increases $l(h)$ by one and leaves the second row the same except for a possible sign change. Thus, no row exchanges create a row of circles, so no $ui^{-1}$ of type 3' is possible.
\end{proof}

\begin{lemma}
\label{E min classification}
    Suppose $\EE$ is of type $Y_\mathcal{M}$ where $\mathcal{M}$ begins at zero with $\mathcal{S}$-data $(\{0, \overline{1}, \overline{2}, \dots, \overline{k}\}).$ Then $\EE = \EE^{min}.$
\end{lemma}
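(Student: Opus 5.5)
Our plan mirrors the proof of Lemma \ref{E min classification for blocks after zero}: we show that no lowering operator applies to $\EE$. Since $\geq_O$ has a unique minimal element (Theorem \ref{thm intersections of local Arthur packets}(3)), this gives $\EE=\EE^{min}$. The lowering operators are $ui$ (including type 3') and $dual\circ ui^{-1}\circ dual$, modulo row exchanges.

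First we describe $\EE$ explicitly. The $\mathcal{S}$-data has a single set $\mathcal{S}_1=\{0,\overline{1},\dots,\overline{k}\}$, so $\mathcal{T}_1^0=\{0\}$ and $\mathcal{T}_1^j=\{j\}$. Hence $\EE$ consists of:
\begin{itemize}
\item the hats $h_j=([j,-j],j,\eta_j)$ for $1\le j\le k$, each with one circle, ordered by decreasing $A$ in the $(P')$ order;
\item the chain $([0,0],0,\ast)$;
\item for each $c$, exactly $m_c-1$ multiples $([c,c],0,\ast)$. This is an even number, placed according to the ordering rule of Definition \ref{defn E(M,S)}.
\end{itemize}
The signs are odd-alternating.

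\textbf{No $ui$ applies.} Any two hats have nested supports $[j,-j]\supset[j',-j']$, so $ui$ of types 1, 2 or 3 (which need $A_{k+1}>A_k$ and $B_{k+1}>B_k$) can only involve a hat together with a one-circle row $[c,c]$ with $c\ge 0$. Moreover $[c,c]\subset[j,-j]$ whenever $c\le j$.
\begin{itemize}
\item Type 3' between two one-circle rows $[c,c]$ and $[c+1,c+1]$ is blocked by the sign pattern. Within a column the multiples come in even numbers with equal sign, and by Lemma \ref{lem-multiplicity-cancel} we may cancel them in pairs. What remains alternates in the manner of a block, so the two rows never satisfy the ordering plus alternating requirement simultaneously. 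The chain is at column $0$ and all other columns hold only multiples after the hats are removed; we must check this carefully using the odd-alternating rules.
\item Pairs of a hat $h_j$ with a row $[c,c]$ where $c>j$: the type 1/2/3 equalities read $A$-side $j=c$, or $B$-side $-j+j=c$, or $0=j+1$ for type 3. All of these fail except possibly $c=0$, which is not $>j$.
\end{itemize}
Row exchanges among hats do nothing (Lemma \ref{lem-hat-row-swaps-easy}), and exchanges of hats past even groups of multiples are trivial (Lemmas \ref{lem-multiplicity-cancel}, \ref{lem-multiplicity-cancel-up}). So no reordering creates new $ui$ configurations.

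\textbf{No $dual\circ ui^{-1}\circ dual$ applies.} We compute $dual(\EE)$. Each hat $h_j$ becomes a one-circle row $([j,j],0,\ast)$, the chain stays $([0,0],0,\ast)$, and each multiple $([c,c],0,\ast)$ with $c\ge1$ becomes a hat $([c,-c],c,\ast)$. By Lemma \ref{odd alternating dual} the sign pattern is again odd-alternating. Lowering by $ui^{-1}$ in $\EE$ corresponds to a $ui^{-1}$ in $dual(\EE)$, and type 3' requires a row with at least two circles. Every row of $dual(\EE)$ has $C=1$. We must show that no sequence of row exchanges produces a row with $\geq 2$ circles (a row with $l=0$ and $b\ge2$):
\begin{itemize}
\item The only rows with $l=0$ are singletons.
\item A hat $([c,-c],c)$ can lose $l$ only by swapping past rows it contains, in Case 1(c).
\item The rows it contains come in even groups of equal sign at each column, or are singletons $[j,j]$ with alternating signs.
\end{itemize}
By Lemma \ref{big swap down}, passing a hat down past alternating singletons reduces $l$ by their number. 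We expect that the failure of the alternating condition at the entry point gives Case 1(b) instead, increasing $l$, as in the argument of Lemma \ref{E min classification for blocks after zero}. The $ui^{-1}$ not of type 3' are of the form $dual\circ ui\circ dual$ by Lemma \ref{lemma ui inv = dud}, so they reduce to the previous paragraph applied to $dual(\EE)$.

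The main obstacle is this last step: controlling the signs when a dual hat is exchanged past the mixture of singletons and even multiples. We would handle it by first deleting the multiples via Lemma \ref{lem-multiplicity-cancel}, which reduces to the type $X_k$ situation. There $dual(\EE)$ is tempered (all rows $[j,j]$), and Theorem \ref{supercuspidal} together with Lemma \ref{Type X exhaustion} lists the available operators. Tempered $X_k$ admits only $S^{-1}$ and $D^{-1}$-type lowerings, and we would verify that these are precisely excluded in the dual picture.
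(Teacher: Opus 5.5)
Your top-level strategy is the paper's: show that no lowering operator applies to $\EE$, then use uniqueness of $\psi^{min}$. However, both verifications rest on false claims and skip the configurations that actually need checking.

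\textbf{The $ui$ part.} Rule (3) of Definition \ref{defn E(M,S)} makes every multiple fail the alternating condition with its predecessor. So the chain and \emph{all} multiples carry one common sign, and that is why no type 3' $ui$ joins two singletons ($\epsilon=+1$). Your ``what remains alternates in the manner of a block'' is backwards: alternating adjacent singletons are exactly what would make type 3' applicable.

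More seriously, Lemma \ref{lem-hat-row-swaps-easy} needs equal signs, but $h_k,\dots,h_1$ and the chain alternate. So moving $h_j$ down past $h_{j-1},\dots,h_1$ and the chain is Case 1(c) at each step (Lemma \ref{big swap down}), and it produces $([j,-j],0,(-1)^k\eta(\EE))$, a row of $2j+1$ circles. After passing the even groups of multiples in columns $\le j$ (unchanged, by Lemma \ref{lem-multiplicity-cancel}), it can be brought directly above a multiple $[j+1,j+1]$ whenever $j<k$ and $m_{j+1}\ge 3$, with $B_{k+1}=A_k+1$. This is the $U^{-1}$ configuration, and your computation never examines it: you work with the original $l=j$, and you also misstate the Case 1 and Case 3 equalities (since $A_k-l_k=0$, they require $c=0$, resp.\ $c=1$). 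The configuration is excluded because the last circle of that row has sign $(-1)^k\eta(\EE)$, equal to the multiples' sign. Hence $\epsilon=+1$ and the equalities of Cases 1 and 2 fail. This is what the paper's ``$U^{-1}$ cannot be applied'' encodes.

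\textbf{The dual part.} The claim you set out to prove is false. For $\mathcal{M}=(1,3)$ and $\eta=\eta(\EE)$ one finds $dual(\EE)=\{([1,-1],1,\eta),([1,-1],1,\eta),([0,0],0,\eta),([1,1],0,-\eta)\}$. Exchanging the second hat downward is Case 1(a) and then Case 1(c), giving $([1,-1],0,-\eta)$ with three circles. So your hoped-for ``Case 1(b) at the entry point'' does not occur.

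The missing idea is a support obstruction:
\begin{itemize}
\item row exchanges preserve supports;
\item every row of $dual(\EE)$ is supported on some $[c,c]$ (these keep $C=1$) or on some $[c,-c]$;
\item a type 3' $ui^{-1}$ splitting $[c,-c]$ would create a segment $[A',-c]$ with $A'<c$, violating $A'+B'\ge 0$.
\end{itemize}
The paper's one-line ``every row has $C(r)=1$'' tacitly relies on this.

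Your fallback does not substitute for it. Lemma \ref{lem-multiplicity-cancel} only describes a row passing a pair; it does not remove the pairs as participants, and in the example above the row that acquires circles is one of the pair. Moreover, what must be excluded on $dual(\EE)$ are raising $ui^{-1}$'s, not the lowerings $S^{-1},D^{-1}$ of the tempered dual. In fact $S^{-1}$ is applicable there, since under $dual$ it corresponds to the raising operator $M$ on $\EE$.
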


\begin{proof}
    Note that $\EE$ consists of hats $([A, -A], A, \eta_A)$ for $A \in \{1, \dots, k\},$ a single chain $([0, 0], 0, \eta_0),$ and multiples. We check that there are no possible lowering operators on $\EE.$ 
    
    First, we check lowering operators of type 3'; i.e., lowering operators involving $ui$ or $ui^{-1}$ of type 3'. It is clear that the only $ui$s of type 3' are $S^{-1}$ and $U^{-1}$: $S^{-1}$ cannot be applied because all the multiples have the same sign as the single chain, and $U^{-1}$ cannot be applied because there is only one chain. Meanwhile, there are no $dual \circ ui^{-1} \circ dual$ operations of type 3' possible on $\mathcal{E}$ because every row $r$ in $\mathcal{E}$ has $C(r) = 1$: therefore, the same is true of $dual(\mathcal{E}),$ so no $ui^{-1}$ can be performed.

    The only lowering operator not of type 3' is a $ui.$ We observe that there are no $ui$ operators not of the type 3', since there are no rows whose supports overlap and do not contain each other. This can be seen because all the hats have supports containing each other, and all other rows have supports of one circle.
\end{proof}

Clearly, any $\mathcal{E}$ of type $Y_\mathcal{M}$ is equivalent to a virtual extended multi-segment of the form described in Lemma \ref{E min classification for blocks after zero} or Lemma \ref{E min classification}, since all $Y_{\mathcal{M}}$ multi-segments of the same sign are equivalent to the same tempered block via applications of $S$. Therefore, given $\mathcal{E}$ of type $Y_{\mathcal{M}},$ $\mathcal{E}^{\min}$ is always of type $Y_{\mathcal{M}}$ and sign $\eta(\mathcal{E}).$ 

Now, it suffices to check that all raising operators on $\mathcal{E}$ preserve sign and type $Y_\mathcal{M}$ form. Such raising operators must be of one of the following three forms
\begin{itemize}
    \item $ui^{-1}$ of type 3',
    \item $dual \circ ui \circ dual$ where the $ui$ is of type 3', or
    \item $dual \circ ui \circ dual$ where the $ui$ is not of type 3'.
\end{itemize} 
For the first of these, it is clear that $S$ and $U$ are the only $ui^{-1}$ operations of type 3'. For the second, when $\mathcal{M}$ starts at zero, any $dual \circ ui \circ dual$ of type 3' must have one of the rows it involves be a hat (lest the rows would have intersecting duals). If the other row is also a hat, then the operation is $M,$ and if the other operation is a row of circles, then the operation is $D.$ When $\mathcal{M}$ starts after zero, it is clear that there are no $dual \circ ui \circ dual$ operations of type 3' because the rows of $dual(\mathcal{E})$ are all hats and therefore have intersecting supports because they all contain zero. 

Since the operations $S, M, U,$ and $D$ all preserve type $Y_\mathcal{M}$ and $\eta,$ it is clear we only need to account for $dual \circ ui \circ dual$s not of type 3'. We will prove that such operations preserve type $Y_\mathcal{M}$ and $\eta$ through the following two lemmas. Lemma \ref{dud not 3' chain} will account for all such $dual \circ ui \circ dual$ operations between two rows which are not hats. Lemma \ref{dud not 3' hat} will account for $dual \circ ui \circ dual$ operations (when $\mathcal{M}$ starts at zero) between two rows, at least one of which is a hat.

\begin{lemma}
\label{dud not 3' chain}
    Suppose $T$ is a $dual \circ ui \circ dual$ not of type 3'. Suppose that $T$ involves two rows $r_1 < r_2$, neither of which are hats. Then $T$ preserves type $Y_\mathcal{M}$ and $\eta(\mathcal{E}).$
\end{lemma}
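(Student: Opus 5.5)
The plan is to unwind $T$ as an inverse union-intersection. By Lemma \ref{lemma ui inv = dud}, a $dual\circ ui\circ dual$ not of type 3' is the inverse of a $ui$ of the same type (1 or 2). So $T=ui_{i,j}^{-1}$ for some pair of rows, and I would work with this description directly on $\EE$ rather than passing through $dual(\EE)$.

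The first step identifies which pairs can occur. Since $r_1,r_2$ are not hats, both are rows of circles ($l=0$): chains or multiples. Inverting a $ui$ of type 1 or 2 requires two rows, possibly after row exchanges, whose supports are nested: $[A_{k+1},B_k]\supset[A_k,B_{k+1}]$. The output consists of two overlapping, non-nested segments. In $\EE$ two circle rows with nested supports must be a chain $r$ and a multiple $s=([c,c],0,\eta')$ with $c\in\supp(r)$, because distinct chains overlap in at most one column and are therefore never nested. For $[c,c]\subset[A,B]$, the output supports would be $[c,B]$ and $[A,c]$, which overlap only in the single column $c$.

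The next step uses the $l$-conditions. Type 1 requires $A_{k+1}-l_{k+1}=A_k-l_k$, and type 2 requires $B_{k+1}+l_{k+1}=B_k+l_k$. After row-exchanging the multiple past the multiples and the chain, these become concrete conditions. The tools are Lemmas \ref{lem-multiplicity-cancel} and \ref{lem-multiplicity-cancel-up}, which let me discard pairs of equal multiples, together with Lemmas \ref{swap down} and \ref{swap up}.

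I expect the main obstacle here. I need the sign bookkeeping of the odd-alternating condition to show that the only admissible configuration has the multiple lying in an end column of the chain, with $c=A$ or $c=B$, and with the correct sign relative to the chain. The alternative would be to show that the $ui^{-1}$ produces a row with $l\neq 0$ that the $l$-conditions forbid. I would first reduce, via the multiplicity-cancel lemmas, to a single chain with at most one multiple, and then compute the row exchange of Definition \ref{def row exchange} case by case.

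In the surviving case the operation should split the chain at $c$ into two chains sharing column $c$. In $\mathcal{S}$-data terms, $\mathcal{S}_i$ becomes $\{B,\dots,c\},\{c,\dots,A\}$, producing a $z$-chain. Equivalently, it is the inverse of a $D^2$/$S^2$-type move, the reverse of what Remark \ref{S exception} describes. I would then verify validity of the new $\mathcal{S}$ against Definition \ref{valid S}. Condition (4) holds because $m_c>1$, since a multiple was used, and the new set has at least two elements. Condition (5) is immediate.

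The final step checks the odd-alternating signs of Definition \ref{defn E(M,S)}. Row exchanges multiply intermediate signs uniformly, as in Lemmas \ref{big swap down} and \ref{big swap up}, so consecutive relations are preserved. I would check the signs at the new junction directly from the $ui$ formulas. Preservation of $\eta(\EE)$ follows because the first row is untouched unless $r_1$ is first, and in that case the first circle's sign is unchanged by the type 1 and type 2 formulas ($\eta_k'=\eta_k$).
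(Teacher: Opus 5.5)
Your identification of the pair (a chain $[A,B]$ with a multiple $[c,c]$ inside it) and of the target $\mathcal{S}$-data $\{B,\dots,c\},\{c,\dots,A,\dots\}$ agrees with the paper. However, the step you single out as the main obstacle aims at a false statement. Every $ui$ in Definition \ref{ui def}, of any type, requires $A_k<A_{k+1}$ and $B_k<B_{k+1}$ strictly. So the outputs $[c,B]$ and $[A,c]$ differ from the inputs exactly when $B<c<A$, and for $c=A$ or $c=B$ no $ui$ pairs the two rows at all. Your own check of condition (4) of Definition \ref{valid S} already assumes $|\{c,\dots,A\}|\ge 2$, which excludes $c=A$. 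The paper reads this off the dual supports $[A_1,-B_1]$ and $[A_2,-B_2]$: a non-3' $ui$ forces $B_1<B_2\le A_2<A_1$, so the multiple lies strictly inside the chain. No sign or $l$-analysis is involved. For a concrete check, take $\EE=\{([3,1],0,\eta),([2,2],0,\eta),([2,2],0,\eta)\}$, of type $Y_\mathcal{M}$ with $\mathcal{M}=(1,3,1)$ on columns $1,2,3$. It has no multiples in end columns, yet $T$ applies with $c=2$ and yields $\{([2,1],0,\eta),([2,2],0,-\eta),([3,2],0,-\eta)\}$.

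The second missing idea is how to pin down the $l$'s and signs of $T(\EE)$; the paper never computes them. An interior column $c$ carries $m_c-1$ identical multiples, and choosing among them changes the result only by row exchanges, so one takes $r_2$ to be the last one. Then $T(\EE)$ has the same support and admissible order as the type $Y_\mathcal{M}$ multi-segment with the split $\mathcal{S}$-data and sign $\eta(\EE)$. That multi-segment is equivalent to $\EE$ by Corollary \ref{Type Y SMUD equivalence}, hence to $T(\EE)$. Corollary \ref{cor same support} then forces equality, which gives both type $Y_\mathcal{M}$ and preservation of $\eta$ at once.

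Your direct route is harder than you suggest. The $m_c-2$ multiples left in column $c$ (an odd number) sit between the two new chains, so making those chains adjacent costs one uncancelled row exchange, and one of the two rows acquires $l=1$. For example, you must move $[c,B]$ down whenever some column strictly between $B$ and $c$ carries multiples. That exchange is in Case 1(b), and the subsequent $ui$ with $[A,c]$ has $\epsilon=-1$ and is of type 3, which your reduction to types 1 and 2 discards. Lemmas \ref{big swap down} and \ref{big swap up} assume alternating rows and do not cover this configuration. So the sign bookkeeping you deferred is precisely where the proof would still have to be done.
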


\begin{proof}
    Suppose $supp(r_1) = [A_1, B_1]$ and $supp(r_2) = [A_2, B_2].$ Since $r_1 < r_2,$ we presume that $B_1 \leq B_2.$ Then, $\widehat{r_1}$ and $\widehat{r_2}$ have supports $[A_1, -B_1]$ and $[A_2, -B_2],$ respectively. A $ui$ not of type 3' between these rows is possible only if $-B_2 < -B_1 \leq A_2 < A_1.$ This implies that $B_1 < B_2 \leq A_2 < A_1,$ so $supp(r_1) \supsetneq supp(r_2).$ Therefore, $r_2$ must be a multiple belonging to $r_1$ and we have $B_2 = A_2 < A_1.$ Since $A_2 \neq A_1,$ $r_2$ cannot be contained in the support of any chain other than $r_1.$ There are evenly many such multiples, and using $dual \circ ui \circ dual$ to combine $r_1$ with any of these multiples gives multi-segments that are equivalent up to row exchange. Therefore, we assume that $r_2$ is the last of these multiples.

    Applying such a $ui$ to $\widehat{r_1}$ and $\widehat{r_2}$ replaces them with rows that have support $[A_1, -A_2]$ and $[A_2, -B_1],$ in that order. Applying dual gives rows with supports $[A_2, B_1]$ and $[A_1, A_2],$ in that order. The resulting rows have the same order and support as the multi-segment obtained by implementing the following change on the $\mathcal{S}$-data of $\mathcal{E}$
    $$\{B_1, \dots, A_1, \dots\} \longrightarrow \{B_1, \dots, A_2\}, \{A_2, \dots, A_1, \dots\}.$$
    The multi-segment resulting from this change in $\mathcal{S}$-data is equivalent to $\mathcal{E},$ which is equivalent to $T \circ \mathcal{E}.$ Therefore, Corollary $\ref{cor same support}$ shows that $T  (\mathcal{E})$ is equal to this multi-segment and therefore has type $Y_\mathcal{M}$ and satisfies $\eta(T ( \mathcal{E})) = \eta(\mathcal{E}).$
\end{proof}

\begin{lemma}
\label{dud not 3' hat}
    Suppose $T$ is a $dual \circ ui \circ dual$ not of type 3'. Suppose that $T$ involves two rows $h < r,$ where $h$ is a hat. Then $T$ preserves type $Y_\mathcal{M}$ and $\eta(\EE).$
\end{lemma}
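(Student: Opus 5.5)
I would follow the template of Lemma \ref{dud not 3' chain}. The plan is to compute the supports produced by $T$, match them with the supports of some type $Y_\mathcal{M}$ multi-segment $\EE'$ built from modified $\mathcal{S}$-data, and then conclude $T(\EE)=\EE'$ from Corollary \ref{cor same support}. Both sides are equivalent to $\EE$: for $T(\EE)$ this holds by construction, and for $\EE'$ it follows from Corollary \ref{Type Y SMUD equivalence}, since $\eta(\EE')=\eta(\EE)$.

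The first step is to locate the two rows after dualizing. Write $h=([A_1,-B_1],B_1,\eta_1)$ with $B_1\ge 1$, and let $r$ be a row with $r>h$ in the $(P')$ order. By Lemma \ref{Dual sign} and the computation in the proof of Lemma \ref{merge}, $\widehat{h}$ is the row of circles $[A_1,B_1]$. The dual row $\widehat{r}$ depends on the type of $r$:
\begin{itemize}
\item if $r$ is a chain or multiple with support $[A_2,B_2]$ and $B_2>0$, then $\widehat{r}$ is a hat $[A_2,-B_2]$;
\item if $r$ is the chain starting at $0$, then $\widehat{r}$ is a row of circles $[A_2,0]$;
\item if $r$ is another hat $[A_2,-B_2]$, then $\widehat{r}$ is a row of circles $[A_2,B_2]$.
\end{itemize}
In $dual(\EE)$ the row $\widehat{r}$ precedes $\widehat{h}$. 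A $ui$ not of type 3' requires strictly overlapping supports that are not nested.

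Next I would rule out the impossible pairings. Two hats in $\EE$ coming from $\mathcal{T}$-blocks are disjoint, since the $\mathcal{T}_i^j$ are pairwise disjoint; hence their duals are disjoint rows of circles and no $ui$ of type 1 or 2 applies. This leaves the case where $r$ is a row of circles. If $B_2>0$, then $\widehat{r}$ is a hat containing $0$ and $\widehat{h}=[A_1,B_1]$ with $B_1>0$. Overlap without nesting then forces $A_2\ge B_1$, $A_2<A_1$ and $-B_2<B_1$.

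I would then use the $\mathcal{S}$-data structure to show $B_2<B_1$ is incompatible with $r>h$. Since $B_1\in\mathcal{T}_i^j$ with $j\ge1$, any circle row meeting column $B_1$ lies in some $\mathcal{S}_{i'}$ with $|\mathcal{S}_i\cap\mathcal{S}_{i'}|\ge 2$, or it is a multiple. This contradicts condition (4) of Definition \ref{valid S} unless $r$ is a multiple $[A_2,A_2]$ with $B_1\le A_2<A_1$, lying under the hat. The analogous claim holds when $r$ is the chain starting at $0$.

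In the surviving case, a multiple $r=[c,c]$ with $B_1\le c<A_1$, I would apply the $ui$ explicitly. It yields duals with supports $[A_1,-c]$ and $[c,B_1]$. After dualizing back, these become a hat $[A_1,-c]$ together with a hat or circle row on $[c,B_1]$. By Lemma \ref{lemma ui inv = dud}, $T$ is the inverse of a $ui$, so the pair in $T(\EE)$ corresponds to splitting the $\mathcal{T}$-block $\{B_1,\dots,A_1\}$ at $c$, with $c$ duplicated. This is exactly the effect of a $D^2$-type change on the $\mathcal{S}$-data, namely
\[\{\dots,\overline{B_1,\dots,A_1},\dots\}\to\{\dots,\overline{B_1,\dots,c}\},\{c,\dots,A_1,\dots\}.\]
I would check that the new $\mathcal{S}$-data is valid: $m_c\ge3$ holds because $r$ is a multiple, and conditions (4) and (5) and the $\mathcal{T}$ condition (3) must be verified directly. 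As in Lemma \ref{dud not 3' chain}, I would reduce to $r$ being the last such multiple using Remark \ref{equivalent D}.

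The main obstacle is the exhaustive case analysis showing that only this configuration survives, and then verifying validity of the new $(\mathcal{S},\mathcal{T})$. It may turn out that the operation is really a $U$ followed by a $D^2$. If so, I would instead identify $T$ with that composite and cite Lemmas \ref{U existence general} and \ref{D existence general}.
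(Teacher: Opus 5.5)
Your route is the paper's. You dispose of hat--hat pairs using disjointness of the $\mathcal{T}_i^j$ ($j\ge 1$), force $r$ to be a multiple $[c,c]$ with $B_1\le c<A_1$, and reduce to the last such multiple. You then identify $T(\EE)$ via Corollary \ref{cor same support} with the type $Y_\mathcal{M}$ multi-segment of sign $\eta(\EE)$ built from the change $\{\dots,\overline{B_1,\dots,A_1},\dots\}\to\{\dots,\overline{B_1,\dots,c}\},\{c,\dots,A_1,\dots\}$. Your displayed $\mathcal{S}$-data change is exactly the paper's.

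However, the computation that feeds this comparison is wrong as written. The $ui$ in $dual(\EE)$ produces rows of supports $[A_1,-c]$ and $[c,B_1]$ in the positions of $\widehat{r}$ and $\widehat{h}$. Since $dual$ sends a support $[A,B]$ to $[A,-B]$, in $T(\EE)$ these become a row of support $[A_1,c]$ in the position of $r$ and a row of support $[c,-B_1]$ in the position of $h$. You instead wrote ``a hat $[A_1,-c]$'', which is the support before dualizing back.

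With your version, the supports of $T(\EE)$ do not agree with those of your proposed $\EE'$. There, $\{c,\dots,A_1,\dots\}$ gives a chain $[A_1,c]$ and $\overline{B_1,\dots,c}$ gives a hat $[c,-B_1]$, so Corollary \ref{cor same support} could not be applied. Your own remark that $T$ inverts a $ui$ already forces the correct answer: the old pair must be the union and intersection of the new pair. With the correction, supports and order match. The hat keeps $B=-B_1$, and in $\EE'$ the chain $[A_1,c]$ sits right after the remaining multiples at $c$, i.e.\ where the last multiple $r$ stood. The conclusion $\eta(T(\EE))=\eta(\EE)$ then comes for free.

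Two smaller points. First, the change is not a $D^2$. A $D^2$ duplicates the column just below a hat and turns the whole hat into a chain, whereas here a column strictly inside the hat is duplicated and $\overline{B_1,\dots,c}$ remains a hat. So your fallback ``$U$ followed by $D^2$'' is the wrong composite. By Remarks \ref{U S-data} and \ref{S S-data}, the same $\mathcal{S}$-data arises from $U_{h,A_1-c}$ followed by the inverse of an $S^2$ that absorbs a multiple at $c$ into the chain $\{c+1,\dots,A_1,\dots\}$. The fallback is unnecessary anyway, since the same-support argument suffices.

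Second, when you verify validity of the new data, use that $c$ is interior to $\mathcal{S}_i$. Hence $c$ lies in no other $\mathcal{S}_{i'}$ beforehand. This is also why the number of multiples at $c$ drops from $m_c-1$ (even) to $m_c-2$, matching the consumption of $r$.
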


\begin{proof}
    If $h = ([A, -B], 0, \eta),$ then $\widehat{h}$ is a row of circles with support $[A, B].$ Note that if $h$ is associated to the set $\mathcal{T}_i^j,$ then $\mathcal{T}_i^j = \{B, \dots, A\}.$ Since all $\mathcal{T}_i^j$ for $j \geq 1$ are disjoint, we conclude that if $r$ is a hat, then $\supp(\widehat{h}) \cap \supp(\widehat{r}) = \emptyset,$ rendering the operation $T$ impossible. Therefore, $r$ has to be some row of circles with support $[C, D],$ which dualizes to a hat with support $[C, -D].$ In order to apply $T,$ we need to have $B \leq C < A.$ Given the intersection conditions on the sets $ \mathcal{S}_i$, this is only possible if $r$ is a multiple with support $[C, C].$ We note that since $C \in \{B, \dots, A\},$ there must be an even number of multiples equal to $r.$ Since performing $dual \circ ui \circ dual$ with each of these multiples is equivalent up to row exchanges, we presume that $r$ is the last row.

    We claim that performing $T$ will replace $h$ with a hat of the same sign, i.e., $([C, -B], B, \eta)$, and change $r$ into a chain with support $[A, C].$ This guarantees that $\eta(\EE)$ is preserved. All the while, the segment will remain odd-alternating. Given this claim, we see that $T$ implements the following change in the $\mathcal{S}$-data:
    $$\{\dots, \overline{B, \dots, A}, \dots\} \longrightarrow \{\dots, \overline{B, \dots, C} \}, \{C, \dots, A, \dots\},$$
    which is indeed valid. Thus, the claim implies $T$ preserves type $Y_\mathcal{M}$.
    
    To prove the claim, let $\widehat{r} = ([C, -C], C, \eta_1)$ and $\widehat{h} = ([A, B], 0, \eta_2).$ Should a $ui$ between these rows be possible, they would be replaced with rows of support $[A, -C]$ and $[C, B],$  which would exist in the same positions as $\widehat{r}$ and $\widehat{h}$ respectively in order to satisfy $P'$ order. Dualizing back, we obtain a multi-segment with the same order and support as $\mathcal{E}$ except:
    \begin{itemize}
        \item The row $r$ is replaced by a row with support $[A, C].$
        \item The hat $h$ is replaced by a row of support $[C, - B].$
    \end{itemize}
    Such a virtual extended multi-segment has precisely the same support and order as the multi-segment resulting from the change in $\mathcal{S}$-data described in the claim. These virtual extended multi-segments correspond to the same non-zero tempered blocks, so Corollary \ref{cor same support} suffices to show that they are equal.
\end{proof}

Having seen that all possible $dual \circ ui \circ dual$s not of case 3' preserve $Y_\mathcal{M}$ and $\eta(\EE),$ we conclude Theorem \ref{block classification}.

\section{\texorpdfstring{The Count $|\Psi(\pi(\mathcal{B}))|$}{}}\label{sec count block}

The goal of this section is to prove Theorem \ref{thm-count-block-temp}. We will do this by counting virtual extended multi-segments of type $Y_\mathcal{M}.$ In order to reduce to this case, we must establish a correspondence between $\Psi(\pi(\BB))$ and virtual extended multi-segments $\mathcal{E}$ of type $Y_\mathcal{M_\B}.$ In particular, different type $Y_{\mathcal{M}}$ multi-segments with different $\mathcal{S}$-data should be associated with different virtual extended multi-segments. This is guaranteed by the following lemma.

\begin{lemma}
\label{packets distinct}
    If $\EE_1$ and $\EE_2$ are both of type $Y_\mathcal{M}$ and have $\eta(\EE_1) = \eta(\EE_2),$ then $\psi(\EE_1) = \psi(\EE_2)$ if and only if $\EE_1 = \EE_2.$
\end{lemma}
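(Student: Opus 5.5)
The direction $\EE_1=\EE_2\Rightarrow\psi(\EE_1)=\psi(\EE_2)$ is trivial. For the converse, the plan is to reduce to Corollary \ref{cor same support}. It says that two (virtual) extended multi-segments with the same support and the same admissible order, representing the same nonzero representation, coincide. There are three things to supply: same representation, same ordered support, and the $(P')$ hypothesis.

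\textbf{Same representation.} By Corollary \ref{Type Y SMUD equivalence}, any two virtual extended multi-segments of type $Y_\mathcal{M}$ with the same sign are equivalent. Hence $\EE_1\sim\EE_2$, and by Theorem \ref{thm intersections of local Arthur packets}(1) they represent the same representation, namely $\pi(\BB)$ for the tempered block $\BB$ with multiplicities $\mathcal{M}$ and sign $\eta$. This representation is nonzero.

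\textbf{Same support and order.} It remains to check that $\psi(\EE_1)=\psi(\EE_2)$ forces $\supp(\EE_1)=\supp(\EE_2)$ together with the same admissible order. I would first note that $\psi_\EE$ determines the multiset of segments $[A_i,B_i]$ via $(a_i,b_i)\mapsto(A_i,B_i)$. Both $\EE_1$ and $\EE_2$ are built in the $(P')$ order prescribed in Definitions \ref{defn E(M,S)} and \ref{defn E(M,S, T)}: rows are sorted by $B$, ties are broken by $A$, and chains precede multiples of equal support. This order is a deterministic function of the multiset of segments, except among rows with identical support. Identical rows can only be multiples, or a chain of one circle followed by its multiples. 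These carry identical data once the signs are forced by the odd-alternating rule, so permuting them changes nothing. Hence the ordered supports agree. Since the orders satisfy $(P')$, the negative-$B$ hypothesis of Corollary \ref{cor same support} is satisfied, and that corollary gives $\EE_1=\EE_2$.

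\textbf{Main obstacle.} The delicate point is that the multiset of segments alone might not recover the $\mathcal{S}$-data. For example, one must distinguish a multiple $[c,c]$ from a one-element chain, and a hat from a chain. However, Corollary \ref{cor same support} needs only equality of supports with order, not of $\mathcal{S}$-data, so this ambiguity should be harmless. What must really be checked is that the ordering convention is canonical given the supports, in particular when $B<0$ for hats, where ties can occur between a hat and a row with the same $B$. Hats have distinct negative $B$ values, since the sets $\mathcal{T}_i^j$ with $j\ge1$ are disjoint, and rows of circles have $B\ge0$. So no ties arise there, and the order is determined.
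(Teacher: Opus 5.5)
Your proposal is correct and follows the paper's proof. The paper likewise obtains $\EE_1\sim\EE_2$ from Corollary \ref{Type Y SMUD equivalence}, notes that equal parameters force equal supports in the prescribed order, and concludes via Corollary \ref{cor same support}; your check that the prescribed order is determined by the supports (rows with identical support carry identical data, and hats have distinct negative $B$) just fills in the paper's one-line remark that the orders ``have already been prescribed.''
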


\begin{proof}
    It is obvious that $\EE_1 = \EE_2$ implies $\psi(\EE_1) = \psi(\EE_2).$ Meanwhile, if $\psi(\EE_1) = \psi(\EE_2),$ then these virtual extended multi-segments must have the same support. The orders of their rows, furthermore, have already been prescribed and therefore must be the same. Since $\EE_1$ and $\EE_2$ are both $Y_\mathcal{M}$ and have $\eta(\EE_1) = \eta(\EE_2),$ we must have $\mathcal{E}_1 \sim \mathcal{E}_2.$ Corollary \ref{cor same support} now suffices to prove the desired result.
\end{proof}

\subsection{Blocks Starting at Zero}

We consider a ``lift'' for arbitrary $\rho$ in the case $\EE$ is of type $Y_{\mathcal{M}}$, with $\mathcal{M}$ starting at zero, as follows.

\begin{defn}\label{defn Theta_1(EE)}
    Suppose that $\EE\in\VRep_\rho^\mathbb{Z}(G_n)$ is of type $Y_\mathcal{M},$ with $\mathcal{M}$ starting at zero.
    By Theorem \ref{block classification} that there exists a tempered virtual extended multi-segment $\EE_{temp}$ which is equivalent to $\EE.$ We assume further that $\EE_{temp}$ starts at 0.
    
    We define
    \[
    \Theta_1(\EE)=\left\{\left(\left[c_{\max}+1,-c_{\max}-1\right]_{\rho}, {c_{\max}+1}, -\eta(\EE) \right)\right\}\cup \EE.
    \]
    
    We remark that the added extended segment should be inserted such that it is the first extended segment in the admissible order. 
\end{defn}

\begin{rmk}
    The above definition of $\Theta_1(\EE)$ is motivated by the calculation of the theta lift (for symplectic-even orthogonal dual pairs) to the first occurrence in the going-up tower for $\pi(\EE).$ This will be exploited in \cite{HKT}.
\end{rmk}

Suppose that $\EE\in\VRep_\rho^\mathbb{Z}(G_n)$ is tempered of type $Y_\mathcal{M}$ with sign $\eta(\EE)$. It is apparent from the definition that $\Theta_1(\mathcal{E})$ is of type $Y_{\mathcal{M}'}$ with $\eta(\Theta_1(\mathcal{E}))=-\eta(\EE),$ where $\mathcal{M}' = (m_0, \dots, m_k, 1).$

We will prove the following theorems, which will help us prove Theorem \ref{thm-count-block-temp}.

\begin{thm}
\label{first block m_n = 1}
    Let $\EE$ be of type $Y_\mathcal{M}$ where $\mathcal{M} = (m_0, \dots, m_{c_{\max} - 1}, 1).$
    Then the following holds.
    \begin{enumerate}
        \item  We can perform a $dual \circ ui \circ dual$ involving the first row of $\Theta_1(\EE)$ to get a virtual extended multi-segment which we denote by $\Theta_2(\EE).$ We can further perform an additional $ui^{-1}$ involving the $(c_{\max} + 1)$-st column of $\Theta_2(\mathcal{E})$ to obtain another virtual extended multi-segment which we denote by $\Theta_3(\EE).$
        \item We have the relation $$\Psi(\Theta_1(\EE)) = \{\psi_{\Theta_i(\EE')} \mid \mathcal{E}' \sim \mathcal{E}; i = 1, 2, 3 \}.$$
    \end{enumerate}
\end{thm}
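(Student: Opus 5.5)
The plan is to reduce Theorem \ref{first block m_n = 1} to the classification Theorem \ref{block classification} applied to the block-tuple $\mathcal{M}' = (m_0, \dots, m_{c_{\max}-1}, 1, 1)$, and then to sort the valid $\mathcal{S}$-data for $\mathcal{M}'$ according to how the new column $c_{\max}+1$ enters.

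Write $k=c_{\max}$. Then $\Theta_1(\EE)$ is of type $Y_{\mathcal{M}'}$. Its $\mathcal{S}$-data is that of $\EE$ with the singleton hat-part $\overline{k+1}$ attached: the new first row $([k+1,-k-1],k+1,-\eta(\EE))$ is the hat associated to $\mathcal{T}^1_1=\{k+1\}$, after the data is reindexed in the $(P')$ order.

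For Part (1), I would take $\Theta_2(\EE)$ to be the image of $\Theta_1(\EE)$ under the unique $dual\circ ui\circ dual$ involving the first row. Since $m_k=1$, Lemma \ref{multiple duds}(1) says there is exactly one such operation: either $M$ with a hat ending at $k$, or $D^1$ to the chain ending at $k$. In both cases column $k+1$ is absorbed into the set containing $k$ (Remarks \ref{M S-data} and \ref{D S-data}). To get $\Theta_3(\EE)$, I would then apply $S$ (Lemma \ref{S existence general}) or $U$ (Lemma \ref{U existence general}) with $c=1$ to the resulting row whose support ends at $k+1$. This splits off $\{k+1\}$ as its own chain $\mathcal{S}_{last}=\{k+1\}$. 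The existence claims in Part (1) then follow directly from these existence lemmas, since they preserve type $Y_{\mathcal{M}'}$.

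For Part (2), the inclusion $\supseteq$ is immediate from Part (1), because raising operators preserve $\pi$. For $\subseteq$, I would take an arbitrary $\mathcal{F}$ of type $Y_{\mathcal{M}'}$ with the correct sign; Theorem \ref{block classification} says these are exactly the elements of the packet set. I would then classify the element $k+1$ in its $\mathcal{S}$-data. Since $m_{k+1}=1$, condition (4) of Definition \ref{valid S} puts $k+1$ in exactly one set, and that set is the last one. There are three cases.
\begin{enumerate}
\item[(i)] It is the singleton chain $\{k+1\}$. This matches $\Theta_3$.
\item[(ii)] It is a hat $\overline{\dots,k+1}$. Undoing the merge and $U$ should express this as $\Theta_1$.
\item[(iii)] It lies in a set $\{\dots,k,k+1\}$ with $k+1$ in $\mathcal{T}^0$. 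This matches $\Theta_2$.
\end{enumerate}
In each case, deleting $k+1$ (and restoring the hat $\overline{k+1}$ in cases (ii) and (iii)) should give valid data $\mathcal{S}'$ for $\mathcal{M}$. The resulting $\EE'=\EE(\mathcal{M},\mathcal{S}',\dots)$ is of type $Y_\mathcal{M}$, so $\EE'\sim\EE$. The sign must then be tracked: $\eta(\Theta_1(\EE'))=-\eta(\EE')$ should agree with $\eta(\mathcal{F})$, using Lemma \ref{odd alternating dual} and sign preservation of the operators. Once that holds, Lemma \ref{packets distinct} identifies $\mathcal{F}$ with $\Theta_i(\EE')$.

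The main obstacle is this case-(ii)/(iii) bookkeeping. One has to check that removing $k+1$ leaves valid $(\mathcal{S},\mathcal{T})$, in particular conditions (4) and (5) and the requirement $|\mathcal{T}^0_{i+1}|\ge 2$ when $z$-chains are present. One also has to check that $\Theta_2(\EE')$ as defined, being the unique operation by Lemma \ref{multiple duds}(1), realizes exactly the merged data. I would first try to handle this by induction on whether the last set is a $z$-chain, treating $\{k, k+1\}$ with $k$ also in the previous set as a special subcase via Remark \ref{S exception}.
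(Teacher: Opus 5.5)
Your route is the paper's route. Part (1) goes through Lemma \ref{multiple duds}(1) followed by $S$ or $U$ with $c=1$. Part (2) goes through Theorem \ref{block classification} plus a case analysis on where $k+1$ sits in the $\mathcal{S}$-data of $\FF$.

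\textbf{Case (ii) is misassigned.} Suppose the row through column $k+1$ is a hat $\overline{\dots,k,k+1}$ of length at least two. Then $\FF$ cannot be $\Theta_1(\EE')$ for any $\EE'$: every $\Theta_1(\EE')$ contains the row with support $[k+1,-k-1]$, and no other row meets column $k+1$. Such an $\FF$ is $\Theta_2(\EE')$, namely $M$ applied to $\Theta_1(\EE')$, where $\EE'$ carries the hat $\overline{\dots,k}$ (Remark \ref{M S-data}). Only the singleton hat $\overline{k+1}$ gives $\Theta_1(\EE')$; it sits after either a chain ending at $k$ or a hat ending at $k$. The operator $U$ plays no role in either subcase; it links the merged-hat form of $\Theta_2$ to $\Theta_3$. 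So split (ii) into two subcases. The four patterns $\{k+1\}$, $\{\dots,k,k+1\}$, $\overline{\dots,k,k+1}$ and $\overline{k+1}$ then correspond to $\Theta_3,\Theta_2,\Theta_2,\Theta_1$ respectively.

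\textbf{The obstacle you anticipate is largely vacuous.} You only used $m_{k+1}=1$, but $m_k=1$ as well. By condition (4) of Definition \ref{valid S}, column $k$ then lies in a single set. Hence $\{k,k+1\}$ can never be a $z$-chain at $k$, and the exceptional case of Remark \ref{S exception} never arises. Deleting $k+1$ also visibly preserves validity. Any $z$-chain in the last set meets the previous set at some $c<k$, so its $\mathcal{T}^0$ keeps at least two elements.

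\textbf{Two smaller points.} First, the new hat of $\Theta_1(\EE)$ is the last hat-part of the last set, the one containing $k$. It is not $\mathcal{T}^1_1$, because $\mathcal{S}$ is indexed by columns, not by row order. Second, the inclusion $\supseteq$ needs $\Theta_1(\EE')\sim\Theta_1(\EE)$ whenever $\EE'\sim\EE$. This follows because both are of type $Y_{\mathcal{M}'}$ with sign $-\eta(\EE)$ (Corollary \ref{Type Y SMUD equivalence}); it does not follow from Part (1) alone.
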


\begin{thm}
\label{first block m_n > 1}
    Let $\EE$ be of type $Y_\mathcal{M}$ where $\mathcal{M} = (m_0, \dots, m_{c_{\max}})$ with $m_{c_{\max}} > 1.$
    Then the following holds.
    \begin{enumerate}
        \item We can perform two $dual \circ ui \circ dual$s of type 3' involving the first row of $\Theta_1(\mathcal{E})$: one with the first row whose support ends at $c_{\max},$ and one with the last row whose support ends at $c_{\max}.$ We call the resulting segments $\Theta_2(\mathcal{E})$ and $\Theta_4(\mathcal{E}),$ and we have $\psi_{\Theta_2(\mathcal{E})} = \psi_{\Theta_4(\mathcal{E})}$ if and only if the $\mathcal{S}$-data of $\EE$ has some $\mathcal{S}_i = \{c_{\max}\}.$ We can perform an additional $ui^{-1}$ to both $\Theta_2$ and $\Theta_4$ to remove a row of one circle from the row with support ending at $c_{\max} + 1$ to obtain the same $\Theta_3(\EE).$
        \item We have the relation $$\Psi(\pi(\Theta_1(\EE))) = \{\psi_{\Theta_i(\EE')} \mid \mathcal{E}' \sim \mathcal{E}; i = 1, 2, 3, 4 \}.$$
    \end{enumerate}
\end{thm}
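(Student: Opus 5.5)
The idea is to reduce everything to the classification in Theorem \ref{block classification}. By that theorem, every element of $\Psi(\pi(\Theta_1(\EE)))$ is $\psi_{\mathcal{F}}$ for some $\mathcal{F}$ of type $Y_{\mathcal{M}'}$ with $\mathcal{M}'=(m_0,\dots,m_{c_{\max}},1)$ and sign $-\eta(\EE)$. By Lemma \ref{packets distinct}, distinct $\mathcal{S}$-data give distinct parameters. So it suffices to read off, from the $\mathcal{S}$-data of $\mathcal{F}$, how the new column $c_{\max}+1$ is covered, and to match each possibility with one of the $\Theta_i(\EE')$.

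For Part (1), use Lemma \ref{D existence general} and Lemma \ref{multiple duds} on the first row $h=([c_{\max}+1,-c_{\max}-1],c_{\max}+1,-\eta(\EE))$ of $\Theta_1(\EE)$, which is a hat with $B=c_{\max}+1$. Since $m_{B-1}=m_{c_{\max}}$, there are two cases.
\begin{enumerate}
\item If $m_{c_{\max}}=1$, Lemma \ref{multiple duds}(1) gives a unique $dual\circ ui\circ dual$. It is either $M$ with a hat ending at $c_{\max}$ or $D^1$ to the chain ending at $c_{\max}$; call the result $\Theta_2(\EE)$.
\item If $m_{c_{\max}}>1$, Lemma \ref{multiple duds}(2),(3) gives either one operation or two. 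They are $D^1/M$ (to the first row ending at $c_{\max}$, giving $\Theta_2$) and $D^2$ (to the last multiple, giving $\Theta_4$). Lemma \ref{multiple duds}(2) shows the two coincide exactly when some $\mathcal{T}^0_i=\mathcal{S}_i=\{c_{\max}\}$; this is the stated criterion for $\psi_{\Theta_2}=\psi_{\Theta_4}$.
\end{enumerate}
In both cases, $\Theta_3$ is obtained by applying $S$ (Lemma \ref{S existence general}) to split off the single circle in column $c_{\max}+1$. In $\Theta_2$ that circle sits at the top of a chain; in $\Theta_4$ it sits at the top of the $z$-chain, where the $S^2$ subtlety of Remark \ref{S exception} applies. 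Comparing $\mathcal{S}$-data via Remarks \ref{S S-data} and \ref{D S-data}, both yield the same data. That data is the $\mathcal{S}$-data of $\EE$ with a new singleton $\{c_{\max}+1\}$ appended, so they agree by Lemma \ref{packets distinct}.

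For Part (2), we check both inclusions. The inclusion $\supseteq$ is Theorem \ref{thm intersections of local Arthur packets}(1) together with the observation that $\Theta_1$ commutes with equivalences of $\EE$. The operators realising $\EE\sim\EE'$ only touch rows below the new top hat. A row exchange of that hat with the equivalence is handled by Corollary \ref{cor Alex}, and the other operators are local by Lemma \ref{lem-local}. Hence $\Theta_1(\EE')\sim\Theta_1(\EE)$.

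For $\subseteq$, take $\mathcal{F}$ of type $Y_{\mathcal{M}'}$ and look at the last set $\mathcal{S}_k$, which contains $c_{\max}+1$. Since $m_{c_{\max}+1}=1$, condition (4) of Definition \ref{valid S} means $\mathcal{S}_k$ is the only set containing $c_{\max}+1$. There are four cases.
\begin{enumerate}
\item $\mathcal{S}_k=\{c_{\max}+1\}$: deleting it gives valid $\mathcal{S}$-data for $\mathcal{M}$, hence some $\EE'$ with $\mathcal{F}=\Theta_3(\EE')$.
\item $c_{\max}+1$ lies in a hat: that hat is $\mathcal{T}^{\ell_k}_k$ and is the first row. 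Removing it, or shortening its set, exhibits $\mathcal{F}$ as $\Theta_1(\EE')$. Where the hat set has more than one element, merging gives $\Theta_2$-type data, via Remark \ref{M S-data}.
\item $c_{\max}+1$ lies in $\mathcal{T}^0_k$ with $|\mathcal{S}_k|\ge 2$, and $\mathcal{S}_k$ is not a $z$-chain: this is $\Theta_2(\EE')$, inverting $D^1$ or $M$.
\item The same, but $\mathcal{S}_k$ is a $z$-chain overlapping $\mathcal{S}_{k-1}$ at $c_{\max}$: this is $\Theta_4(\EE')$, inverting $D^2$; this case requires $m_{c_{\max}}>1$.
\end{enumerate}
In each case we must confirm that the truncated data is valid for $\mathcal{M}$ and that the sign is $\eta(\EE)$; the sign follows from Lemma \ref{odd alternating dual} and the sign-preservation parts of Lemma \ref{D existence general}.

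The main obstacle is this last bookkeeping step. The hat case needs care: after removing or shortening the top hat set, condition (3) of Definition \ref{valid T} may fail, or $\mathcal{T}^0_k$ may become too short. In that situation the top row is instead the result of a $U$ operation, and Remark \ref{U S-data} has to be used to recognise it as $\Theta_1(\EE')$ for a different $\EE'$. I would first treat the $X_k$ case (all $m_i=1$) explicitly, and then reduce the general case using Lemma \ref{lem-multiplicity-cancel}, as in the proof of Lemma \ref{D existence general}.
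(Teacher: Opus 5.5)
Your overall route is the paper's. You use Lemma \ref{multiple duds} for the two $dual\circ ui\circ dual$'s on the top hat $h$ of $\Theta_1(\EE)$, an $\mathcal{S}$-data comparison showing that both continuations land on one $\Theta_3(\EE)$, and, for Part (2), Theorem \ref{block classification} followed by a case analysis on the set containing $c_{\max}+1$.

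However, one step of Part (1) fails as written. Suppose the last set of $\EE$ ends in a hat $h'$ containing $c_{\max}$. Then the operation producing $\Theta_2(\EE)$ is $M_{h,h'}$, and the circle in column $c_{\max}+1$ lies in the merged hat $h*h'$, not at the top of a chain. So $S$ cannot be applied: Lemma \ref{S existence general} only splits rows of circles. The paper instead applies $U_{h*h',1}$ (Lemma \ref{U existence general}). It then proves $U_{h*h',1}\circ M_{h,h'}=S_{r',1}\circ D^2_{h,r}$ by comparing Remarks \ref{M S-data} and \ref{U S-data} with Remarks \ref{D S-data} and \ref{S exception}. This is Lemma \ref{supplementary commutativity properties}(1); part (2) of that lemma is the $D^1$/$D^2$ comparison you did carry out. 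With $U$ in place of $S$ your conclusion stands: $\Theta_3(\EE)$ has the $\mathcal{S}$-data of $\EE$ with $\{c_{\max}+1\}$ appended.

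The same slip recurs in Part (2). In your hat case, deleting a singleton top hat $\{c_{\max}+1\}$ does give $\mathcal{F}=\Theta_1(\EE')$. But shortening a longer top hat $\{j,\dots,c_{\max}+1\}$ to $\{j,\dots,c_{\max}\}$ gives $\mathcal{F}=\Theta_2(\EE')$ via $M$, not $\Theta_1(\EE')$. In your chain case only $D^1$ can be inverted, since $M$ outputs a hat.

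The obstacle you anticipate does not arise. Removing or shortening $\mathcal{T}^{\ell_k}_k$ leaves $\mathcal{T}^0_k$ and $\min\mathcal{S}_k$ unchanged, so Definitions \ref{valid S} and \ref{valid T} remain satisfied. In particular, condition (4) is kept because $|\mathcal{T}^0_k|\ge 2$ whenever $\mathcal{S}_k$ meets $\mathcal{S}_{k-1}$. So no reinterpretation via $U$ is needed; the output of $U$ splitting off column $c_{\max}+1$ is a $\Theta_3$, not a $\Theta_1$.

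Finally, for $\supseteq$ the detour through Corollary \ref{cor Alex} is unnecessary. $\Theta_1(\EE')$ is of type $Y_{\mathcal{M}'}$ with sign $-\eta(\EE')=-\eta(\EE)$, so Corollary \ref{Type Y SMUD equivalence} gives $\Theta_1(\EE')\sim\Theta_1(\EE)$ directly.
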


We begin by proving Part (1) of Theorem \ref{first block m_n = 1} from the existence of the operators $D, U,$ and $S.$

\begin{proof}[Proof of Theorem \ref{first block m_n = 1} Part (1)]
    Suppose that $\EE$ is of type $Y_\mathcal{M}$ with final multiplicity $m_{c_{\max}} = 1.$ We know that $\Theta_1(\EE) := h \cup \EE,$ where
    $$h: = ([c_{\max} + 1, -c_{\max} - 1], c_{\max} + 1, -\eta(\EE)).$$

    Lemma \ref{multiple duds} states that there is exactly one $dual \circ ui \circ dual$ involving $h.$ There are two cases.

    \begin{itemize}
        \item First, $\Theta_2(\EE) = D_{h, r}\circ \Theta_1(\EE),$ for some chain $r.$ In this case, we let
        $$\Theta_3(\EE) := S_{r', 1} \circ \Theta_2(\EE),$$

        where $r'$ is the image of $r$ under $D_{h, r}.$
        \item Second, $\Theta_2(\EE) = M_{h, h'} \circ \Theta_1(\EE),$ where $h'$ is the first hat of $\EE.$ Here, we let
        \[\Theta_3(\EE) := U_{h * h', 1} \circ \Theta_2(\EE).\qedhere\]
    \end{itemize}
\end{proof}

The proof of Part (1) of Theorem \ref{first block m_n > 1} is similar. To complete it we will utilize the following lemma, which considers how the two different operations $D^1$ and $D^2$ can be related to each other.

\begin{lemma}
\label{supplementary commutativity properties}
    Suppose that $\EE$ is of type $Y_\mathcal{M}$ and has a hat $h = ([A, B], -B, \eta).$ Suppose $m_{B - 1} \geq 3,$ and let $r_1, r_2$ be the first and last rows ending at $B - 1.$

    \begin{enumerate}
        \item If $r_1$ is a hat, then
        $$U_{h * r_1, C(h)} \circ M_{h, r_1} = S_{r_2', C(h)} \circ D_{h, r_2}^2.$$
        \item If $r_1$ is a chain, then
        $$S_{r_1', C(h)} \circ D_{h, r_1}^1 = S_{r_2', C(h)} \circ D_{h, r_2}^2.$$
    \end{enumerate}

    Here, $r_1'$ and $r_2'$ indicate the images of $r_1$ and $r_2$ under $dual \circ ui \circ dual$s with $h.$
\end{lemma}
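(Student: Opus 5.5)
The plan is to derive both identities from the uniqueness statement Corollary \ref{cor same support}, in the same way Lemma \ref{dud not 3' chain} and Lemma \ref{dud not 3' hat} were handled. That reduces the work to two checks: that the two sides are strongly equivalent, and that they have the same support and admissible order. Equivalence is automatic. Each side is a composition of raising operators and their inverses applied to $\EE$, so by Theorem \ref{thm intersections of local Arthur packets}(1) both sides represent $\pi(\EE)$. The existence of every operator involved is supplied by Lemma \ref{M existence general}, Lemma \ref{D existence general}, Lemma \ref{U existence general} and Lemma \ref{S existence general}. By those lemmas (and Lemma \ref{Type X exhaustion} style preservation), both sides are again of type $Y_\mathcal{M}$ with sign $\eta(\EE)$. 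Once the supports and orders agree, Lemma \ref{packets distinct} (equivalently Corollary \ref{cor same support}) gives equality.

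For part (1), I will compute the $\mathcal{S}$-data of both sides using Remarks \ref{M S-data}, \ref{U S-data}, \ref{D S-data} and \ref{S S-data}. Here $r_1$ is the hat attached to $\mathcal{T}_i^{j+1}=\{C,\dots,B-1\}$, while $h$ corresponds to $\overline{B,\dots,A}$. Merging replaces these two blocks by $\overline{C,\dots,A}$. Applying $U_{h*r_1,C(h)}$, with $C(h)=A-B+1$, then yields $\{\dots\overline{C,\dots,B-1}\},\{B,\dots,A,\dots\}$. On the other side, $D^2_{h,r_2}$ gives $\{\dots,\overline{C,\dots,B-1}\},\{B-1,\dots,A,\dots\}$, which creates a $z$-chain. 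Then $S_{r_2',C(h)}$ separates $C(h)$ circles from the chain $[A,B-1]$, whose length is $C(h)+1$. This is exactly the exceptional case $S^2$ of Remark \ref{S exception}, and it produces $\{B,\dots,A,\dots\}$. The two resulting $\mathcal{S}$-data coincide.

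Part (2) works the same way. $D^1_{h,r_1}$ turns $\{C,\dots,B-1,\overline{B,\dots,A},\dots\}$ into $\{C,\dots,A,\dots\}$. The split $S^1$ with $c=C(h)$ then gives $\{C,\dots,B-1\},\{B,\dots,A,\dots\}$. The $D^2$ side again gives $\{C,\dots,B-1\},\{B-1,\dots,A,\dots\}$, and $S^2$ converts this to $\{C,\dots,B-1\},\{B,\dots,A,\dots\}$. I must also check that the multiples come out the same. In both computations the multiplicity count at column $B-1$ is recovered as $m_{B-1}$ minus the number of $\mathcal{S}_i$ containing $B-1$, and the definition of $\EE(\mathcal{M},\mathcal{S},\mathcal{T},\eta)$ then fixes the rows, their $(P')$ order and the signs. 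Since the signs are already pinned down by $\eta$, the two virtual extended multi-segments agree.

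The main obstacle is the $S^2$ step. I need to verify that $S_{r_2',C(h)}$ is genuinely applicable to the $z$-chain created by $D^2$, and that the leftover single circle at $B-1$ is absorbed as a multiple in the same position as on the other side. My first attempt would be to rerun the argument of Lemma \ref{S existence general}: exchange the chain past the evenly many multiples using Lemma \ref{lem-multiplicity-cancel}, and then count parities of the multiples at column $B-1$. If the admissible orders turn out to differ by row exchanges, I would fix this with Lemma \ref{lemma Alex}(1) before invoking Corollary \ref{cor same support}.
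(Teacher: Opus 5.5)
Your proposal is correct and follows essentially the same route as the paper. The paper's proof likewise tracks the $\mathcal{S}$-data through Remarks \ref{M S-data}, \ref{U S-data}, \ref{D S-data} and \ref{S S-data}, with the $D^2$ side landing in the exceptional $S^2$ case of Remark \ref{S exception}, and observes that both sides give identical data; it leaves implicit the step you make explicit, namely that equal $\mathcal{S}$-data and sign determine the virtual extended multi-segment (equivalently, that Corollary \ref{cor same support} applies). One harmless indexing slip: by Definition \ref{valid T}(2), the piece $\{C,\dots,B-1\}$ attached to $r_1$ is $\mathcal{T}_i^{j}$ and $\{B,\dots,A\}$ is $\mathcal{T}_i^{j+1}$, not the reverse, but this does not affect your computation.
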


\begin{proof}
    For Part (1), let $r_1 = ([C, -B + 1], B - 1, \eta_1)$ and let $r_2 = ([B - 1, B - 1], 0, \eta_2).$ Then conducting $U_{h * r_1, C(h)} \circ M_{h, r_1}$ changes the $\mathcal{S}$-data of $\EE$ as follows (see Remarks \ref{M S-data} and \ref{U S-data}).
    \begin{align*}
        \{\dots \overline{C, \dots, B - 1}, \overline{B, \dots, A}, \dots\} &\longrightarrow \{\dots \overline{C, \dots, B - 1, B, \dots, A}, \dots\}\\
        &\longrightarrow \{\dots \overline{C, \dots, B - 1} \}, \{B, \dots, A, \dots\}.
    \end{align*}
    Meanwhile, applying $S_{r_2', C(h)} \circ D_{h, r_2}^2$ gives the changes (see Remarks \ref{S S-data} and \ref{D S-data}):
    \begin{align*}
        \{\dots \overline{C, \dots, B - 1}, \overline{B, \dots, A}, \dots \} &\longrightarrow \{ \dots \overline{C, \dots, B - 1} \}, \{B - 1, B, \dots, A, \dots\}\\
        &\longrightarrow \{ \dots \overline{C, \dots, B - 1} \}, \{B, \dots, A, \dots\},
    \end{align*}
    where the last arrow takes place due to the exception discussed in Remark \ref{S exception}. 
    
    For Part (2), let $r_1 = ([C, B - 1], 0, \eta_1).$ Then applying $S_{r_1', C(h)} \circ D_{h, r_1}^1$ gives:
    \begin{align*}
        \{C, \dots, B - 1, \overline{B, \dots, A}, \dots\} &\longrightarrow \{C, \dots, B - 1, B, \dots, A, \dots\}\\
        &\longrightarrow \{C, \dots, B - 1 \}, \{B, \dots, A, \dots\}.
    \end{align*}
    Meanwhile, applying $S_{r_2', C(h)} \circ D_{h, r_2}^2$ gives the changes:
    \begin{align*}
        \{C, \dots, B - 1, \overline{B, \dots, A}, \dots \} &\longrightarrow \{C, \dots, B - 1 \}, \{B - 1, B, \dots, A, \dots\}\\
        &\longrightarrow \{C, \dots, B - 1 \}, \{B, \dots, A, \dots\},
    \end{align*}
    which suffices to complete the proof.
\end{proof}

\begin{proof}[Proof of Theorem \ref{first block m_n > 1} Part (1)]
    By Lemma \ref{multiple duds}, given any $\Theta_1(\EE)$ where $m_{c_{\max}} \geq 3,$ it is possible to perform two different $dual \circ ui \circ dual$'s of type 3' involving the hat $h = ([c_{\max} + 1, -c_{\max} - 1], c_{\max} + 1, -\eta(\EE)),$ resulting in virtual extended multi-segments $\Theta_2(\EE)$ and $\Theta_4(\EE)$ respectively. Lemma \ref{multiple duds} states that $\Theta_2(\EE)$ and $\Theta_4(\EE)$ are equivalent via row exchanges if and only if the $\mathcal{S}$-data of $\EE$ contains $\{ c_{\max} \}$.

    Now, we consider two cases. If the multi-segment $\EE$ has a hat $h'$ with support containing $c_{\max}$, then the two $dual \circ ui \circ dual$'s associated with $h$ that produce $\Theta_2(\EE)$ and $\Theta_4(\EE)$ are $M_{h, h'}$ and $D^2_{h, r}$ respectively, where $r$ is a multiple. We can produce another virtual extended multi-segment from $\Theta_2(\EE)$ by applying $U_{h * h', 1}$, and we can obtain another virtual extended multi-segment from $\Theta_4(\EE)$ by applying $S_{r', 1},$ where $r'$ is the image of $r$ under $D_{h, r}^2.$ Part (1) of Lemma \ref{supplementary commutativity properties} implies that both of $ui^{-1}$'s produce the same virtual extended multi-segment which we call $\Theta_3(\EE).$

    In the case where the two operators that produce $\Theta_2(\EE)$ and $\Theta_4(\EE)$ are $D^1$ and $D^2,$ then we can perform a $ui^{-1}$ of the form $S$ to separate one circle at $c_{\max} + 1$ from each. Part (2) of Lemma \ref{supplementary commutativity properties} guarantees that both resulting multi-segments are the same, and again we let this be $\Theta_3(\EE).$
\end{proof}

Next we want to establish that, whenever $\mathcal{E}$ is of type $Y_\mathcal{M}$ and final multiplicity $m_{c_{\max}} = 1,$ no two elements of the set $\{ \Theta_i(\EE') \mid i \in \{1, 2, 3\}; \EE' \sim \EE \}$ have the same local Arthur parameter. To do this, we must simply argue that these extended multi-segments have different supports. In particular, we want to prove the following.

\begin{lemma}
\label{thetas distinct}
    Let $\mathcal{E}_1 \sim \mathcal{E}_2$ be of type $Y_{\mathcal{M}},$ with $\mathcal{M}$ starting after zero. Then
    $$supp(\Theta_i(\mathcal{E}_1)) = supp(\Theta_j(\mathcal{E}_2)) \implies i = j, \mathcal{E}_1 = \mathcal{E}_2.$$
\end{lemma}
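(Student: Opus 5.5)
The plan is to reduce everything to Lemma \ref{packets distinct}, applied to the enlarged block-tuple $\mathcal{M}'=(m_{c_{\min}},\dots,m_{c_{\max}},1)$. I read the hypotheses exactly as stated: $\mathcal{E}_1\sim\mathcal{E}_2$ are of type $Y_{\mathcal{M}}$. Since they are equivalent, $\eta(\mathcal{E}_1)=\eta(\mathcal{E}_2)=:\eta$. The first step is to check that each $\Theta_i(\mathcal{E}_j)$ is again of type $Y_{\mathcal{M}'}$ with sign $-\eta$, using whichever $\Theta_i$ are defined in the relevant case ($i\le 3$ when $m_{c_{\max}}=1$, $i\le 4$ otherwise).

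\begin{enumerate}
\item For $\Theta_1$, this is the remark following Definition \ref{defn Theta_1(EE)}.
\item For $\Theta_2$ and $\Theta_4$, they arise from $\Theta_1$ by an $M$, $D^1$ or $D^2$ operation.
\item For $\Theta_3$, it arises from these by an $S$ or $U$ operation.
\end{enumerate}

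In items 2 and 3, Lemmas \ref{M existence general}, \ref{D existence general}, \ref{S existence general} and \ref{U existence general} (or Corollary \ref{preservation}) show that type $Y_{\mathcal{M}'}$ and the sign are preserved.

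Now suppose $\supp(\Theta_i(\mathcal{E}_1))=\supp(\Theta_j(\mathcal{E}_2))$. The order of rows in a type $Y_{\mathcal{M}'}$ multi-segment is prescribed by its support, so the two orders agree as well. Lemma \ref{packets distinct} then gives $\Theta_i(\mathcal{E}_1)=\Theta_j(\mathcal{E}_2)$ as extended multi-segments. It remains to read off $i$ and $\mathcal{E}$ from this common multi-segment $\mathcal{F}$.

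To recover $i$, I would look at the rows of $\mathcal{F}$ whose support contains the column $c_{\max}+1$. Since $m'_{c_{\max}+1}=1$, there is exactly one such row.

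\begin{enumerate}
\item In $\Theta_1$ it is the hat $[c_{\max}+1,-c_{\max}-1]$.
\item In $\Theta_3$ it is a single circle $[c_{\max}+1,c_{\max}+1]$, split off by the final $S_{\cdot,1}$ or $U_{\cdot,1}$.
\item In $\Theta_2$ and $\Theta_4$ it is a row ending at $c_{\max}+1$ that also contains $c_{\max}$. Its shape is determined by Remarks \ref{M S-data} and \ref{D S-data}: it is either a merged hat $[c_{\max}+1,-B]$ with $B<c_{\max}+1$, or a chain of length at least two.
\end{enumerate}

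These three shapes have distinct supports, which separates $i=1$, $i=3$ and $i\in\{2,4\}$. To separate $\Theta_2$ from $\Theta_4$ (when $\mathcal{S}$ has no singleton $\{c_{\max}\}$), I use the last paragraph of the proof of Lemma \ref{multiple duds}: only the $D^2$ image contains a $z$-chain starting at $c_{\max}$. In the $\mathcal{S}$-data this is an extra set containing $c_{\max}$, which is visible in the support. So $i=j$.

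Finally, with $i=j$, I recover $\mathcal{E}$ by inverting the operator.

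\begin{enumerate}
\item For $i=1$, delete the first row.
\item For $i\ge 2$, the map $\mathcal{E}\mapsto\Theta_i(\mathcal{E})$ is $\Theta_1$ followed by one or two $ui$ or $dual\circ ui\circ dual$ operations, none of type $3'$ in the inverse direction needed here. Each of these has a two-sided inverse by Lemma \ref{lemma ui inv = dud}, and the specific rows involved are determined from $\mathcal{F}$ by the column-$(c_{\max}+1)$ row identified above. Applying the inverses recovers $\Theta_1(\mathcal{E}_1)=\Theta_1(\mathcal{E}_2)$, hence $\mathcal{E}_1=\mathcal{E}_2$.
\end{enumerate}

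I expect the main obstacle to be the $\Theta_2$ versus $\Theta_4$ distinction. There the argument relies on the $\mathcal{S}$-data being faithfully recoverable from the support alone, in particular that a $z$-chain can be detected. If that step fails, the fallback is to compare the multiplicity of rows with support ending at $c_{\max}$ in the two images, which differ by exactly one between the $D^1$/$M$ case and the $D^2$ case.
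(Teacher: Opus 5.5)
You determine $i$ exactly as the paper does. The unique row $R$ through column $c_{\max}+1$ is the hat $[c_{\max}+1,-c_{\max}-1]$ for $i=1$, the circle $[c_{\max}+1,c_{\max}+1]$ for $i=3$, and has $C(R)>1$ for $i\in\{2,4\}$. Within the last case, $\Theta_2$ and $\Theta_4$ are told apart by whether $B(R)=c_{\max}$, except when $\{c_{\max}\}$ occurs in $\mathcal{S}$, where $\Theta_2(\mathcal{E})=\Theta_4(\mathcal{E})$ anyway.

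\textbf{The gap is the recovery of $\mathcal{E}$ by inverting operators.}
\begin{enumerate}
\item Lemma \ref{lemma ui inv = dud} concerns only $ui$ \emph{not} of type 3'. Every operation producing $\Theta_2,\Theta_3,\Theta_4$ from $\Theta_1$ (namely $M$, $D^1$, $D^2$, $S$, $U$) is of type 3'.
\item So the inverses you need are a type 3' $ui$, respectively a $dual\circ ui^{-1}\circ dual$ with $ui^{-1}$ of type 3'. These are exactly the kind you claim not to need, and the cited lemma gives nothing.
\item For $i=3$, the isolated circle $R=[c_{\max}+1,c_{\max}+1]$ does not by itself tell you which row to re-merge it with, nor whether you are undoing an $S$ or a $U$.
\end{enumerate}

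\textbf{The fix is the paper's route, which avoids inversion.} Passing from $\mathcal{E}$ to $\Theta_i(\mathcal{E})$ changes supports only by adding the hat. That hat may be fused with one row of $\mathcal{E}$ ending at $c_{\max}$, and for $i=3$ split off again as a single circle. All other rows change at most in sign. So $\supp(\mathcal{E})$ can be read off from $\supp(\Theta_i(\mathcal{E}))$:
\begin{enumerate}
\item delete $R$ when $i=1$ or $i=3$;
\item replace $R=[c_{\max}+1,B(R)]$ by $[c_{\max},B(R)]$ when $i\in\{2,4\}$, which handles $D^1$, $M$ and $D^2$ uniformly.
\end{enumerate}
Equal supports therefore give $\supp(\mathcal{E}_1)=\supp(\mathcal{E}_2)$. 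Lemma \ref{packets distinct}, applied at level $\mathcal{M}$ to $\mathcal{E}_1\sim\mathcal{E}_2$ (which share a sign), then yields $\mathcal{E}_1=\mathcal{E}_2$.

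This makes your preliminary step unnecessary, namely showing each $\Theta_i(\mathcal{E}_j)$ is of type $Y_{\mathcal{M}'}$ so that Lemma \ref{packets distinct} applies at level $\mathcal{M}'$. That step also leaned on the remark after Definition \ref{defn Theta_1(EE)}, which is stated only for tempered $\mathcal{E}$.

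Two smaller points:
\begin{enumerate}
\item Your fallback is false. In the $D^1$, $M$ and $D^2$ images alike, exactly $m_{c_{\max}}-1$ rows end at $c_{\max}$, so only the shape of $R$ separates the cases.
\item The hypothesis has to be read as ``$\mathcal{M}$ starting at zero''. $\Theta_1$ and the existence lemmas for $M$, $D$, $U$ that you cite are only available in that case.
\end{enumerate}
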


\begin{proof}
    Starting with the case where $m_{c_{\max}} = 1,$ let $\mathcal{E}$ be of type $Y_\mathcal{M}$ and consider some multi-segment $\Theta_i(\mathcal{E})$ for $i \in \{1, 2, 3\}.$ Let $r \in \Theta_i(\mathcal{E})$ be the unique row whose support contains $c_{\max} + 1.$ Then we must be in one of the following cases:
\begin{itemize}
    \item If $r = ([c_{\max} + 1, -c_{\max} - 1], c_{\max} + 1, \eta)$ for some $\eta \in \{\pm 1\},$ then $i = 1.$
    \item If $C(r) > 1$ (i.e., $r$ is a merged hat or merged chain), then $i = 2.$
     \item If $r = ([c_{\max} + 1, c_{\max} + 1], 0, \eta),$ for some $\eta \in \{\pm 1\},$ then $i = 3.$
\end{itemize}
This confirms that $supp(\Theta_i(\mathcal{E}_1)) = supp(\Theta_j(\mathcal{E}_2)) \implies i = j,$ so it suffices to show that the multi-segment $\mathcal{E}$ can be uniquely determined from $\Theta_i(\mathcal{E}).$ This is obvious in the case where $i = 1$ since $supp(\mathcal{E}) = supp(\Theta_1(\mathcal{E}) \setminus \{r\}),$ as implied by Definition \ref{defn Theta_1(EE)}; note that the support suffices to identify $\mathcal{E}$ by Lemma \ref{packets distinct}. The statement $supp(\mathcal{E}) = supp(\Theta_1(\mathcal{E}) \setminus \{r\})$ is also clearly true for $i = 3.$ In the case $i = 2,$ $\mathcal{E}$ can be determined because $\Theta_3(\mathcal{E})$ can be determined: the operation from $\Theta_2(\mathcal{E})$ to $\Theta_3(\mathcal{E})$ is unique.

Meanwhile, in the case where $c_{\max} > 1,$ let $\mathcal{E}$ be of type $Y_\mathcal{M}.$ Any multi-segment $\Theta_i(\mathcal{E})$ must fall into one of the following slightly different cases:
\begin{itemize}
    \item If $r = ([c_{\max} + 1, -c_{\max}], k, \eta)$ for some $\eta \in \{\pm 1\},$ then $i = 1.$
    \item If $C(r) > 1$ and $|\{\mathcal{S}_i \mid c_{\max} \in \mathcal{S}_i \}| = 1,$ then $i(\EE) = 2.$
    \item If $C(r) > 1$ and $|\{\mathcal{S}_i \mid c_{\max} \in \mathcal{S}_i \}| = 2,$ then $i(\EE) = 4.$
    \item If $r = ([c_{\max} + 1, c_{\max} + 1], 0, \eta),$ then $i = 3.$
\end{itemize}
Again, this shows that the support of $\Theta_i(\mathcal{E})$ determines $i,$ except in the case when the unique chain containing $c_{\max} + 1$ has support $[c_{\max} + 1, c_{\max}]$. This case could occur both for $\Theta_2(\mathcal{E})$ and $\Theta_4(\mathcal{E}),$ but only when the unique chain in $\mathcal{E}$ containing $c_{\max}$ has support $[c_{\max}, c_{\max}].$ This however, is precisely the case where $\Theta_2(\mathcal{E}) = \Theta_4(\mathcal{E}).$ Meanwhile, $\mathcal{E}$ can be determined from $\Theta_i(\mathcal{E})$ again using the same methods as the case where $m_{c_{\max}} > 1.$ This suffices for the proof.
\end{proof}

Now, we prove Part (2) of Theorem \ref{first block m_n = 1}. Lemma \ref{thetas distinct} shows that the elements $\{\psi_{\Theta_i(\EE')} \mid i = 1, 2, 3; \ \EE' \sim \mathcal{E}\}$ are all distinct, and so now it suffices to show that any $\psi_\FF$, where $\FF \sim \Theta_1(\mathcal{E})$, belongs to this set. To do this, we examine the $\mathcal{S}$-data.

\begin{proof}[Proof of Theorem \ref{first block m_n = 1} Part (2)]
    Let $\mathcal{F} \sim \Theta_1(\mathcal{E}),$ where $\mathcal{E}$ is of type $Y_\mathcal{M}$ and $\mathcal{M} = (m_0, \dots, m_{c_{\max}}).$ Then Theorem \ref{block classification} and Part (1) of Theorem \ref{first block m_n = 1} imply that $\FF$ must be type $Y_{\mathcal{M}'},$ for $\mathcal{M}' = (m_0, \dots, c_{\max}, 1)$. Consider the $\mathcal{S}$-data of $\FF.$ Since $m_{c_{\max}} = m_{c_{\max} + 1} = 1,$ we note $c_{\max}$ can only belong to one set $ \mathcal{S}_i,$ and likewise with $c_{\max} + 1.$ 
    
    Let $\mathcal{E}' := \mathcal{E}(\mathcal{M}, \mathcal{S}', \mathcal{T}', \eta(\mathcal{E})$, where $(\mathcal{S}', \mathcal{T}')$ is obtained by removing $k + 1$ from $(\mathcal{S}, \mathcal{T}).$ From Theorem \ref{block classification}, we have $\mathcal{E}' \sim \mathcal{E}.$ We have the following cases.
    \begin{itemize}
        \item If both $k$ and $k + 1$ belong to the same set $\mathcal{S}_\ell = \{\dots, k, k + 1\},$ then clearly $\mathcal{F} = \Theta_2(\mathcal{E}').$
        \item If both $k$ and $k + 1$ belong to $\mathcal{S}_\ell = \{\dots, \overline{k, k + 1}\},$ then $\FF = \Theta_2(\mathcal{E}').$
        \item If both $k$ and $k + 1$ belong to $\mathcal{S}_\ell = \{\dots, \overline{k}, 
        \overline{k + 1}\},$ then $\FF = \Theta_1(\mathcal{E}').$
        \item If both $k$ and $k + 1$ belong to $\mathcal{S}_\ell = \{\dots, k,\overline{k + 1}\},$ then $\FF = \Theta_2(\mathcal{E}').$
        \item If $k + 1$ belongs to its own set $\mathcal{S}_{\ell} = \{k + 1\},$ then $\FF = \Theta_3(\mathcal{E}')$. \qedhere
    \end{itemize}
\end{proof}

The proof of Part (2) of Theorem \ref{first block m_n > 1} is quite similar, since again, in light of Lemma \ref{thetas distinct}, we only need to show that any $\FF \sim \Theta_1(\mathcal{E})$ is of the form $\Theta_i(\EE')$ for some $i \in \{1, 2, 3, 4\}$ and $\EE' \sim \mathcal{E}.$

\begin{proof}[Proof of Theorem \ref{first block m_n > 1} Part (2)]
    Let $\mathcal{F} \sim \Theta_1(\mathcal{E})$. Then again Theorem \ref{block classification} and Part (1) of Theorem \ref{first block m_n = 1} imply that $\FF$ must be of type $Y_{\mathcal{M}'},$ where $\mathcal{M}' = (m_0, \dots, c_{\max}, 1)$. Consider the $\mathcal{S}$-data of $\FF.$ Since $m_{k + 1} = 1,$ $k + 1$ can only belong to one set $ \mathcal{S}_i.$ However, $k$ can belong to at most two $\mathcal{S}_i,$ but only when the latter set has $\mathcal{T}_i^0 = \{k, k + 1\}.$ In such a case, we clearly have $\FF = \Theta_4(\EE'),$ where $\EE'$ is defined as in the proof of Part (2) of Theorem \ref{first block m_n = 1}. The cases where $k$ only belongs to one $ \mathcal{S}_i$ can be identified with various $\Theta_i(\EE')$ in exactly the same manner as the proof for Part (2) of Theorem \ref{first block m_n = 1}.
\end{proof}

Having proved Theorems \ref{first block m_n = 1} and \ref{first block m_n > 1}, we can finally prove Theorem \ref{thm-count-block-temp} for blocks $\BB$ that start at zero. To do this, we need the following lemma.

\begin{lemma}
\label{almost block reduction}
    Suppose $\EE$ is of type $Y_\mathcal{M},$ where $\mathcal{M} = (m_0, \dots, m_{c_{\max}}).$ Let $k\in\Z_{>0}$ and
    $$\EE' := \EE \cup \bigcup_{i = 1}^k ([c_{\max}, c_{\max}], 0, \eta')$$ 
    be the extended multi-segment obtained by $k$ rows of circles of support $[c_{\max}, c_{\max}]$ to $\EE.$ Here, the sign $\eta'$ is chosen to match the sign on the last circle of $\mathcal{E}.$ Then, a multi-segment $\mathcal{E}_1'$ is equivalent to $\mathcal{E}'$ if and only if $\EE_1' = \EE_1 \cup \bigcup_{i = 1}^k ([c_{\max}, c_{\max}], 0, \eta')$ for $\EE_1 \sim \EE.$ In particular, we obtain that
    $|\Psi(\pi(\EE'))| = |\Psi(\pi(\EE))|.$
\end{lemma}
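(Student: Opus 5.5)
We argue both directions separately. The count identity then follows from Lemma \ref{packets distinct} together with the fact that $\EE_1\mapsto \EE_1'$ is injective on supports.

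For the easy direction ($\Leftarrow$), suppose $\EE_1\sim\EE$ via a chain of raising operators and their inverses. We apply the same chain to $\EE'$, with the $k$ added rows $([c_{\max},c_{\max}],0,\eta')$ placed at the bottom. By Lemma \ref{lem-local}, each $ui$ and $dual\circ ui\circ dual$ fixes rows not involved in it or in its row exchanges. The row exchanges in the $ui$'s stay inside $\EE$, so only the $dual\circ ui\circ dual$ steps need care. After dualizing, the added rows become hats $([c_{\max},-c_{\max}],c_{\max},\cdot)$ sitting at the top. By Lemma \ref{Dual sign}, $dual$ multiplies all other signs by $(-1)^{k}$ times the old signs, which changes nothing structural. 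So each step of the chain goes through with the added rows carried along. When $k$ is even, we can also use Lemmas \ref{lem-multiplicity-cancel} and \ref{lem-multiplicity-cancel-up} to see that exchanging any row past a pair of added rows leaves it unchanged. The result is $\EE_1\cup\bigcup([c_{\max},c_{\max}],0,\eta')$. Finally, Corollary \ref{cor same support} identifies this with the output of the operator chain applied to $\EE'$.

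For the hard direction ($\Rightarrow$), the key observation is that $\EE'$ is again of type $Y_{\mathcal M'}$, where $\mathcal M'$ agrees with $\mathcal M$ except $m'_{c_{\max}}=m_{c_{\max}}+k$. Indeed, the added rows are multiples in column $c_{\max}$. Their sign matches the last circle, so they satisfy condition (3) of Definition \ref{defn E(M,S)} relative to the row before them, and they are the last rows. When $k$ is even, $\mathcal M'$ is a genuine block-tuple and Theorem \ref{block classification} applies. When $k$ is odd, $m'_{c_{\max}}$ is even, so $\mathcal{M}'$ is not a block-tuple, and we must check that the arguments of \S\ref{sec exhaustion of Y_M} still work. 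They only use oddness of multiplicities through Lemma \ref{lem-multiplicity-cancel}, applied to multiples lying strictly inside a chain, so they should go through for the last column. Granting this, every $\FF\sim\EE'$ is of type $Y_{\mathcal M'}$ with $\eta(\FF)=\eta(\EE)$. We then show that such an $\FF$ has at least $k$ multiples in column $c_{\max}$, and that removing $k$ of them gives a valid type $Y_{\mathcal M}$ datum. Condition (4) of Definition \ref{valid S} allows at most two $\mathcal S_i$ containing $c_{\max}$. Since $m_{c_{\max}}\ge1$, the number of multiples in $\FF$ is $m_{c_{\max}}+k-|\{i: c_{\max}\in\mathcal S_i\}|\ge k-1$.

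The main obstacle is the boundary case where exactly $k-1$ multiples remain. This happens when $m_{c_{\max}}=1$ and $c_{\max}$ lies in two sets $\mathcal S_i$, that is, a $z$-chain ending at $c_{\max}$ exists only because of the added multiplicity. We have to rule this out for $\FF\sim\EE'$. The plan is to track the invariant ``number of chains containing $c_{\max}$ is at most $m_{c_{\max}}$'' through the operators $S,M,U,D$ and the non-type-3' operators of Lemmas \ref{dud not 3' chain} and \ref{dud not 3' hat}. The operators that create a $z$-chain are $D^2$ via Lemma \ref{multiple duds} and the split in Lemma \ref{dud not 3' chain}. Both require a hat, or a chain containing $c_{\max}$, strictly extending beyond column $c_{\max}$. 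No such row exists, since $c_{\max}$ is the last column. So a $z$-chain can only meet $c_{\max}$ as its right endpoint when $m_{c_{\max}}\ge3$ already in $\mathcal M$. This gives the invariant.

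Once the boundary case is excluded, we delete $k$ trailing multiples to get $\FF=\EE_1\cup\bigcup([c_{\max},c_{\max}],0,\eta')$, with $\EE_1$ of type $Y_{\mathcal M}$ and sign $\eta(\EE)$. Corollary \ref{Type Y SMUD equivalence} then gives $\EE_1\sim\EE$.
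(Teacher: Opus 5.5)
The gap is the case $k$ odd. There $m_{c_{\max}}+k$ is even, so $\mathcal{M}'$ is not a block-tuple and Theorem \ref{block classification} is not available. Your sentence ``granting this'' therefore carries essentially the whole content of the lemma in that case. The case is needed: the paper applies the lemma to almost-blocks, whose last multiplicity may be even.

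Your justification is also inaccurate. You claim that \S\ref{sec exhaustion of Y_M} uses oddness only for multiples strictly inside a chain. But in Lemma \ref{U existence general}, for a hat $h$ with $A(h)=c_{\max}$, the reduction to type $X$ uses Lemma \ref{lem-multiplicity-cancel} to cancel the multiples in column $c_{\max}$. These lie in the last column of $\supp(h)$, not strictly inside a chain. With an even last multiplicity one of them survives, and the recipe ``exchange $h$ to the bottom, where it is a row of circles'' fails.

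A concrete instance: take $\mathcal{M}=(1,1,1)$, $\mathcal{S}=(\{0,\overline{1,2}\})$, $\eta=1$ and $k=1$. Then $\EE'$ has rows $([2,-1],1,1)$, $([0,0],0,1)$, $([2,2],0,1)$. Exchanging the hat past $([0,0],0,1)$ gives $([2,-1],0,-1)$. Exchanging it further past the added row is Case 1(b) of Definition \ref{def row exchange} and returns $([2,-1],1,1)$, which is not a row of circles. So the operators must be set up again (stopping before the added rows), and the description of $\EE^{\min}$ and the exhaustion of raising and lowering operators would have to be redone for the new tuple. None of this is carried out.

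For $k$ even your argument does go through, and it is a genuinely different route from the paper's. $\mathcal{M}'$ is then a block-tuple, so Theorem \ref{block classification} applies to $\EE'$. Stripping $k$ trailing multiples gives type $Y_{\mathcal{M}}$ data, and Corollary \ref{Type Y SMUD equivalence} finishes. You should add that the stripped rows carry the sign $\eta'$; this follows from your $(\Leftarrow)$ direction together with Corollary \ref{cor same support}.

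Your ``main obstacle'' is vacuous. By condition (3) of Definition \ref{valid S}, if $c_{\max}\in\mathcal{S}_i$ and $j>i$, then $\mathcal{S}_j\subseteq\{c_{\max}\}$. Condition (4) requires $|\mathcal{S}_j|\geq 2$, so it forbids $c_{\max}$ from lying in two sets. Hence every type $Y_{\mathcal{M}'}$ datum has exactly $m_{c_{\max}}+k-1\geq k$ multiples in column $c_{\max}$, and the invariant-tracking paragraph can be dropped.

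The paper avoids classifying $\mathcal{M}'$ altogether and checks directly that no operator involves an added row in a new way:
\begin{itemize}
\item the dual of an added row has support $[c_{\max},-c_{\max}]$, which contains every other dual support, so no $dual\circ ui\circ dual$ involves it;
\item single-circle rows cannot be split by $ui^{-1}$ or by $dual\circ ui^{-1}\circ dual$;
\item the only remaining candidate, a type 3' $ui$ with a row ending at $c_{\max}-1$, is either excluded by the choice of $\eta'$ or coincides with an operation already available in $\EE$.
\end{itemize}
That argument never uses the parity of $m_{c_{\max}}+k$, which is exactly what your approach lacks for odd $k$.
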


\begin{proof}
    Per the proof of Theorem \ref{block classification}, we have seen that the raising operators on $\EE$ are precisely $S, M, U, D,$ and certain $dual \circ ui \circ dual$s not of type 3' that we have shown to be equivalent to some combination of $S, M, U, D,$ and their inverses (Lemmas \ref{dud not 3' chain} and \ref{dud not 3' hat}). It is clear from the definitions of $S, M, U, D,$ that all of these operators and their inverses can still be performed on the $\EE$ when considered as a sub-virtual extended multi-segment of $\EE'.$ This proves the forward direction of the desired statement. To prove the other direction, we claim that these are the only operators that can be applied to $\EE.'$

    We assume otherwise for the sake of contradiction, i.e, we assume that there exists an operator $T$ applicable on $\EE'$ which is not applicable on $\EE.$ Since none of the multiples $r_i = ([c_{\max}, c_{\max}], 0, \eta')$ allow the other rows of $\EE$ to interact with each other in new ways, such an operator $T$ would have to involve one of these multiples $r_i.$ We consider all possible operators. $T$ cannot be a $dual \circ ui \circ dual$ because $dual(r_i)$ has support $[c_{\max}, -c_{\max}],$ and all other rows in $dual(\EE')$ would be contained in this support. $T$ also cannot be a $ui^{-1}$ of type 3'. since these rows have only one circle. Meanwhile, $T$ obviously cannot be a $dual \circ ui^{-1} \circ dual$ of type 3' since each of these multiples has only one circle.

    The only remaining possibility is that $T$ is a $ui,$ in which case it must be one of type 3' since the rows $r_i$ have only one circle. $T$ would have to merge $r_i$ with $r,$ a row ending at $c_{\max} - 1.$ Consider the sets $\mathcal{S}_i$ containing $c_{\max} - 1$ or $c_{\max}.$ If we have a set $\{\dots, c_{\max} - 1, c_{\max}\},$ then $T$ is impossible because the only rows $r$ ending at $k - 1$ are multiples, which must have the same sign as the rows with support $[c_{\max}, c_{\max}].$ If we have a set $\{\dots, c_{\max} - 1, \overline{c_{\max}} \},$  then the sign of $r_i$ has been chosen so that it can be treated as a multiple belonging to $r$ and therefore has the same sign as the circle at $c_{\max} - 1,$ rendering $T$ impossible. If we have $\{c_{\max} - 1\}, \{c_{\max}\},$ then applying $T$ to merge $r$ and $r_i$ is the same as applying the operation to merge $c_{\max}$ with the chain of support $[c_{\max}, c_{\max}],$ which is already defined in $\EE.$ 

    We conclude that the virtual extended multi-segments equivalent of $\EE'$, up to row exchange, are in one-to-one correspondence with the virtual extended-multi-segments which are equivalent to $\EE$, up to row exchange. In particular, this means that there is a one-to-one correspondence between $\Psi(\pi(\EE))$ and $\Psi(\pi(\EE')).$
\end{proof}

Now, we are ready to prove Theorem \ref{thm-count-block-temp} for blocks that start at $0.$ We shall restate the result here.

\begin{thm}
    Let $\EE$ be a tempered almost-block with columns $0$ through $c_{\max} \geq 2$. Let $\EE_k$ denote the extended multi-segment consisting of only the rows with supports contained in $[k, 0]$. Then,
    $$|\Psi(\EE)| = \begin{cases}
        3 \cdot |\Psi(\pi(\EE_{c_{\max} - 1}))| & \ \mathrm{if} \ m_{c_{\max} - 1} = 1,\\
        4 \cdot |\Psi(\pi(\EE_{c_{\max} - 1}))| - |\Psi(\pi(\EE_{c_{\max} - 1}))| &  \ \mathrm{if} \ m_{c_{\max} - 2} \geq 3.
    \end{cases}$$
\end{thm}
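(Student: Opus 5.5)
The plan is to reduce the count for $\EE$ to the counts for smaller almost-blocks by peeling off the last column with Lemma \ref{almost block reduction}, and then applying Theorems \ref{first block m_n = 1} and \ref{first block m_n > 1}. (The statement as printed seems to contain typos: the second case should read $m_{c_{\max}-1}\geq 3$ and the subtracted term should be $|\Psi(\pi(\EE_{c_{\max}-2}))|$, in line with \eqref{eq first recursion}. I will prove that corrected version.)

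First step: reduce the multiplicity of the last column. Since $\EE$ is an almost-block, the column $c_{\max}$ may carry an odd or even number of rows. The rows with support $[c_{\max},c_{\max}]$ beyond the first one all carry the sign of the last circle of what precedes them. So Lemma \ref{almost block reduction}, applied to the almost-block with $m_{c_{\max}}=1$, shows that $|\Psi(\pi(\EE))|$ equals the count for the almost-block $\EE^\flat$ obtained by keeping a single row at column $c_{\max}$. Next observe that $\EE^\flat$ coincides with $\Theta_1(\EE_{c_{\max}-1})$ up to equivalence. Indeed, $\Theta_1(\EE_{c_{\max}-1})$ is of type $Y_{\mathcal{M}'}$ with $\mathcal{M}'=(m_0,\dots,m_{c_{\max}-1},1)$. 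By Theorem \ref{block classification} and Corollary \ref{Type Y SMUD equivalence}, it is equivalent to the tempered multi-segment with these multiplicities and the prescribed sign. The sign $-\eta$ of the added hat is chosen exactly so that the new column alternates with the previous one, as the block condition requires. Hence $|\Psi(\pi(\EE))|=|\Psi(\pi(\Theta_1(\EE_{c_{\max}-1})))|$.

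Second step: count using the $\Theta_i$. When $m_{c_{\max}-1}=1$, Theorem \ref{first block m_n = 1}(2) together with Lemma \ref{thetas distinct} shows that $\psi\mapsto(\EE',i)$ is a bijection between $\Psi(\pi(\Theta_1(\EE_{c_{\max}-1})))$ and pairs consisting of $\EE'\sim\EE_{c_{\max}-1}$ and $i\in\{1,2,3\}$. Lemma \ref{packets distinct} identifies the $\EE'$ with $\Psi(\pi(\EE_{c_{\max}-1}))$, which gives the factor $3$.

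When $m_{c_{\max}-1}\ge 3$, Theorem \ref{first block m_n > 1} yields four families $i=1,2,3,4$. The only collision is $\psi_{\Theta_2(\EE')}=\psi_{\Theta_4(\EE')}$, which occurs exactly when the $\mathcal{S}$-data of $\EE'$ contains the singleton $\{c_{\max}-1\}$. The total is therefore $4|\Psi(\pi(\EE_{c_{\max}-1}))|$ minus the number of $\EE'\sim\EE_{c_{\max}-1}$ whose $\mathcal{S}$-data has $\{c_{\max}-1\}$ as its own set.

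The main obstacle is identifying this correction term with $|\Psi(\pi(\EE_{c_{\max}-2}))|$. I would argue via $\mathcal{S}$-data. If $\{c_{\max}-1\}$ is a set of $\mathcal{S}$, then deleting it gives valid $\mathcal{S}$-data for $\mathcal{M}=(m_0,\dots,m_{c_{\max}-2})$. Validity conditions (4) and (5) of Definition \ref{valid S} and condition (3) of Definition \ref{valid T} never involve a singleton last set, because a singleton cannot be a $z$-chain. Conversely, appending $\{c_{\max}-1\}$ to any valid data for $\EE_{c_{\max}-2}$ is valid, and the leftover $m_{c_{\max}-1}-1$ rows are even in number and become multiples. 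Theorem \ref{block classification} turns this into a bijection between the relevant $\EE'$ and the type $Y$ multi-segments with the smaller multiplicity tuple and the correct sign; the sign is forced by Lemma \ref{odd alternating dual} and the odd-alternating rules. Lemma \ref{packets distinct} then converts this bijection into the count $|\Psi(\pi(\EE_{c_{\max}-2}))|$. I would carefully check the edge case $c_{\max}-2=0$ and the hat decoration of the column $c_{\max}-1$; this last check is where I expect subtlety. A hat $\overline{c_{\max}-1}$ sitting inside an earlier set is not a singleton set, so it should not be subtracted.
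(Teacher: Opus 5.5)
Your proposal is correct and follows the paper's proof. Both arguments:
\begin{itemize}
\item use Lemma \ref{almost block reduction} to cut the last column down to one row;
\item realize the resulting block through the $\Theta$-construction on $\EE_{c_{\max}-1}$ and count with part (2) of Theorems \ref{first block m_n = 1} and \ref{first block m_n > 1};
\item identify the $\Theta_2=\Theta_4$ collisions with the $\mathcal{S}$-data containing the singleton $\{c_{\max}-1\}$.
\end{itemize}
Your direct deletion of that singleton set is the same bijection the paper obtains via $\Theta_3(\EE'')$ with $\EE''\sim\EE_{c_{\max}-2}$. You have also read the typos in the statement correctly.

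One small correction. The hat added by $\Theta_1$ has its only circle in column $0$, so its sign $-\eta$ is chosen to alternate with the first row of $\EE_{c_{\max}-1}$, not with the new column. Hence $\eta(\Theta_1(\EE_{c_{\max}-1}))=-\eta(\EE_{c_{\max}-1})$, and $\Theta_1(\EE_{c_{\max}-1})$ is equivalent to the global sign flip of $\EE^\flat$, not to $\EE^\flat$ itself. For example, a direct computation shows $\Theta_1(\{([0,0],0,\eta)\})$ is equivalent to $\{([0,0],0,-\eta),([1,1],0,\eta)\}$.

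This does not affect the count. By Theorem \ref{block classification} and Lemma \ref{packets distinct}, the set $\Psi(\pi(\cdot))$ of a block is the set of parameters attached to the valid $\mathcal{S}$-data of its multiplicity tuple, and so it does not depend on the sign. You can therefore simply apply $\Theta_1$ to the sign-flipped $\EE_{c_{\max}-1}$.
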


\begin{proof}
    Let $\EE'$ be the extended multi-segment obtained by removing all but one of the rows in $\EE$ of support $[c_{\max}, c_{\max}].$ Then $\EE'$ is a block of type $Y_{\mathcal{M}}$ for some $\mathcal{M}$ ending at $c_{\max}.$ From Lemma \ref{almost block reduction}, we have $|\Psi(\pi(\EE))| = |\Psi(\pi(\EE'))|.$ Therefore, we may assume for the sake of the proof that $\EE$ is a block with $m_{c_{\max}} = 1.$ In this case, we have $\EE= \Theta_3(\EE_{c_{\max} - 1})$ up to twisting by the appropriate supercuspidals. If $m_{c_{\max} - 1} = 1,$ the fact that $|\Psi(\pi(\EE))| = 3 \cdot |\Psi(\pi(\EE_{c_{\max} - 1}))|$ follows immediately from Part (2) of Theorem \ref{first block m_n = 1}.

    Meanwhile, Part (2) of Theorem \ref{first block m_n > 1} gives us that if $m_{c_{\max} - 1} > 1,$ then
    $$|\Psi(\pi(\EE))| = 4 \cdot |\Psi(\pi(\EE_{c_{\max} - 1}))| - R,$$
    where $R$ is the cardinality of the set $\{ \EE \mid \EE \sim \EE_{c_{\max} - 1}, \Theta_2(\EE) = \Theta_4(\EE)  \}.$ According to part (2) of Theorem \ref{first block m_n > 1}, these are precisley the multi-segments with $\{k - 1\}$ in their $\mathcal{S}$-data.

    Applying Lemma \ref{almost block reduction} again allows us to assume each of the $\EE \in \{ \EE \mid \EE \sim \EE_{c_{\max} - 1}, \Theta_2(\EE) = \Theta_4(\EE)\}$ has $m_{c_{\max} - 1} = 1.$ In that case, these are precisely the multi-segments equal to $\Theta_3(\EE'')$ for $\EE'' \sim \EE_{k - 2}.$ The cardinality of this set is precisely $|\Psi(\pi(\EE_{c_{\max} - 2}))|.$
\end{proof}

\subsection{Blocks not starting at zero}

In this subsection, we use Theorem \ref{block classification} to derive Equation \eqref{eq second recursion} in Theorem \ref{thm-count-block-temp}, which we restate below for convenience.

\begin{thm}[count for blocks not starting at zero]
\label{thm-count-block-temp-in-paper}
    Let $\BB$ be a block starting at $c_{min}$ and ending at $c_{max}$, and suppose that $c_{min}>0$. Let $ \BB' := \rc_{c_{max}}(\BB) $ and $ \BB'' := \rc_{c_{max}-1}(\BB').$
    Then
    \[
    |\Psi(\pi(\mathcal{B}))| = \begin{cases} 2 |\Psi(\pi(\mathcal{B}'))| & \text{if } m_{c_{max}-1} = 1, \\ 3 |\Psi(\pi(\mathcal{B}'))| - |\Psi(\pi(\mathcal{B}''))| & \text{if } m_{c_{max}-1} = 3, 5, \dots. \end{cases}
    \]
\end{thm}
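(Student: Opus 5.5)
By Theorem \ref{block classification} combined with Lemma \ref{packets distinct}, $|\Psi(\pi(\BB))|$ equals the number of valid $\mathcal{S}$-tuples for the block-tuple $\mathcal{M}=\mathcal{M}_\BB$. Since $c_{\min}>0$, no $\mathcal{T}$-data occurs. The theorem is therefore a purely combinatorial recursion for valid tuples in the sense of Definition \ref{valid S}. Write $N(\mathcal{M})$ for this count, and set $k=c_{\max}$. The multiplicities are odd, so the value $m_k$ never enters the conditions of Definition \ref{valid S}, since overlaps only occur at $c$ with $c$ the maximum of one set and the minimum of the next. Consequently removing the last column, as in $\rc_{k}$, corresponds to deleting $k$ from the last set.

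I would classify a valid tuple $\mathcal{S}=(\mathcal{S}_1,\dots,\mathcal{S}_j)$ for $\mathcal{M}$ by how $k$ sits in the last set $\mathcal{S}_j$. There are three possibilities.
\begin{itemize}
\item[(a)] $\mathcal{S}_j=\{k\}$ and $k-1\notin\mathcal{S}_j$.
\item[(b)] $\mathcal{S}_j\ni k-1,k$ and $k-1$ lies in no other set.
\item[(c)] $\mathcal{S}_j=\{k-1,k\}\cup\cdots$ with $\min\mathcal{S}_j=k-1$ and $k-1\in\mathcal{S}_{j-1}$. Here $\mathcal{S}_j$ is a $z$-chain starting at $k-1$.
\end{itemize}
Case (c) forces $\min\mathcal{S}_j=k-1$, and hence $\mathcal{S}_j=\{k-1,k\}$.

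In case (a), deleting $\mathcal{S}_j$ gives a bijection onto all valid tuples for $\mathcal{M}'=\mathcal{M}_{\BB'}$. Condition (5) imposes nothing on $\mathcal{S}_j$ because $|\mathcal{S}_j|=1$.

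In case (b), deleting $k$ from $\mathcal{S}_j$ gives a valid tuple for $\mathcal{M}'$ in which $k-1$ is the maximum of the last set and belongs only to that set. Conversely, appending $k$ to such a last set is always valid. Condition (5) is unaffected, since it concerns minima only, and when $\min\mathcal{S}_j=k-1$ it was already satisfied or is vacuous. I need to check that last case: if the last set of $\mathcal{M}'$ is $\{k-1\}$ and $m_{k-1}>1$, then enlarging it to $\{k-1,k\}$ makes condition (5) require $k-1\in\mathcal{S}_{j-1}$. That is exactly the situation of case (c), not (b). So when $m_{k-1}>1$, the tuples in case (b) correspond to valid tuples for $\mathcal{M}'$ whose last set is not the singleton $\{k-1\}$. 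Their number is $N(\mathcal{M}')-N(\mathcal{M}'')$, because tuples for $\mathcal{M}'$ ending in $\{k-1\}$ biject with tuples for $\mathcal{M}''$. If $m_{k-1}=1$, the count in (b) is simply $N(\mathcal{M}')$, and case (c) is impossible by condition (4).

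In case (c), with $m_{k-1}>1$, removing $\mathcal{S}_j$ leaves a valid tuple for $\mathcal{M}'$ whose last set contains $k-1$. Every valid tuple for $\mathcal{M}'$ has this property, so this gives a bijection and the count is $N(\mathcal{M}')$. The one thing to check is that the remaining last set is still allowed to share $k-1$. Condition (4) needs $|\mathcal{S}_j|\ge2$ and $m_{k-1}>1$, both of which hold.

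Summing the three cases gives $2N(\mathcal{M}')$ when $m_{k-1}=1$, and $N(\mathcal{M}')+(N(\mathcal{M}')-N(\mathcal{M}''))+N(\mathcal{M}')=3N(\mathcal{M}')-N(\mathcal{M}'')$ otherwise. I expect the main obstacle to be the boundary bookkeeping in case (b) when $\mathcal{M}'$ ends with a singleton $\{k-1\}$, that is, getting condition (5) exactly right. A secondary point is to confirm that $\BB'$ and $\BB''$ are again blocks, or almost-blocks, whose $\Psi$ is counted by the same $\mathcal{S}$-tuples; Lemma \ref{almost block reduction} gives this if needed.
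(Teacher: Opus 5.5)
Your three-way split is the paper's: your cases (a), (c), (b) are its $\Psi_1$, $\Psi_2$, $\Psi_3$, and ``the last set of $\mathcal{S}'$ is not $\{k-1\}$'' is equivalent to its property (Q). There is, however, a genuine gap, and the paper's written proof has it in the same two places (its assertion that $\Psi(\pi(\BB))$ is in bijection with valid $\mathcal{S}$, and its use of Condition (5) to verify (Q)).

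\emph{First gap.} Theorem \ref{block classification} together with Lemma \ref{packets distinct} puts $\Psi(\pi(\BB))$ in bijection with the \emph{distinct} multi-segments $\EE(\mathcal{M},\mathcal{S},\eta(\BB))$, not with the valid tuples. The map $\mathcal{S}\mapsto\EE(\mathcal{M},\mathcal{S},\eta)$ is not injective. Condition (5) of Definition \ref{valid S} is vacuous for $i=1$. So if $m_{c_{\min}}\ge 3$ and $|\mathcal{S}_1|\ge 2$ (hence $\min\mathcal{S}_1=c_{\min}$), both $(\mathcal{S}_1,\mathcal{S}_2,\dots)$ and $(\{c_{\min}\},\mathcal{S}_1,\mathcal{S}_2,\dots)$ are valid. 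They have the same supports in the same order: $m_{c_{\min}}-1$ rows $[c_{\min},c_{\min}]$, then $[\max\mathcal{S}_1,c_{\min}]$, then the rows coming from $\mathcal{S}_2,\mathcal{S}_3,\dots$. Hence they give the same $\psi$. For $\mathcal{M}=(3,1)$ there are three valid tuples but $|\Psi(\pi(\BB))|=2$; for $\mathcal{M}=(3,1,1)$ there are six valid tuples but only four parameters.

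\emph{Second gap.} In case (b), your claim that $\mathcal{S}_j\setminus\{k\}\neq\{k-1\}$ uses condition (5), which says nothing when $j=1$. For a two-column block with $m_{c_{\min}}\ge 3$, the tuple $(\{k-1,k\})$ lies in (b) and maps to $(\{k-1\})$, which you excluded, so your count of (b) is off by one. For two-column blocks the two errors happen to cancel: with $|\Psi(\pi(\emptyset))|=1$ the theorem gives $3\cdot 1-1=2$, the true value. For blocks with at least three columns and $m_{c_{\min}}\ge 3$, however, your bijections correctly prove the recursion for the number of valid tuples, and that number is not $|\Psi|$. So nothing about $|\Psi(\pi(\BB))|$ follows.

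\emph{Repair.} Count normalized tuples: impose condition (5) also for $i=1$, i.e.\ a set of size at least $2$ whose minimum $c$ has $m_c>1$ must be preceded by a set containing $c$. Every $\EE(\mathcal{M},\mathcal{S},\eta)$ comes from exactly one normalized tuple. To get one, prepend $\{c_{\min}\}$ if necessary. Conversely, the sets of size $\ge 2$ are read off from the rows with $A>B$, and the singleton sets are then forced by conditions (2), (4) and the strengthened (5). So the normalized count is exactly $|\Psi(\pi(\BB))|$. With this normalization your argument in case (b) is valid for every $j$, and your three bijections go through verbatim. This includes two-column blocks, once you set $|\Psi(\pi(\BB''))|=1$ for $\BB''=\emptyset$.
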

\begin{proof}
    First we consider the case that $m_{c_{max}-1} = 1$. From Theorem \ref{block classification}, the set $\Psi(\pi(\BB))$ is in bijection with the set of valid $\mathcal{S}$ for the block-tuple $\mathcal{M}_\mathcal{B}$, and analogously for the set $\Psi(\pi(\BB'))$. To prove the formula, we will partition the set of valid $\mathcal{S}$ for $\mathcal{M}_\mathcal{B}$ into two sets, $\Psi_1$ and $\Psi_2$, each of which is in bijection with the set of valid $\mathcal{S}'$ for $\mathcal{M}_{\mathcal{B}'}$. The set of valid $\mathcal{S}'$ for $\mathcal{M}_{\mathcal{B}'}$ is in turn in bijection with $\Psi(\pi(\BB'))$ by Theorem \ref{block classification}, so this suffices to prove the formula.

    The first set $\Psi_1$ consists of the valid $\mathcal{S} = (\mathcal{S}_1, \dots, \mathcal{S}_k)$ with $\mathcal{S}_k = \{c_{max}\}$. In this case, if we let $\mathcal{S}' = (\mathcal{S}_1', \dots, \mathcal{S}_{k-1}') := (\mathcal{S}_1, \dots, \mathcal{S}_{k-1}),$ then we claim $\mathcal{S}'$ is valid for $\mathcal{M}_{\BB'}$. To see this, we simply check each of the conditions in Definition \ref{valid S}. Condition (1) is clear. Since $\mathcal{M}_{\mathcal{B}'} = (m_{c_{min}}, \dots, m_{c_{max}-1})$, Condition (2) holds. Finally, Conditions (3), (4), and (5) are strictly weaker for $\mathcal{S}'$ compared to $\mathcal{S}$. 
    
    Moreover, we claim that given $\mathcal{S}' = (\mathcal{S}_1', \dots, \mathcal{S}_{k-1}')$ valid for $\mathcal{M}_{\BB'}$, \[\mathcal{S} := (\mathcal{S}_1', \dots, \mathcal{S}_{k-1}', \{c_{max}\})\] is valid for $\mathcal{M}_\BB$. Conditions (1), (2) of Definition \ref{valid S} are clear. Condition (3) holds because every element of any $\mathcal{S}_i'$ is at most $c_{max}-1$, since $\BB'$ ends at $c_{max}-1$. Condition (4) is clear because we only need to check it in the case that $j=k$, but $\mathcal{S}_k$ does not intersect with any other $\mathcal{S}_i$. Finally, Condition (5) is clear because $|\mathcal{S}_k| < 2$, and it holds for $\mathcal{S}'$.

    The second set $\Psi_2$ consists of all other valid $\mathcal{S}$. In this case due to Condition (3) of Definition \ref{valid S}, it must be that $c_{max} \in \mathcal{S}_k$. Since $\mathcal{S}\not\in\Psi_1$, $\mathcal{S}_k$ does not consist solely of $c_{max}$, and so we have $|\mathcal{S}_k| \geq 2$. In this case, if we let \[\mathcal{S}' = (\mathcal{S}_1', \dots, \mathcal{S}_k') := (\mathcal{S}_1, \dots, \mathcal{S}_{k-1}, \mathcal{S}_k \setminus \{c_{max} \}),\] then we claim $\mathcal{S}'$ is valid for $\mathcal{M}_{\BB'}$. Condition (1) holds because $|\mathcal{S}_k| \geq 2$, so $\mathcal{S}_k' \neq \emptyset$. Conditions (2) and (3) are clear, and Conditions (4) and (5) hold for the same reason as before: they are strictly weaker for $\mathcal{S}'$ than for $\mathcal{S}$.

    Moreover, we claim that given $\mathcal{S}' = (\mathcal{S}_1', \dots, \mathcal{S}_k')$ valid for $\mathcal{M}_{\BB'}$, \[\mathcal{S} = (\mathcal{S}_1', \dots, \mathcal{S}_{k-1}', \mathcal{S}_k' \cup \{c_{max}\})\] is valid for $\mathcal{M}_{\BB}$. Conditions (1), (2), (3) are clear. Condition (4) is clear because we only need to check the case of $\mathcal{S}_k' \cup \{c_{max}\}$ intersecting with other $\mathcal{S}_i'$, but since $c_{max} \notin \mathcal{S}_i'$, it must be that $\mathcal{S}_i'$ and $\mathcal{S}_k'$ have nontrivial intersection, so it follows that $k-i=1$, $|\mathcal{S}_k'| \geq 2$, and $m_c > 1$, where $c \in \mathcal{S}_i' \cap \mathcal{S}_k'$. So certainly $|\mathcal{S}_k' \cup \{c_{max}\}| \geq 2$ as well. For Condition (5), it suffices to check the case where $\mathcal{S}_k'$ does not satisfy the hypotheses of Condition (5) in $\mathcal{S}'$ but $\mathcal{S}_k' \cup \{c_{max}\}$ does satisfy the hypotheses. Since $|\mathcal{S}_k' \cup \{c_{max}\}| \geq 2$ while $|\mathcal{S}_k'| < 2$, it must be that $\mathcal{S}_k' = \{c_{max}-1\}$. But $m_{c_{max}-1} = 1$ by assumption, so the hypotheses of Condition (5) are not satisfied, so the condition is satisfied. This completes the bijections for $m_{c_{max}-1} = 1$ case of Theorem \ref{thm-count-block-temp-in-paper}.

    Second we consider the case that $m_{c_{max}-1} > 1$. We will partition the set of valid $\mathcal{S}$ for $\mathcal{M}_\BB$ into three sets $\Psi_1, \Psi_2,$ and $\Psi_2$. The first two sets will both be in bijection with the set of valid $\mathcal{S}'$ for $\mathcal{M}_{\BB'}$. The third set $\Psi_2$ will be in bijection with a subset of the set of valid $\mathcal{S}'$ for $\mathcal{M}_{\BB'}$, with the complement of this subset being in bijection with the set of valid $\mathcal{S}''$ for $\mathcal{M}_{\BB''}$. Together with Theorem \ref{block classification}, this proves the formula.

    The three sets $\Psi_1,\Psi_2,\Psi_3$ are as follows. Let $\mathcal{S} = (\mathcal{S}_1, \dots, \mathcal{S}_k)$ be valid for $\mathcal{M}_\BB$. We have that $\mathcal{S}$
    \begin{itemize}
        \item lies in $\Psi_1$ if $\mathcal{S}_k = \{c_{max} \}$,
        \item lies in $\Psi_2$ if $c_{max}-1$ appears in more than one $\mathcal{S}_i$, or
        \item lies in $\Psi_3$ if $c_{max}-1$ appears in exactly one $\mathcal{S}_i$, say $\mathcal{S}_{i_0}$, and $c_{max} \in \mathcal{S}_{i_0}$.
    \end{itemize}
    Observe that these three sets partition the set of valid $\mathcal{S}$. Certainly $\Psi_2\cap\Psi_3=\emptyset$. If $\mathcal{S}\not\in\Psi_2\cup\Psi_3$, then $c_{max}-1$ appears in exactly one $\mathcal{S}_i$, say $\mathcal{S}_{i_0}$, and $c_{max} \notin \mathcal{S}_{i_0}$. By Condition (3) of Definition \ref{valid S}, the set $\mathcal{S}_{i_0+1}$ can contain only $c_{max}$. By Condition (4), there can be no further sets $\mathcal{S}_j$ with $j>i_0+1$. Hence $k = i_0+1$, and we have $\mathcal{S}_k = \{c_{max}\}$, so $\mathcal{S}$ lies in the first set.

    Now we construct our bijections. The bijection for $\Psi_1$ is exactly the same as before, when $m_{c_{max}-1} = 1$, since we did not use this fact.

    For $\Psi_2$, let $\mathcal{S}$ be valid for $\mathcal{M}_\BB$. If $c_{max}-1$ appears in more than on $\mathcal{S}_i$, then by Condition (3) it appears in exactly two, say $\mathcal{S}_{i_0}$ and $\mathcal{S}_{i_0+1}$. By Condition (4), we have $|\mathcal{S}_{i_0+1}| > 1$, so $\mathcal{S}_{i_0+1} = \{c_{max}-1, c_{max} \}$. By the same reasoning as before, there can be no further sets $\mathcal{S}_j$ with $j>i_0+1$, or in other words $k=i_0+1$. Then we claim \[\mathcal{S}' = (\mathcal{S}_1', \dots, \mathcal{S}_{k-1}') := (\mathcal{S}_1, \dots, \mathcal{S}_{k-1})\] is valid for $\mathcal{M}_{\BB'}$. Condition (1) is clear. Condition (2) follows from the fact that $c_{max}-1 \in \mathcal{S}_{k-1}'$, and the corresponding condition holds for $\mathcal{S}$. Conditions (3), (4), (5) all follow directly from the corresponding conditions for $\mathcal{S}$.

    Moreover, we claim that given $\mathcal{S}' = (\mathcal{S}_1', \dots, \mathcal{S}_{k-1}')$ valid for $\mathcal{M}_{\BB'}$, then \[\mathcal{S} = (\mathcal{S}_1', \dots, \mathcal{S}_{k-1}', \{c_{max}-1, c_{max}\} )\] is valid for $\mathcal{M}_\BB$. Conditions (1), (2), (3) are clear. For Condition (4), it suffices to check overlaps between $\{c_{max}-1, c_{max}\}$ and some other $\mathcal{S}_i'$. Since the condition holds for $\mathcal{S}'$, the only possible such overlap is with $\mathcal{S}_{k-1}'$, with intersection $\{c_{max}-1\}$. Then it is indeed the case that $|\{c_{max}-1, c_{max}\}| \geq 2$ and $m_{c_{max}-1} > 1$. Finally, for Condition (5) it again suffices to check the condition for $\{c_{max}-1, c_{max}\}$, which does indeed satisfy the hypotheses. We have $c_{max}-1 \in \mathcal{S}_{k-1}'$ since $\mathcal{S}'$ is valid for $\mathcal{M}_{\BB'}$ and therefore satisfies Condition (2), (3).

    Finally, we construct our bijection between $\Psi_3$ and a subset of the valid $\mathcal{S}' = (\mathcal{S}_1', \dots, \mathcal{S}_k')$ for $\mathcal{M}_{\BB'}$. The subset consists of those $\mathcal{S}'$ satisfying the following property.
    \[\tag{Q} \text{If } \min(\mathcal{S}_k') = c_{max}-1, \text{then } c_{max}-1 \in \mathcal{S}_{k-1}'.\] First suppose we have a valid $\mathcal{S}$ for $\mathcal{M}_\BB$ lying in $\Psi_3$. If $c_{max}-1$ appears in only $\mathcal{S}_{i_0}$ and $c_{max} \in \mathcal{S}_{i_0}$, then let $\mathcal{S}' = (\mathcal{S}_1', \dots, \mathcal{S}_k') := (\mathcal{S}_1, \dots, \mathcal{S}_{k-1}, \mathcal{S}_k \setminus \{c_{max} \}).$ Again it is clear that the conditions for $\mathcal{S}'$ to be valid for $\mathcal{M}_{\BB'}$ are strictly weaker than the conditions for $\mathcal{S}$ to be valid for $\mathcal{M}_\BB$. We also observe that $\mathcal{S}'$ always has property (Q), because if $\min(\mathcal{S}_k \setminus \{c_{max}\}) = c_{max}-1$, then since $m_{c_{max}-1}>1$, by Condition (5) we have $c_{max}-1 \in \mathcal{S}_{k-1}'$.

    Moreover, suppose we have $\mathcal{S}'$ valid for $\mathcal{M}_{\BB'}$ satisfying property (Q). Then we claim $\mathcal{S} = (\mathcal{S}_1', \dots, \mathcal{S}_k' \cup \{c_{max} \})$ is valid for $\mathcal{M}_\BB$. Conditions (1), (2), (3) are clear. For Condition (4), it suffices to check the case where one of the sets is $\mathcal{S}_k' \cup \{c_{max} \}$. Since $\mathcal{S}'$ is valid, this can only have nontrivial intersection with $\mathcal{S}_{k-1}'$. If $c$ is this intersection, then the fact that $m_c>1$ follows from the fact that Condition (4) holds for $\mathcal{S}'$. Also, $|\mathcal{S}_k' \cup \{c_{max}\}| \geq 2$. Finally, for Condition (5) it suffices to check the set $\mathcal{S}_k' \cup \{c_{max}\}$. If $\min(\mathcal{S}_k') < c_{max}-1$, then the hypotheses of Condition (5) for this set are equivalent to those for $\mathcal{S}_k'$ in $\mathcal{S}'$. So the condition for $\mathcal{S}$ is also satisfied. If $\min(\mathcal{S}_k') = c_{max}-1$, then Condition (5) follows from the fact that $\mathcal{S}'$ has property (Q).

    To complete the proof, we construct a bijection between the $\mathcal{S}'$ not satisfying property (Q) and the $\mathcal{S}''$ valid for $\mathcal{M}_{\BB''}$. Given $\mathcal{S}' = (\mathcal{S}_1', \dots, \mathcal{S}_k')$ not satisfying property (Q), we claim $\mathcal{S}'' = (\mathcal{S}_1', \dots, \mathcal{S}_{k-1}')$ is valid for $\mathcal{M}_{\BB''}$. Since $\mathcal{S}'$ does not satisfy property (Q), we have $\min(\mathcal{S}_k') = c_{max}-1$ and $c_{max}-1 \notin \mathcal{S}_{k-1}'$. Hence Condition (2) follows. Conditions (1), (3), (4), (5) are clear from the fact that $\mathcal{S}'$ is valid.

    Moreover, given $\mathcal{S}'' = (\mathcal{S}_1'', \dots, \mathcal{S}_{k-1}'')$ valid for $\mathcal{M}_{\BB''}$, we claim \[\mathcal{S}' = (\mathcal{S}_1'', \dots, \mathcal{S}_{k-1}'', \{c_{max}-1\})\] is valid for $\mathcal{M}_{\BB'}$. Conditions (1), (2), (3) are clear. Note that no additional constraints are imposed by Conditions (4) and (5) applied to $\mathcal{S}'$ since $\{c_{max}-1\}$ cannot intersect any other $\mathcal{S}_i''$, and its size is not at least 2, respectively. Finally, observe that this map is the inverse of the previous one because $\mathcal{S}'$ not satisfying property (Q) implies that $\mathcal{S}_k' = \{c_{max}-1\}$.
\end{proof}

\section{Interactions of Blocks}\label{sec interation of blocks}

In this section, we consider the cases where a tempered $\EE\in\VRep_\rho^\mathbb{Z}(G_n)$ may consist of multiple blocks (by Lemma \ref{lem-unique-block-decomp}, we can always decompose a tempered $\EE\in\VRep_\rho^\mathbb{Z}(G_n)$ into disjoint blocks).

\subsection{Existence of operations on blocks}\label{sec Existence of operations on blocks}

In this subsection, we study the effects operators on blocks.
Intuitively, the valid operations on a block in a virtual extended multi-segment are the same as if the block was its own virtual extended multi-segment. (A similar statement holds if the extended multi-segment is not tempered and we are considering one of the pieces of its decomposition.) The only exception to this is that, for the blocks after the first block, they behave as if they do not start at zero, even if they do. This is what Lemma \ref{lem-later-blocks-exhaustion} amounts to. Lemmas \ref{lem-first-block} and \ref{lem-later-blocks-existence} show the validity of all of the operations, as the intuitive picture might suggest (with this additional adjustment).

The following definition formalizes the idea of a block which starts at zero behaving as if it does not start at zero.

\begin{defn}
    Let $\EE$ be a block starting at zero of type $Y_\mathcal{M}$, where $\mathcal{M} = (m_0, \dots, m_{c_{\max}})$. We say $\EE$ is of type $Y_{\mathcal{M}}^{>0}$ if $\EE = sh^{-1}(\EE')$ for some $\EE'$ of type $Y_{\mathcal{M}'}$, where $\mathcal{M}' = (m_1', \dots, m_{c_{\max}+1}')$ with $m_i' = m_{i-1}$. (When $\EE$ is a block not starting at zero of type $Y_{\mathcal{M}}$, this condition automatically holds, so we also say $\EE$ is of type $Y_{\mathcal{M}}^{>0}$.)
\end{defn}

\begin{lemma}
\label{lem-later-blocks-exhaustion}
    Let $\EE\in \VRep_\rho^\mathbb{Z}(G_n)$ be equivalent to a tempered virtual extended multi-segment $\EE_{temp}$. Suppose that $\EE_{temp}$ has a block decomposition $\BB_{1, temp} \cup \dots \cup \BB_{k, temp}$ with $k>1$ and that $\EE$ has decomposition $\BB_1 \cup \dots \cup \BB_k$ with $\BB_i$ equivalent to $\BB_{i, temp}$. Suppose further that $\BB_k$ is of type $Y_\mathcal{M}^{>0}$. We claim the following.
    \begin{enumerate}
        \item Every operation on $\EE$ which is an operation on $\BB_k$ (in the sense that it only involves rows from $\BB_k$) is analogous to an operation on the extended multi-segment $sh^1(\BB_k)$, in the following way:
        \begin{itemize}
            \item the operation $ui$ on rows $r_1$ and $r_2$ in $\EE$ corresponds to the operation $ui$ on rows $sh^1(r_1)$ and $sh^1(r_2)$ in $sh^1(\BB_k)$;
            \item the operation $dual$ on $\EE$ corresponds to the operation $dual$ on $sh^1(\BB_k)$;
            \item and the operation $ui\inv$ of type 3' on $\EE$ which splits off $k$ circles from row $r$ corresponds to the operation $ui\inv$ of type 3' in $sh^1(\BB_k)$ which splits off $k$ circles from $sh^1(r)$.
        \end{itemize}
        \item After any operation on $\EE$ which is an operation on $\BB_k$, $\BB_k$ is still of type $Y_\mathcal{M}^{>0}$.
    \end{enumerate}
\end{lemma}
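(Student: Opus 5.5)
The plan is to compare the combinatorial formulas defining each operator on $\EE$ and on $sh^1(\BB_k)$, and check that the only place where the shift matters is a boundary condition at column zero. That condition never bites inside a later block, because of the blocks $\BB_1,\dots,\BB_{k-1}$ that precede $\BB_k$.

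The first step is to handle operations involving only rows of $\BB_k$, using Lemma \ref{lem-local}. Operations $ui$, $ui^{-1}$ and $dual\circ ui\circ dual$ are local, so the rows outside $\BB_k$ can enter only through row exchanges, which move a row of $\BB_k$ past earlier rows or past rows of $\BB_k$ itself. For $ui$ and $ui^{-1}$ of type 3', Definition \ref{ui def} depends only on the differences $A_{k+1}-A_k$, $B_{k+1}+l_{k+1}-(B_k+l_k)$, $A_k-l_k$ and on $\epsilon$. All of these are invariant under shifting both rows by $1$, since $(-1)^{A-B}$ is unchanged. The row exchange formulas of Definition \ref{def row exchange} are likewise shift invariant, as already used in the proof of Lemma \ref{lem-comm}. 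Hence the first and third bullets of Part (1) follow, provided the relevant rows stay inside $\BB_k$.

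The second step, and the main obstacle, is $dual$. By Definition \ref{def dual}, $dual$ sends $B_i\mapsto -B_i$ and changes $l_i$ by $B_i$ and $\eta_i$ by $(-1)^{\alpha_i+\beta_i}$. These quantities are not shift invariant, and they involve rows of the other blocks through $\alpha_i$ and $\beta_i$. My approach is to prove the weaker statement actually needed: a $dual\circ ui\circ dual$ on rows $r_1,r_2\in\BB_k$ is applicable in $\EE$ if and only if it is applicable to $sh^1(r_1),sh^1(r_2)$ in $sh^1(\BB_k)$, and it produces the shifted result.
\begin{enumerate}
\item Signs. By Lemma \ref{Dual sign}, the sign changes are $(-1)^{C(\EE)-C(r)}$. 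The contribution of the other blocks is a common factor on all rows of $\BB_k$, so it cancels in every alternating-sign comparison. Lemma \ref{odd alternating dual} then controls the overall sign.
\item Columns. Since $\BB_k$ is of type $Y_\mathcal{M}^{>0}$, every row of $\BB_k$ has the form $sh^{-1}$ of a row with $B\ge 1$ in the $Y_{\mathcal M'}$ picture. This rules out the boundary effect in the proof of Lemma \ref{lem-first-block}-type statements: the dual of a chain starting at $0$ remains a chain rather than becoming a hat (cf.\ the proof of Lemma \ref{Type X Basic Properties}(3)). Concretely, the condition for a type-3$'$ $ui$ in the dual, $B_{j}+l_{j}=A_i-l_i+1$, becomes after dualizing an equation that is shift-covariant precisely when no row of $\BB_k$ starts at $0$ in the $Y^{>0}$ normalization.
\end{enumerate}
The earlier blocks $\BB_1\cup\dots\cup\BB_{k-1}$ account for the shift. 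Their supports lie in columns weakly below those of $\BB_k$, and the defining maximality of blocks in Definition \ref{def-block} forces a sign or multiplicity break at the interface. I would use Corollary \ref{cor Alex} together with Lemmas \ref{lem-multiplicity-cancel} and \ref{lem-multiplicity-cancel-up} to replace the earlier blocks by a single row, or by the phantom row $P_k$, which acts on $\BB_k$ exactly like the column $0$ of $sh^1(\BB_k)$.

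Part (2) then follows from Part (1) and Theorem \ref{block classification} applied to $sh^1(\BB_k)$. An operation on $\BB_k$ corresponds to an operation on $sh^1(\BB_k)$, which is of type $Y_{\mathcal M'}$. Its image is again of type $Y_{\mathcal M'}$ with the same sign, by Theorem \ref{block classification} and the preservation results (Lemmas \ref{dud not 3' chain} and \ref{dud not 3' hat}). Applying $sh^{-1}$ shows that $\BB_k$ remains of type $Y_\mathcal{M}^{>0}$. The step I expect to be delicate is the reduction of the earlier blocks to a single phantom-like row in the $dual$ comparison. If the corollaries do not apply directly, the fallback is to verify it by induction on the number of rows in $\BB_1\cup\dots\cup\BB_{k-1}$, using Lemma \ref{big swap down}.
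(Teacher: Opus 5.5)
Your treatment of $ui$, $ui^{-1}$ and row exchanges by shift invariance is fine; it matches the paper's $ui$ case. The gap is in the operators that carry the real content of the lemma. These are the lowering operators $dual\circ ui^{-1}\circ dual$ of type 3', i.e.\ the moves that would create a hat. Your second step only compares the raising $dual\circ ui\circ dual$. Since $sh^1(\BB_k)$ does not start at zero, no hat can ever be created there. Part (1) therefore requires showing that no hat can be created on $\BB_k$ inside $\EE$ either.

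This is exactly where $\EE$ and $sh^1(\BB_k)$ differ. Your ``Columns'' step concedes that the comparison only works when no row of $\BB_k$ starts at column $0$. Type $Y_\mathcal{M}^{>0}$ does not exclude this: a later block may start at $0$. Such a block, taken on its own, does admit a hat-creating move, namely the inverse of $D$ (e.g.\ $\{0,1\}\mapsto\{0,\overline{1}\}$). So the obstruction must come from the preceding block, and your proposal contains no argument that produces it.

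The remedies you suggest do not supply this argument.
\begin{itemize}
\item A phantom row has $b-2l=0$. As noted in the proof of Corollary \ref{cor Alex}, exchanging a row past it leaves that row unchanged. Substituting it for $\BB_1\cup\dots\cup\BB_{k-1}$ would therefore erase the obstruction rather than model it.
\item The phantom row is also not equivalent to those blocks, so Corollary \ref{cor Alex} does not license the substitution.
\item Your fallback via Lemma \ref{big swap down} points the wrong way. That lemma treats alternating pairs, which fall in Case 1(c) and lower $l$. The relevant pair across the boundary is non-alternating.
\end{itemize}

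The missing argument, as in the paper, runs as follows.
\begin{itemize}
\item If $\BB_k$ starts at $0$, then $k=2$. Moreover $\BB_1$ is an odd number of column-$0$ circles whose sign equals that of the first circle of $\BB_2$ (a type 2 boundary).
\item In $dual(\EE)$, suppose a row $\widehat r$ of $dual(\BB_2)$ is split before reaching the last row. Then the resulting piece with $B>0$ sits above the column-$0$ circle of $dual(\BB_1)$, which violates admissibility.
\item Otherwise $\widehat r$ is exchanged to the bottom. At the end of $dual(\BB_2)$ it consists only of circles, by reversing the analysis of Lemma \ref{D existence general}.
\item Reduce $\BB_1$ to a single circle by Lemma \ref{lem-multiplicity-cancel}. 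The final exchange is then Case 1(b) of Definition \ref{def row exchange}, so $l\ge 1$ and no $ui^{-1}$ of type 3' applies.
\end{itemize}
Rows exchanged upward, and blocks not starting at $0$, are handled by the argument of Lemma \ref{E min classification for blocks after zero}.

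Your derivation of Part (2) from Part (1), by applying Theorem \ref{block classification} to $sh^1(\BB_k)$, is a reasonable alternative to the paper's direct check. However, it inherits this gap.
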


\begin{proof}
    Except for the case of $dual \circ ui\inv \circ dual$ operations, the proof is similar to the proof of exhaustion in  \S\ref{sec exhaustion of Y_M}. The main difference is that we prove a slightly stronger claim here, which is a characterization of the operators possible at each step, rather than a characterization of all equivalent extended multi-segments. Therefore we do not find the minimal extended multi-segment with respect to the admissible order, so in our exhaustion step we must also consider lowering operators.

    For operations of the form $dual \circ ui \circ dual$, the same support argument as before (see discussion before Lemma \ref{dud not 3' chain}) holds to show that no operations are possible except between a chain and a multiple belonging to it. In this case the $dual \circ ui \circ dual$ operation only involves rows between the chain and the multiple in question. In particular, it only involves rows in $\BB_k$. So applying the same reasoning as in the proof of Lemma \ref{dud not 3' chain}, the operation preserves type $Y_{\mathcal{M}}^{>0}$.

    For operations of the form $dual \circ ui\inv \circ dual$ where the $ui\inv$ is of type 3', we claim that no such operations are possible. First suppose a row $\widehat{r}$ in $dual(\EE)$ is exchanged up. Then any such operation only involves rows in $dual(\BB_k)$, so it would be a valid operation on the extended multi-segment $\BB_k$. But no such operations are possible, by a similar reasoning as in Lemma \ref{E min classification for blocks after zero}, applied to each chain and multiples contained in the support of the chain.
    
    So we only consider a row $r$ in $\EE$ which is exchanged down in $dual(\EE)$. If $\BB_k$ does not start at zero, then again we can imitate the proof of Lemma \ref{E min classification for blocks after zero}: in short, after exchanging a row in the dual, it will always have $l > 0$, so no $ui\inv$ of type 3' are possible. 
    
    On the other hand, if $\BB_k$ starts at zero, then no $dual \circ ui\inv \circ dual$ operations are possible, but the reason is slightly different. Let $r$ be the row in $\EE$ to which $ui\inv$ is eventually applied. Note that since $\BB_k$ starts at zero, we must have $k=2$ and $\BB_1$ must consist of some number of circles in column $0$. 
    
    We first claim that if $\widehat{r}$ is not exchanged to the last row in $dual(\EE)$, no $ui\inv$ is possible. Suppose $\widehat{r}$ is split into two rows, $\widehat{r_1}$ and $\widehat{r_2}$, with $\widehat{r_1}<\widehat{r_2}$ in the resulting admissible order. If a column $c$ is in the support of $\widehat{r_1}$, then $c$ is not in the support of $\widehat{r_2}$, and so $B(\widehat{r_2})>0$. But since $\BB_1$ has a circle in column $0$, the last row of $dual(\EE)$ is a single circle in column $0$ and it comes after $\widehat{r_2}$. So the order is not admissible. If on the other hand $\rho$ is not in the support of $\widehat{r_1}$, then $A(\widehat{r_1})<0$ which is of course impossible.
    Hence the $ui\inv$ operation was not valid.

    However, if $\widehat{r}$ is exchanged to the last row in $dual(\EE)$, we claim $ui\inv$ is still not possible. Since a $dual \circ ui\inv \circ dual$ would be inverse to the $D$ operation, by similar reasoning as the existence of the $D$ operation (Lemma \ref{D existence general}) but in reverse, since $\BB_2$ is of type $Y_\mathcal{M}$, when $\widehat{r}$ is exchanged all the way to the last row of $dual(\BB_2)$, it has only circles. 
    Since $\BB_1$ has an odd number of circles, by Lemma \ref{lem-multiplicity-cancel}, we can assume without loss of generality that it has exactly one circle. But the row exchange is of Case 1(b) and therefore after the last row exchange we must have $l \geq 1$ and so no $ui\inv$ is possible.

    For operations of the form $ui\inv$ of type 3', these must be of type S, which have already been addressed above.

    Finally, for operations of the form $ui$, since they do not depend on the column, the valid $ui$ operations on $\BB_k$ are precisely the valid $ui$ operations on $sh^1(\BB_k)$. These all preserve type $Y_\mathcal{M}^{>0}$ by definition.
\end{proof}

\begin{rmk}
    The intuitive reason why the second block $\BB_2$ behaves the same as a block not starting at zero is that, in both cases, it is impossible to form hats via a $dual \circ ui\inv \circ dual$ operation. Hence the only extended multi-segments are ones that can be obtained by doing $ui$'s of type 3' together with row exchanges.
\end{rmk}

The previous lemma allows us to make the following very important observation about the supports of each part of the decomposition.

\begin{lemma}
\label{lem-staircase}
    Let $\EE_{temp}$ have a block decomposition $\BB_{1, temp} \cup \dots \cup \BB_{k, temp}$, and let $\EE$ be equivalent to $\EE_{temp}$. Suppose $\EE$ has a decomposition $\BB_1 \cup \dots \cup \BB_k$ with $\BB_i$ equivalent to $\BB_{i, temp}$. Then this decomposition has the property that for $i_1 < i_2$, and rows $r_1 \in \BB_{i_1}$, $r_2 \in \BB_{i_2}$, we have $A(r_1) \leq B(r_2).$
\end{lemma}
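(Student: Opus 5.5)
The plan is to deduce Lemma \ref{lem-staircase} from the classification of each block (Theorem \ref{block classification}) together with Lemma \ref{lem-later-blocks-exhaustion}. The point is that each $\BB_i$ is of type $Y_{\mathcal{M}_i}$ for the block-tuple $\mathcal{M}_i=\mathcal{M}_{\BB_{i,temp}}$. For $i\geq 2$ it is even of type $Y^{>0}_{\mathcal{M}_i}$, so it contains no hats. Granting this, the supports of rows in $\BB_i$ are easy to describe. Every chain and every multiple of $\BB_i$ has support inside $[c_{\max}^{(i)},c_{\min}^{(i)}]$, the column range of $\BB_{i,temp}$. The only rows reaching below $c_{\min}^{(i)}$ are hats, and these occur only when $i=1$ and $c^{(1)}_{\min}=0$. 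In every case $A(r)\leq c^{(i)}_{\max}$ for $r\in\BB_i$, and $B(r)\geq c^{(i)}_{\min}$ when $i\geq 2$.

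Next I compare consecutive tempered blocks. By the maximality in Definition \ref{def-block} and the uniqueness in Lemma \ref{lem-unique-block-decomp}, the blocks $\BB_{1,temp},\dots,\BB_{k,temp}$ occupy column ranges with $c^{(i_1)}_{\max}\leq c^{(i_2)}_{\min}$ for $i_1<i_2$. Columns can be shared only at an endpoint, which happens when the column has even total multiplicity or the signs fail to alternate. Combining the two bounds gives, for $r_1\in\BB_{i_1}$ and $r_2\in\BB_{i_2}$ with $i_2\geq 2$, the chain $A(r_1)\leq c^{(i_1)}_{\max}\leq c^{(i_2)}_{\min}\leq B(r_2)$, which is the claim.

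The main obstacle is justifying that each $\BB_i$ really stays of type $Y_{\mathcal{M}_i}$ (respectively $Y^{>0}$) inside $\EE$, so that no operation mixes rows across blocks. I would argue by induction on the number of raising operators and inverses connecting $\EE_{temp}$ to $\EE$ (Theorem \ref{thm intersections of local Arthur packets}(2)), carrying the staircase property along as part of the induction hypothesis. At each step the staircase property is used to show that an applicable $ui$, $ui^{-1}$ or $dual\circ ui^{\pm1}\circ dual$ involves rows of a single block. Two rows from different blocks have supports that meet in at most one column $c$. A $ui$ of type 3' would need them adjacent with $B_{k+1}=A_k+1$, which is excluded unless they lie in the same block. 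If they share $c$, their circles at $c$ have signs incompatible with the alternating requirement; this follows from the failure of the block conditions at the boundary, using Lemmas \ref{lem-multiplicity-cancel} and \ref{lem-multiplicity-cancel-up} to cancel even multiplicities. Once every operation is confined to a single block, Lemma \ref{lem-later-blocks-exhaustion}(2) and Theorem \ref{block classification} preserve the block types, and the support bounds above then restore the staircase property.

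The delicate subcase is the first and second blocks both touching column $0$. There hats of $\BB_1$ could reach toward $\BB_2$. This is exactly what Lemma \ref{lem-later-blocks-exhaustion} handles, since it rules out $dual\circ ui^{-1}\circ dual$ creating hats in $\BB_2$. Hats in $\BB_1$ only extend to negative $B$, so they do not break the inequality.
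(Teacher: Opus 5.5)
This is essentially the paper's proof. There too, $A(r_1)$ is bounded by the last column of $\BB_{i_1,temp}$ because no operation changes $\max A$ (your type $Y_{\mathcal{M}}$ description says the same), and $B(r_2)$ is bounded below by the first column of $\BB_{i_2,temp}$ because the only operation that could lower $\min B$, a hat-creating $dual\circ ui^{-1}\circ dual$ of type 3', is ruled out by Lemma \ref{lem-later-blocks-exhaustion}, which is exactly your $Y^{>0}$ step including the column-$0$ subcase.

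Your inductive argument that every operation stays inside one block goes beyond what the paper does here: it reads confinement off the hypothesis and proves it later, in Lemma \ref{lem-independence-of-blocks}, using the present lemma. As sketched, that argument also misplaces the sign step. At a type 3 boundary $B_{k+1}=A_k+1$ does occur between different blocks, and it is the equal signs from Lemma \ref{lem-aj} that forbid a type 3' $ui$ there. The shared-column case instead requires controlling the single possible row exchange across the boundary.
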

\begin{proof}
    Note that in $\EE_{temp}$, for $i_1 < i_2$ and any row $r_1 \in \BB_{i_1, temp}$ and $r_2 \in \BB_{i_2, temp}$, we have $0 \leq B(r_1) \leq A(r_1) \leq B(r_2) \leq A(r_2).$ Since $\EE$ is equivalent to $\EE_{temp}$ and each of the $\BB_i$ are equivalent to $\BB_{i, temp}$, we can get $\EE$ by applying raising operations and their inverses to each of the $\BB_{i, temp}$ taking them to $\BB_i$. Note that applying raising operations and their inverses on $\BB_{i_1}$ do not change the maximum of $A(r)$ across rows $r \in \BB_{i_1}$. Moreover, the only raising operations or their inverses on $\BB_{i_2}$ which might change the minimum of support is $dual \circ ui\inv \circ dual$ of type 3', but from Lemma \ref{lem-later-blocks-exhaustion} we know these are not possible. So for $r_1 \in \BB_{i_1}$ and $r_2 \in \BB_{i_2}$ with $i_1<i_2$, we have that $A(r_1) \leq B(r_2).$
\end{proof}

\begin{defn}
    We call the above property in Lemma \ref{lem-staircase} the \emph{staircase} property of the decomposition. In future sections, we will also consider other sorts of decompositions arising in different ways, and refer to the same property as the \emph{staircase} property.
\end{defn}

Next we show that any operator on a block induces an operator on the virtual extended multi-segment.

\begin{lemma}
\label{lem-first-block}
    Let $\EE\in\VRep_\rho^\mathbb{Z}(G_n)$ be an extended multi-segment equivalent to a tempered extended multi-segment, and let $\EE$ have decomposition $\BB_1 \cup \dots \cup \BB_k$, with the same conditions as before. Then any operation on $\BB_1$ as its own extended multi-segment is also a valid operation on $\EE$.
\end{lemma}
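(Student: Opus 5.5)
Since $\BB_1$ occupies the first rows of $\EE$ in the admissible order, I would argue that an operation on $\BB_1$, viewed inside $\EE$, never needs to move rows of $\BB_1$ past rows of later blocks. The rows of $\BB_2\cup\dots\cup\BB_k$ would then be unchanged, except possibly for a uniform sign change. I would check each kind of operator that can occur on $\BB_1$: $ui$ and $ui^{-1}$ of type 3' ($S$ and $U$), and $dual\circ ui\circ dual$ (the operators $M$, $D$, and those of Lemmas \ref{dud not 3' chain} and \ref{dud not 3' hat}). For each I would verify that its defining combinatorial condition holds in $\EE$ exactly when it holds in $\BB_1$, and that the output agrees on $\BB_1$.

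\textbf{Step 1: operators of type $ui$ and $ui^{-1}$.} These are local by Lemma \ref{lem-local}. The row exchanges they require take place among rows of $\BB_1$, and all of these precede the rows of the later blocks. The applicability conditions of Definition \ref{ui def} involve only the two rows concerned. Hence these operators apply to $\EE$ whenever they apply to $\BB_1$, and they leave the other blocks fixed.

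\textbf{Step 2: operators of the form $dual\circ ui\circ dual$.} This is the main step. By Lemma \ref{Dual sign}, the row $\widehat r$ for $r\in\BB_1$ differs from its image under $dual(\BB_1)$ only by the sign factor $(-1)^{C(\EE)-C(\BB_1)}$, which is the same for every row of $\BB_1$. The same holds for the $l$-values, because $B\in\Z$. Thus $dual(\EE)$ consists of $dual(\BB_1)$ with all signs multiplied by one common factor, placed \emph{last}, preceded by the duals of the later blocks. Condition (1) of Definition \ref{ui def} and the alternating-type sign condition $\epsilon$ depend only on products $\eta_k\eta_{k+1}$, and a common factor cancels in these products. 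So the $ui$ performed on $dual(\BB_1)$ is also performable inside $dual(\EE)$. Applying $dual$ again then recovers $\BB_1$ transformed as in the standalone case, with the later blocks restored, since $dual$ is an involution on these data and $C$ is unchanged by $ui$.

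\textbf{Step 3: the main obstacle.} The difficulty is when the $ui$ inside the dual requires row exchanges, as in $D$ and $U$. Here one must show these exchanges never involve rows of the later blocks. By the staircase property (Lemma \ref{lem-staircase}), every row of a later block has $B\ge A(r)$ for all $r\in\BB_1$. After dualizing, the later-block rows become hats centred at $0$ whose supports contain those of $\widehat{\BB_1}$, or rows disjoint from them. I would therefore reorder using Lemma \ref{lemma Alex} so that the required exchanges occur entirely within the final segment $dual(\BB_1)$. Corollary \ref{cor Alex} would then confirm that the rows not involved are unaffected. If a later-block row nonetheless has to be passed, I would fall back on Lemmas \ref{lem-multiplicity-cancel} and \ref{lem-multiplicity-cancel-up}, using that blocks have odd multiplicities, to show that passing it has no net effect on the rows of $\BB_1$.

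Finally, Theorem \ref{thm intersections of local Arthur packets}(1) and Corollary \ref{cor same support} would identify the resulting multi-segment uniquely from its support and order.
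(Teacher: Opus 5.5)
Your Step 1 has a gap. For $ui^{-1}$ you argue that the applicability conditions ``involve only the two rows concerned'' and conclude by locality (Lemma \ref{lem-local}). But applying $ui^{-1}$ to $\BB_1$ means producing $\BB_1'$ with $ui(\BB_1')=\BB_1$. Inside $\EE$, this also requires $\BB_1'\cup\BB_2\cup\dots\cup\BB_k$ to be an extended multi-segment in admissible order. Locality gives everything except this, and this is the one real point of the paper's proof. When a row $[A,B]$ of $\BB_1$ is split into $[A',B]$ and $[A,A'+1]$, the second piece still precedes all later rows. It could therefore violate $(P)$ against a later row $r_2$ with $A(r_2)<A$ and $B(r_2)<A'+1$; for example, a split of $[5,0]$ sitting in front of a row $[2,2]$. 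What rules this out is the staircase property, Lemma \ref{lem-staircase}: $B(r_2)\ge A(r_1)$ for every $r_1\in\BB_1$ and every $r_2$ in a later block. You cite that lemma only in Step 3, and for a different purpose.

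You also never treat the lowering operators $dual\circ ui^{-1}\circ dual$ with $ui$ of type 3', i.e.\ the inverses of $M$ and $D$. Lemma \ref{lemma ui inv = dud} does not reduce them to your other cases. They are exactly what is needed when Theorem \ref{thm-count_for_tempered-in-paper} invokes this lemma to create hats in the first block starting from $\BB_{1,temp}$. The same admissibility check closes this case. By the staircase property, every row of $dual(\BB_i)$ with $i>1$ has support containing that of every row of $dual(\BB_1)$, and this persists after splitting a row of $dual(\BB_1)$. So the split rows, placed in the final segment, cannot break admissibility, nor the $(P')$ order needed to dualize back.

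Conversely, the difficulty you raise in Step 3 does not arise. The reordering needed for a $ui$ inside $dual(\BB_1)$ can be chosen entirely within the final segment $dual(\BB_1)$, and the combined order is admissible by the same containment. So no fallback on Lemmas \ref{lem-multiplicity-cancel}, \ref{lem-multiplicity-cancel-up} or Corollary \ref{cor Alex} is needed. Note also that the later-block duals are never disjoint from those of $\BB_1$, as you suggest; the staircase property forces containment.
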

\begin{proof}
    Let $T$ be a raising operator or its inverse on $\BB_1$. Any raising operator or its inverse is one of $dual \circ ui \circ dual$, $ui\inv$ of type 3', $dual \circ ui\inv \circ dual$ of type 3', or $ui$. Since the conditions for $ui$ are purely local (see Lemma \ref{lem-local}), if $T$ is a union-intersection, then it is clearly also valid on $\EE$. Similarly, observe that $dual(\EE) = dual(\BB_k) \cup \dots \cup dual(\BB_1)$ possibly up to a global sign change on each part $dual(\BB_i)$. Hence if $T$ is an operation of the form $dual \circ ui \circ dual$, then it is also valid on $\EE$, since a global sign change on $dual(\BB_1)$ does not affect whether a union-intersection is valid.

    Now we consider the case of $ui\inv$ of type 3'. Let $\BB_1'$ be such that $ui(\BB_1') = \BB_1$, where the union-intersection is of type 3'. Since union-intersection is local, it is always the case that $ui(\BB_1' \cup \dots \cup \BB_k) = \BB_1 \cup \dots \cup \BB_k = \EE$, unless $\BB_1' \cup \dots \cup \BB_k$ is not in admissible order. Since $ui\inv$ is a valid operation on $\BB_1$, we see that $\BB_1'$ is in admissible order. Also, each $\BB_i$ for $i>1$ is in admissible order. So it suffices to check that for a row $r_1$ from $\BB_1'$ and a row $r_2$ from $\BB_{i_2}$ for $i_2>1$, $r_1$ and $r_2$ are in admissible order. Since $r_2$ comes after $r_1$, this can only fail if $A(r_1) > A(r_2)$ and $B(r_1) > B(r_2)$. But since the decomposition has the staircase property, we always have that $B(r_2) \geq A(r_1)$. So $B(r_2) \geq B(r_1)$, so the order is admissible. Hence we conclude $ui\inv$ is a valid operation on $\EE$.

    Finally we consider the case of $dual \circ ui\inv \circ dual$ where the $ui\inv$ is of type 3'. Here the exact same argument applies, but to $dual(\EE)$. The only possible obstruction to $ui\inv$ being a valid operation is that the result is not in admissible order. Again it suffices to check rows $r_1$ from $dual(\BB_1)$ and $r_2$ from $dual(\BB_{i_2})$ for $i_2>1$. However, every row in $dual(\BB_{i_2})$ has support containing the support of every row in $dual(\BB_1)$, since the decomposition has the staircase property. Since this is still true even after a $ui\inv$, it is impossible for these rows to fail to satisfy the admissibility condition.
\end{proof}

\begin{lemma}
\label{lem-later-blocks-existence}
    Let $\EE_{temp}$ be a tempered extended multi-segment with a block decomposition $\BB_{1, temp} \cup \dots \cup \BB_{k, temp}$, and let $\EE$ be any equivalent extended multi-segment, with decomposition $\BB_1 \cup \dots \cup \BB_k$. Suppose that $\BB_k = \BB_{k, temp}$, and let $\BB_k'$ be an extended multi-segment equivalent to $\BB_k$ of type $Y_\mathcal{M}^{>0}$. Then there exists a sequence of operations taking $\EE$ to $\EE' = \BB_1 \cup \dots \cup \BB_{k-1} \cup \BB_k'$.
\end{lemma}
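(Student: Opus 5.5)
Since $\BB_k$ is tempered and $\BB_k'$ is equivalent to it and of type $Y_\mathcal{M}^{>0}$, the plan is to reduce to single operations. By Theorem \ref{block classification}, applied to $sh^1(\BB_k)$, and by Corollary \ref{Type Y SMUD equivalence}, $sh^1(\BB_k')$ is reached from the tempered $sh^1(\BB_k)$ by a finite sequence of operations on a block not starting at zero. Since there are no hats there, these are only $S$ operations, i.e. $ui^{-1}$ of type 3' followed by row exchanges. Under $sh^{-1}$ this gives a chain $\BB_k=\FF_0,\FF_1,\dots,\FF_m=\BB_k'$. Each $\FF_j$ is of type $Y_\mathcal{M}^{>0}$, and each step $\FF_{j}\to\FF_{j+1}$ is an $S_{r,c}$ as in Lemma \ref{S existence general}, with its accompanying row exchanges inside the block. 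By induction on $m$, it suffices to show that a single such step lifts. That is, if $\EE_j=\BB_1\cup\dots\cup\BB_{k-1}\cup\FF_j$ is a valid element of $\VRep_\rho^\mathbb{Z}(G_n)$ equivalent to $\EE_{temp}$, then applying the same $S$ operation to the last block gives a valid operation on $\EE_j$.

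For one $S$ step I would mirror the proof of Lemma \ref{lem-first-block}. A $ui^{-1}$ of type 3' is purely local (Lemma \ref{lem-local}), and the row exchanges used in Lemma \ref{S existence general} only involve rows of $\FF_j$, with signs determined by Definition \ref{def row exchange} from those rows alone. So the only possible obstruction is admissibility of the order on the whole of $\EE_{j+1}$. Take a row $r_1$ from some $\BB_i$ with $i<k$ and a row $r_2$ from $\FF_{j+1}$. The $S$ operation only splits a chain $[A,B]$ into $[A-c,B]$ and $[A,A-c+1]$, so the minimum $B$ over the block is unchanged. By the staircase property (Lemma \ref{lem-staircase}, applied to $\EE_j$), $A(r_1)\le B(r_2)$, and hence $B(r_1)\le B(r_2)$. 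Therefore $r_1$ before $r_2$ never violates $(P)$, and it also respects $(P')$ whenever that is required. This shows the operation is valid on $\EE_j$. By Theorem \ref{thm intersections of local Arthur packets}(1), $\EE_{j+1}$ is again equivalent to $\EE_{temp}$, so the induction continues.

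The main obstacle I expect is nonvanishing: $\pi(\EE_{j+1})\neq 0$ as an actual member of $\VRep$, rather than just formal validity of the operator. In particular, one must check the $(\ast)$ condition of Theorem \ref{thm non-vanishing} for the new rows, and the fact that a $ui^{-1}$ on a nonvanishing segment stays nonvanishing. I would handle this as follows. The rows of $\FF_{j+1}$ all have $l=0$ and $B\ge 0$, so $(\ast)$ is trivial. The adjacent-pair conditions of \cite[Proposition 4.1]{Ato20b} involving a row of $\BB_i$ and a row of $\FF_{j+1}$ are satisfied automatically, because by the staircase property their supports are at most adjacent or touching in one column. The pair conditions inside $\FF_{j+1}$ are the same as for the block alone, which is nonvanishing. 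If the touching-column case at the boundary is troublesome, I would instead use that $ui^{-1}$ is the inverse of a raising operator. Writing $\EE_j=ui(\EE_{j+1})$ and invoking Theorem \ref{thm intersections of local Arthur packets}(1) in reverse, as in the proof of Lemma \ref{lem-first-block}, reduces everything to the admissibility check already done.
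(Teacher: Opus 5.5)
Your argument is essentially the paper's. You use Corollary \ref{Type Y SMUD equivalence} to reduce to single $S$-type moves inside the last block. You note that these moves are local and involve only row exchanges within $\BB_k$. You then check admissibility of cross-block pairs with the staircase property. Your check through the unchanged minimum of $B$, together with $B(r_1)\le A(r_1)\le B(r_2)$, is the cleaner invariant; the paper phrases the same check through the minimum of $A$.

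There is one slip in how you set up the induction. Starting from the tempered block $\BB_k=\FF_0$, no split $S_{r,c}$ can be applied, because every row has a single circle. So the steps $\FF_j\to\FF_{j+1}$ leading to $\BB_k'$ must be $S^{-1}$ moves: row exchanges inside the block followed by a $ui$ of type 3' that merges $[A_1,B_1]$ and $[A_2,A_1+1]$ into $[A_2,B_1]$. Equivalently, you reverse the splitting sequence that reduces $\BB_k'$ to the tempered block.

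Your argument carries over to this direction without change. The merge is local and does not change the minimum $B$ of the block, so the staircase check still applies. Non-vanishing of the result follows directly from Theorem \ref{thm intersections of local Arthur packets}(1), since $ui$ is the inverse of the raising operator $ui^{-1}$. With that in place, the detour through the adjacent-pair conditions of \cite[Proposition 4.1]{Ato20b} is unnecessary. It would also force you to shift to a non-negative segment whenever $\BB_1$ contains hats.
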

\begin{proof}
    Note that of the $S$, $M$, $U$, $D^1$, and $D^2$ operators, the only one which applies to extended multi-segments of type $Y_\mathcal{M}^{>0}$ is $S$, since such extended multi-segments have no hats. By Corollary \ref{Type Y SMUD equivalence}, it suffices to show that the $S$ and $S^{-1}$ operations on $\BB_k$ are valid as an operation on $\EE$, where we only assume $\BB_k$ is of type $Y_{\mathcal{M}}^{>0}$. We can then repeatedly apply $S$ or $S^{-1}$ operations to take $\EE$ to $\EE'$.

    Since $S$ and $S^{-1}$ are local, it just remains to check that they do not create rows violating the admissible order condition. This is clear for $S^{-1}$ since it is a $ui$ operation. 
    
    For $S$ operations, since $\BB_k$ is the last piece of the decomposition, the only way this could happen is if it created a row with $A$ smaller than a row above it. But a $ui\inv$ operation does not change the minimum value of $A$ among the rows of $\BB_k$, so it cannot create a non-admissible order.

    Also, the $S$ operation preserves type $Y_\mathcal{M}^{>0}$ since it is local, and the $S$ operation is also valid on $sh^1(\BB_k)$, since it does not depend on the column. So this follows from the existence of $S$ on a single block (Lemma \ref{S existence general}).
\end{proof}

\subsection{Independence of blocks}

In this subsection, we prove that the blocks which occur in the block decomposition a tempered $\EE\in\VRep_\rho^\mathbb{Z}(G_n)$ (see Lemma \ref{lem-unique-block-decomp}) determine $\Psi(\pi(\EE))$ (see Proposition \ref{prop-independence-of-blocks}). We begin by giving some lemmas. The first lemma allows us to only consider certain sequences of row exchanges.

\begin{lemma}
\label{lem-condense-row-swaps}
    Let $\EE\in\VRep_\rho^\mathbb{Z}(G_n)$ and let $\EE'$ be the result after any number of row exchanges on $\EE$. Then there exists a row of $\EE$ so that, when it is exchanged monotonically (i.e. either all up or all down) until it is the $i$-th row, it is the same as the $i$-th row of $\EE'$.
\end{lemma}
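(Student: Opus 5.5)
The plan is to reduce the statement to a claim about the final admissible order alone, and then to realise that order by a well-chosen sequence of row exchanges.

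\textbf{Step 1: the result depends only on the final order.} Let $\EE'$ be obtained from $\EE$ by a sequence of row exchanges. By Theorem \ref{thm intersections of local Arthur packets}(1), every intermediate virtual extended multi-segment has the same $\pi$. It also has the same support. Hence, by Corollary \ref{cor same support} (equivalently, by iterating Lemma \ref{lemma Alex}(1)), $\EE'$ is determined by its final admissible order. If some rows have negative $B$, I would first pass to $sh^d(\EE)$ for $d\gg0$, as in the proof of Lemma \ref{lem-comm}; this works because row exchanges commute with $sh^d$. So it suffices to exhibit one admissible sequence of row exchanges from $\EE$ that reaches the same order as $\EE'$ and contains a monotone passage of a single row of $\EE$ into position $i$.

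\textbf{Step 2: choosing the row and the sequence.} Let $\sigma$ be the permutation of indices taking the order of $\EE$ to that of $\EE'$, and let $r$ be the row placed in position $i$ by $\sigma$. I would write $\sigma$ as a reduced word in adjacent transpositions, with every intermediate order admissible. Then I would use braid moves, which are justified by Lemma \ref{lem-comm} exactly when the three rows involved are pairwise nested, to push all transpositions involving a chosen row $r_0$ into one contiguous block. In a reduced word each pair of rows crosses at most once, so that contiguous block moves $r_0$ monotonically. The remaining transpositions only permute rows other than $r_0$ and never change $r_0$. Hence the $i$-th row of $\EE'$ equals the image of $r_0$ after a monotone exchange from its original position to position $i$.

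\textbf{Main obstacle.} The difficulty is the choice of $r_0$ together with admissibility. If $r$ must cross rows both upward and downward, no monotone block for $r$ itself exists. In that case I would pick instead the row $r_0$ of $\EE$ that ends up in position $i$ after first carrying out the rearrangement of the other rows that never crosses position $i$; its subsequent motion is monotone. This uses the freedom that the lemma only asks for \emph{some} row of $\EE$. The remaining check is that the braid moves are available: the relevant triples must be nested, and two non-nested rows that are forced to be adjacent can only be exchanged if the order stays admissible, i.e.\ condition ($P$) holds. I expect this admissibility bookkeeping, for triples that are not pairwise nested, to be the hardest part. I would handle it by noting that non-nested comparable pairs never swap, so such pairs impose only ordering constraints and never generate braid relations.
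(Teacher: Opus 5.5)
Your proposal has a genuine gap, and it sits where you placed the ``main obstacle''. The proposed workaround cannot succeed. Row exchanges never change supports, so the $i$-th row of $\EE'$ has support $\supp(r)$, where $r$ is the row of $\EE$ that lands in position $i$. A monotone move of a different row $r_0$ produces a row with support $\supp(r_0)$, which is in general different. Moreover, once you ``first carry out the rearrangement of the other rows'', $r_0$ is no longer being moved inside $\EE$. Step 2 also fails earlier, even when $r$ moves in only one direction. Suppose a reduced word has a contiguous, monotone block of transpositions involving $r$. That block need not come first, so $r$ crosses images of rows rather than the rows of $\EE$ adjacent to it. For example, in any reduced word taking $(r,x,y)$ to $(y,r,x)$, the rows $x,y$ are exchanged before $r$ crosses $y$. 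The monotone move of $r$ into position $2$ instead crosses $x$. So your argument covers only the case where $r$ crosses exactly the rows of $\EE$ lying between its initial and final positions.

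This restriction cannot be removed, because the statement is false as written. The paper's own proof skips the same point: it asserts, via Lemma \ref{lem-comm}, that all exchanges involving $r$ can be performed first. Take $\EE=(a,r,b)$ with $a=([4,0],0,\eta)$, $r=([3,1],0,\eta)$ and $b=([2,2],0,\eta)$. For $\rho$ trivial and $\eta=-1$ this is an extended multi-segment of $\Sp_{44}(F)$. Its supports are pairwise strictly nested, and every reordering is again an extended multi-segment, so Theorem \ref{thm non-vanishing}(ii) gives $\pi(\EE)\neq 0$. Definition \ref{def row exchange} (Cases 1(a), 1(a), 1(b), all with $\epsilon=1$) gives
\[R_1(R_2(R_1(\EE)))=\bigl(([2,2],0,\eta),([3,1],1,-\eta),([4,0],2,\eta)\bigr),\]
and $R_2(R_1(R_2(\EE)))$ gives the same result, consistent with Lemma \ref{lem-comm}. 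The second row has support $[3,1]$ and $l=1$. The only row of $\EE$ with support $[3,1]$ is $r$, which is already second and has $l=0$, so no monotone move reproduces it.

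What does hold is the version with an added hypothesis: $r$ crosses exactly the rows of $\EE$ strictly between its old and new positions. Your Step 1 proves this version cleanly. First move $r$ there; each swap exchanges a nested pair, so the order stays admissible. Then sort the rows on each side of $r$, swapping only adjacent pairs that are inverted relative to $\EE'$. Such pairs are nested, since they occur in opposite orders in two admissible orders. By Corollary \ref{cor same support} the result is $\EE'$, and the later swaps never touch $r$. The applications in Lemma \ref{lem-independence-of-blocks} would then need to be checked against this hypothesis.
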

\begin{proof}
    Let $r$ be the row of $\EE$ which, after the row exchanges, becomes the $i$th row of $\EE'$. By commutativity of row exchanges (Lemma \ref{lem-comm}), when performing the row exchanges taking $\EE$ to $\EE'$, we can equivalently perform all the row exchanges involving $r$ first, and then perform the other row exchanges. Since the other row exchanges do not involve $r$, and are performed last, they do not affect the $i$th row, so without loss of generality we can omit them. So we are left with a sequence of row exchanges only involving $r$. Since row exchanging is its own inverse, after removing pairs of row exchanges which are their own inverses, we can assume that the sequence of row exchanges either moves $r$ monotonically up or monotonically down. Hence the $i$th row of $\EE'$ is the same as the $i$th row if we only performed row exchanges monotonically on $r$.
\end{proof}

Our second lemma concerns row exchanges within blocks of type $Y_\mathcal{M}$, and follows from the theorems in Section \ref{sec-individual-blocks}.

\begin{lemma}
\label{lem-aj}
    Let $\BB$ be any virtual extended multi-segment equivalent to a block $\BB_{temp}$.
    \begin{enumerate}
        \item Suppose $\BB$ starts at $0$. Suppose that there is a row $r$ such that it is possible to row exchange it until it is the last row, and let $r'$ be its image after those row swaps. Then $l(r') = 0$. Moreover, the sign of the last circle in $\BB$ is the same as the sign of the last circle in $r'$.
        \item Suppose $\BB$ does not start at $0$. Suppose that there is a row $r$ such that it is possible to row exchange it until it is the first row, and let $r'$ be its image. Then $l(r') = 0$, and the sign of the first circle in $\BB$ is the same as the sign of the first circle in $r'$.
    \end{enumerate}
\end{lemma}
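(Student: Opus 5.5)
Since $\BB\sim\BB_{temp}$, Theorem \ref{block classification} (together with Lemma \ref{packets distinct}) says $\BB$ is of type $Y_\mathcal{M}$ with $\mathcal{M}=\mathcal{M}_{\BB_{temp}}$ and $\eta(\BB)=\eta(\BB_{temp})$. The plan is to reduce the claim to the type $X_k$ computations of Lemmas \ref{big swap down} and \ref{big swap up}. The reduction uses the commutation results: Corollary \ref{swap commutes with S}, Corollary \ref{swap commutes with U}, Lemma \ref{lem-swap commutes with D}, Corollary \ref{swap commutes with M}, and Corollary \ref{cor Alex}. These say the image of a row $r$ after exchanging past a group of rows depends only on the equivalence class of that group, provided $\supp(r)$ contains their supports.

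For Part (1), I first observe that exchanging $r$ to the bottom forces $\supp(r)\supseteq\supp(s)$, or the reverse, for each later row $s$. Because $\BB$ is of type $Y_\mathcal{M}$ and starts at $0$, the rows below $r$ must have supports inside $\supp(r)$; the only exceptions are multiples equal to rows that $r$ cannot pass, which is excluded by the hypothesis.

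Next, by Corollary \ref{cor Alex}, I replace the rows below $r$ by an equivalent configuration. Using the operators $D$ and $S$ (Lemma \ref{D existence general}(4)), I make them tempered. Using Lemma \ref{lem-multiplicity-cancel}, I delete pairs of equal multiples. What remains below $r$ is an alternating string of single circles. I then check that $r$ is alternating with its successor, or failing alternation in the way prescribed by the odd-alternating rules. Lemma \ref{big swap down} then gives $l(r')=l(r)-\sum C(r_i)$.

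The key numerical claim is that $l(r)$ equals the number of circles strictly below $r$ after cancellation. For a hat $r=([A,-B],B,\eta)$, the rows below it have total circle count $B$ modulo the even cancellations, since they fill columns $0,\dots,B-1$ with odd multiplicities. So $l(r')=B-B=0$, exactly as in the proof of Lemma \ref{U existence}. If $r$ is a chain, then $l(r)=0$ and nothing can pass below it except multiples, which cancel.

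For the sign, the formula $\eta(r')=(-1)^{\sum C(r_i)}\eta(r)$ gives the sign of the last circle of $r'$. I would compare this with the last circle of $\BB$ via the odd-alternating streak structure (Remark \ref{rmk odd signs}). Reading circles in order, each streak has odd length, so parity counting gives agreement after discarding the even cancellations. This is the same computation as in Lemma \ref{odd alternating dual}.

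Part (2) is the mirror argument. When $\BB$ does not start at $0$, it has no hats, so $l(r)=0$ for every row. I exchange $r$ upward past rows with supports contained in $\supp(r)$. After reducing the rows above to tempered form and cancelling pairs, I apply Lemma \ref{big swap up}, or its single-step version Lemma \ref{swap up} together with Lemma \ref{lem-multiplicity-cancel-up}. These show that $l$ stays $0$ and that the first-circle sign matches $\eta(\BB)$. Alternatively, I could dualize and apply Part (1)-type reasoning, using Lemma \ref{Alternating dual}.

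The main obstacle is the bookkeeping in the reduction step. One must verify that after replacing the intermediate rows by a tempered alternating string, $r$ stands in the correct alternation relation so that Case 1(c) of Definition \ref{def row exchange} applies at every step. The risk is a Case 1(a) or 1(b) exchange, which would raise $l$ instead of lowering it. I would handle this by induction on the number of rows passed, as in the proof of Lemma \ref{big swap down}, checking the $z$-chain and multiple cases separately.
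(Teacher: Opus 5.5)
Your Part (1) is essentially the paper's argument. You reduce, via the commutation results and Lemma \ref{lem-multiplicity-cancel}, to the case where the rows below $r$ are single alternating circles in columns $0,\dots,B-1$. Lemma \ref{big swap down} then gives $l(r')=B-B=0$ and the sign by a parity count, and a chain only passes multiples that cancel in pairs. That part is fine.

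Part (2) has a genuine gap. The lemmas you invoke point the wrong way. Lemmas \ref{swap up} and \ref{big swap up} say that exchanging a row up past rows it contains, in the failing-alternation configuration, gives $l(r_0^{(k)})=l(r_0)+\sum C(r_i)$. So $l$ \emph{increases} whenever anything with circles survives above $r$, and these lemmas can never show that $l$ stays $0$.

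For example, take $\mathcal{M}=(m_1,m_2)=(3,1)$ and $\mathcal{S}=(\{1\},\{1,2\})$, so the rows are the chain $[1,1]$, one multiple $[1,1]$, and the $z$-chain $[2,1]$, all with sign $\eta$. Exchanging the $z$-chain past the multiple directly above it is Case 2(b) of Definition \ref{def row exchange} and produces $l=1$. Only the next exchange, Case 2(c) with the chain $[1,1]$, brings it back to $0$.

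The missing idea is structural. In a type $Y_\mathcal{M}$ block with $c_{\min}>0$, suppose $C(r)\ge 2$ and $r$ can reach the top. Then everything above $r$ is an even number of single circles in column $B(r)$, all with sign $\eta(r)$. These cancel in pairs by Lemma \ref{lem-multiplicity-cancel}, run in reverse (row exchange is an involution), and nothing remains to swap past. No swap-up lemma is needed.

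Second, you only let $r$ pass rows whose supports lie inside $\supp(r)$. That omits the case the paper has to compute by hand: $r=[c,c]$ is a one-circle multiple belonging to a chain $s$ of length $\ge 2$ that sits in the first row. Here $r$ moves up past a row that \emph{contains} it. Case 1 of Definition \ref{def row exchange} then gives $l(r')=l(r)=0$. The odd-alternating rule for multiples gives $\eta(r')=(-1)^{A(s)-B(s)}\eta(r)=\eta(s)=\eta(\BB)$.

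Your dualization fallback does not fill this gap. For $c_{\min}>0$, $dual(\BB)$ consists of nested hats that all contain column $0$. That is not a type $Y_\mathcal{M}$ configuration, so the Part (1) reasoning does not transfer to it.
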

\begin{proof}
    For Part (1), first suppose $\mathcal{B}_{temp}$ starts at $0$ and ends at $k$. Then $\mathcal{B}$ is of type $Y_\mathcal{M}$ by Theorem \ref{block classification}.
    
    If $r$ is a hat, then we use similar reasoning as in the proof of Lemma \ref{U existence}. In particular, we can assume that all rows following $r$ have $C=1$. Moreover, by Lemma \ref{lem-multiplicity-cancel} we can assume that the multiplicity of every column in $\BB_{temp}$ is $1$. Hence without loss of generality $\mathcal{B}_{temp}$ is therefore of type $X_k.$ If $r'$ is the image of $r$ after it is exchanged to the bottom, the fact that $l(r') = 0$ follows from the proof of Lemma \ref{U existence}. By the alternating sign condition, $ \eta(r) = \eta(\BB) \cdot (-1)^{\sum_{s < r} C(s)}.$ Lemma \ref{big swap down} implies that the sign of the last circle in $r'$ is \[\eta(r) \cdot (-1)^{\sum_{s > r} C(s)} \cdot (-1)^{C(r') - 1}.\] Excluding $r$, there are $k + 1 - C(r)$ rows $s$ in $\BB$ each with $C(s) = 1,$ so the sign of the last circle is \[ \eta(\BB) (-1)^{\sum_{s \in \mathcal{B} - \{r\}} C(s)} (-1)^{C(r')-1} = (-1)^{k + C(r') - C(r)} \eta(\BB).\] But $C(r') - C(r)$ is even since $r$ and $r'$ have the same support, so the sign is $(-1)^k \eta(\BB),$ which is precisely the sign of the last circle of $\mathcal{B}.$
    
    If $r$ is not a hat, then $r$ is a chain or a multiple. Again it is only possible to exchange $r$ with the rows following it if $\supp(r)$ contains their supports, which means that all rows $s > r$ must be multiples. We presume $r$ is a chain, since otherwise $r$ is trivially unchanged by row exchanges with identical multiples. Since there is no chain after $r$, there must be an even number of each multiple; thus, by Lemma \ref{lem-multiplicity-cancel}, exchanging $r$ with these rows leaves $r$ unchanged, so $l(r') = l(r) = 0$ as desired. The sign of the last circle of $r'$ is the same as the last circle of $r$, which is the same as the sign of the last circle in $\mathcal{B}_{temp}$ (and $\mathcal{B}$) due to the odd-alternating condition.

    For part (2), if $\BB_{temp}$ is a block not starting at zero, we break into two cases. First suppose $\supp(r)$ has length at least $2$. Then it must be a chain. Moreover, its support cannot be contained in the support of a row before it, since by the axioms of $\mathcal{S}$-data (Definition \ref{defn E(M,S)}) the intersection of any two supports has size at most $1$. Since we can exchange $r$ all the way to the top, $\supp(r)$ must contain the support of every row before it. For the same reason, this implies that every row before $r$ must be a single circle. Since $r$ is a chain, by the axioms of type $Y_\mathcal{M}$, all the circles must lie in column $B(r)$ and have the same sign as $\eta(r)$, and there must be an even number of circles. So by Lemma \ref{lem-multiplicity-cancel}, $r$ is unchanged when it is exchanged to the top. So it has the same sign as the sign of $\BB$.

    Second suppose $\supp(r)$ has length $1$. If $\supp(r)$ contains the support of all the rows above it, then it must be a multiple of a chain of length $1$. Since these all have the same sign, row exchanges have no affect. Otherwise, $\supp(r)$ is contained in the support of all the rows above it, which means it belongs to a chain $s$ of length at least $2$. Since $\supp(r)$ is contained in the support of all the rows above it, every row coming before $r$ other than $s$ must be another multiple in the same column as $r$. So without loss of generality $r$ immediately follows the chain $s$. Then we can compute from Definition \ref{def row exchange} that $l(r') = 0$ and $\eta(r') = \eta(s)$, using the fact that $\BB$ is odd-alternating.
\end{proof}

Finally, we need the following lemmas, which will be helpful when we consider operations which involve row exchanges. This lemma allows us to treat the row exchange as if the relevant row was exchanged all the way to the top/bottom, by truncating the extended multi-segment.

\begin{lemma}
\label{lem-truncations}
    Let $\EE$ be of type $Y_\mathcal{M}$, and let $\EE^{\tc}$ be a truncated virtual extended multi-segment containing all but the first $i$ rows of $\EE$. Then regardless of $i$, the extended multi-segment $\EE^{\tc}$ is equivalent to a tempered virtual extended multi-segment where the multiplicity of each column is at most the corresponding multiplicity in $\EE_{temp}$. Moreover, the operations realizing this equivalence can be performed on $\EE$.
\end{lemma}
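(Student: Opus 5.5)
The approach is to understand the truncation through the $\mathcal{S}$-data of $\EE$ and reduce it, by the operations $S$ and $D$ of \S\ref{sec-individual-blocks}, to a tempered multi-segment. Write $\EE=\EE(\mathcal{M},\mathcal{S},\mathcal{T},\eta)$, or $\EE(\mathcal{M},\mathcal{S},\eta)$ when $\mathcal{M}$ starts after zero. Removing the first $i$ rows deletes some hats, some chains (including possibly the chain $\mathcal{T}_j^0$ of a partially removed $\mathcal{S}_j$), and some multiples. Every remaining row still has the shape of a chain, hat or multiple, and the ordering and odd-alternating sign rules of Definition \ref{defn E(M,S)} are conditions on consecutive rows, so they survive in $\EE^{\tc}$. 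Hence $\EE^{\tc}$ is "$Y$-like" with multiplicities $m_c^{\tc}\le m_c$. It may fail to be genuinely of type $Y_{\mathcal{M}'}$ only because some column multiplicities drop to even values or to zero, or because a hat has lost the chain/multiples in column $B-1$ that it would dualize to.

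The key steps, in order, are as follows.

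\textbf{Step 1.} Apply $S$ (Lemma \ref{S existence general}) to every chain of $\EE^{\tc}$ until each chain has a single circle. $S$ is a local $ui^{-1}$ that only needs exchanges past multiples belonging to the chain, and these lie below the chain, so they are unaffected by truncation from the top. It is therefore valid both in $\EE^{\tc}$ and in $\EE$; for $\EE$ this follows from Corollary \ref{swap commutes with S} and Lemma \ref{lem-local}.

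\textbf{Step 2.} Dualize the remaining hats, working from the bottom hat upwards, using $D_{h,r}$ from Lemma \ref{D existence general}. The relevant row $r$ ending at $B(h)-1$ lies below $h$. Since truncation only removes rows above the first surviving row, every such $r$ used for a surviving hat $h$ is still present. The proof of Lemma \ref{D existence general} only involved rows between $h$ and $r$, together with Lemmas \ref{lem-multiplicity-cancel} and \ref{lem-swap commutes with D}, so the same operation exists in $\EE^{\tc}$ and in $\EE$.

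\textbf{Step 3.} Split off single circles again with $S$. This yields a multi-segment all of whose rows are single circles, i.e. tempered, with column multiplicities $m_c^{\tc}\le m_c$, because each operation preserves the number of circles in each column.

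The point of Steps 1--3 is that every operation performed involves only rows of $\EE^{\tc}$ and exchanges among them. By locality (Lemma \ref{lem-local}, Lemma \ref{lemma Alex}) and the commutation results (Corollaries \ref{swap commutes with M}, \ref{swap commutes with S}, \ref{swap commutes with U}, Lemma \ref{lem-swap commutes with D}), each of these operations can also be performed on $\EE$ while leaving the first $i$ rows untouched. This gives the "moreover" clause.

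The main obstacle is Step 2 in the case where the truncation cuts through the middle of some $\mathcal{S}_j$. Then a hat of $\mathcal{T}_j^{\ell}$ may survive while the chain $\mathcal{T}_j^0$, and possibly some lower hats of the same $\mathcal{S}_j$, are removed. Since hats are ordered above the chain in $(P')$ order by $A$, the rows with $A=B(h)-1$ are in fact lower, so I would first check carefully that the needed target row really survives. Failing that, I would merge the hats of $\mathcal{S}_j$ with $M$ (Lemma \ref{M existence general}) and dualize the merged hat to a multiple, as in Lemma \ref{multiple duds}. A second delicate point is verifying, as in Lemma \ref{lem-aj}, that the sign bookkeeping (Lemmas \ref{big swap down}, \ref{big swap up}) in the presence of now-even multiplicities still gives a consistent result. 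For this I would cancel pairs of equal multiples using Lemma \ref{lem-multiplicity-cancel} and reduce to the type $X_k$ computations.
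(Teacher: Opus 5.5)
Your proposal is correct and follows the paper's proof. The paper likewise turns $\EE$ into $\EE_{temp}$ by undualizing hats ($D$) and splitting rows of circles ($S$) via Corollary \ref{Type Y SMUD equivalence}, observes that each such operation only involves the operated row and rows below it (so it survives truncation from the top and can be performed on $\EE$ without touching the removed rows), and then compares column multiplicities. The obstacle you flag does not actually arise: in the $(P')$ order every hat ($B<0$) precedes every chain and multiple ($B\ge 0$), and a hat's target row ending at $B(h)-1$ always lies below it, so the preliminary $S$ step, the fallback via $M$, and the extra sign bookkeeping are unnecessary.
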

\begin{proof}
    Observe that $\EE$ can be transformed to $\EE_{temp}$ using two types of operations: undualizing hats, and splitting rows of circles (Corollary \ref{Type Y SMUD equivalence}). Undualizing a hat only involves rows between the hat and the row to which it is undualized. In particular, for any hat in $\EE^{\tc}$, since the truncation removes rows from the top, the corresponding row is also in $\EE^{\tc}$. So the undualization is a valid operation on $\EE^{\tc}$. Similarly, we see that splitting rows of circles can only involve a row of circles and the rows following it, so it is again a valid operation on $\EE^{\tc}$. So we can undualize all the hats in $\EE^{\tc}$, if there are any, and then split rows of circles until the result is tempered. 
    
    After performing all of these operations, we are left with the truncated rows followed by a tempered virtual extended multi-segment equivalent to $\EE^{\tc}$. The operations which take this virtual extended multi-segment to $\EE_{temp}$ only increase the multiplicity of each column, since undualizing hats increases multiplicities, while splitting rows of circles leaves them unchanged. Hence the multiplicity of each column in the extended multi-segment equivalent to $\EE^{\tc}$ must be at most the multiplicity of the corresponding column in $\EE_{temp}$.
\end{proof}

We are now read to begin the proof of independence of blocks. A key observation is that due to the maximality of blocks, there are only a few configurations that can occur between adjacent blocks. The following definition of type 1, type 2, and type 3 boundaries formalizes this.

\begin{defn}
    Suppose $\EE\in\VRep_\rho^\mathbb{Z}(G_n)$ is  equivalent to a tempered virtual extended multi-segment $\EE_{temp}$, and suppose $\EE_{temp}$ has a block decomposition $\BB_{1, temp} \cup \cdots \cup \BB_{k, temp}$. Suppose that $\EE$ has a decomposition $\BB_1 \cup \cdots \cup \BB_k$ with $\BB_i$ equivalent to $\BB_{i, temp}$. In this setup we give a classification of the ways in which $\BB_k$ could start, which depend on how it support overlaps with the support of $\BB_{k-1}$, which we call the \emph{boundary} between $\BB_k$ and $\BB_{k-1}$. Let $H_{col}$ be the last nonempty column in $\BB_{k-1}$ and let $N_{col}$ be the first nonempty column in $\BB_k$.
    \begin{itemize}
        \item A \emph{type 1 boundary} occurs when $N_{col} > H_{col}+1$.
        \item A \emph{type 2 boundary} occurs when $N_{col} = H_{col}$.
        \item A \emph{type 3 boundary} occurs when $N_{col} = H_{col}+1$.
    \end{itemize}
\end{defn}
    
Note that in the case of a type 2 boundary, the sign of the last circle in $\BB_{k-1, temp}$ must be the same as the sign of the first circle in $\BB_{k, temp}$, since $\BB_{k, temp}$ started a new block. Similarly in case of a type 3 boundary, the sign of the last circle in $\BB_{k-1, temp}$ must be the same as the sign of the first circle in $\BB_{k, temp}$. See Figure \ref{figure-boundary-types} below.
{\tiny
    \begin{figure}[ht]
        \begin{minipage}[t]{.3\textwidth}
            \centering
            \caption*{Type 1}
            $\bordermatrix{ & {\scriptstyle 0} & \cdots &  H_{col} & \cdots  & N_{col} & \cdots \cr 
            & & \ddots & & & & \cr
            & & & \oplus & \cdots & & \cr 
            & & & & & \ominus & \cr
            & & & & & & \ddots}$
        \end{minipage} \hfill
        \begin{minipage}[t]{.3\textwidth}
            \centering
            \caption*{Type 2}
            $\bordermatrix{ & 0 & \cdots & H_{col} &  & \cdots \cr 
            & & \ddots & & & \cr
            & & & \oplus & & \cr
            & & & \oplus & & \cr
            & & & & \ominus & \cr
            & & & & & \ddots }$
        \end{minipage} \hfill
        \begin{minipage}[t]{.3\textwidth}
            \centering
            \caption*{Type 3}
            $\bordermatrix{ & 0 & \cdots & H_{col} & N_{col} & \cdots \cr 
            & & \ddots & & & \cr
            & & & \oplus & & \cr
            & & & & \oplus & \cr
            & & & & & \ddots }$
        \end{minipage}
        \caption{Boundary Types}
        \label{figure-boundary-types}
    \end{figure}
    }
    
We also make a few observations about row exchanges across a boundary. A row exchange cannot occur across a boundary (i.e. between rows of the virtual extended multi-segments in question) of type 1 or type 3, since in those cases the supports of the two extended multi-segments are disjoint. 

Moreover, observe that in the case of a type 2 boundary, say between $\EE_1$ and $\EE_2$, at most one row exchange can occur across the boundary. Let $H_{col}$ be the last column of $\EE_1$, which is the same as the first column of $\EE_2$, and let $\EE_{1, temp}$ and $\EE_{2, temp}$ be the tempered extended multi-segments equivalent to $\EE_1$ and $\EE_2$. Note that $\EE_{2, temp}$ has exactly one circle in the first column of $\EE_{2, temp}$, since otherwise the circles would be included in $\EE_{1, temp}$. Since none of the operations can increase the support of a row, and none of the operations can create two rows with support in a column from one row with support in a column, $\EE_2$ can only have one row with support including the first column of $\EE_{2, temp}$, which must be $H_{col}$. By similar reasoning, the support of a row in $\EE_1$ cannot extend beyond $H_{col}$. So the only way for a row exchange to occur is with a single circle in $H_{col}$ from $\EE_2$, after which no other row exchanges can occur because every other row in $\EE_2$ has support not including $H_{col}$.

Now we are ready to state the main lemma of this subsection.

\begin{lemma}
\label{lem-independence-of-blocks} 
    Suppose that $\EE\in\VRep_\rho^\mathbb{Z}(G_n)$ is equivalent to a tempered virtual extended multi-segment $\EE_{temp}$. Let $\EE_{temp}$ have block decomposition $\BB_{1, temp} \cup \dots \cup \BB_{k, temp}$. Suppose that $\EE$ has a decomposition $\BB_1 \cup \dots \cup \BB_k$ with $\BB_i$ equivalent to $\BB_{i, temp}$. Then any raising operator $T$ (or inverse) on $\EE$ cannot involve (even by row exchanges) rows from both $\BB_{i_1}$ and $\BB_{i_2}$ for $i_1 < i_2$.
\end{lemma}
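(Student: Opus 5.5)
We will argue by induction on $k$, reducing to the interaction between $\BB_{k-1}$ and $\BB_k$, and then split according to the type of the boundary between them and the form of $T$. By the staircase property (Lemma \ref{lem-staircase}), every row of $\BB_{i_1}$ has $A\le B$ of every row of $\BB_{i_2}$ when $i_1<i_2$. Hence the supports of rows in different blocks meet in at most one column, and only across a type 2 boundary. In particular, a $ui$ not of type 3' can never involve rows from two different blocks. A $ui$ of type 3' between two such rows requires the first to end at $B-1$ of the second, which can only happen across a type 3 boundary. There the sign of the last circle of $\BB_{k-1,temp}$ equals the sign of the first circle of $\BB_{k,temp}$. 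We will show that this equality persists in $\BB_{k-1}$ and $\BB_k$, using Lemma \ref{lem-aj}: Part (1) handles the last circle of the first block, and Part (2), applied to $sh^1(\BB_k)$ via Lemma \ref{lem-later-blocks-exhaustion}, handles the first circle of the later block. Equal signs make the alternating sign condition fail, so $\epsilon=1$ and Case 3 of Definition \ref{ui def} is excluded.

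Next we treat $ui$ and $ui^{-1}$ preceded by row exchanges. By Lemma \ref{lem-condense-row-swaps} we may assume the relevant row $r$ moves monotonically. Row exchanges across type 1 and type 3 boundaries are impossible, and across a type 2 boundary at most one can occur, namely with the single circle of $\BB_k$ in column $H_{col}$. For a $ui^{-1}$ of type 3' this forces the split row to lie inside one block, since a split row from $\BB_{k-1}$ would need circles beyond $H_{col}$. For a $ui$ in which $r\in\BB_{k-1}$ is brought to the bottom of its block, Lemma \ref{lem-truncations} together with Lemma \ref{lem-aj}(1) shows that the image has $l=0$ and that its last circle has the sign of the last circle of $\BB_{k-1,temp}$. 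The same sign comparison as above then rules out a union-intersection with the first row of $\BB_k$. The type 2 case is handled by a direct computation with Definition \ref{def row exchange}, using Lemma \ref{lem-multiplicity-cancel}: the single circle of $\BB_k$ at $H_{col}$ passes the odd stack of equal-signed circles of $\BB_{k-1}$ at $H_{col}$ and emerges unchanged, so no $ui$ becomes available.

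The operators $dual\circ ui^{\pm1}\circ dual$ will be handled by passing to $dual(\EE)=dual(\BB_k)\cup\dots\cup dual(\BB_1)$, up to a sign change on each part. In the dual, every row coming from $\BB_{k}$ contains every row coming from $\BB_{k-1}$, so a non-type-3' $ui$ across blocks is impossible for support reasons. A type 3' interaction would require some $\widehat r$ from a later block to be exchanged down to a row with $l=0$ adjacent to a row from an earlier block. Lemma \ref{lem-later-blocks-exhaustion} already shows that no $dual\circ ui^{-1}\circ dual$ is possible on a later block, and the same row exchange computation shows that $l\ge 1$ after crossing the first block (Case 1(b)). This excludes the corresponding $ui$ as well.

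The main obstacle is tracking the signs in the type 2 and type 3 cases after arbitrary prior operations, so that the sign equality of the tempered boundary persists. The plan is to derive this entirely from Lemma \ref{lem-aj} and Lemma \ref{lem-truncations}, which reduce any such configuration to the tempered one.
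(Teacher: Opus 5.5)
Your outline matches the paper's: a case split on the form of $T$, the staircase property, monotone exchanges via Lemma \ref{lem-condense-row-swaps}, and Lemma \ref{lem-aj} to carry the boundary signs over. The gap is the reduction to the pair $(\BB_{k-1},\BB_k)$: it is not justified, and the support claim it rests on is false for non-adjacent blocks. You assert that a type 3' union-intersection between rows of different blocks needs $B(r_2)=A(r_1)+1$, which ``can only happen across a type 3 boundary''.

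A counterexample is the case where both boundaries around $\BB_{k-1}$ are of type 2 and $\BB_{k-1}$ occupies exactly the columns $H$ and $H+1$.
\begin{itemize}
\item A row $r_1\in\BB_{k-2}$ ending at $H$ can be exchanged past the lone circle of $\BB_{k-1}$ in column $H$.
\item A row $r_2\in\BB_k$ starting at $H+1$ can be exchanged up past the circles of $\BB_{k-1}$ in column $H+1$.
\item The two rows then become adjacent with $B(r_2)=A(r_1)+1$, and the last circle of $r_1$ and the first circle of $r_2$ have \emph{opposite} signs.
\end{itemize}
So your sign obstruction gives nothing here. What excludes the $ui$ is a row-exchange computation. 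If $C(r_1)\ge 2$, then $r_1$ is all circles, and its last circle has the sign of the lone circle it passes (Lemma \ref{lem-aj}). That exchange is therefore in Case 1(b) of Definition \ref{def row exchange}, so $l(r_1')=1$. If $C(r_1)=1$, the exchange is trivial and you are back to adjacent blocks. The paper handles this configuration, and the mixed ones (type 3 then type 2, type 2 then type 3), separately. Induction on $k$ does not absorb them. Such a $T$ has its $ui$ touching $\BB_k$, so it is not an operation on $\BB_1\cup\dots\cup\BB_{k-1}$, and single row exchanges across a type 2 boundary are genuinely allowed.

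The same Case 1(b)/2(b) computation is missing from your adjacent type 2 analysis. Your sentence about the lone circle of $\BB_k$ passing an odd stack of equal-signed circles assumes column $H_{col}$ of $\BB_{k-1}$ is still a stack of single circles. After merges it may instead be covered by a chain $r_1$ with $C(r_1)\ge 2$. The dangerous move is then $r_1$ crossing that lone circle and meeting a row of $\BB_k$ that starts at $H_{col}+1$ with the opposite first sign. This is excluded only because $r_1'$ acquires a pair of triangles. The same fact, rather than ``needing circles beyond $H_{col}$'', is what forbids a cross-block $ui^{-1}$ of type 3'.

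The dual side has two further holes.
\begin{itemize}
\item You omit the case where a row of $dual(\BB_{i_1})$ is exchanged \emph{up} into $dual(\BB_{i_2})$. The paper rules this out with a $(P')$-order argument.
\item For the downward case you cannot just cite Lemma \ref{lem-later-blocks-exhaustion}, whose statement covers only operations involving rows of the last block. You need the paper's counting argument. Each exchange lowers $l$ by at most one, and by exactly one only in Case 1(c), or in Case 1(a) with $C=0$. So $\widehat r$ would have to meet a hat of every width below $|B(\widehat r)|$, after cancelling equal pairs by Lemma \ref{lem-multiplicity-cancel}. This fails at a type 1 boundary because a width is missing. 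It fails at a type 2 boundary because the duals of the column-$H_{col}$ circles cancel. It fails at a type 3 boundary because the alternating sign condition fails, which would force Case 1(a) with $C(\widehat r')=0$, and that is impossible.
\end{itemize}
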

\begin{proof}
    Recall from Theorem \ref{block classification} that any virtual extended multi-segment equivalent to a block is of type $Y_\mathcal{M}$, up to row exchanges. Since these row exchanges do not involve rows from different parts of the decomposition, we can assume without loss of generality that each $\BB_i$ is of type $Y_\mathcal{M}$.
    
    We have four possible operations: $ui\inv$ of type 3', $dual \circ ui\inv \circ dual$ where the $ui\inv$ is of type 3', $dual \circ ui \circ dual$, and $ui$.

    \underline{Operations of the form $ui\inv$ of type 3'}

    First we show that no $ui\inv$ operation of type 3' can occur involving a row from $\BB_{i_1}$ and a row from $\BB_{i_2}$. Note that a general $ui\inv$ operation of type 3' consists of some number of row exchanges, followed by $ui\inv$, followed by the inverse row exchanges. Consequently, we need to rule out the row exchanges affecting another block.
    Suppose $ui\inv$ is applied to a row $r$. By Lemma \ref{lem-condense-row-swaps}, it suffices to consider only monotonic row exchanges involving $r$.
    
    The only way this operation could involve a row from $\BB_{i_1}$ and a row from $\BB_{i_2}$ is if $r$ lies in $\BB_{i_1}$ and it is exchanged into a position in $\BB_{i_2}$, or vice versa. However we know that exchanges across a boundary of type 1 and type 3 are impossible, and the only exchange across a boundary of type 2 is a row exchange with exactly one circle in $H_{col}$. So suppose we have one of these row exchanges. By Lemma \ref{lem-aj}, a row becomes a row of circles after it is exchanged to the end of one of the $\BB_i$. In order for the row exchange to be nontrivial, we need $r$ to have at least two circles, since it is being exchanged with a row with exactly one circle. (Otherwise, both rows would have to be supported in the same column, which would mean they are identical.) Therefore we lie in Case 1(b) or Case 2(b) of Definition \ref{def row exchange}, which means that after the row exchange, $l(r') = 1$. In particular, no $ui\inv$ of type 3' on $r'$ is possible.

    \underline{Operations of the form $dual \circ ui\inv \circ dual$ of type 3'}
    
    Second we show that no $dual \circ ui\inv \circ dual$ operation involving a row from $\BB_{i_1}$ and a row from $\BB_{i_2}$ is possible. Again by Lemma \ref{lem-condense-row-swaps} it suffices to consider two possibilities: either a row $\widehat{r}$ from $dual(\BB_{i_1})$ is exchanged up into $dual(\BB_{i_2})$ (recall from Definition \ref{def dual} that the order is reversed), or a row $\widehat{r}$ from $dual(\BB_{i_2})$ is exchanged down into $dual(\BB_{i_1})$.
    
    When $r$ lies in $\BB_{i_1}$ and we exchange $\widehat{r}$ up, no $dual \circ ui\inv \circ dual$ operation is possible due to support reasons. Observe that $\EE$ is in $(P')$ order. This is because each $\BB_i$ is of type $Y_\mathcal{M}$, so it is in $(P')$ order, and the block decomposition has the staircase property that for a row $r_1$ lying in $\BB_{i_1}$ and a row $r_2$ lying in $\BB_{i_2}$ for $i_1 < i_2$, we have $A(r_1) \leq B(r_2)\leq A(r_2)$.

    Since $\EE$ is in $(P')$ order, so is $dual(\EE)$. Now suppose $\widehat{r}$ is exchanged with some row, say $\widehat{s}$, before the $ui\inv$ occurs. Since the order is $(P')$, and $\widehat{r}$ is being exchanged up (by assumption), we must have $B(\widehat{s}) \leq B(\widehat{r})$. Let $\widehat{r}'$ be the image of $\widehat{r}$ after the row exchanges, and let $\widehat{r_1}$ and $\widehat{r_2}$ be the result of applying the $ui\inv$ operation to $\widehat{r}'$, with $B(\widehat{r_1}) < B(\widehat{r_2})$. Since the operation only row exchanges $\widehat{r_2}$, after the $ui\inv$ operation, the row $\widehat{r_1}$ comes before $\widehat{s}$. Since $dual$ requires the extended multi-segment to have $(P')$ order, we must have $B(\widehat{r_1}) \leq B(\widehat{s})$. But $B(\widehat{r_1}) = B(\widehat{r})$, so we conclude $B(\widehat{s}) = B(\widehat{r})$. However we know the row exchange between $\widehat{r_2}$ and $\widehat{s}$ is possible. By the above reasoning the first column of $\widehat{r}$ lies in the support of $\widehat{s}$, but it clearly does not lie in the support of $\widehat{r_2}$. So we have a contradiction.

    When $r$ lies in $\BB_{i_2}$ and we exchange $\widehat{r}$ down, we can apply a similar analysis as in the proof of Lemma \ref{U existence}, except that this time the operation is not possible. By truncating the extended multi-segment, we can assume that $\widehat{r}$ is exchanged to the last row. By Lemma \ref{lem-truncations} the truncation is equivalent to a tempered extended multi-segment. Since the decomposition satisfies the staircase property (Lemma \ref{lem-staircase}) and $\BB_{i_2}$ is $(P')$ order (as it is type $Y_\mathcal{M}$), the support of $\widehat{r}$ must contain the support of all rows below it. So by Corollary \ref{cor Alex} we can assume the truncation is indeed tempered, or in other words that $\widehat{r}$ is swapped with rows with $C=1$. Let $\widehat{r}'$ be the image of $\widehat{r}$ after all row exchanges, or all but one row exchange in the case that the multiplicity in the first column of the reduction is even. By examining the cases of row exchange, we see that in these cases $l$ cannot decrease by more than 1 for every row exchange. Moreover, the only cases in which $l$ decreases by exactly 1 is in Case 1(c), or Case 1(a) and $C(\widehat{r}) = 0$. Since a row exchange with two hats with identical widths leaves $\widehat{r}$ unchanged (Lemma \ref{lem-multiplicity-cancel}), in order for $l(\widehat{r}')$ to be $0$, every row exchange (ignoring multiplicities) must decrease $l$ by exactly $1$. Moreover, there must be a row exchange with a row with every width less than $|B(\widehat{r})|$. This is impossible if $\widehat{r}$ encounters a boundary of type 1.

    In the case of a boundary of type 2, we note that the row exchanges with the duals of the circles in $H_{col}$ leave $\widehat{r}$ unchanged. In particular, the net effect is that there are no row exchanges with hats with width $H_{col}$. So once again we have a contradiction.
    
    In the case of a boundary of type 3, we see that since the circles in $H_{col}$ and $N_{col}$ fail the alternating sign condition, so do their corresponding hats in the dual (Lemma \ref{Alternating dual}). In particular, the image of $\widehat{r}$ after row exchanges will fail the alternating sign condition with the hat of width $H_{col}$. In order for $l(\widehat{r}')$ to be $0$, by the reasoning above this forces the row exchange to be Case 1(a) and have $C(\widehat{r}) = 0$. However the latter is impossible since $C(\widehat{r})$ starts nonzero, and only increases at each step. Hence in any case $\widehat{r}$ cannot be exchanged across a boundary and still have $l(\widehat{r}') = 0$. This shows no $dual \circ ui\inv \circ dual$ of type 3' is possible.

    \underline{Operations of the form $dual \circ ui \circ dual$}

    Third we show that no operations $dual \circ ui \circ dual$ can occur between a rows in $\BB_{i_1}$ and $\BB_{i_2}$. Since the decomposition has the staircase property (Lemma \ref{lem-staircase}), for any $r_1 \in \BB_{i_1, temp}$ and $r_2 \in \BB_{i_2, temp}$, $\supp(\widehat{r_2}) \supset \supp(\widehat{r_1})$. So there is no nontrivial union-intersection possible between different blocks.

    \underline{Operations of the form $ui$}
    
    Fourth we show that no $ui$ operations can occur between a row $r_1$ in $\BB_{i_1}$ and a row $r_2$ in $\BB_{i_2}$. We have two cases based on $i_1.$
    
    \textbf{Case 1.} First suppose that the row $r_1$ lies in some $\BB_i$ for $i<i_2-1$. Note that in the case of a type 1 or type 3 boundary, the maximum of the supports of all rows in the first extended multi-segment is at least 1 less than the minimum of the supports of all rows in the second extended multi-segment. Also, in the case of two consecutive type 2 boundaries, the middle extended multi-segment must have at least two columns, since blocks are maximal. So the maximum of the supports of the rows in the first extended multi-segment is at least 1 less than the minimum of the supports of the rows in the third extended multi-segment. In order for $r_1$ and $r_2$ to have a valid union-intersection operation, we must have that  $A(r_1) \geq B(r_2)-1$. So $r_1$ cannot lie in $\BB_i$ for $i<i_2-2$, and if $r_1$ lies in $\BB_{i_2-2}$, then the boundaries between $\BB_{i_2-2}$, $\BB_{i_2-1}$ and $\BB_{i_2-1}$, $\BB_{i_2}$ must be either both type 2, or one of them is type 2 and one of them is type 3.

    In the case that both boundaries are type 2, it must be that $\BB_{i_2-1}$ has exactly two columns, say $H_{col}$ and $H_{col}+1$, as shown below. 
    \[\begin{tikzpicture}[baseline=(current bounding box.north)]
      \matrix (m) [matrix of math nodes, nodes in empty cells] {
        0 & \cdots & H_{col} & H_{col} + 1 & \cdots\\
        & \ddots & \phantom{\oplus} & \BB_{i_2-2} & \\
        & \cdots & \ominus & & \BB_{i_2-1} \\
        & & \ominus & &  \\
        & & & \oplus & \BB_{i_2} \\
        & & & \oplus & \phantom{\oplus} \\
      } ;
      \draw (m-3-1.south west) -- (m-3-3.south east);
      \draw (m-3-3.south east) -- (m-2-3.north east);
      \draw (m-6-4.north west) -- (m-6-5.north east);
      \draw (m-6-4.north west) -- (m-6-4.south west);
      \draw (m-4-3.north west) rectangle (m-5-4.south east);
    \end{tikzpicture}\]    
    For support reasons, the only way for the union-intersection to occur is if $r_1$ is exchanged to the first position in $\BB_{i_2-1}$, and $r_2$ is exchanged to the position immediately after. If either $r_1$ or $r_2$ has only one circle, then the first row exchange for $r_1$ or $r_2$ is trivial, so the $ui$ is equivalent to a $ui$ between adjacent parts of the decomposition, which is covered in the cases below. But if $r_1$ has more than one circle, the row exchange with the first row of $\BB_{i_2-1}$ is Case 1(b) of Definition \ref{def row exchange}, since by Lemma \ref{lem-aj} the sign of the last circle of $r_1$ is the same as the sign of the last circle of $\BB_{i_2-2, temp}$, which is the same as the sign of the first circle of $\BB_{i_2-1, temp}$, which is the same as the sign of the first circle of $\BB_{i_2-1}$. So the result after the row exchange, denoted $r_1'$, has $l(r_1') = 1$, so no $ui$ is possible.
    
    In the case that one of the boundaries is type 2 and one of the boundaries is type 3, first suppose that the boundary between $\BB_{i_2-2}$ and $\BB_{i_2-1}$ is of type 3, and the boundary between $\BB_{i_2-1}$ and $\BB_{i_2}$ is of type 2. Let the last column of $\BB_{i_2-2}$ be $H_{col}$. In order for the $ui$ to be possible, it must be that $\BB_{i_2-1}$ is only supported in one column, namely $H_{col}+1$, as depicted below.
    \[\begin{tikzpicture}[baseline=(current bounding box.north)]
      \matrix (m) [matrix of math nodes, nodes in empty cells] {
        0 & \cdots & H_{col} & H_{col} + 1 & \cdots\\
        & \ddots & \phantom{\oplus} & \BB_{i_2-2} & \\
        & \cdots & \oplus & \BB_{i_2-1} & \\
        & & & \oplus & \BB_{i_2} \\
        & & & \oplus & \phantom{\oplus} \\
        & & & \phantom{\oplus} & \ominus \\
      } ;
      \draw (m-3-1.south west) -- (m-3-3.south east);
      \draw (m-3-3.south east) -- (m-2-3.north east);
      \draw (m-5-4.north west) -- (m-5-5.north east);
      \draw (m-5-4.north west) -- (m-6-4.south west);
      \draw (m-4-4.north west) rectangle (m-4-4.south east);
    \end{tikzpicture}\]
    
    The union-intersection must be performed by exchanging $r_1$ to the last row of $\BB_{i_2-2}$ and exchanging $r_2$ to the first row of $\BB_{i_2-1}$. By Lemma \ref{lem-multiplicity-cancel}, without loss of generality we can assume $\BB_{i_2-1}$ has only one circle. Then the row exchange for $r_2$ is in Case 2(b) of Definition \ref{def row exchange} by Lemma \ref{lem-aj}, so the result $r_2'$ has $l(r_2') = 1$ and hence no $ui$ is possible.
    
    Now suppose that the boundary between $\BB_{i_2-2}$ and $\BB_{i_2-1}$ is of type 2, and the boundary between $\BB_{i_2-1}$ and $\BB_{i_2}$ is of type 3. In this case, in order for the support of $r_1$ to include $H_{col}-1$, it must be that $\BB_{i_2-1}$ lies only in the column $H_{col}-1$. Since it follows a boundary of type 2, it must be a single circle. In summary, we have the following situation:
    \[\begin{tikzpicture}[baseline=(current bounding box.north)]
      \matrix (m) [matrix of math nodes, nodes in empty cells] {
        0 & \cdots & H_{col} & H_{col} + 1 & \cdots\\
        & \ddots & \phantom{\oplus} & & \\
        & \cdots & \oplus & \BB_{i_2-2} & \\
        & & \oplus & \BB_{i_2-1} & \BB_{i_2} \\
        & & & \oplus & \phantom{\oplus} \\
        & & & \phantom{\oplus} & \ominus \\
      } ;
      \draw (m-3-1.south west) -- (m-3-3.south east);
      \draw (m-3-3.south east) -- (m-2-3.north east);
      \draw (m-5-4.north west) -- (m-5-5.north east);
      \draw (m-5-4.north west) -- (m-6-4.south west);
      \draw (m-4-3.north west) rectangle (m-4-3.south east);
    \end{tikzpicture}\]
    In this case, by Lemma \ref{lem-aj}, when row $r_1$ is exchanged so that it takes the place of the last row in $\BB_{i_2-2}$, it must be a row of only circles ending in the same sign as $\BB_{i_2-2}$. After another row exchange with the only row of $\BB_{i_2-1}$, it contains a pair of triangles. So no union-intersection is possible with row $r_2$, which after row exchanges, by Lemma \ref{lem-aj}, is a row of circles starting at $H_{col}+1$.
    
    \textbf{Case 2.} Second suppose that $r_1$ lies in $\BB_{i_2-1}$. If the boundary between $\BB_{i_2-1}$ and $\BB_{i_2}$ is type 1 or type 3, since no row exchanges can occur across the boundary, in order for a union-intersection to occur between a row of $\BB_{i_2-1}$ and a row of $\BB_{i_2}$, it must be that a row of $\BB_{i_2-1}$ is exchanged to the last row of $\BB_{i_2-1}$, and a row of $\BB_{i_2}$ is exchanged to the first row of $\BB_{i_2}$. Let $r_1'$ be the image of the last row of $\BB_{i_2-1}$ after the row exchanges, and let $r_2'$ be the image of the first row of $\BB_{i_2}$ after row exchanges. By Lemma \ref{lem-aj}, $l(r_1') = 0$ and $l(r_2') = 0$. However for a type 1 boundary, no $ui$ is possible since $B(r_2') > A(r_1') + 1$. In a type 3 boundary, for support reasons the $ui$ would have to be of type 3', since $A(r_1') = H_{col}$ and $B(r_2') = H_{col}+1$. By Lemma \ref{lem-aj}, since the sign of the last circle of $\BB_{i_2-1, temp}$ is the same as the sign of the first circle of $\BB_{i_2, temp}$, the sign of the last circle of $r_1'$ is the same as the sign of the first circle of $r_2'$. Hence no $ui$ of type 3' is possible.

    If the boundary between $\BB_{i_2-1}$ and $\BB_{i_2}$ is type 2, then there are two ways a union-intersection might occur. First, $r_1$ is exchanged across the boundary, which can only happen if it is exchanged with the first circle of $\BB_{i_2}$. In this case, since by Lemma \ref{lem-aj} the row contains only circles when it is the last row of $\BB_{i_2-1}$, it contains a pair of triangles when it is the first row of $\BB_{i_2}$. However $r_2$ after row exchanges, by Lemma \ref{lem-aj}, is a row of circles starting at $H_{col}+1$, so no union-intersection is possible. Second, $r_1$ is not exchanged across the boundary. After row exchanges $r_1$ and $r_2$ are both only circles, say $r_1'$ and $r_2'$, so the only possible $ui$ is of type 3'. However the sign of the last circle of $\BB_{i_2-1}$ is the same as the sign of the first circle of $\BB_{i_2}$, so by Lemma \ref{lem-aj}, the sign of the last circle of $r_1'$ is the same as the sign of the first circle of $r_2'$. This means that union-intersection is not possible.
\end{proof}

We now prove the main result of this subsection, namely, that the blocks occurring in the block decomposition control the equivalence of tempered virtual extended multi-segments.

\begin{prop}[Independence of blocks]
\label{prop-independence-of-blocks} 
    Let $\EE\in\VRep_\rho^\mathbb{Z}(G_n)(G)$ be  equivalent to a tempered virtual extended multi-segment $\EE_{temp}$. If $\EE_{temp}$ has a block decomposition $\BB_{1, temp} \cup \cdots \cup \BB_{k, temp}$, then there exists a decomposition $\EE = \BB_1 \cup \cdots \cup \BB_k$ such that $\BB_i$ is equivalent to $\BB_{i, temp}$ for all $i$.
\end{prop}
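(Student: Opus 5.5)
The plan is induction on the length of a chain of operators connecting $\EE_{temp}$ to $\EE$. By Theorem \ref{thm intersections of local Arthur packets}(2), $\EE$ and $\EE_{temp}$ are related by a finite sequence $\EE_{temp}=\EE^{(0)},\EE^{(1)},\dots,\EE^{(N)}=\EE$, where each step is a raising operator, its inverse, or a row exchange. I would prove the stronger statement that every $\EE^{(j)}$ admits a decomposition $\BB^{(j)}_1\cup\dots\cup\BB^{(j)}_k$ with $\BB^{(j)}_i\sim\BB_{i,temp}$, where the rows of $\BB^{(j)}_i$ are the rows ``descended'' from $\BB_{i,temp}$. The base case $j=0$ is the block decomposition itself, which exists and is unique by Lemma \ref{lem-unique-block-decomp}.

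For the inductive step, suppose $\EE^{(j)}$ has such a decomposition and $T$ is the operator taking $\EE^{(j)}$ to $\EE^{(j+1)}$. If $T$ is a raising operator or its inverse, Lemma \ref{lem-independence-of-blocks} says $T$ involves rows from only one piece $\BB^{(j)}_{i}$, including the rows used in its auxiliary row exchanges. By locality (Lemma \ref{lem-local}), every other piece is unchanged. Any sign changes coming from row exchanges inside $\BB^{(j)}_i$ are global on the rows they touch and stay inside that piece. So $T$ restricts to an operator on $\BB^{(j)}_i$ viewed as its own virtual extended multi-segment, and I set $\BB^{(j+1)}_i:=T(\BB^{(j)}_i)\sim\BB_{i,temp}$. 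The subtle point is the $dual$ in $dual\circ ui\circ dual$. I would use the observation from the proof of Lemma \ref{lem-first-block} that $dual(\EE^{(j)})=dual(\BB^{(j)}_k)\cup\dots\cup dual(\BB^{(j)}_1)$ up to a global sign on each piece. Together with Lemma \ref{Alternating dual}, this means the operation computed inside $\EE^{(j)}$ agrees with the one computed on the isolated piece. For pieces after the first, the comparison goes through $sh^1$, as in Lemma \ref{lem-later-blocks-exhaustion}, so that the equivalence is to $\BB_{i,temp}$ regarded with the correct starting behavior.

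If $T$ is a row exchange, I use the boundary analysis that precedes Lemma \ref{lem-independence-of-blocks}. Across type 1 and type 3 boundaries, row exchanges are impossible because the supports are disjoint. Within a piece, a row exchange keeps the piece equivalent to itself. The delicate case is the single possible exchange across a type 2 boundary, namely a row of $\BB_{i-1}$ with the lone circle in $H_{col}$ from $\BB_i$. Here I would assign the rows by origin rather than by position. The decomposition then may not be contiguous, so I expect to need the fact that the single-circle row can always be exchanged back. I would therefore arrange, using Lemma \ref{lem-comm} and Lemma \ref{lem-condense-row-swaps}, that row exchanges are pushed to the end of the sequence, or absorbed into the raising operators' own row exchanges. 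This lets me work with representatives in which each piece occupies consecutive rows and has staircase support, as in Lemma \ref{lem-staircase}.

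The main obstacle is exactly this bookkeeping: showing that after an operator applied across the whole $\EE^{(j)}$, the rows belonging to $\BB^{(j)}_i$ form something genuinely equivalent to $\BB_{i,temp}$ as an isolated object. In particular I must check that sign twists induced by exchanges near a type 2 boundary do not alter the piece's sign $\eta$; this does not follow directly from the lemmas above. I would handle it with Lemma \ref{lem-multiplicity-cancel} and Lemma \ref{lem-aj}, which show that an exchange with a single circle is undone or leaves the first and last circle signs intact. That reduces the claim to Theorem \ref{block classification}: the piece is of type $Y_{\mathcal{M}}$ with the correct sign, hence equivalent to $\BB_{i,temp}$.
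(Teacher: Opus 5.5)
Your handling of a raising operator or its inverse is exactly the paper's proof, and it is the entire proof. Lemma \ref{lem-independence-of-blocks} confines the operator, together with its internal row exchanges, to a single piece $\BB_{i_0}$. Lemma \ref{lem-local} then leaves every other piece untouched, so you simply replace $\BB_{i_0}$ by its image under the corresponding operator. In the paper, $\EE\sim\EE'$ is \emph{defined} as being related by raising operators and their inverses. So the chain from $\EE_{temp}$ to $\EE$ contains no bare row exchanges, and you should not go through Theorem \ref{thm intersections of local Arthur packets}(2), which only gives $\pi(\EE)=\pi(\EE_{temp})$. Your remarks that $dual$ acts piecewise up to a sign on each piece, and that later pieces are compared through $sh^1$, are reasonable refinements of the paper's terse ``$T'$ denotes the corresponding operation''.

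The row-exchange layer is not just unnecessary. Once bare row exchanges are allowed, the strengthened statement you announce is false. Your proposed repair via Lemma \ref{lem-multiplicity-cancel} and Lemma \ref{lem-aj} also fails, because an exchange across a type 2 boundary is neither undone nor harmless: it changes $l$. Here is a concrete counterexample.
\begin{itemize}
\item Take $\EE_{temp}=\{([0,0],0,+),([1,1],0,-),([1,1],0,-),([2,2],0,+)\}$. Its blocks $\{([0,0],0,+),([1,1],0,-)\}$ and $\{([1,1],0,-),([2,2],0,+)\}$ meet in a type 2 boundary at column $1$.
\item A $ui$ of type 3' on the first two rows produces $([1,0],0,+)$.
\item Exchange the lone column-$1$ circle of the second block above this row. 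This falls in Case 1(b) of Definition \ref{def row exchange} and yields $([1,1],0,+)$, $([1,0],1,\ast)$, $([2,2],0,+)$, which still has the same $\pi$.
\item Comparing $\sum a_ib_i$ (row values $3,4,5$) forces $\BB_1=\{([1,0],1,\ast)\}$. This is also what assigning rows by origin gives. It is not a prefix of the new order.
\item Nor can it be equivalent to $\BB_{1,temp}$. The row $([1,0],0,+)$ is equivalent to $\BB_{1,temp}$, has the same support, and has a different $l$, so Corollary \ref{cor same support} rules this out.
\end{itemize}
So drop your third and fourth paragraphs and run the induction over chains of raising operators and their inverses only, which is precisely what the paper does.
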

\begin{proof}
    Since $\EE$ is equivalent to $\EE_{temp}$, there exists a sequence of raising operators and their inverses taking $\EE_{temp}$ to $\EE$. Suppose the first operation sends $\EE_{temp}$ to $\EE^{(1)}$, then the second operation sends $\EE^{(1)}$ to $\EE^{(2)}$, and so on. Note that a decomposition of the desired form exists on $\EE_{temp}$, namely the decomposition into blocks. So it suffices to show that if $\EE^{(s)}$ has a decomposition of the desired form, after applying a raising operator or its inverse, we can still find a suitable decomposition. Then by repeating this procedure, we find a suitable decomposition of $\EE$.

    So suppose we have some raising operator or inverse $T$ on $\EE^{(s)}$, assuming that $\EE^{(s)}$ has some decomposition $\BB_1 \cup \cdots \cup \BB_k$ with $\BB_i$ equivalent to $\BB_{i, temp}$. By Lemma \ref{lem-independence-of-blocks}, $T$ cannot involve (either by union-intersection or by row exchanges) a row from $\BB_{i_1}$ and a row from $\BB_{i_2}$ for $i_1 \neq i_2$. So $T$ involves only rows from $\BB_{i_0}$ for some $i_0$. By Lemma \ref{lem-local}, this means $T$ does not affect rows from $\BB_i$ for $i \neq i_0$. Then $T(\EE^{(s)}) = \BB_i \cup \cdots \cup T'(\BB_{i_0}) \cup \cdots \cup \BB_k$, where $T'$ denotes the corresponding operation. Therefore, we have found a suitable decomposition of $T(\EE^{(s)})$.
\end{proof}

We end this subsection with a proof of Theorem \ref{thm-count-temp} which we recall below (with slightly different notation).

\begin{thm}[count for tempered representations]\label{thm-count_for_tempered-in-paper}
    Consider a tempered virtual extended multi-segment $\EE_{temp} \in\VRep_\rho^\mathbb{Z}(G_n)(G)$  and suppose that $\EE_{temp}$ decomposes into blocks $\BB_{1, temp} \cup \dots \cup \BB_{k, temp}$ in that order. Then \[|\Psi(\pi(\EE_{temp}))| = |\Psi(\pi(\BB_{1,temp}))| \cdot \prod_{i=2}^k |\Psi(\pi(sh^1(\BB_{i,temp})))|.\]
\end{thm}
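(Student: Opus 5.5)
The plan is to build an explicit bijection
\[
\Psi(\pi(\EE_{temp})) \;\longleftrightarrow\; \Psi(\pi(\BB_{1,temp}))\times\prod_{i=2}^k \Psi(\pi(sh^1(\BB_{i,temp}))),
\]
using Proposition \ref{prop-independence-of-blocks} for one direction and Lemmas \ref{lem-first-block} and \ref{lem-later-blocks-existence} for the other. Given $\EE\sim\EE_{temp}$, Proposition \ref{prop-independence-of-blocks} yields a decomposition $\EE=\BB_1\cup\cdots\cup\BB_k$ with $\BB_i\sim\BB_{i,temp}$. By Lemma \ref{lem-later-blocks-exhaustion}, for $i\geq 2$ the piece $\BB_i$ is of type $Y^{>0}_{\mathcal{M}}$, so $sh^1(\BB_i)$ is equivalent to $sh^1(\BB_{i,temp})$. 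The map sends $\psi_\EE$ to $(\psi_{\BB_1},\psi_{sh^1(\BB_2)},\dots,\psi_{sh^1(\BB_k)})$.

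For well-definedness and injectivity, note that the staircase property (Lemma \ref{lem-staircase}), together with the $(P')$ order in each piece, determines the order of $\EE$ from the orders of the pieces. Also $\supp(\EE)$ is the ordered concatenation of the $\supp(\BB_i)$. The decomposition should be unique, since it is recovered from supports: the columns of consecutive blocks overlap in at most the single column $H_{col}$ at a type 2 boundary, and there the row count is determined. Hence $\psi_\EE$ determines the tuple. Conversely, by Lemma \ref{packets distinct} applied blockwise, equal tuples give equal supports, and Corollary \ref{cor same support} then gives equal $\EE$. So the map is injective.

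For surjectivity, start from $\EE_{temp}$. Change the first block to an arbitrary $\BB_1'\sim\BB_{1,temp}$ using Lemma \ref{lem-first-block}. Then, proceeding from the last block backward, replace $\BB_i$ by any $\BB_i'$ of type $Y^{>0}_{\mathcal{M}}$ using Lemma \ref{lem-later-blocks-existence}. That lemma is stated only for the last block, so we must check that its proof, which relies only on the locality of $S$ and $S^{-1}$, on the staircase property, and on the fact that these operations preserve the minimum of $B$ and the maximum of $A$ within the block, works for every middle block. By Theorem \ref{block classification} applied to $sh^1(\BB_{i,temp})$, the type $Y^{>0}_{\mathcal{M}}$ pieces correspond exactly to the elements of $\Psi(\pi(sh^1(\BB_{i,temp})))$. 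Doing the blocks independently yields any prescribed tuple.

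The main obstacle is surjectivity for the middle blocks. An $S$ operation in $\BB_i$ involves row exchanges, and one must ensure these neither cross a type 2 boundary nor break admissibility with $\BB_{i+1}$. I would handle this with the boundary analysis already carried out in the proof of Lemma \ref{lem-independence-of-blocks}, together with Lemma \ref{lem-aj}.
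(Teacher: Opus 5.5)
Your proposal is correct and follows essentially the paper's proof. For one inclusion you use Proposition \ref{prop-independence-of-blocks} together with Lemma \ref{lem-later-blocks-exhaustion}, which must be applied inductively along a chain of operations from $\EE_{temp}$, each operation localized to one block by Lemma \ref{lem-independence-of-blocks} and the lemma applied to the truncation $\BB_1\cup\cdots\cup\BB_i$. For the converse you use Lemmas \ref{lem-first-block} and \ref{lem-later-blocks-existence}, and your explicit well-definedness check is something the paper leaves implicit. For middle blocks the paper just applies Lemma \ref{lem-later-blocks-existence} to successive truncations, working forward from $\BB_2$. The obstacle you anticipate is lighter than you fear: by the staircase property no row of a later block begins at or before column $A(r)-c$, so the row exchanges in $S_{r,c}$ stay inside the block. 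Moreover $S^{\pm1}$ change neither the block's maximal $A$ nor its minimal $B$, so admissibility with neighboring blocks is automatic and no boundary analysis is needed.
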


\begin{proof}
    Let $\EE\in\VRep_\rho^\mathbb{Z}(G_n)(G)$ be a virtual extended multi-segment equivalent to $\EE_{temp}$.
    By Proposition \ref{prop-independence-of-blocks}, $\EE$ has a decomposition $\BB_1 \cup \dots \cup \BB_k$ with each $\BB_i$ equivalent to $\BB_{i, temp}$. We claim that each $\BB_i$ for $i>1$ must be of type $Y_\mathcal{M}^{>0}$. Conversely, we claim that for every $\BB_1$ equivalent to $\BB_{1, temp}$ and for every $\BB_i$ of type $Y_\mathcal{M}^{>0}$ equivalent to $\BB_{i, temp}$, there exists $\EE$ equivalent to $\EE_{temp}$ with the decomposition $\BB_1 \cup \dots \cup \BB_k$.

    For the first claim, since $\EE_{temp}$ satisfies these conditions, it suffices to show that after applying any raising operator or inverse, the extended multi-segment still satisfies these conditions. Let $\EE$ be some extended multi-segment equivalent to $\EE_{temp}$ such that each $\BB_i$ for $i>1$ is of type $Y_\mathcal{M}^{>0}$. Let $T$ be an operation on $\EE.$ By Lemma \ref{lem-independence-of-blocks}, we can view $T$ as an operation on one of the parts of the decomposition $\BB_i$. If $T$ is an operation on $\BB_1$, then there is nothing to prove. Otherwise, suppose $T$ is an operation on $\BB_i$. Then applying Lemma \ref{lem-later-blocks-exhaustion} to the truncated extended multi-segment $\BB_1 \cup \dots \cup \BB_i$, we see that after $T$ is applied, $\BB_i'$ is still of type $Y_\mathcal{M}^{>0}$. Note that since $T$ was a valid operation on $\EE$, it is still a valid operation on the truncated extended multi-segment.

    For the second claim, by Lemma \ref{lem-first-block} we can apply the operations taking $\BB_{1, temp}$ to $\BB_1$ to the first block of $\EE_{temp}$. Since these operations are local (see Lemma \ref{lem-local}), they do not affect the other rows and so we obtain an equivalent extended multi-segment with decomposition $\BB_1 \cup \BB_{2, temp} \cup \dots \cup \BB_{k, temp}$. By Lemma \ref{lem-later-blocks-existence}, we can repeat the same logic for each of the other blocks by considering successive truncations and applying the relevant operations to the last part of each truncation.
\end{proof}

\bibliographystyle{amsplain}
\bibliography{On_Arthur_packets_containing_a_fixed_tempered_arXiv}

\end{document}